\documentclass[12pt]{article}
\usepackage{amsmath,amssymb, mathtools} 
\usepackage{amsthm} 
\usepackage{dsfont} 
\usepackage{MnSymbol} 

\usepackage[pdftex]{graphicx} 
\usepackage{color}
\usepackage[colorlinks,citecolor=blue,linkcolor=blue,urlcolor=blue]{hyperref} 
\usepackage{cleveref}
\usepackage{tikz-cd} 

\usepackage[utf8]{inputenc} 
\usepackage[english]{babel} 

\usepackage{geometry} 
\geometry{left=30mm,right=30mm, top=30mm, bottom=30mm}
\usepackage{tikz} 
\usepackage{cite} 
\usepackage{enumitem} 
\usepackage{comment} 

\newtheorem{theorem}{Theorem}[section]
\newtheorem{proposition}[theorem]{Proposition}
\newtheorem{lemma}[theorem]{Lemma}
\newtheorem{corollary}[theorem]{Corollary}

\newtheorem{example}[theorem]{Example}

\newtheorem{conjecture}[theorem]{Conjecture}
\newtheorem{definition}[theorem]{Definition}
\newtheorem*{remark}{Remark}

\newtheorem*{fakedefinition}{Definition}
\newtheorem{faketheorem}{Theorem}
\newtheorem{fakeconjecture}[faketheorem]{Conjecture}
\newtheorem{fakeprop}[faketheorem]{Proposition}

\setcounter{tocdepth}{2} 
\numberwithin{theorem}{section} 
\numberwithin{equation}{section} 
\numberwithin{figure}{section} 

\newcommand{\C}{\mathbb{C}}
\newcommand{\R}{\mathbb{R}}
\newcommand*{\g}{\mathfrak{g}}

\renewcommand*{\l}{\lambda}
\newcommand*{\mf}{\mathfrak}

\newcommand{\del}{\partial}
\newcommand{\delbar}{\bar{\partial}}

\DeclareMathOperator{\tr}{tr}
\DeclareMathOperator{\rk}{rk}
\DeclareMathOperator{\SL}{SL}
\DeclareMathOperator{\PSL}{PSL}
\DeclareMathOperator{\Hom}{Hom}
\DeclareMathOperator{\ad}{ad}
\DeclareMathOperator{\Ad}{Ad}

\DeclareMathOperator{\Aut}{Aut}

\title{Fock bundles and Hitchin components}

\author{Georgios Kydonakis, Charlie Reid, Alexander Thomas}

\date{}

\begin{document}
\maketitle
\begin{abstract}
    We introduce the concept of a Fock bundle, a smooth principal bundle over a surface equipped with a special kind of adjoint-valued 1-form, as a new tool for studying character varieties of surface groups. Although similar to Higgs bundles, the crucial difference  is that no complex structure is fixed on the underlying surface. Fock bundles are the gauge-theoretic realization of higher complex structures. 
    We construct a canonical connection to a Fock bundle equipped with compatible symmetric pairing and hermitian structure. The space of flat Fock bundles maps to the character variety of the split real form. Determining the hermitian structure such that this connection is flat gives a non-linear PDE similar to Hitchin's equation.
    We explicitly construct solutions for Fock bundles in the Fuchsian locus. Ellipticity of the relevant linear operator provides a map from a neighborhood of the Fuchsian locus in the space of higher complex structures modulo higher diffeomorphisms to a neighborhood of the Fuchsian locus in the Hitchin component.
\end{abstract}

\tableofcontents

\section{Introduction and results}

\subsection{Context}

Let $S$ be a smooth closed orientable surface of genus at least 2.
The moduli space of complex structures on $S$ modulo diffeomorphisms isotopic to the identity is famously known as the \emph{Teichm\"{u}ller space}. The Poincaré--Koebe uniformization theorem implies that the Teichm\"{u}ller space also describes all marked hyperbolic structures on $S$. The holonomy representation of a hyperbolic structure leads to a discrete and faithful representation $\pi_1 S \to \mathrm{PSL}_2(\mathbb{R})$, thus allowing for an algebraic realization of the Teichm\"{u}ller space of $S$ as a connected component of the character variety $\mathcal{X}(\pi_1 S,\mathrm{PSL}_2(\mathbb{R}))$.
Generalizing the uniformization theorem to higher rank Lie groups is the main motivation of this work.
 
Representations of fundamental groups and their links to geometric structures are captured by the character variety. For a Lie group $G$, the $G$-\emph{character variety} $$\mathcal{X}(\pi_1 S,G)=\mathrm{Hom}(\pi_1 S,G)/G$$ is defined as the space of isomorphism classes of completely reducible representations, where $G$ acts by conjugation.

Higher Teichm\"{u}ller theory concerns the study of connected components of character varieties  for more general Lie groups than $\mathrm{PSL}_2(\mathbb{R})$ which share analogous properties to the classical Teichm\"{u}ller space. The first step in this direction was taken by Hitchin in \cite{Hit92}, where he found a connected component of $\chi(\pi_1 S,\mathrm{PSL}_n(\mathbb{R}))$ (and more generally for any adjoint group of the split real form of a complex simple Lie group) which is contractible, and naturally contains a copy of Teichm\"uller space. Representations parametrized by these components, now most often called \emph{Hitchin components}, were later shown to be discrete and faithful \cite{Labourie06, FG}.

The \emph{mapping class group} $\mathrm{Mod}(S)$ of the surface $S$, that is, the group of all orien\-tation-preserving diffeomorphisms of $S$ modulo the ones which are isotopic to the identity, acts naturally on the Teichm\"{u}ller space by changing the marking. This action is properly discontinuous and the resulting quotient is the moduli space of Riemann surfaces with the same topological type as $S$. Similarly, $\mathrm{Mod}(S)$ acts on any character variety by precomposition.

Fixing a complex structure on $S$, the nonabelian Hodge correspondence identifies $\chi(\pi_1 S,G)$, for any reductive Lie group $G$, with the moduli space of polystable $G$-Higgs bundles. The nonabelian Hodge correspondence is very effective in providing complex analytic descriptions of character varieties $\mathcal{X}(\pi_1 S,G)$, their Hitchin components, and other Teichm\"uller space-like components, now called \emph{positive} components. However, a major drawback in this correspondence is that one has to fix a complex structure on $S$, and so $\mathrm{Mod}(S)$ does not act on the moduli of Higgs bundles.  

In \cite{Labourie}, Labourie proposed a remedy which, if generally applicable, could provide a canonical way to associate a complex structure on $S$ to a given point in the Hitchin component. This analytic method involving harmonic maps and minimal surfaces has received considerable attention and has proven to be effective in several cases of higher Teichm\"{u}ller spaces but only for groups of rank 2; we refer to \cite{Lab17} and \cite{CTT} for general accounts on split real simple Lie groups of rank 2 and Hermitian Lie groups of rank 2 respectively. Yet, it recently became clear that this approach via minimal surfaces towards establishing a canonical choice of complex structure on $S$ and thus obtaining a mapping class group equivariant parametrization of the associated higher Teichm\"{u}ller spaces in terms of holomorphic data, fails for groups of rank greater than 2. In \cite{SaSm}, Sagman and Smillie demonstrated the existence of numerous equivariant minimal surfaces for certain Hitchin representations $\rho: \pi_1(S) \to G$, when $G$ is a split real semisimple Lie group of rank at least 3, building on the breakthrough accomplished by Markovi\'{c} \cite{Mar}, who applied his New Main Inequality to get an analogous statement in the case of $\mathrm{PSL}_2(\mathbb{R})^3$.

A new approach towards realizing an equivariant parametrization of the Hitchin components was proposed by Vladimir Fock and the third author \cite{FT} in terms of newly introduced geometric structures called \emph{higher complex structures}. These were originally defined using the punctual Hilbert scheme of the plane and a moduli space of higher complex structures was defined in \cite{FT} as a quotient by Hamiltonian diffeomorphisms of the cotangent bundle of the surface preserving the zero section $S\subset T^*S$ setwise. Such diffeomorphisms were called \emph{higher diffeomorphisms} and the resulting quotient space, called the \emph{geometric Hitchin space} and denoted by $\hat{\mathcal{T}}^n(S)$, recovers the classical Teichm\"{u}ller space when $n=2$. Moreover, this space is a manifold of complex dimension equal to that of the Hitchin component for $G=\mathrm{PSL}_2(\mathbb{R})$ \cite[Theorem 2]{FT}. Its cotangent bundle can be parametrized by a set of tensors on the underlying surface, satisfying a certain compatibility condition called \emph{$\mu$-holomorphicity} \cite[Theorem 3]{FT}. The main conjecture suggested in \cite{FT} was that the geometric Hitchin space is canonically diffeomorphic to the Hitchin component.   

In \cite{Nolte}, Nolte introduced the notion of \emph{harmonic} representatives for higher complex structures and studied in detail the group of higher diffeomorphisms. 
There, the geometric Hitchin space is denoted by $\mathcal{T}^n(S)$ and is called the degree-n Fock–Thomas space. Moreover, a canonical diffeomorphism from $\mathcal{T}^3(S)$ to the $\mathrm{PSL}_3(\mathbb{R})$-Hitchin component was constructed. This construction is, in fact, $\mathrm{Mod}(S)$-equivariant. More generally, the space $\mathcal{T}^n(S)$ is diffeomorphic to a ball of complex dimension $(g-1)(n^2-1)$, meaning that it is abstractly diffeomorphic to the Hitchin component; see \cite[Theorem 2]{FT} and \cite[Corollary 1.7]{Nolte}. Nolte's method, however, for getting this canonical $\mathrm{Mod}(S)$-equivariant diffeomorphism uses the affine spheres perspective on the $\mathrm{PSL}_3(\mathbb{R})$-Hitchin component by Labourie \cite{Lab07} and Loftin \cite{Loftin}, and so the technique does not directly apply for higher rank. 

\medskip
In this article, we introduce a gauge-theoretic realization of higher complex structures which allows us to make progress on their link to character varieties, in particular to Hitchin components.
In addition, central associated notions such as higher diffeomorphisms and the $\mu$-holomorphicity condition become more transparent. 

We introduce a new type of object $(P,\Phi,\sigma)$, that we call \emph{$G$-Fock bundle}, consisting of a smooth principal $G$-bundle $P$ together with an adjoint-valued 1-form $\Phi$, called a \emph{Fock field}, that satisfies certain conditions. In this triple, $\sigma$ is an involution on $P$ generalizing the notion of a symmetric pairing on a vector bundle. This notion is similar to a $G$-Higgs bundle, with the crucial difference that it does not involve fixing a priori any complex structure on the underlying surface $S$. Isomorphism classes of $\PSL_n(\C)$-Fock bundles are equivalent to higher complex structures of order $n$. A $G$-Fock field $\Phi$ as above naturally induces a complex structure on $S$. Varying the Fock field also varies this complex structure. 

We then set to establish a passage from $G$-Fock bundles to connections analogously to the nonabelian Hodge correspondence. 
Our theory associates to a Fock bundle $(P,\Phi,\sigma)$ equipped with a compatible hermitian structure $\rho$, a connection $\nabla=\Phi+d_A+\Phi^{*}$, where $\Phi^{*}=-\rho(\Phi)$ is the hermitian conjugate of $\Phi$ and $d_A$ is the unique unitary $\sigma$-invariant connection satisfying $d_A\Phi=0$. We conjecture that there exists a hermitian structure such that $\nabla$ is flat. 

The flatness equation for $\nabla$ is similar to Hitchin's self-duality equations over a Riemann surface \cite{Hit87} and, in fact, coincides with Hitchin's equation in the most trivial examples, the so-called Fuchsian locus (Fock bundles coming from uniformizing Higgs bundles). 
Moreover we prove the conjecture in a neighborhood of the Fuchsian locus.
For the family of flat $G$-Fock bundles, we show that the monodromy of the connection is always in the split real form of $G$. 

Rudiments of the approach towards establishing a passage to a canonical family of flat connections have appeared in the previous articles \cite{thomas-flat-conn,ThoWKB} and \cite{Tho22} by the third author. In particular, it was shown in \cite{thomas-flat-conn} that the cotangent bundle of the moduli space of higher complex structures can be included into a 1-parameter family of spaces whose sections are flat formal connections.  The theory of $\mathfrak{g}$-complex structures was introduced in \cite{Tho22}, extending the case of $\mathfrak{sl}_n(\C)$ to a general complex simple Lie algebra $\g$, and constitutes the starting point for our definition of $G$-Fock bundles.

The introduction of Fock bundles and their relationship to families of connections points towards a broader framework to study character varieties, which includes both the theory of Higgs and Fock bundles.
Fixing a complex structure on $S$, one may consider a $G$-principal bundle $P$ on $S$, together with a connection $d_A$ and a field $\Phi\in \Omega^1(S,\mathfrak{g}_P)$ satisfying $[\Phi\wedge\Phi]=0$, $d_A(\Phi)=0$ and with fixed conjugacy class of $\Phi^{0,1}$. Here we used the complex structure to decompose $\Phi$ into Hodge types. Under this setup, then whenever $\Phi^{0,1}=0$, the field $\Phi$ is holomorphic with respect to the holomorphic structure on $P$ given by $d_A^{0,1}$, thus leading to the theory of Higgs bundles. On the other hand, whenever $\Phi^{0,1}$ is principal nilpotent, this leads to our theory of Fock bundles.
We hope that to such data, one can always associate a flat connection, in analogy to the nonabelian Hodge correspondence. 
Alternative conditions for the conjugacy class of $\Phi^{0,1}$ can possibly lead to alternative approaches to character varieties, yet the case when $\Phi^{0,1}$ is principal nilpotent describes the only possible conjugacy class which stays invariant under coordinate change on $S$. In other words, \emph{the approach via Fock bundles within this broader framework is the only one which is independent of the complex structure on $S$}.

\subsection{Results and structure}

We now make the statements appearing in the article more precise. Throughout the whole paper we denote by $S$ a smooth closed connected orientable surface of genus at least 2 and by $G$ a complex simple Lie group with Lie algebra $\g$. 
The main new object we introduce is the notion of a $G$-Fock bundle. We restrict to the case $G=\mathrm{SL}_n(\mathbb{C})$ here for simplicity and give the full definition in Section \ref{Sec:Fock-bundles}.

\begin{fakedefinition}[Definition \ref{defn_Fock-bundle}]
     An \emph{$\mathrm{SL}_n(\mathbb{C})$-Fock bundle} over $S$ is a triple $(E,\Phi,g)$ where $E$ is a complex vector bundle over $S$ with fixed volume form, non-degenerate symmetric pairing $g:E\times E\to \underline{\mathbb{C}}$, where $\underline{\mathbb{C}}$ denotes the trivial line bundle, and $\Phi\in \Omega^1(S,\mf{sl}(E))$ satisfying
    \begin{enumerate}
        \item $\Phi\wedge\Phi = 0$,
        \item $\Phi(v)(z)$ is principal nilpotent for all $z\in S$ and all non-zero vectors $v\in T_zS$.
        \item $\Phi$ is $g$-self-adjoint. 
    \end{enumerate}
We refer to $\Phi$ as the \emph{Fock field}.
\end{fakedefinition}

This generalizes easily to general $G$: a $G$-Fock bundle is a principal $G$-bundle $P$ with a certain involution $\sigma$ playing the role of the symmetric pairing, and an adjoint-valued 1-form $\Phi$ satisfying the analogous three properties above. An important example of $G$-Fock bundles, the so-called \emph{Fuchsian locus}, arises from the underlying smooth principal bundle to the uniformizing $G$-Higgs bundle when we equip $S$ with a complex structure (see Section \ref{Sec:fuchsian-locus}). The link to higher complex structures is given by the following:
 
\begin{fakeprop}[Proposition \ref{Prop:link-to-g-C-str}]
An $\mathrm{SL}_n(\mathbb{C})$-Fock bundle induces a higher complex structure of order $n$ on $S$. Isomorphism classes of $\mathrm{PSL}_n(\mathbb{C})$-Fock bundles are equivalent to higher complex structures. 
\end{fakeprop}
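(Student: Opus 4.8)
The plan is to produce, from the data $(E,\Phi,g)$, the pointwise algebraic structure that defines a higher complex structure of order $n$, and then to check compatibility and well-definedness. Recall that a higher complex structure of order $n$ on $S$ is, concretely, the assignment to each point $z\in S$ of a point in a suitable fiber bundle over $S$ modeled on the punctual Hilbert scheme data — equivalently (following \cite{FT} and its reformulations), a tuple of tensors $(\mu_2,\dots,\mu_n)$ with $\mu_k\in\Gamma(\bar{K}\otimes K^{1-k})$ satisfying the $\mu$-holomorphicity relations, together with the underlying almost complex structure determined by $\mu_2$. The first step is therefore to extract, at each $z\in S$, a complex structure on $T_zS$ from $\Phi$: since $\Phi(v)(z)$ is principal nilpotent for every nonzero $v\in T_zS$, and $\Phi\wedge\Phi=0$, the kernel of the map $v\mapsto \Phi(v)(z)$ acting on $\Lambda^2$ — more precisely the condition that $\Phi$ be of rank $1$ as a bundle map $TS\to \mathfrak{sl}(E)$ once we complexify — singles out a complex line in $T_z S\otimes\C$, hence an almost complex structure $J_\Phi$ on $S$. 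I would show $\Phi\wedge\Phi=0$ together with principal nilpotency forces $\Phi$ to factor as $\Phi = e\otimes \nu$ locally, where $\nu$ is a $(1,0)$-form for a (uniquely determined) almost complex structure and $e$ is a section of principal nilpotents; the residual freedom in $e$ at each point is exactly the data parametrized by the punctual Hilbert scheme / higher complex structure.

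The second step is to read off the higher tensors. Fixing a local complex coordinate $z$ for $J_\Phi$, write $\Phi = \Psi\, d\bar z$ for a section $\Psi$ of principal nilpotents (the $(0,1)$-part in the induced decomposition); conjugating $\Psi$ into a standard principal nilpotent $e_-$ by a local gauge transformation, the obstruction to doing so globally/holomorphically, measured against the centralizer grading $\mathfrak{sl}_n = \bigoplus_k \mathfrak{g}_k$, produces coefficients that one identifies with the Fock--Thomas tensors $\mu_2,\dots,\mu_n$ (this is precisely the mechanism already used in \cite{thomas-flat-conn, Tho22} to pass between $\mathfrak g$-complex structures and tuples of tensors, so I would cite those computations rather than redo them). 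The symmetric pairing $g$ and the self-adjointness condition (3) should be shown to be exactly the constraint that cuts the $\mathrm{SL}_n$ data down to the correct real dimension and, for the $\mathrm{PSL}_n$ statement, to impose no further constraint beyond fixing the reduction — i.e. condition (3) is automatically arrangeable and does not affect the isomorphism class as a higher complex structure.

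The third step is the equivalence of isomorphism classes for $G=\mathrm{PSL}_n(\C)$. For this I would construct the inverse map: given a higher complex structure, realize it (after \cite{FT,Nolte}) as a point in the Hilbert-scheme bundle, build the associated principal nilpotent section via the companion-matrix presentation of $\mathrm{Hilb}^n(\C^2)$, and check that the two constructions are mutually inverse on isomorphism classes — gauge transformations of $(E,\Phi)$ preserving all the structure correspond exactly to higher diffeomorphisms acting trivially, and conversely. I expect the main obstacle to be precisely this bookkeeping of \emph{equivalence}: showing that two Fock bundles give the same higher complex structure if and only if they are isomorphic requires matching the automorphism groups on both sides, i.e. identifying gauge equivalence of the triple $(E,\Phi,g)$ (modulo, for $\mathrm{PSL}_n$, the center and the residual diffeomorphism freedom that acts on $S$) with the equivalence relation defining higher complex structures. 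The $\mathrm{SL}_n$ versus $\mathrm{PSL}_n$ discrepancy — why only the adjoint group yields a clean bijection, the extra discrete data in the simply connected case — is where I would be most careful, and I would handle it by tracking the center $Z(\mathrm{SL}_n)$ through the correspondence and showing it accounts exactly for the difference.
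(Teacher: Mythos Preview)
Your proposal contains several genuine errors and misses the much simpler argument the paper actually uses.

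First, the factorization claim is false: $\Phi\wedge\Phi=0$ together with principal nilpotency does \emph{not} force $\Phi=e\otimes\nu$ with $\nu$ a $(1,0)$-form. In the induced complex coordinate one has $\Phi=\Phi_1\,dz+\Phi_2\,d\bar z$ with $\Phi_1$ principal nilpotent and $\Phi_2\in Z(\Phi_1)$ (see Lemma~\ref{lemma:sln-fock-field}); in general $\Phi_2$ is not a scalar multiple of $\Phi_1$, so $\Phi$ is not of rank one. Relatedly, you have the roles of the $(1,0)$- and $(0,1)$-parts reversed: it is the $(1,0)$-part that is principal nilpotent in the induced complex structure, while the $(0,1)$-part is precisely \emph{not} regular. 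You also invoke ``$\mu$-holomorphicity relations'' as part of the definition of a higher complex structure; those relations are conditions on covectors $t_k$ (Equation~\eqref{Eq:mu-holo-cond-hcs}), not on the Beltrami differentials, and play no role here. Finally, for the equivalence of isomorphism classes you try to match gauge transformations with higher diffeomorphisms. This is the wrong equivalence relation: a $\g$-complex structure is by Definition~\ref{def-g-C-str} a $G$-\emph{conjugacy} class of $\Phi$, which is exactly ordinary gauge equivalence. Higher diffeomorphisms only appear when one passes to the moduli space $\mathcal{T}^n(S)$, which is a further quotient not relevant to this proposition.

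The paper's argument is almost tautological once the definitions are lined up. For the first assertion, compare Definition~\ref{defn_Fock-bundle} with Definition~\ref{def-g-C-str}: a Fock field $\Phi$ literally is, pointwise, the data of a $\g$-complex structure. For $\mathrm{SL}_n(\C)$ the paper makes this concrete via the algebra map $p:\mathrm{Sym}(T^{\C}S)\to\End(E)$, $v_1\cdots v_k\mapsto\Phi(v_1)\cdots\Phi(v_k)$ (Proposition~\ref{link_hcs}), whose kernel is the ideal~\eqref{Eq:hcs-ideal}. For the second assertion, the point is that for the adjoint group the extra data in a Fock bundle beyond the conjugacy class of $\Phi$ is vacuous: the bundle $P$ is topologically trivial (Corollary~\ref{Prop:P-topology}), and the $\sigma_0$-structure exists and is unique up to an automorphism fixing $\Phi$ (Proposition~\ref{Prop:sigma-dependence} and Lemma~\ref{lem:space_of_sigma}). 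Hence an isomorphism class of $G_{ad}$-Fock bundles is exactly a $G_{ad}$-conjugacy class of $\Phi$, i.e.\ a $\g$-complex structure. The role of $g$ (resp.\ $\sigma$) is not to ``cut down dimension'' but is essentially auxiliary: any $\sigma$ negating $\Phi_1$ automatically negates all of $Z(\Phi_1)$ (Corollary~\ref{cor:sigma_negate_centralizer}), so it imposes no constraint on the higher Beltrami differentials.
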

 
In Subsection \ref{Sec:var-Fock-fields} we describe variations of Fock bundles via the cohomology of a certain chain complex. In particular, we prove that a Fock bundle has no infinitesimal automorphisms, hence is stable in that sense.

The main result of Section \ref{Sec:can-connection} is the construction of a canonical connection associated to a Fock bundle equipped with a compatible positive hermitian structure. A hermitian structure $\rho$ is an involution on $P$ associated to the compact real form of $G$. It is said to be \emph{compatible} with a Fock bundle if $\rho$ and $\sigma$ commute. The property of being \emph{positive} is a certain open condition, see Definition \ref{Def:pos-Higgs-field} for the precise statement.
\begin{faketheorem}[Theorem \ref{Thm-filling-in}]
    For a $G$-Fock bundle $(P,\Phi,\sigma)$ equipped with a compatible, positive hermitian structure $\rho$, there is a unique unitary, $\sigma$-invariant connection $d_A$ satisfying $d_A\Phi=0$.
\end{faketheorem}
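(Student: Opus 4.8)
The plan is to solve for $d_A$ locally and then check that the local solutions are canonical enough to glue. Fix a point and a local trivialization adapted to both involutions: because $\rho$ and $\sigma$ commute, they generate a $\mathbb{Z}/2 \times \mathbb{Z}/2$ action, so we may split $\mathfrak{g}$ into the $\pm 1$ eigenspaces for $\rho$ and for $\sigma$ simultaneously, and pick a local section of $P$ reducing the structure group to the common stabilizer (a compact group fixed by $\sigma$). In such a frame $\Phi$ is a $\mathfrak{g}$-valued $1$-form which is everywhere principal nilpotent and self-adjoint for $\sigma$; by the positivity hypothesis on $\rho$, the element $\Phi - \rho(\Phi) = \Phi + \Phi^*$ together with the associated semisimple element $H$ of an $\mathfrak{sl}_2$-triple spans a copy of $\mathfrak{sl}_2$ in the split real form, so near the given point $\Phi$ looks, up to the residual gauge freedom, like the "uniformizing" Higgs field twisted by a local function. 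The key algebraic input is that the condition $d_A \Phi = 0$ is, pointwise, a linear equation on the connection form $A$: writing $d_A = d + A$, it reads $dA \wedge \text{(nothing)}$ — more precisely $d\Phi + [A \wedge \Phi] = 0$, which is an inhomogeneous linear equation for the $\Omega^1(\mathfrak{g})$-valued unknown $A$.

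The first real step is the pointwise surjectivity/injectivity statement: I would show that the linear map $A \mapsto [A \wedge \Phi]$, restricted to $A$ lying in the appropriate subspace (unitary, i.e.\ $\rho$-anti-invariant, and $\sigma$-invariant, i.e.\ commuting with $\sigma$), and with target the space of $\mathfrak{g}$-valued $2$-forms, is a bijection. This is where principal nilpotency is essential: $\mathrm{ad}_\Phi$ acting on the Kostant slice / the centralizer decomposition of $\mathfrak{g}$ under the $\mathfrak{sl}_2$-triple attached to $\Phi$ has a controlled kernel, and pairing against the $1$-form factor (using that $T_zS$ is $2$-dimensional so $\Lambda^2$ is a line) converts the graded structure into an isomorphism after intersecting with the $\rho$- and $\sigma$-eigenspace constraints. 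Concretely one reduces to checking that $\mathrm{ad}: \mathfrak{n}^- \to \mathfrak{g}$ (or the relevant eigenspace piece) combined with the other component of $\Phi$ is invertible onto the $2$-form space; this is a finite-dimensional linear-algebra fact about principal $\mathfrak{sl}_2$-triples and the decomposition of $\Phi$ into its two $1$-form coefficients, one of which is regular nilpotent. I expect this pointwise nondegeneracy — and correctly identifying the right unitary-and-$\sigma$-invariant subspace on which it holds — to be the main obstacle.

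Granting the pointwise statement, existence and uniqueness of $d_A$ solving $d_A\Phi=0$ within the class of unitary $\sigma$-invariant connections is then immediate: in any adapted local frame the equation $d\Phi + [A \wedge \Phi]=0$ has a unique solution $A$ in the prescribed subspace, depending smoothly (indeed algebraically) on $\Phi$ and its first derivatives; since the defining conditions (unitarity with respect to $\rho$, $\sigma$-invariance, and $d_A\Phi = 0$) are all preserved under the adapted transition functions — which take values in the common compact stabilizer and hence are both $\rho$- and $\sigma$-compatible — the locally defined $A$'s transform as connection forms and glue to a global connection $d_A$ on $P$. Uniqueness is similarly local: any two such connections differ by a global section $a \in \Omega^1(S,\mathfrak{g}_P)$ which is $\rho$-anti-invariant, $\sigma$-invariant, and satisfies $[a\wedge\Phi]=0$, and the pointwise injectivity forces $a=0$.

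Finally I would remark that smoothness of $d_A$ in $(P,\Phi,\sigma,\rho)$, and in particular the fact that the construction is continuous on the open positivity locus, follows from the explicit linear-algebraic nature of the pointwise solution, so that $d_A$ varies real-analytically with the data; this is what makes the later ellipticity argument near the Fuchsian locus possible. The one technical point to handle with care is verifying that the positivity condition of Definition~\ref{Def:pos-Higgs-field} is exactly what guarantees the pointwise operator is nondegenerate — degeneration of $\Phi + \Phi^*$ away from the split-real $\mathfrak{sl}_2$ picture is precisely where invertibility could fail, so the open condition should be read off from the nonvanishing of the relevant determinant.
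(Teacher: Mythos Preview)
Your central claim—that the pointwise linear map $A\mapsto [A\wedge\Phi]$, restricted to the unitary $\sigma$-invariant subspace of $\Omega^1(S,\mathfrak{g}_P)$, is a \emph{bijection} onto $\mathfrak{g}$-valued $2$-forms—is false on dimension grounds, so the existence half of your argument does not go through. At a point, a unitary $\sigma$-invariant $1$-form is determined by its $(1,0)$-part $A_1\in\mathfrak{g}^\sigma$, giving the domain real dimension $2\dim_\C\mathfrak{g}^\sigma$. The image lands in $\Omega^2(S,\mathfrak{g}^{-\sigma})$ (since $\sigma(\Phi)=-\Phi$), of real dimension $2\dim_\C\mathfrak{g}^{-\sigma}$. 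But for any simple $\mathfrak{g}$ one has $\dim\mathfrak{g}^{-\sigma}=\dim\mathfrak{g}^\sigma+\mathrm{rk}\,\mathfrak{g}$: in each irreducible $\mathfrak{sl}_2$-summand $V_k$ of $\mathfrak{g}$ the involution $\sigma$ alternates sign along the weight string starting with $-1$ at the top, so $\dim V_k^{-\sigma}=k+1>k=\dim V_k^\sigma$. The map is therefore never surjective, and you cannot simply invert it to produce $A$ from $d\Phi$.

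Your injectivity claim, by contrast, is essentially correct and matches the paper's uniqueness argument: if $A$ is unitary then $[A\wedge\Phi]=0$ forces $[A\wedge\Phi^*]=0$ as well (apply $\rho$), so $A$ lies in $\ker(\ad_\Phi)\cap\ker(\ad_{\Phi^*})$, which the paper identifies with $\mathrm{H}^1(\Phi)$ and shows carries no $\sigma$-invariant part (Propositions~\ref{Prop:repr-for-cohom} and~\ref{Prop-no-sigma-cohom}). What you are missing for existence is precisely this second equation and the transversality of $\Phi$ and $\Phi^*$ that positivity buys. The paper does not try to hit $-d\Phi$ directly; instead it first takes \emph{any} $\sigma$-invariant $\Phi$-compatible connection $d_{A_0}$ (Proposition~\ref{Prop:comp-connections}), observes that $d_{A_0}\Phi^*\in\mathrm{Im}(\ad_{\Phi^*})$, and then corrects $d_{A_0}$ by a term $R_1\in\mathrm{Im}(\ad_\Phi)^\sigma$ using the decomposition $\Omega^{1,\sigma}=\mathrm{Im}(\ad_\Phi)^\sigma\oplus\mathrm{Im}(\ad_{\Phi^*})^\sigma$ from Corollary~\ref{Coro:sigma-decompo-transverse}. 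This two-step construction is where positivity (hence transversality) actually enters; unitarity is then deduced \emph{a posteriori} from uniqueness by observing that $\rho(d_A)$ is another $\sigma$-invariant solution. (Minor point: a unitary connection form lies in the $\rho$-\emph{fixed} subspace, not the $\rho$-anti-invariant one.)
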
 
This result is analogous to the existence of a Chern connection on a holomorphic bundle induced by a hermitian structure.
Thus, to the data $(P,\Phi,\sigma,\rho)$ as above we can associate a connection $\Phi+d_A+\Phi^{*}$, where $\Phi^*=-\rho(\Phi)$, which preserves a split-real structure, and has curvature 
\begin{equation}\label{Eq-intro:hitchin-like}
F(A)+[\Phi\wedge\Phi^{*}].
\end{equation}

Our main conjecture states:
\begin{fakeconjecture}[Conjecture \ref{main-conj}]\label{Conj-intro}
    For a $G$-Fock bundle $(P,\Phi,\sigma)$, there exists a unique compatible positive hermitian structure $\rho$ such that the associated connection $\Phi+d_A+\Phi^{*}$ is flat, in other words, such that $F(A)+[\Phi\wedge\Phi^{*}]=0$.
\end{fakeconjecture}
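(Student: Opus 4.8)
The plan is to establish the conjecture in a neighborhood of the Fuchsian locus by the implicit function theorem --- the regime advertised in the introduction --- and to indicate why the global statement is harder. \textbf{Reduction to a PDE.} Fix a reference compatible positive hermitian structure $\rho_0$ on $(P,\Phi,\sigma)$; near the Fuchsian locus such a $\rho_0$ is obtained by continuity from the uniformizing solution. Parametrize nearby compatible hermitian structures by sections $s$ of the sub-bundle $\mathfrak{m}\subset\mathfrak{g}_P$ of infinitesimal deformations of $\rho_0$ that are compatible with $\sigma$ --- the tangent bundle to the relevant totally geodesic subspace of the space of hermitian structures --- writing $\rho_s$ for the structure associated to $s$. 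By Theorem~\ref{Thm-filling-in} the connection $d_A=d_{A(s)}$ and the hermitian conjugate $\Phi^{*}=\Phi^{*}_{s}=-\rho_s(\Phi)$ are determined by $s$, so Conjecture~\ref{main-conj} becomes the statement that the nonlinear second-order two-form-valued operator $\mathcal{H}(\Phi,s):=F(A(s))+[\Phi\wedge\Phi^{*}_{s}]$ has a unique zero $s\in\Omega^{0}(S,\mathfrak{m})$, positivity being an open condition that is automatically retained for $s$ small.

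\textbf{Ellipticity and uniqueness.} Next I would compute the linearization $D_{s}\mathcal{H}$ at a solution. As in the variation of the Chern connection in Hitchin's equation~\cite{Hit87}, its principal part is the covariant Laplacian $d_A^{*}d_A$, and the remaining term $\dot s\mapsto[\Phi\wedge[\Phi^{*}_{s},\dot s]]$ is a zeroth-order operator which, because $\Phi^{*}_s$ is the $\rho_s$-conjugate of $\Phi$, is self-adjoint and nonnegative. Thus $D_{s}\mathcal{H}$ is self-adjoint and elliptic with $\int_{S}\langle D_{s}\mathcal{H}(\dot s),\dot s\rangle = \|d_A\dot s\|^{2}+\|[\Phi,\dot s]\|^{2}\geq 0$, so any element of its kernel is $d_A$-parallel, i.e.\ an infinitesimal automorphism of the Fock bundle; since there are none (Subsection~\ref{Sec:var-Fock-fields}), $D_{s}\mathcal{H}$ is an isomorphism of the relevant Sobolev completions. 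The same identity applied to any two solutions --- taking one as the reference $\rho_0$ and writing the other as $\rho_s$ --- shows $\int_{S}\langle\mathcal{H}(\Phi,s)-\mathcal{H}(\Phi,0),s\rangle\geq \|d_A s\|^{2}+\|[\Phi,s]\|^{2}\geq 0$ with equality forcing $s\equiv 0$, which gives the uniqueness clause.

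\textbf{Existence near the Fuchsian locus.} At a Fuchsian point the triple $(P,\Phi_{0},\sigma)$ arises from the smooth bundle underlying the uniformizing $G$-Higgs bundle for a choice of complex structure on $S$ (Section~\ref{Sec:fuchsian-locus}); I would verify that with $\rho_0$ the associated Hitchin solution one has $\mathcal{H}(\Phi_{0},0)=0$, the equation restricting there to Hitchin's self-duality equation, whose solution exists and is nondegenerate in the metric direction by stability of the uniformizing Higgs bundle~\cite{Hit87}. As $\mathcal{H}$ depends smoothly on the Fock field, the implicit function theorem then produces, for every $G$-Fock bundle $(P,\Phi,\sigma)$ in a neighborhood of the Fuchsian locus, a unique small solution $s(\Phi)$; elliptic regularity makes it smooth and positivity is automatic. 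Composing $(\Phi,s)\mapsto\Phi+d_{A(s)}+\Phi^{*}_{s}$ --- a flat connection with monodromy in the split real form, by the discussion of Section~\ref{Sec:can-connection} --- with Proposition~\ref{Prop:link-to-g-C-str} then gives the promised map from a neighborhood of the Fuchsian locus in the space of higher complex structures modulo higher diffeomorphisms into the Hitchin component.

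\textbf{The main obstacle.} The genuine difficulty is the global statement, which is why it remains conjectural: away from the Fuchsian locus there is no fixed complex structure on $S$ to anchor a heat-flow argument, and at present neither a stability notion for $(P,\Phi,\sigma)$ nor a Donaldson-type functional on the space of compatible positive hermitian structures whose convexity along geodesics and coercivity would force a minimizer. The route I would pursue is to isolate the correct stability condition --- it should reduce to polystability of the complex-structure-dependent data that $\Phi$ induces --- and to build such a functional from the energy (or Chern--Simons invariant) of $\Phi+d_{A(s)}+\Phi^{*}_{s}$, whose Euler--Lagrange equation is $\mathcal{H}=0$; the crux would then be the $C^{0}$ a priori estimate on $s$ keeping the hermitian structure positive and nondegenerate along a continuity path or gradient flow from the Fuchsian locus.
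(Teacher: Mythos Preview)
Your overall strategy---parametrize compatible hermitian structures, linearize the curvature map, prove ellipticity, and apply the implicit function theorem at the Fuchsian locus while leaving the global statement open---is exactly the paper's (Section~\ref{Sec:ellipticity}). The gap is in your linearization. You assert by analogy with Hitchin's equation that the principal part of $D_s\mathcal{H}$ is the covariant Laplacian $d_A^*d_A$, but the canonical connection $d_A$ of Theorem~\ref{Thm-filling-in} is \emph{not} the Chern connection away from the Fuchsian locus, and its variation under an infinitesimal hermitian change $\eta\in\Omega^0(S,\g_P^{\sigma,-\rho})$ is $\dot A = Q\,d_A\eta$, where $Q$ is the involution on $\Omega^1(S,\g_P)^\sigma \cong \mathrm{Im}(\ad_\Phi)^\sigma\oplus\mathrm{Im}(\ad_{\Phi^*})^\sigma$ acting by $(-1,+1)$ (Corollary~\ref{Coro:sigma-decompo-transverse}). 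The linearized operator is therefore $L\eta = d_AQd_A\eta + [[\Phi,\eta]\wedge\Phi^*] - [\Phi\wedge[\Phi^*,\eta]]$, with symbol $\alpha\wedge Q(\alpha\eta)$; this is elliptic \emph{only because} of the positivity hypothesis $\Phi\in\mathcal{P}$, which makes the pseudo-hermitian form \eqref{Eq:herm-form} definite on $\mathrm{Im}(\ad_\Phi)$. The associated quadratic form is $2\lVert\pi_{\mathrm{Im}\,\ad_\Phi}(d_A\eta)\rVert^2 + 2\lVert[\Phi,\eta]\rVert^2$, not $\lVert d_A\eta\rVert^2 + \lVert[\Phi,\eta]\rVert^2$; the projection you omit is precisely where positivity enters, and without it your inference that elements of the kernel are $d_A$-parallel does not follow.

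Your global uniqueness argument also does not go through: the inequality $\int_S\langle\mathcal{H}(\Phi,s)-\mathcal{H}(\Phi,0),s\rangle \geq \lVert d_As\rVert^2+\lVert[\Phi,s]\rVert^2$ would require convexity of $s\mapsto\mathcal{H}(\Phi,s)$ along the segment from $0$ to $s$, which you have not established and which the $s$-dependence of $Q$ (through $\Phi^*_s$ and the transversality decomposition) obstructs. The paper accordingly claims only local uniqueness via the implicit function theorem (Proposition~\ref{prop:openness}) and leaves global uniqueness as part of the open conjecture.
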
 
This would give a map from the space of Fock bundles to the space of Hitchin representations. The conjecture is true for examples of Fock bundles from the Fuchsian locus obtained by the nonabelian Hodge correspondence (the uniformizing Higgs bundles). 

Section \ref{Sec:ellipticity} analyzes the conjecture near the Fuchsian locus. The following result provides strong evidence towards the validity of the conjecture in general:
\begin{faketheorem}[Theorem \ref{Thm:ellipticity}]
Let $(P,\Phi,\sigma)$ be a $G$-Fock bundle equipped with a compatible positive hermitian structure $\rho$. The derivative of the map from the $\Aut(P,\sigma)$-conjugacy class of $\Phi$ to the curvature of the resulting connection $F(A)+[\Phi\wedge\Phi^{*}]$ is an elliptic isomorphism.
\end{faketheorem}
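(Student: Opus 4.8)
The statement asserts that a certain linearized map is an elliptic isomorphism, so the natural strategy is to separate the two claims: (a) the relevant operator is elliptic, and (b) it has trivial kernel and cokernel. I would begin by unwinding the definitions. Fix the Fock bundle $(P,\Phi,\sigma)$ and the compatible positive hermitian structure $\rho$, so that we have the canonical connection $d_A$ of Theorem~\ref{Thm-filling-in} and the associated connection $\nabla = \Phi + d_A + \Phi^*$. A variation of $\Phi$ within its $\Aut(P,\sigma)$-conjugacy class is, infinitesimally, of the form $\dot\Phi = d_{A}\xi + [\text{something}]$ modulo the action of $\mathrm{Lie}(\Aut(P,\sigma))$; more precisely I would use the chain complex from Subsection~\ref{Sec:var-Fock-fields} that computes variations of Fock bundles, together with the fact — already proved there — that a Fock bundle has no infinitesimal automorphisms. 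That vanishing is what makes the tangent space to the conjugacy class identify cleanly with the appropriate cohomology group, with no stabilizer to quotient by. Then I would compute the derivative of $\Phi \mapsto F(A) + [\Phi\wedge\Phi^*]$: the variation has three contributions, namely $\dot F(A)$ coming from the fact that $d_A$ depends on $\Phi$ (via $d_A\Phi = 0$ and unitarity), the term $[\dot\Phi\wedge\Phi^*]$, and $[\Phi\wedge\dot\Phi^*]$ with $\dot\Phi^* = -\rho(\dot\Phi)$. Linearizing the defining equation $d_A\Phi=0$ expresses $\dot A$ in terms of $\dot\Phi$, so the whole thing becomes a first-order linear differential operator in $\dot\Phi$ (equivalently in the gauge parameter $\xi$ after passing to cohomology).

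For ellipticity (part (a)), I would extract the principal symbol of this operator. The symbol computation should reduce, at a point $z\in S$ and covector $\eta\in T_z^*S$, to a finite-dimensional linear-algebra statement about the operators $v\mapsto [\Phi(v),\,\cdot\,]$ and their $\rho$-adjoints on $\g$, using that $\Phi(v)$ is \emph{principal nilpotent} for every nonzero $v$ (condition (2) of the definition of a Fock bundle). Principal nilpotency is precisely the input that forces the symbol to be invertible: the $\mathfrak{sl}_2$-triple attached to a principal nilpotent, together with the positivity of $\rho$, makes the relevant combination of $\ad\Phi(v)$ and $\ad\Phi(v)^*$ nondegenerate on each graded piece. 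I expect this to be the technical heart on the analytic side, but it is a computation one can organize representation-theoretically via the principal $\mathfrak{sl}_2$ and the decomposition of $\g$ into irreducible $\mathfrak{sl}_2$-submodules (the exponents of $\g$), exactly as in Hitchin's original analysis of the self-duality equations.

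With ellipticity in hand, $\nabla$-the operator is Fredholm, so it remains to compute its index and show the kernel vanishes (part (b)). For the kernel: suppose $\dot\Phi$ is a variation in the conjugacy class with $\dot F(A) + [\dot\Phi\wedge\Phi^*] + [\Phi\wedge\dot\Phi^*] = 0$. I would pair this equation against $\dot\Phi$ (or the associated Hodge-theoretic representative) and integrate over $S$, integrating by parts to produce a sum of squared $L^2$-norms — one coming from the curvature/connection term and one from the $[\Phi\wedge\Phi^*]$ term — whose vanishing forces $\dot A = 0$ and $[\Phi,\dot\Phi^*]=0$, and then principal nilpotency again (the centralizer of a principal nilpotent is abelian of dimension equal to the rank) pins down $\dot\Phi$ up to the automorphisms we already quotiented out; since there are no infinitesimal automorphisms, $\dot\Phi = 0$. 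This Weitzenböck/Bochner-type argument is where positivity of $\rho$ is essential, since it is what makes the cross terms have a definite sign. For the index: the operator deforms continuously (through elliptic operators) as we move within the space of Fock bundles, and the Fuchsian locus is in the same connected component, so the index is constant and can be read off there, where the connection coincides with a solution of Hitchin's equation and the computation is classical; alternatively one computes the index directly from the symbol via Atiyah--Singer, getting $0$ because the source and target cohomology groups have equal dimension $(2g-2)\dim\g$. Combining zero kernel with zero index gives the isomorphism. The main obstacle I anticipate is the symbol/ellipticity computation: making the principal-nilpotent structure interact correctly with the hermitian adjoint across all graded components of $\g$, and doing so uniformly in the direction $\eta$, is where the argument could become delicate.
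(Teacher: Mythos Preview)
Your plan has the right overall shape but misidentifies the key objects, which would derail both the symbol computation and the order count.

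First, the parametrization: an infinitesimal variation of $\Phi$ within its $\Aut(P,\sigma)$-conjugacy class is $\dot\Phi = [\Phi,\eta]$ for $\eta \in \Omega^0(S,\g_P^{\sigma})$, and since unitary gauge preserves curvature only the hermitian part $\eta \in \g_P^{\sigma,-\rho}$ matters. Your formula $\dot\Phi = d_A\xi + [\cdots]$ is not a tangent vector to the conjugacy orbit; that is the formula for the action of a special $\lambda$-dependent gauge, a different operation entirely.

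Second, and more seriously, the operator is \emph{second} order in $\eta$, not first. Linearizing $d_A\Phi = 0$ and $d_A\Phi^* = 0$ together gives the explicit formula $\dot A = Q\, d_A\eta$, where $Q$ is the involution on $\Omega^1(S,\g_P)^\sigma = \mathrm{Im}(\ad_\Phi)^\sigma \oplus \mathrm{Im}(\ad_{\Phi^*})^\sigma$ acting by $(-1,+1)$. Hence $\dot F(A) = d_A Q\, d_A\eta$, and the full linearization is
\[
L\eta \;=\; d_A Q\, d_A \eta \;+\; [[\Phi,\eta]\wedge\Phi^*] \;-\; [\Phi\wedge[\Phi^*,\eta]].
\]
The principal symbol at a covector $\alpha$ is $\eta \mapsto \alpha \wedge Q(\alpha\eta)$. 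Writing $\alpha\eta = \xi + \xi^*$ with $\xi \in \mathrm{Im}(\ad_\Phi)$, one computes $-\tfrac{i}{2}\tr(\alpha\eta \wedge Q(\alpha\eta)) = -2\lVert\xi\rVert^2$, which is definite precisely by the positivity hypothesis on $\rho$. No principal $\mathfrak{sl}_2$ decomposition is needed here; positivity of the hermitian structure does all the work, and the step you flagged as delicate is in fact the easy one.

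For the isomorphism, the paper does not touch index theory. The same integration by parts you propose gives
\[
-\tfrac{i}{2}\int_S \tr(\eta\, L\eta) \;=\; 2\lVert \pi_{\mathrm{Im}\,\ad_\Phi}(d_A\eta)\rVert^2 + 2\lVert [\Phi,\eta]\rVert^2,
\]
which is an $H_1$-equivalent norm on $\Omega^0(S,\g_P^{\sigma,-\rho})$. This immediately makes $L:H_1 \to H_{-1}$ an isomorphism (it is the Riesz map for this inner product), and elliptic regularity bootstraps to $H_{l+2}\to H_l$. Your Bochner instinct is right, but you should push it to full coercivity rather than just trivial kernel; then the cokernel and the index come for free.
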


We then use the implicit function theorem in Section \ref{Sec:impl-fct-thm} to conclude that the space of solutions to Equation (\ref{Eq-intro:hitchin-like}) is a Banach manifold which maps locally diffeomorphically to the space of Fock bundles. In particular, we get a map from a neighborhood of the Fuchsian locus in the space of higher complex structures to the Hitchin component. In \cite{Nolte} it is shown that on each higher diffeomorphism orbit there is a harmonic representative unique up to isotopy. By restricting our map to this slice, we get the following:

\begin{faketheorem}[Theorem \ref{Thm:neighborhood}]
    There is an open neighborhood of the Fuchsian locus in the moduli space $\mathcal{T}^n(S)$ of higher complex structures of order $n$, which has a canonical map to the $\PSL_n(\R)$-Hitchin component, via solution of Equation (\ref{Eq-intro:hitchin-like}).
\end{faketheorem}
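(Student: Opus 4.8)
The plan is to assemble this statement from the three preceding results — the ellipticity/isomorphism statement of Theorem \ref{Thm:ellipticity}, the implicit function theorem argument of Section \ref{Sec:impl-fct-thm}, and Nolte's harmonic slice theorem — together with the explicit solution on the Fuchsian locus. First I would recall that a higher complex structure of order $n$ is, by Proposition \ref{Prop:link-to-g-C-str}, the same data as an isomorphism class of $\mathrm{PSL}_n(\mathbb{C})$-Fock bundle $(P,\Phi,\sigma)$. On the Fuchsian locus the conjectured hermitian structure $\rho$ exists and makes $\nabla = \Phi + d_A + \Phi^*$ flat, because there it coincides with the solution produced by the nonabelian Hodge correspondence applied to the uniformizing Higgs bundle; this furnishes a base point of the deformation problem. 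By Theorem \ref{Thm:ellipticity}, at such a point the linearization of the map sending the $\Aut(P,\sigma)$-conjugacy class of $\Phi$ (with $\rho$ varying so as to keep $d_A$ the canonical connection of Theorem \ref{Thm-filling-in}) to the curvature $F(A) + [\Phi \wedge \Phi^*]$ is an elliptic operator which is moreover an isomorphism.

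Next I would set up the appropriate Banach (Sobolev $H^s$, $s$ large) completions of the spaces of Fock fields modulo $\Aut(P,\sigma)$ and of hermitian structures, so that the curvature map becomes a smooth map of Banach manifolds and the conjugacy-class quotient is locally a Banach manifold near the Fuchsian locus (the infinitesimal-automorphism-free statement of Subsection \ref{Sec:var-Fock-fields} is what guarantees the quotient is nonsingular there). Applying the implicit function theorem to the curvature map, using that its differential is an isomorphism by Theorem \ref{Thm:ellipticity}, yields that the zero set $\{F(A) + [\Phi \wedge \Phi^*] = 0\}$ is, near the Fuchsian locus, a Banach submanifold mapping locally diffeomorphically onto a neighborhood of the Fuchsian locus in the space of Fock bundles, i.e.\ in the space of higher complex structures. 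Elliptic regularity upgrades the solutions from $H^s$ to smooth, so the construction is independent of $s$. Composing with the monodromy map lands us in the character variety; by the split-real structure preserved by $\nabla$ (and continuity/connectedness, since the Fuchsian locus is in the Hitchin component), the image lies in the $\mathrm{PSL}_n(\mathbb{R})$-Hitchin component.

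Finally, to descend from Fock bundles to the moduli space $\mathcal{T}^n(S)$ — the quotient by higher diffeomorphisms — I would invoke Nolte's theorem that each higher diffeomorphism orbit contains a harmonic representative, unique up to isotopy, so that $\mathcal{T}^n(S)$ is locally modeled near the Fuchsian locus by this harmonic slice. Restricting the map constructed above to the slice of harmonic higher complex structures gives a well-defined map from an open neighborhood of the Fuchsian locus in $\mathcal{T}^n(S)$ to the Hitchin component; one checks it is canonical in that it does not depend on auxiliary choices (Sobolev index, local slice), using uniqueness in the implicit function theorem and uniqueness of the harmonic representative. The main obstacle I anticipate is purely functional-analytic bookkeeping: verifying that the curvature map is genuinely smooth between the chosen Banach completions, that the $\Aut(P,\sigma)$-action admits local slices so that the quotient is a manifold, and that the harmonic slice of \cite{Nolte} is transverse to the deformation directions detected by Theorem \ref{Thm:ellipticity} — i.e.\ that passing to the slice does not destroy the local diffeomorphism. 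Everything else is a formal consequence of the implicit function theorem once the isomorphism property of the linearization (already granted by Theorem \ref{Thm:ellipticity}) is in hand.
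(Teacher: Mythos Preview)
Your approach is essentially the paper's: solve at the Fuchsian locus via nonabelian Hodge, apply the implicit function theorem using the elliptic isomorphism of Theorem \ref{Thm:ellipticity} to get solutions on an open set of Fock fields, take monodromy (which is split-real by Proposition \ref{Prop:real-monodr-1}), and descend to $\mathcal{T}^n(S)$ via Nolte's harmonic slice and the $\mathrm{Diff}_0(S)$-invariance of monodromy.

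The one point you gloss over and the paper handles more carefully is canonicity of the solution. Proposition \ref{prop:openness} only gives that solutions $\eta$ are \emph{isolated}, not unique, so ``uniqueness in the implicit function theorem'' does not by itself single out a preferred $\eta$ far from the Fuchsian locus. The paper's device is to use the $\R^*$-scaling $(\mu_3,\dots,\mu_n)\mapsto(t\mu_3,\dots,t^{n-2}\mu_n)$ on harmonic higher complex structures and define $\mathcal{H}\mathbb{M}^n_*(S)$ as those points admitting a \emph{path} of solutions along $t\in[0,1]$ starting at the known Fuchsian solution; evaluation at $t=1$ then gives a canonical solution, and openness of this set follows from Proposition \ref{prop:openness}. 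This is a minor but necessary step to make the map well-defined rather than multivalued.
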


It is expected that the restriction to harmonic higher complex structures is not actually necessary because our map to the character variety should be constant along higher diffeomorphism orbits. In Section \ref{Sec:higher-diffeos}, we analyse the action of special $\lambda$-dependent gauge transformations on a family of flat connections of the form $\lambda^{-1}\Phi+d_A+\lambda \Phi^{*}$ where $\lambda\in\mathbb{C}^*$ is a parameter.
In the case of $\mathrm{SL}_n(\C)$, these special gauge transformations relate to higher diffeomorphisms:
\begin{faketheorem}[Theorem \ref{hamiltonian-first-variations-coincide}]
    The variation on an $\mathrm{SL}_n(\C)$-Fock field $\Phi$ induced by an infinitesimal gauge transformation $\lambda^{-1} \eta$ with $\eta=\Phi(v_1)\cdots \Phi(v_k)$ is equivalent to the infinitesimal action of the Hamiltonian $H=v_1\cdots v_k$ on the higher complex structure induced by $\Phi$.
\end{faketheorem}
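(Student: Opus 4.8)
The plan is to compute both variations explicitly in a local holomorphic coordinate and match them term by term. First I would set up the local picture: since a $\mathrm{SL}_n(\C)$-Fock field $\Phi$ induces a complex structure on $S$, pick a local coordinate $z$ adapted to it, so that $\Phi = \Phi^{1,0}dz + \Phi^{0,1}d\bar z$ with $\Phi^{0,1}$ principal nilpotent; after a gauge transformation we may normalize $\Phi^{0,1}$ to the standard principal nilpotent $e_-$ (a single Jordan block) and absorb the remaining data into $\Phi^{1,0}$, which is then forced by the conditions $\Phi\wedge\Phi=0$ and $g$-self-adjointness to lie in a specific form — essentially a polynomial in $e_-$ together with the holomorphic differentials that constitute the higher complex structure. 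In this normal form, the dictionary of Proposition~\ref{Prop:link-to-g-C-str} between the Fock field and the higher complex structure (the tuple of tensors $\mu = (\mu_2,\dots,\mu_n)$ on $S$, or rather the Hamiltonian data on $T^*S$) becomes completely explicit: the entries of $\Phi^{1,0}$ along the superdiagonals are the components $\mu_k$.

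Next I would compute the left-hand side. An infinitesimal gauge transformation by $\lambda^{-1}\eta$ with $\eta = \Phi(v_1)\cdots\Phi(v_k)$ acts on the $\lambda$-family $\lambda^{-1}\Phi + d_A + \lambda\Phi^*$ by $\delta(\cdot) = d_\nabla(\lambda^{-1}\eta) = \lambda^{-1}d_A\eta + \lambda^{-2}[\Phi\wedge\eta] + [\Phi^*\wedge\eta]$. Reading off the $\lambda^{-2}$ component gives $[\Phi\wedge\eta]$, which is the induced variation $\delta\Phi$ of the Fock field (the other powers of $\lambda$ get absorbed into the variations of $d_A$ and $\Phi^*$; one must check that the $\lambda^{-1}$ term is consistent with $d_A$ remaining the canonical connection, which follows from $d_A\Phi = 0$ and $d_A\eta$ being built from $\Phi$'s). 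Because $\eta = \Phi(v_1)\cdots\Phi(v_k)$ is a product of principal nilpotents contracted against tangent vectors $v_i$, the bracket $[\Phi\wedge\eta]$ is again of the form that keeps $\Phi^{0,1}$ principal nilpotent to first order — i.e. it is a legitimate variation within the space of Fock fields — and its components along the superdiagonals can be written down directly.

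Then I would compute the right-hand side. The infinitesimal action of a Hamiltonian $H = v_1\cdots v_k \in C^\infty(T^*S)$ (a degree-$k$ polynomial in the fibers) on a higher complex structure is given in \cite{FT}, \cite{Nolte}: it is the Poisson-bracket action, i.e. the variation $\delta\mu_j = \{H, \cdot\}$ expressed through the symbol calculus, which for $H$ a monomial produces a specific combination of derivatives of $H$ paired with the $\mu$'s. My claim is that under the explicit dictionary from the first step, the superdiagonal components of $[\Phi\wedge\eta]$ are exactly these Poisson-bracket expressions — the nilpotent multiplication $\Phi(v_1)\cdots\Phi(v_k)$ categorifies polynomial multiplication on fibers of $T^*S$, and the commutator $[\Phi\wedge\cdot]$ categorifies the Poisson bracket. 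This last identification is the heart of the matter and the step I expect to be the main obstacle: it requires carefully tracking the combinatorial factors that arise from $\mathfrak{sl}_n$ matrix multiplication of nilpotent Jordan blocks against the factors arising from differentiating a monomial Hamiltonian, and showing they agree. I would organize this by first treating the linear case $k=1$ (where $\eta = \Phi(v)$ and the computation reduces to the known infinitesimal variation of a higher complex structure by a vector field), then proceeding by induction on $k$ using the Leibniz property of both $[\Phi\wedge\cdot]$ (via the Jacobi identity) and $\{H,\cdot\}$ (via the Leibniz rule for Poisson brackets), reducing the general monomial to products of linear ones.
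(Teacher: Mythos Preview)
There is a genuine gap in your identification of $\delta\Phi$. In the $\lambda$-family $\mathcal{A}(\lambda)=\lambda^{-1}\Phi+d_A+\lambda\Phi^{*}$, the Fock field $\Phi$ is the coefficient of $\lambda^{-1}$, not of $\lambda^{-2}$. Your expansion of the gauge action is correct,
\[
d_{\mathcal{A}(\lambda)}(\lambda^{-1}\eta)=\lambda^{-2}[\Phi,\eta]+\lambda^{-1}d_A\eta+[\Phi^{*},\eta],
\]
but then the $\lambda^{-2}$ term must \emph{vanish} for the result to remain a three-term connection, and it does: since $\eta=\Phi(v_1)\cdots\Phi(v_k)$ lies in the centralizer $Z(\Phi)$ (this is exactly why such $\eta$ are admissible special gauges), one has $[\Phi,\eta]=0$ identically. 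The induced variation of the Fock field is therefore the $\lambda^{-1}$ coefficient,
\[
\delta\Phi=d_A\eta=d\eta+[A\wedge\eta],
\]
as in the paper's Equation~(\ref{variation-phi}). Your proposed computation of the ``superdiagonal components of $[\Phi\wedge\eta]$'' would simply give zero, and the induction on $k$ via Jacobi/Leibniz for the bracket with $\Phi$ cannot get off the ground.

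The actual proof hinges on something you have not touched: once $\delta\Phi=d_A\eta$, the connection matrices $A_1,A_2$ enter explicitly, and one must eliminate them using the compatibility condition $d_A\Phi=0$. The paper does this by writing $\Phi=F\,dz+Q(F)\,d\bar z$ (so the $\mu_k$ sit in $\Phi_2$, not in $\Phi_1$ as you wrote), computing $\delta\Phi_2$ in terms of $[A_2,F^k]$, and then using $d_A\Phi=0$ to replace $[A_2,F^k]$ by expressions involving $\partial\mu_m$ and $[A_1,F^{m-1}]$; after cancellation the connection drops out and one is left exactly with the Poisson-bracket formula $\{w_kp^k,-\bar p+\sum\mu_m p^{m-1}\}$. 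This use of $d_A\Phi=0$ is the essential mechanism and is absent from your plan. (Minor point: in the induced complex structure it is $\Phi_1=\Phi^{1,0}$ that is principal nilpotent, not $\Phi^{0,1}$; this also needs correcting in your normal form.)
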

This theorem gives a clear gauge-theoretic meaning to higher diffeomorphisms, which in the theory of higher complex structures are Hamiltonian diffeomorphisms of $T^*S$ preserving the zero-section $S\subset T^*S$ setwise. 
This realization of higher diffeomorphisms as gauge transformations, together with our ellipticity result gives the following:

\begin{fakeprop}[Proposition \ref{prop:constancy}]
A differentiable family $\Phi_t$ of solutions to Equation (\ref{Eq-intro:hitchin-like}) for $G=\SL_n(\C)$ which induces a family of higher diffeomorphic higher complex structures maps to a constant path of representations.
\end{fakeprop}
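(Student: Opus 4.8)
The plan is to prove the proposition pointwise in $t$: fixing a parameter value $t_0$, I would show that the resulting path of conjugacy classes of representations has vanishing derivative at $t_0$. By the ellipticity Theorem \ref{Thm:ellipticity} applied at $(\Phi_{t_0},\rho_{t_0})$ together with the implicit function theorem, a neighborhood of $\Phi_{t_0}$ in the solution set of Equation (\ref{Eq-intro:hitchin-like}) is a Banach manifold mapping locally diffeomorphically to the space of $\SL_n(\C)$-Fock bundles; since two solutions whose Fock fields differ by an element of $\Aut(P,\sigma)$ have gauge-equivalent flat connections, composing with the monodromy map and with Proposition \ref{Prop:link-to-g-C-str} produces a well-defined differentiable map $F$ from a neighborhood of $\mu_{t_0}$ in $\mathcal{T}^n(S)$ to the character variety, with $\frac{d}{dt}[\rho_t]=dF(\dot\mu_t)$. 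By hypothesis the $\mu_t$ all lie in a single higher diffeomorphism orbit, so $\dot\mu_{t_0}=\delta_H\mu_{t_0}$ for some Hamiltonian $H$; hence it suffices to show $dF(\delta_H\mu_{t_0})=0$, and by linearity we may take $H=v_1\cdots v_k$ a monomial.

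Next I would choose a convenient lift of $\delta_H\mu_{t_0}$ to the solution set. By Theorem \ref{hamiltonian-first-variations-coincide}, the infinitesimal higher diffeomorphism $\delta_H\mu_{t_0}$ is induced by the variation of the Fock field $\Phi_{t_0}$ coming from the infinitesimal $\lambda$-dependent gauge transformation $\lambda^{-1}\eta$ with $\eta=\Phi_{t_0}(v_1)\cdots\Phi_{t_0}(v_k)$; any other lift differs from it by the infinitesimal action of an element of $\mathrm{Lie}(\Aut(P,\sigma))$ --- there are no infinitesimal automorphisms, by Subsection \ref{Sec:var-Fock-fields} --- and such an action moves the canonical connection, together with its hermitian structure, by a gauge transformation, hence by a $\nabla_0$-exact term, where $\nabla_0=\Phi_{t_0}+d_{A_{t_0}}+\Phi_{t_0}^{*}$. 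So $dF(\delta_H\mu_{t_0})$ can be computed from the deformation generated by $\lambda^{-1}\eta$ alone.

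The heart of the matter is this. Since $\Phi_{t_0}\wedge\Phi_{t_0}=0$, the values $\Phi_{t_0}(X)$ pairwise commute, so the nilpotent section $\eta$ commutes with $\Phi_{t_0}$. Moreover the family $\nabla_{\lambda}=\lambda^{-1}\Phi_{t_0}+d_{A_{t_0}}+\lambda\Phi_{t_0}^{*}$ is flat for every $\lambda\in\C^{*}$, because $[\Phi_{t_0}\wedge\Phi_{t_0}]=0=[\Phi_{t_0}^{*}\wedge\Phi_{t_0}^{*}]$, $d_{A_{t_0}}\Phi_{t_0}=0=d_{A_{t_0}}\Phi_{t_0}^{*}$, and $F(A_{t_0})+[\Phi_{t_0}\wedge\Phi_{t_0}^{*}]=0$. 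The infinitesimal gauge transformation $\lambda^{-1}\eta$ moves this family by
\[
\nabla_{\lambda}(\lambda^{-1}\eta)=\lambda^{-1}d_{A_{t_0}}\eta+\lambda^{-2}[\Phi_{t_0},\eta]+[\Phi_{t_0}^{*},\eta]=\lambda^{-1}d_{A_{t_0}}\eta+[\Phi_{t_0}^{*},\eta],
\]
which, crucially, has no $\lambda^{-2}$ pole, so $\lambda^{-1}\eta$ preserves the class of $\lambda$-families of Fock type --- this, and the precise identification of the induced variation of $\Phi_{t_0}$, is exactly the content of the analysis in Section \ref{Sec:higher-diffeos}. At the same time, for each fixed $\lambda\in\C^{*}$ the expression $\nabla_{\lambda}(\lambda^{-1}\eta)$ is manifestly $\nabla_{\lambda}$-exact, being $\nabla_{\lambda}$ applied to a genuine section, so it produces no first-order change in the monodromy of $\nabla_{\lambda}$. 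Specialising to $\lambda=1$, and accounting for the $\nabla_0$-exact corrections (a $\lambda$-independent gauge transformation plus the forced compensating change of hermitian structure) needed to bring $\exp(\lambda^{-1}\eta)\cdot\nabla_{\lambda}$ back to canonical form, the variation of the canonical connection in the direction $\delta_H$ is $\nabla_0$-exact. Hence $dF(\delta_H\mu_{t_0})=0$, and since $t_0$ was arbitrary the path $t\mapsto[\rho_t]$ has vanishing derivative throughout and is therefore constant.

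The step I expect to be the main obstacle is the identification used twice in the previous paragraph: that applying $\lambda^{-1}\eta$, followed by a suitable $\lambda$-independent gauge transformation and the accompanying change of hermitian structure, carries the canonical $\lambda$-family of $\Phi_{t_0}$ to the canonical $\lambda$-family of a genuine nearby $\SL_n(\C)$-Fock bundle whose Fock field is $\Phi_{t_0}$ plus precisely the higher diffeomorphism variation $\delta_H$. Verifying this requires the detailed analysis in Section \ref{Sec:higher-diffeos} of how the special $\lambda$-dependent gauge transformations act on connections of the form $\lambda^{-1}\Phi+d_A+\lambda\Phi^{*}$, and it uses the local uniqueness of the compatible positive hermitian structure (from the ellipticity Theorem \ref{Thm:ellipticity} and the implicit function theorem), which makes the required change of hermitian structure forced rather than an additional free parameter. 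Once this identification is in place, the monodromy statement is immediate, since an honest gauge transformation at $\lambda=1$ cannot change the monodromy.
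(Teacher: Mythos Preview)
Your overall strategy matches the paper's: realize the infinitesimal higher diffeomorphism via a special $\lambda$-dependent gauge transformation (Theorem~\ref{hamiltonian-first-variations-coincide}), observe that such gauge transformations do not change monodromy, and then argue that the actual path $\Phi_t$ agrees with this special gauge orbit using the ellipticity of $L$. However, two points need correcting.

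First, a minor but genuine circularity: you phrase the map $F$ as defined on a neighborhood in $\mathcal{T}^n(S)$, but $\mathcal{T}^n(S)$ is the quotient by higher diffeomorphisms, and the whole point of the proposition is to show the map descends to this quotient. You should work on the space of Fock fields (or on $\mathbb{M}^n(S)$), not on $\mathcal{T}^n(S)$.

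Second, and more importantly, the step you flag as ``the main obstacle'' is exactly the heart of the argument, and you do not actually carry it out. Your claim that ``any other lift\ldots moves the canonical connection, together with its hermitian structure, by a gauge transformation'' is not correct as stated: an element $\eta\in\mathrm{Lie}(\Aut(P,\sigma))=\Omega^0(S,\g_P^\sigma)$ with nonzero hermitian part $\eta^{-\rho}$ does \emph{not} act by gauge on the canonical 3-term connection when $\rho$ is fixed --- it changes the curvature by $L\eta^{-\rho}$. The paper's proof handles this directly: write $\dot\Phi_t=d_{A_t}(\xi_t)+[\Phi_t,\eta_t]$ with $\eta_t\in\Omega^0(S,\g_P^\sigma)$, and split $\eta_t=\eta_t^{\rho}+\eta_t^{-\rho}$. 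The 3-term infinitesimal gauge $\lambda^{-1}\xi_t+\eta_t^{\rho}+\lambda\xi_t^{*}$ is a genuine gauge transformation preserving flatness and inducing the variation $d_{A_t}\xi_t+[\Phi_t,\eta_t^{\rho}]$ in $\Phi_t$; hence the remaining piece $[\Phi_t,\eta_t^{-\rho}]$, being the difference of two tangent vectors to the flat locus, must itself induce zero change in curvature. But that change is precisely $L\eta_t^{-\rho}$, and Theorem~\ref{Thm:ellipticity} gives $\eta_t^{-\rho}=0$. This is the concrete content behind your appeal to ``local uniqueness of the compatible positive hermitian structure''; once made explicit, your argument and the paper's coincide.
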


Together with the main Conjecture \ref{Conj-intro}, this would give the desired map from $\mathcal{T}^n(S)$ to the $\PSL_n(\R)$-Hitchin component, without the need for harmonic representatives.

In the final Section \ref{Sec:covectors} we consider $G$-Fock bundles $(P,\Phi,\sigma,\rho)$ equipped with a hermitian structure $\rho$ not necessarily commuting with $\sigma$. We parametrize the space of unitary connections $d_A$ satisfying $d_A\Phi=0$ by so-called \emph{covectors}. In the case of $\mathrm{SL}_n(\C)$, covectors can be identified with cotangent vectors to the space of higher complex structures. We conjecture that we can still solve the equation $F(A)+[\Phi\wedge\Phi^{*}]=0$ for small and $\mu$-holomorphic covectors and that the monodromies of the flat connections $\Phi+d_A+\Phi^{*}$ describe a tubular neighborhood of the $G$-Hitchin component inside the complex character variety $\chi(\pi_1S,G)$. This picture generalizes the work of Donaldson \cite{Donaldson} and Trautwein \cite{Traut} on the space of almost-Fuchsian representations in the $\mathrm{SL}_2(\C)$-case.
The $\mu$-holomorphicity condition from the theory of higher complex structures gets a precise gauge-theoretical interpretation:
\begin{faketheorem}[Theorem \ref{Thm:mu-holo}]
    For an $\mathrm{SL}_n(\C)$-Fock bundle, the condition $F(A)\in\mathrm{Im}(\ad_\Phi)\subset\Omega^2(S,\g_P)$ is equivalent to the $\mu$-holomorphicity condition \eqref{Eq:mu-holo-cond-hcs}.
\end{faketheorem}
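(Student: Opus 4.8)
The plan is to reduce the statement to a local computation in a frame adapted to the higher complex structure and to match the two conditions term by term. First I would pass to a coordinate disc with coordinate $z$ (the one entering \eqref{Eq:mu-holo-cond-hcs}) and, using Proposition \ref{Prop:link-to-g-C-str}, bring $\Phi$ into normal form: trivialising $\g_P$ and applying a gauge transformation, bring the principal nilpotent Hodge component, say $\Phi^{0,1}$, to the constant regular nilpotent $e\in\mathfrak{sl}_n$ — such a gauge exists because $\Phi^{0,1}$ is a map from the contractible disc into the regular nilpotent orbit, whose stabiliser is connected. Then $\Phi\wedge\Phi=0$ forces $\Phi^{1,0}\in Z(e)=\langle e,e^{2},\dots,e^{n-1}\rangle$, so $\Phi^{1,0}=\sum_{k=2}^{n}\mu_k\,e^{k-1}$ with $(\mu_2,\dots,\mu_n)$ the local coefficients of the induced higher complex structure. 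Completing $e$ to a principal $\mathfrak{sl}_2$-triple $(e,h,f)$ and decomposing $\mathfrak{sl}_n$ into $\ad$-irreducibles, one has $Z(f)=\ker(\ad_f)=\langle f,f^{2},\dots,f^{n-1}\rangle$, a canonical complement to $\mathrm{Im}(\ad_e)$.

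Next I would pin down $\mathrm{Im}(\ad_\Phi)\subset\Omega^2(S,\g_P)$. Since $\ad_\Phi(\beta)=[\Phi\wedge\beta]$, its pointwise image is $\big(\mathrm{Im}(\ad_{\Phi^{1,0}})+\mathrm{Im}(\ad_{\Phi^{0,1}})\big)\otimes\Omega^2(S)$; using that the Killing form identifies $\mathrm{Im}(\ad_x)$ with $Z(x)^{\perp}$ and that $Z(e)\subseteq Z(e^{k})$, each $\mathrm{Im}(\ad_{e^{k}})$ lies in $\mathrm{Im}(\ad_e)$, whence $\mathrm{Im}(\ad_\Phi)=\mathrm{Im}(\ad_e)\otimes\Omega^2(S)$, with complement $Z(f)\otimes\Omega^2(S)$ of rank $n-1$ over functions. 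Thus $F(A)\in\mathrm{Im}(\ad_\Phi)$ is equivalent to the vanishing of the $n-1$ scalar components of $F(A)$ along $f,f^{2},\dots,f^{n-1}$. The same argument shows $[\Phi\wedge\Phi^{*}]\in\mathrm{Im}(\ad_\Phi)$, so the condition holds automatically at genuine solutions of $F(A)+[\Phi\wedge\Phi^{*}]=0$ — a consistency check for the theorem.

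The heart of the proof is then an explicit curvature computation. Writing $d_A=d_{A_0}+a$ with $d_{A_0}$ a fixed $\sigma$-compatible reference connection (given by Theorem \ref{Thm-filling-in}, corresponding to the zero covector) and $a$ the linear expression in the covector $q=(q_2,\dots,q_n)$ provided in Section \ref{Sec:covectors}, which enters through the $Z(f)$-directions, I would expand $F(A)=F(A_0)+d_{A_0}a+a\wedge a$ and project onto $Z(f)\otimes\Omega^2(S)$. By the previous paragraph $F(A_0)$ drops out. The term $d_{A_0}a$ contributes, on the $f^{j}$-line, the background Dolbeault operator $\bar\partial$ applied to $q_j$ together with zeroth-order terms from the coefficients of $d_{A_0}$; the remaining contributions — the $Z(f)$-projection of the brackets $[\Phi^{1,0},\,q_j f^{j}]=[\sum_k\mu_k e^{k-1},\,q_j f^{j}]$ (which appear because the constraint $d_A\Phi=0$ used to fix $d_{A_0}$ couples the covector part of $a$ to $\Phi^{1,0}$), together with the quadratic term $a\wedge a$ — should reproduce the correction terms of \eqref{Eq:mu-holo-cond-hcs}. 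Here one invokes the dictionary between polynomials $\sum a_i e^{i}$, $\sum b_j f^{j}$ in $\mathfrak{sl}_n$ and polynomials in the fibre coordinate of $T^{*}S$ — the same dictionary underlying the identification of $\PSL_n(\C)$-Fock bundles with higher complex structures — under which the $Z(f)$-projection of $[\mu,q]$ becomes the degree-appropriate part of the Poisson bracket $\{\mu,q\}$ on $T^{*}S$. Matching spins term by term then identifies the $Z(f)$-projection of $F(A)$ with the $\mu$-holomorphicity operator applied to $q$, giving both implications at once.

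The hard part will be exactly this last identification: verifying that the principal-$\mathfrak{sl}_2$ commutators $[e^{k-1},f^{j}]$, after projection onto the complement $Z(f)$ and re-indexing by spin, assemble precisely — with all scalar normalisations, and correctly dovetailing with the $\bar\partial$-term — into the operator of \eqref{Eq:mu-holo-cond-hcs}. This is a representation-theoretic identity in $\mathfrak{sl}_n$ that must be reconciled with the Poisson-geometric definition of $\mu$-holomorphicity, and it is where the $\mathfrak{sl}_n$/$T^{*}S$ dictionary does the real work. A secondary point is to check that the parametrisation of Section \ref{Sec:covectors} is compatible with the splitting $\Omega^2(S,\g_P)=\mathrm{Im}(\ad_\Phi)\oplus\big(Z(f)\otimes\Omega^2(S)\big)$, so that only the covector $q$ enters the $Z(f)$-projection of $F(A)$ and the equivalence is genuinely with $\mu$-holomorphicity of $q$.
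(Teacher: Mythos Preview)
Your skeleton is right --- localise, normalise $\Phi$, isolate the piece of the curvature that survives modulo $\mathrm{Im}(\ad_\Phi)$, and identify it with the $\mu$-holomorphicity operator. But two points in the mechanism are off, and the computation you defer to a ``Poisson dictionary'' is where the paper does the real work.

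First, the decomposition. The relevant splitting is not $d_A = d_{A_0}+a$ with $d_{A_0}$ the base point of Theorem~\ref{Thm-filling-in}, but the $\sigma$-decomposition $d_A = d_{A^\sigma} + A^{-\sigma}$, where $A^{-\sigma}=\tfrac{1}{2}(d_A-\sigma(d_A))$. The reason $F(A^\sigma)$ drops out is not the consistency check you cite (that $[\Phi\wedge\Phi^*]\in\mathrm{Im}(\ad_\Phi)$, which says nothing about $F(A_0)$ on its own); it is that $F(A^\sigma)$ is $\sigma$-invariant and $\mathrm{H}^{2,\sigma}(\Phi)=0$ by Proposition~\ref{Prop-no-sigma-cohom}. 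For the \emph{same} reason the quadratic term $A^{-\sigma}\wedge A^{-\sigma}$ is $\sigma$-invariant and drops out too --- so your expectation that $a\wedge a$ helps produce the correction terms is wrong. Only the cross term $d_{A^\sigma}(A^{-\sigma})$ survives, and this is exactly what must be paired against $Z(\Phi)$. (Also, a notational slip: in the induced complex structure it is $\Phi^{1,0}$ that is principal nilpotent, with $\Phi^{0,1}$ in its centraliser, so your $e$ should play the role of the paper's $F$ with $\mu_k$ sitting in $\Phi^{0,1}$; and the covector lives in $\ker\ad_\Phi$, not in the $Z(f)$-directions.)

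Second, the computation. The paper works dually to your projection: rather than projecting onto a complement $Z(E)$, it pairs $d_{A^\sigma}(A^{-\sigma})$ with $F^k$ via the Killing form, which directly involves the scalar covector data $t_{k+1}=\tr(F^k A_1^{-\sigma})$. The key technical input is not an abstract $\mathfrak{sl}_n$/$T^*S$ dictionary but the single relation $\partial Q(F)=[X,F]$ coming from $d_{A^\sigma}\Phi=0$, which one solves explicitly as $X=\sum_\ell \tfrac{\partial\mu_\ell}{2\ell-2}[F^{\ell-1},E]$ using the principal $\mathfrak{sl}_2$ weight decomposition. Feeding this into $\tr\big(F^k\,d_{A^\sigma}(A^{-\sigma})\big)$ and repeatedly using cyclicity of the trace and the weight identity $[H,F^{\ell-1}]=(2-2\ell)F^{\ell-1}$ produces each term of \eqref{Eq:mu-holo-cond-hcs} on the nose. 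Your projection approach is equivalent in principle, but you will still need this $X$-identity (or something equivalent) to convert the connection coefficients hidden in $d_{A^\sigma}$ into $\partial\mu_\ell$'s; the ``Poisson bracket dictionary'' does not bypass this step.
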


\bigskip
\noindent\textbf{\textit{Acknowledgements.}}
We warmly thank Abdelmalek Abdesselam, Andrea Bianchi, Jeff Danciger, Vladimir Fock, Dan Freed, Andy Neitzke and Alex Nolte for many fruitful discussions and insights. We are also grateful to the Alexander von Humboldt Foundation for support towards completing this project, to the National Science Foundation (grants DMS-1937215, and DMS-1945493), to the SPP 2026 \textit{Geometry at Infinity} for a travel grant, to the European Research Council (ERC-Advanced Grant 101018839), to the Deutsche Forschungsgemeinschaft (Project-ID 281071066 - TRR 191), and to the University of Heidelberg and the University of Texas at Austin where most of the work has been carried out.

\section{Preliminaries}

We gather necessary material for the main part of the text. We review higher complex structures in Subsection \ref{Sec:HCS}, the nonabelian Hodge correspondence in \ref{Sec:nonabelian-Hodge}, the Lie theoretic background in \ref{Sec:princ-nilp-elements} (especially principal nilpotent elements) and involutions on principal $G$-bundles in \ref{Sec:involutions}. Part of the material in the Lie-theoretic section seems new, the rest is well-known.

\subsection{Higher complex structures}\label{Sec:HCS}

Motivated by the aspiration to describe components of real character varieties as moduli spaces of geometric structures, Vladimir V. Fock and the third author introduced higher complex structures \cite{FT} and conjectured that they parametrize $\mathrm{PSL}_n(\mathbb{R})$-Hitchin components. These were, in turn, generalized to $\g$-complex structures where $\g$ is a complex simple Lie algebra in \cite{Tho22}. We give here a brief account, concentrating on the properties we need in the subsequent sections.

\subsubsection{Definitions}

Let $S$ be a closed orientable surface of genus at least 2. A complex structure on $S$ is equivalent to an \emph{almost complex structure}, that is, an automorphism $J$ of $TS$ satisfying $J^2=-\mathrm{id}$. The complexified tangent bundle then decomposes as
$$T^\C S=T^{1,0}S\oplus T^{0,1}S,$$
where $T^{1,0}S$ and $T^{0,1}S$ are pointwise the eigendirections of $J$. Since $T^{1,0}S$ is the complex conjugate of $T^{0,1}S$, the complex structure is uniquely encoded by $T^{1,0}S$ which is a non-real direction of $T^\C S$. This in turn is uniquely encoded by a certain ideal $I$ in $\mathrm{Sym}(T^{\mathbb{C}}S)$ generated by $T^{0,1}S$ and $(T^{1,0}S)^2$. Geometrically, $\mathrm{Sym}(T^{\mathbb{C}}S)$ is the ring of functions on $T^{*\mathbb{C}}S$ which are polynomial on fibers, and $I$ cuts out an infinitesimal thickening of the zero section of $T^\C S$ in the direction of $T^{1,0}S$. 

To describe this equivalence more explicitly, fix a reference complex structure on $S$ and local coordinates $(z,\bar{z})$. Denote by $(p,\bar{p})$ the linear coordinates on $T^{*\C}S$ corresponding to vector fields $\del_z$ and $\del_{\bar{z}}$. Any other complex structure can be then described by an ideal $I$ locally of the form 
$$I=\langle p^2, \bar{p}-\mu(z,\bar{z}) p\rangle,$$
where $\mu$ is known as the \emph{Beltrami differential} which satisfies $\lvert \mu\rvert \neq 1$. The holomorphic cotangent bundle of the new complex structure is defined by the equation $\bar{p} = \mu(z,\bar{z})p$.

\begin{definition}
    A \emph{higher complex structure of order $n$} (or of rank $n-1$) on $S$ is a special ideal $I$ in $\mathrm{Sym}(T^{\mathbb{C}}S)$ locally of the form
\begin{equation}\label{Eq:hcs-ideal}
    I=\langle p^n, -\bar{p}+\mu_2(z,\bar{z}) p+\mu_3(z,\bar{z}) p^2+...+\mu_n(z,\bar{z}) p^{n-1}\rangle,
\end{equation}
where $\mu_2=\mu$ is the usual Beltrami differential and $\mu_\ell$ for $\ell=3,...,n$ are called \emph{higher Beltrami differentials} (see \cite[Proposition 1]{FT}).
\end{definition}
 Globally, $\mu_\ell$ is a smooth section of $K^{1-\ell}\otimes \bar{K}$ where $K$ denotes the canonical bundle. A usual complex structure is a higher complex structure of order 2, i.e. of rank 1. 

An important feature of these structures is the forgetful map, which associates to a higher complex structure of order $n$ a structure of order $n-1$ by forgetting the last Beltrami differential $\mu_n$. In particular, \emph{any higher complex structure induces a complex structure on $S$}.

\begin{remark}\label{rem:orientation}
    The space of higher complex structures as defined above has two connected components. A higher complex structure induces a complex structure which in turn induces an orientation on $S$. This orientation coincides with the one induced from the reference complex structure iff $\lvert\mu_2\rvert <1$. Changing the reference complex structure to the complex conjugate one changes $\mu_2$ to $1/\bar{\mu}_2$ which is of norm strictly bigger than 1.
\end{remark}

More generally, let $\g$ be a complex simple Lie algebra.
An element $x\in\g$ is called \emph{principal nilpotent} if $\ad_x$ is nilpotent and $\dim Z(x)=\mathrm{rk}\,\g$, where $Z(x)=\{y\in \g\mid [x,y]=0\}$ denotes the centralizer (see also Section \ref{Sec:princ-nilp-elements}). 

\begin{definition}[Definition 4.1 in \cite{Tho22}]\label{def-g-C-str}
    A \emph{$\g$-complex structure} on $S$ is a $G$-conjugacy class of fields $\Phi\in\Omega^1(S,\g)$ locally of the form $\Phi=\Phi_1(z,\bar{z})dz+\Phi_2(z,\bar{z})d\bar{z}$ such that $\Phi_1$ is principal nilpotent and $\Phi_2\in Z(\Phi_1)$ commutes with $\Phi_1$ satisfying a certain inequality explained below. 
\end{definition}

There is a unique linear combination of the form $\mu_2\Phi_1-\Phi_2$ which is not principal nilpotent \cite[Proposition 2.21]{Tho22}. The inequality we impose is $\lvert\mu_2\rvert <1$. This allows to identify $\mu_2$ with a Beltrami differential of a complex structure on $S$. Therefore a $\g$-complex structure induces a complex structure on $S$.

For $\g=\mathfrak{sl}_n(\C)$ we get the notion of higher complex structure as follows: principal nilpotent elements in $\g$ form a single conjugacy orbit. So we can fix a principal nilpotent element $F\in\mathfrak{sl}_n(\C)$ and use the gauge freedom to fix $\Phi_1=F$. Using the standard representation of $\mathfrak{sl}_n(\C)$ on $\C^n$, the centralizer of $F$ in $\g$ is generated by all polynomials in $F$ without constant term. Hence the ideal of polynomials $P\in\C[p,\bar{p}]$ such that $P(\Phi_1,\Phi_2)=0$ (which makes sense since $\Phi_1$ and $\Phi_2$ commute) is of the form \eqref{Eq:hcs-ideal}.

\subsubsection{Moduli space}

For complex structures, the associated moduli space is the \emph{Teichm\"{u}ller space}, where complex structures are considered modulo diffeomorphisms of $S$ isotopic to the identity. For higher complex structures, one needs to mod-out by a larger group in order to get a finite-dimensional moduli space. We restrict exposition to the case  $\mathfrak{g}=\mathfrak{sl}_n(\C)$ here. For $\g$ of classical type, see \cite[Section 4]{Tho22}. For general complex simple $\g$, the moduli space of $\mathfrak{g}$-complex structures is not constructed yet.

\begin{definition}[Definition 3 in \cite{FT}]
    A \emph{higher diffeomorphism} of a surface $S$ is a hamiltonian diffeomorphism of $T^*S$ preserving the zero-section $S \subset T^*S$ setwise. The group of higher diffeomorphisms is denoted by $\mathrm{Ham}_0(T^*S)$.
\end{definition}

Diffeomorphisms of $T^*S$ fixing the zero-section act on the completed symmetric algebra $\widehat{\mathrm{Sym}} (T^\C S)$ of power series. The map from ideals in $\mathrm{Sym} (T^\C S)$ to ideals in  $\widehat{\mathrm{Sym}} (T^\C S)$ is injective on those which contain some power of the ideal $(T^\C S) \subset \mathrm{Sym} (T^\C S)$. This includes higher complex structures, so we can just as well view higher complex structures as ideals in $\widehat{\mathrm{Sym}} (T^\C S)$ where diffeomorphisms naturally act. A precise study of the space $\mathrm{Ham}_0(T^*S)$ and the action by diffeomorphisms appears in \cite[Section 7]{Nolte}. 

\begin{definition}
    The \emph{moduli space of higher complex structures of order $n$}, denoted by $\mathcal{T}^n(S)$, is the space of higher complex structures of order $n$, denoted by $\mathbb{M}^n(S)$, modulo higher diffeomorphisms. 
    
    The \emph{Fuchsian locus} in $\mathbb{M}^n(S)$ consists in those higher complex structures with trivial higher Beltrami differentials. The Fuchsian locus in $\mathcal{T}^n(S)$ is the image under the projection $\mathbb{M}^n(S)\to\mathcal{T}^n(S)$.
\end{definition}

This moduli space is finite-dimensional of dimension $(n^2-1)(2g-2)$, contractible and allows a forgetful map $\mathcal{T}^n(S)\to\mathcal{T}^{n-1}(S)$, see \cite[Theorem 2]{FT} and \cite[Theorem 1.1]{Nolte}. The Fuchsian locus inside $\mathcal{T}^n(S)$ is a copy of Teichm\"uller space, which is $\mathcal{T}^2(S)$. The main conjecture within this theory concerns the existence of a canonical diffeomorphism between $\mathcal{T}^n(S)$ and the $\mathrm{PSL}_n(\R)$-Hitchin component which is equivariant with respect to the natural mapping class group action. This conjecture has been proven for $n=3$ by Nolte in \cite{Nolte} using techniques which are special to $n=3$ and are analogous to a positive resolution to the Labourie Conjecture for $n=3$ \cite{Labourie}, but are known to fail for higher $n$ (see \cite{SaSm}). 

\medskip

Finally, we present the description of the total cotangent bundle $T^*\mathcal{T}^n(S)$. 

\begin{theorem}[Theorem 3 in \cite{FT}]
    The cotangent bundle $T^*\mathcal{T}^n(S)$ is an $\mathrm{Ham}_0(T^*S)$-equivalence class of tensors $\mu_\ell\in \Gamma(K^{1-\ell}\otimes\bar{K})$ and $t_\ell\in\Gamma(K^\ell)$ for $\ell=2,...,n$ satisfying
    \begin{equation}\label{Eq:mu-holo-cond-hcs}
    -\bar{\partial}t_k\!+\!\mu_2\partial t_k\!+\!kt_k\partial\mu_2+\sum_{l=1}^{n-k}((l\!+\!k)t_{k+l}\partial\mu_{l+2}+(l\!+\!1)\mu_{l+2}\partial t_{k+l})=0.
    \end{equation}
    We refer to this condition as the \emph{$\mu$-holomorphicity condition}.
\end{theorem}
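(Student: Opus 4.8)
The plan is to obtain the description by realizing $T^*\mathcal{T}^n(S)$ as the symplectic reduction of $T^*\mathbb{M}^n(S)$ at zero momentum for the cotangent-lifted action of $\mathrm{Ham}_0(T^*S)$, and then to identify the moment-map constraint with \eqref{Eq:mu-holo-cond-hcs} by an explicit computation. First I would fix a reference complex structure and use the local normal form \eqref{Eq:hcs-ideal} to identify the space $\mathbb{M}^n(S)$ of higher complex structures of order $n$ with an open subset of the vector space $\bigoplus_{\ell=2}^{n}\Gamma(K^{1-\ell}\otimes\bar K)$, coordinatized by the higher Beltrami differentials $\mu_\ell$. The integration pairing $\langle(\mu_\ell),(t_\ell)\rangle=\sum_{\ell}\int_S\mu_\ell\,t_\ell$, in which each product $\mu_\ell\,t_\ell$ is a section of $K\otimes\bar K$ and can thus be integrated over $S$, identifies the smooth dual of this space with $\bigoplus_{\ell=2}^n\Gamma(K^\ell)$; hence $T^*\mathbb{M}^n(S)$ is parametrized by pairs $(\mu_\ell,t_\ell)$ with the canonical symplectic form, and the $t_\ell$ are precisely the tensors of the statement. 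Since $\mathbb{M}^n(S)$ carries a $\mathrm{Ham}_0(T^*S)$-action with finite-dimensional quotient manifold $\mathcal{T}^n(S)$ (by \cite{FT,Nolte}), on the locus where the action is free the cotangent bundle reduction theorem at zero momentum gives a canonical isomorphism $T^*\mathcal{T}^n(S)\cong\Psi^{-1}(0)/\mathrm{Ham}_0(T^*S)$, where $\Psi\colon T^*\mathbb{M}^n(S)\to\mathrm{Lie}(\mathrm{Ham}_0(T^*S))^*$ is the moment map of the cotangent lift. Because this moment map is tautological, $\langle\Psi(\mu,t),H\rangle=\langle t,\delta_H\mu\rangle$, where $\delta_H\mu$ denotes the infinitesimal variation of the higher complex structure under the higher diffeomorphism generated by the Hamiltonian $H$. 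Thus the whole content of the theorem reduces to the equivalence: $\langle t,\delta_H\mu\rangle=0$ for all admissible $H$ $\iff$ the $\mu$-holomorphicity condition \eqref{Eq:mu-holo-cond-hcs}.

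Next I would compute $\delta_H\mu$. Viewing the higher complex structure as the ideal $I=\langle p^n,\,-\bar p+\sum_{\ell\ge2}\mu_\ell p^{\ell-1}\rangle$ inside $\widehat{\mathrm{Sym}}(T^\C S)$, the higher diffeomorphism generated by $H$ acts infinitesimally on $I$ by $g\mapsto\{H,g\}$, so the variation of the generator $g_0=-\bar p+\sum_\ell\mu_\ell p^{\ell-1}$ is $\{H,g_0\}$ reduced modulo $I$: eliminating $\bar p$ through $\bar p\equiv\sum_\ell\mu_\ell p^{\ell-1}$ and truncating with $p^n\equiv0$ leaves a polynomial $\sum_{\ell=2}^n(\delta_H\mu)_\ell\,p^{\ell-1}$, whose coefficients are the $(\delta_H\mu)_\ell$. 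It suffices to test against a spanning family, and since modulo $I$ every Hamiltonian becomes a polynomial in $p$ of degree $<n$, I would take $H=h_m\,p^m$ with $h_m\in\Gamma(K^{-m})$ for $m=1,\dots,n-1$ — the case $m=1$ being ordinary $(1,0)$-vector fields, the cases $m\ge2$ the genuinely new directions in $\mathrm{Ham}_0(T^*S)$. A direct Poisson-bracket computation then gives a closed formula of the shape $(\delta_H\mu)_j=\delta_{j,m+1}\,\bar\partial h_m+m\,h_m\,\partial\mu_{j-m+1}-(j-m)\,\mu_{j-m+1}\,\partial h_m$.

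I would then substitute this into $\sum_j\int_S t_j\,(\delta_H\mu)_j=0$ and integrate by parts in $h_m$ (which is valid because $S$ is closed), turning the condition into the vanishing of the pairing of $h_m$ against an explicit expression in the $t$'s and $\mu$'s. Setting $k=m+1$: the term $\delta_{j,m+1}\bar\partial h_m$ produces $-\bar\partial t_k$, the $\ell=2$ part of the remaining sum produces $\mu_2\partial t_k+kt_k\partial\mu_2$, and the remaining terms reassemble exactly into $\sum_{l=1}^{n-k}\bigl((l+k)t_{k+l}\partial\mu_{l+2}+(l+1)\mu_{l+2}\partial t_{k+l}\bigr)$; hence requiring the coefficient of every $h_m$, $m=1,\dots,n-1$, to vanish is precisely \eqref{Eq:mu-holo-cond-hcs} for $k=2,\dots,n$. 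Finally I would observe that these local identities are coordinate-independent — each summand is a section of the fixed bundle $K^{k}\otimes\bar K$, precisely because $t_\ell\in\Gamma(K^\ell)$ and $\mu_\ell\in\Gamma(K^{1-\ell}\otimes\bar K)$ — so they patch to a global PDE on $S$, which gives the claimed description of $T^*\mathcal{T}^n(S)$ as a $\mathrm{Ham}_0(T^*S)$-equivalence class of tuples $(\mu_\ell,t_\ell)$ solving \eqref{Eq:mu-holo-cond-hcs}.

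The main obstacle is twofold. Analytically, one has to set up $\mathbb{M}^n(S)$ and $\mathrm{Ham}_0(T^*S)$ in a suitable Fréchet or Sobolev category, verify that the action is free and admits slices over the relevant locus so that the quotient is a genuine manifold and the cotangent bundle reduction theorem applies, and justify that the integration pairing identifies the smooth cotangent spaces with $\bigoplus_\ell\Gamma(K^\ell)$ — the nonstandard feature being that an infinite-dimensional reduction here produces a finite-dimensional space, as ensured by \cite{FT}. Structurally, the delicate part is the bookkeeping of the action of a Hamiltonian diffeomorphism on the ideal $I$: one must check that passing to $\widehat{\mathrm{Sym}}(T^\C S)/I$ is consistent (that $\{H,\cdot\}$ descends), that eliminating $\bar p$ and truncating by $p^n$ commute with differentiation, and above all that the combinatorial coefficients and index shifts $l+k$, $l+1$, $l+2$ in \eqref{Eq:mu-holo-cond-hcs} come out exactly as written; a careless Poisson-bracket or integration-by-parts count is where such an argument would go wrong.
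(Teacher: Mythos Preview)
This theorem is not proved in the present paper: it is quoted in the preliminaries as ``Theorem 3 in \cite{FT}'' and used as background, with no proof given here. So there is no proof in the paper to compare against.

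That said, your approach is the right one and matches the strategy of the original reference: realize $T^*\mathcal{T}^n(S)$ as the symplectic (cotangent-bundle) reduction of $T^*\mathbb{M}^n(S)$ by the $\mathrm{Ham}_0(T^*S)$-action at zero momentum, compute the infinitesimal action on the $\mu_\ell$ by taking the Poisson bracket with a Hamiltonian $H=h_m p^m$ and reducing modulo the ideal $I$, and then read off the moment-map constraint by pairing with the $t_\ell$ and integrating by parts. The variation formula you write for $(\delta_H\mu)_j$ is exactly the one that appears (in gauge-theoretic guise) in Theorem~\ref{hamiltonian-first-variations-coincide} of this paper, and the dual variation of the $t_k$ appears in Proposition~\ref{Prop:var-covectors}; so although the paper does not assemble these into a proof of the cited theorem, its computations confirm your coefficients. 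Your caveats about the analytic setup (Fr\'echet/Sobolev category, slices, identifying the smooth dual) are the genuine issues one must address, but they are handled in \cite{FT} and \cite{Nolte} rather than here.
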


The tensors $\mu_\ell$ are the higher Beltrami differentials from \eqref{Eq:hcs-ideal}. The tensors $t_\ell$ describe the covector.
For a trivial higher complex structure, that is, when $\mu_k=0$ for all $k\in\{2,...,n\}$, the condition simply reduces to $\bar\partial t_k=0$. Thus, $\mu$-holomorphicity can be seen as \emph{a generalization of the usual holomorphicity condition}.

\subsection{Nonabelian Hodge theory and twistor approach}\label{Sec:nonabelian-Hodge}

Building on the fundamental theorems by Narasimhan--Seshadri and Eells--Sampson, nonabelian Hodge theory provides an abundance of methods that can be used for the study of character varieties via holomorphic techniques and Higgs bundles, as well as analytic techniques from the theory of harmonic maps. We provide a brief overview of this holomorphic viewpoint and the twistor space framework to character varieties.

\subsubsection{The nonabelian Hodge correspondence}

Let $X$ be a compact K\"{a}hler manifold and let $G$ be a connected complex reductive Lie group. For $H \subseteq G$ a maximal compact subgroup of $G$, its complexification $H^{\mathbb{C}}$ is isomorphic to $G$, and the Cartan decomposition for the corresponding Lie algebras $\mathfrak{g}:=\mathrm{Lie}(G)$ and $\mathfrak{h}:=\mathrm{Lie}(H)$ reads in this case as
\[\mathfrak{g}=\mathfrak{h} \oplus i\mathfrak{h}.\]
A flat principal $G$-bundle $(P, \Theta)$ over $X$ is equivalent to a reductive representation $\rho: \pi_1(X) \to G$ via the \emph{Riemann--Hilbert correspondence}, for $\Theta$ a 1-form on $P$ with values in $\mathfrak{g}$ (a principal connection on $P$). The flatness condition means that $\Theta$ satisfies the equation 
\[d\Theta +\tfrac{1}{2}[\Theta \wedge \Theta]=0.\] 

Since $\rho$ is assumed to be reductive, Corlette's Theorem \cite{Corlette} provides the existence of a $\rho$-equivariant harmonic map from the universal cover $\widetilde{X}$ of $X$ to the associated symmetric space,
\[f: \widetilde{X}\to G/H.\]
An equivariant map from $\widetilde{X}$ to $G/H$ is the same as a reduction of structure group of the principal $G$-bundle $P$ to the maximal compact $H$. This means there is an $H$-bundle $P_H$, and an $H$-equivariant map $\iota :P_H\to P$, and the flat connection ${{\iota }^{*}}\Theta $ on $P_{H}$ now splits as
\[{{\iota }^{*}}\Theta =A+\psi,\]
where $A$ is a connection on $P_{H}$, and $\psi$ descends to a 1-form on $X$ with values in the bundle associated to $P_{H}$ via the isotropy representation $\mathrm{Ad}:H\to \mathrm{GL}(i\mathfrak{h})$. The flatness of the connection ${{\iota }^{*}}\Theta $ implies the two equations 
\begin{align*}
    {{F}_{A}}+\tfrac{1}{2}\left[ \psi \wedge\psi  \right]&=0\\
    {{d}_{A}}\psi &=0,
\end{align*}
and harmonicity of the map $f$ gives the additional equation
\[d_{A}^{*}\psi =0.\]
In the case when $G=\mathrm{SL}_n(\mathbb{C})$, a reduction to a maximal compact is the same as a unit volume hermitian metric on the associated vector bundle. In the light of this equivalence, we call the special metric provided by Corlette's Theorem, a \emph{harmonic hermitian metric}.\\
An application of the Siu--Sampson Theorem \cite[Theorem 1]{Sam} now gives that for a reduction of structure group as above, then the $(0,1)$-part ${{\bar{\partial }}_{{{P}_{H}}}}$ of the connection $A$ and the $(1,0)$-part $\varphi$ of the 1-form $\psi$ satisfy the equations
$$\bar{\partial }^2_{P_H}\varphi=0\;, \;\; \bar{\partial }_{P_H}\varphi =0\;, \;\;  [\varphi \wedge\varphi ]=0.$$ The $(0,1)$-form ${{\bar{\partial }}_{{{P}_{H}}}}$ defines a holomorphic structure on the ${{C}^{\infty }}$-principal bundle $P_{H}$; extending the structure group from $H$ to ${{H}^{\mathbb{C}}}$ we finally have:
\begin{definition}
For a compact K\"{a}hler manifold $X$ and a connected complex reductive Lie group $G$, a \emph{$G$-Higgs bundle}  over $X$ is a pair $(P,\varphi)$, where
\begin{itemize}
\item $P$ is a holomorphic principal $G$-bundle over $X$, and 
\item $\varphi \in {{\Omega}^{1,0}}\left( X,\mf{g}_P \right)$ satisfying ${{\bar{\partial }}_{{{P}_{H}}}}\varphi =0$ and $[\varphi \wedge\varphi ]=0$. 
\end{itemize}
Equivalently, we can be thinking of the Higgs field $\varphi$ as a holomorphic section $\varphi \in \mathrm{H}^0\left( X,\g_P \otimes K \right)$, where $\g_P$ denotes the adjoint bundle of $P$. 
\end{definition}
\begin{remark}
 In the case when $X$ is a compact Riemann surface, the two conditions  $\bar{\partial }^2_{P_H}\varphi=0$ and $[\varphi \wedge\varphi ]=0$ are automatically satisfied.  
\end{remark}

\begin{remark} 
When $G \subset \mathrm{GL}_n( \mathbb{C})$, a $G$-Higgs bundle can be naturally interpreted as a Higgs bundle $(E, \Phi)$ in the original sense of Hitchin \cite{Hit87}, \cite{Simpson} together with some additional structure reflecting the structure of the group $G$. In particular, when $G=\mathrm{SL}_n(\mathbb{C})$, an $\mathrm{SL}_n(\mathbb{C})$-Higgs bundle is described by a pair $(E,\Phi)$, where $E$ is a holomorphic rank $n$ vector bundle with trivial determinant and the Higgs field $\Phi$ is a holomorphic section $\Phi \in \mathrm{H}^0\left( X,\mathrm{End}(E)\otimes K \right)$ with $\mathrm{tr}(\Phi)=0$.
\end{remark}

The flatness of the connection $\Theta$ finally gives the so-called \emph{Hitchin equation} for the $G$-Higgs bundle $\left( P,\varphi  \right)$ constructed above,
\begin{equation}\label{G_Hitchin_eq}
F(A) + [\varphi\wedge \varphi^*]=0,
\end{equation}
where $\varphi^*$ denotes the hermitian adjoint coming from the Cartan involution of $\mathfrak{g}$ extended to $\g_P\otimes \Omega^1(S)$. 

So far we have described the passage from a reductive representation to a Higgs bundle; the opposite direction is provided by Simpson's Theorem \cite{Simpson} and its generalizations \cite{GGM}, which says that for a $G$-Higgs bundle $\left( P,\varphi  \right)$ with vanishing Chern classes, there exists a reduction of structure group of $P$ such that Equation (\ref{G_Hitchin_eq}) holds if and only if $\left( P,\varphi  \right)$ is polystable.  The polystability condition is a condition appropriately extending Mumford's stability condition for vector bundles, which is implemented in order to construct the \emph{moduli space of $G$-Higgs bundles}, or Dolbeault moduli space, $\mathcal{M}_{Dol} (X,G)$ as a GIT quotient. The nonabelian Hodge correspondence for a K\"{a}hler manifold $X$ with underlying topological manifold $M$, therefore, is describing a bijection between this Dolbeault moduli space and the character variety, or Betti moduli space, $\mathcal{M}_B (M,G)$ of reductive fundamental group representations:

\begin{theorem}[Nonabelian Hodge correspondence]\label{naHc}
There is a real-analytic isomorphism $\mathcal{M}_{Dol} (X,G) \cong \mathcal{M}_B (M,G)$.
\end{theorem}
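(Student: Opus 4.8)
The plan is to factor the asserted isomorphism through a third moduli space, the de Rham moduli space $\mathcal{M}_{dR}(M,G)$ of reductive flat $G$-connections on the underlying topological manifold $M$, and to exhibit the correspondence as a composite of two bijections $\mathcal{M}_{Dol}(X,G)\cong\mathcal{M}_{dR}(M,G)\cong\mathcal{M}_B(M,G)$, of which the second is essentially formal and the first is the analytic heart of the matter. For the second, I would invoke the Riemann--Hilbert correspondence: the holonomy map sends a flat $G$-bundle $(P,\Theta)$ to its monodromy $\rho\colon\pi_1(M)\to G$, and one checks that $\Theta$ reductive corresponds to $\rho$ completely reducible, that the respective automorphism groups match, and hence that the GIT quotient structures agree, giving a biholomorphism $\mathcal{M}_{dR}\cong\mathcal{M}_B$. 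This step uses nothing about $X$ beyond its homotopy type.

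The substance is the equivalence $\mathcal{M}_{dR}\cong\mathcal{M}_{Dol}$, which I would build from the two existence theorems already recalled in the preliminaries. In the direction $\mathcal{M}_{dR}\to\mathcal{M}_{Dol}$, Corlette's theorem produces, for a reductive flat bundle, a harmonic reduction of structure group to the maximal compact $H$; writing $\iota^*\Theta=A+\psi$ and decomposing into Hodge types, the Siu--Sampson Bochner identity (this is where the K\"ahler structure of $X$ enters) forces $\bar{\partial}_{P_H}^2\varphi=0$, $\bar{\partial}_{P_H}\varphi=0$, and $[\varphi\wedge\varphi]=0$, i.e.\ a $G$-Higgs bundle, which is polystable because $\rho$ is semisimple. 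Conversely, Simpson's Hitchin--Kobayashi correspondence, together with the extensions to reductive $G$ cited above, produces, for a polystable $G$-Higgs bundle with vanishing Chern classes, a hermitian metric solving $F(A)+[\varphi\wedge\varphi^*]=0$; then $d_A+\varphi+\varphi^*$ is flat, and reductive by polystability. One then checks that both assignments descend to the moduli spaces and are mutually inverse: uniqueness of the harmonic metric up to automorphisms of $(P,\Theta)$ guarantees that flat $\to$ Higgs $\to$ flat returns the original connection, and symmetrically for Higgs $\to$ flat $\to$ Higgs.

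It remains to upgrade this set-theoretic bijection to a real-analytic isomorphism of (possibly singular) moduli spaces. For this I would use deformation theory: each moduli space carries a local model cut out, via a Kuranishi-type slice for the relevant elliptic complex, from the zero locus of a real-analytic map, and one shows that the harmonic metric depends real-analytically on the Higgs (resp.\ flat) data by the analytic implicit function theorem applied to the elliptic Hitchin equation, whose linearization is invertible transverse to the gauge orbit at stable points. The reducible strata are then handled by matching the local GIT descriptions on the two sides.

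The main obstacle is, as always, the two existence theorems themselves, each a genuine piece of geometric analysis: Corlette's theorem demands an existence result for equivariant harmonic maps into the nonpositively curved symmetric space $G/H$, with properness of the energy functional supplied by reductivity; and Simpson's theorem demands a Donaldson-type continuity or heat-flow argument to solve the Hermitian--Einstein-type equation, with polystability entering through a Uhlenbeck--Yau-style estimate that extracts a destabilizing subobject whenever the flow fails to converge. The second delicate point is establishing real-analyticity \emph{uniformly} across the singular strata, rather than merely a homeomorphism on the smooth locus; this is where most of the bookkeeping lies.
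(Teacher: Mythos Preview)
The paper does not actually give a proof of this theorem: it is stated in the preliminaries section as a known background result, with the surrounding text outlining the standard ingredients (Corlette's harmonic metric, the Siu--Sampson consequence, Simpson's Hitchin--Kobayashi correspondence) and citing the relevant literature. Your proposal is a faithful and reasonably detailed sketch of exactly this standard approach, and in particular matches the narrative the paper gives in the paragraphs preceding the theorem statement; there is nothing to correct.
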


\subsubsection{The twistor approach}

We now restrict attention to a compact Riemann surface $X$ with underlying smooth surface $S$.
For a polystable Higgs bundle $\left( {{{\bar{\partial }}}_{{{E}_{H}}}},\varphi  \right)$ over $X$, Hitchin \cite{Hit87} interpreted Equation (\ref{G_Hitchin_eq}) together with the condition of holomorphicity for the Higgs field $\varphi$, $ {{{\bar{\partial }}}_{{{E}_{H}}}}(\varphi)=0$, in terms of a set of three moment maps for the action of the unitary gauge group. Following symplectic reduction techniques, the moduli space $\mathcal{M}_{Hit}$ of solutions to this set of equations was constructed as a \emph{hyperk\"{a}hler quotient}. This means that $\mathcal{M}_{Hit}$ is a $4n$-dimensional Riemannian manifold equipped with three covariant constant (with respect to the Levi-Civita connection) orthogonal automorphisms $I, J$ and $K$ of the tangent bundle $T\mathcal{M}_{Hit}$ which satisfy the quaternionic identities 
\[   {{I}^{2}}={{J}^{2}}={{K}^{2}}=IJK=-1.\]
In fact, any linear combination of the form 
\begin{equation}\label{cx_str_hyperk}
   i(a,b,c)=aI+bJ+cK 
\end{equation}
satisfies $i^2=-\mathrm{id}$ if and only if $a^2+b^2+c^2 = 1$. Taking $\lambda = (a,b,c)\in \mathbb{CP}^1$, we have a 1-parameter family of complex structures on $\mathcal{M}_{Hit}$. Then, using the Riemannian metric on $\mathcal{M}_{Hit}$, one gets a family of symplectic structures which, combined with the complex structures above, give a 1-parameter family of K\"{a}hler structures. 

In the light of \cite{HKLR87}, the \emph{twistor approach} allows one to incorporate all complex structures in the 1-parameter family described above into a single complex structure on a larger manifold, the \emph{twistor space of} $\mathcal{M}_{Hit}$. As a smooth manifold, this is defined as the product manifold
\[Z=\mathcal{M}_{Hit} \times \mathbb{CP}^1.\]
One can equip $Z$ with a complex structure as follows: at a point $(m,\lambda)$ of $Z$ it is given by the pair $(i_{\lambda}, i_0)$, where $i_0$ denotes the standard complex structure on $\mathbb{CP}^1$ and $i_{\lambda}$ is as in Equation (\ref{cx_str_hyperk}), for $\lambda = (a,b,c)\in \mathbb{CP}^1$. This defines an almost-complex structure and it is not hard to prove the integrability. The projection $p:Z\to \mathbb{CP}^1$ is holomorphic and each copy $(m, \mathbb{CP}^1)$ of the projective line is a holomorphic section of this projection, called a \emph{twistor line}. 

Using ideas of Deligne, Simpson \cite{Sim97} constructed the twistor space for $\mathcal{M}_{Hit}$ as the moduli space of $\lambda$-connections which he called \emph{Hodge moduli space} $\mathcal{M}_{Hod}$. In this broader picture, the Betti moduli space  $\mathcal{M}_B (S,G)$ and the Dolbeault moduli space $\mathcal{M}_{Dol} (X,G)$ are two special fibers of the holomorphic fiber bundle $\mathcal{M}_{Hod}\to \mathbb{CP}^1$. One possible way to describe Deligne's $\l$-connections is via what we call three-term connections, which is discussed next.

\subsubsection{Three-term connections}\label{sec:examples of 3-term conn}

For a complex reductive Lie group $G$, the work of Hitchin \cite{Hit87} and its extensions provide that the cotangent bundle of the space of holomorphic $G$-bundles on a given Riemann surface $X$ can be mapped isomorphically to the space of families of connections of the form 
\[\mathcal{A}(\lambda)=\lambda^{-1}\Phi +d_A + \lambda \Phi^{*},\]
where $\lambda \in \mathbb{C}^{*}$ is a parameter, $\Phi$ a holomorphic Higgs field, $\Phi^{*}$ the hermitian conjugate with respect to a harmonic metric $h$, and $d_A$ the associated Chern connection. In the Higgs bundle setting, this is a family of \emph{flat connections}, the $(0,1)$-part of the background connection $d_A$ defines a holomorphic structure on the bundle and the Higgs field $\Phi$ defines a cotangent vector to the space of holomorphic bundles. The family $\mathcal{A}(\lambda)$ gives a family of maps from the moduli space of flat $G$-connections to itself depending on the parameter $\lambda \in \mathbb{C}^{*}$. In the limit $\lambda \to 0$ (resp. $\lambda\to \infty$) these structures tend to the moduli space of $G$-Higgs bundles on $X$ (resp. the complex conjugate $\bar{X}$) with its K\"{a}hler structure. 

Another important example of a 1-parameter family of flat connections depending on a parameter $\lambda \in \mathbb{C}^{*}$ analogous to the Hitchin family of flat connections described above was given by Fock in \cite{Fock}. These flat connections are determined by solutions of the cosh-Gordon equation and describe a candidate for the twistor space of almost-Fuchsian representations. 
In the approach of \cite{Fock}, the complex structure on the surface is a function of a background connection determined as above, and is not fixed once for all connections in the family as in Hitchin's case.

\subsection{Principal nilpotent elements}\label{Sec:princ-nilp-elements}

Let $\g$ be a complex simple Lie algebra and $G$ be a Lie group with Lie algebra $\g$. For $x\in\g$ we denote by $Z(x)=\{y\in \g\mid [x,y]=0\}$ its centralizer in $\g$. We also denote by $Z_G(x)$ the centralizer of $x$ in $G$ and by $Z(G)$ the center of $G$. 

\begin{definition}
An element of $\g$ is called \emph{regular} if the dimension of its centralizer equals the rank of the Lie algebra. A regular nilpotent element is called \emph{principal nilpotent}.
\end{definition}

A more general statement holds about the dimension of the centralizer: for any $x \in \g$, we have $\dim Z(x) \geq \rk(\g)$ (see for example Lemma 2.1.15. in \cite{Collingwood}). So the regular elements minimize this dimension.
For $\g=\mathfrak{sl}_n(\C)$, a nilpotent element is principal nilpotent iff it has maximal rank, i.e. it is of rank $n-1$.

\begin{theorem}\cite[Corollary 5.5.]{Kost}\label{Thm:one-reg-orbit}
All principal nilpotent elements are conjugate under the adjoint action of the Lie group $G$.
\end{theorem}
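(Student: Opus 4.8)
The plan is to reduce the statement via the Jacobson--Morozov theorem to a question about $\mathfrak{sl}_2$-triples, and then to show that the regularity hypothesis forces such a triple into a completely rigid ``principal'' shape, bringing the problem down to the conjugacy of Borel subalgebras. Concretely, every nonzero nilpotent $e\in\g$ can be completed to an $\mathfrak{sl}_2$-triple $(e,h,f)$, and two triples with the same $e$ are conjugate under the unipotent radical of $Z_G(e)$; so it suffices to prove that every $\mathfrak{sl}_2$-triple with $e$ principal nilpotent is $G$-conjugate to one fixed such triple.

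Next I would analyse the grading. Fix a triple $(e,h,f)$ with $e$ principal nilpotent and decompose $\g=\bigoplus_{i\in\mathbb{Z}}\g_i$ into $\ad_h$-eigenspaces. From the $\mathfrak{sl}_2$-module structure, $\ker(\ad_e)$ is spanned by the highest-weight vectors of the irreducible summands, which gives $\dim Z(e)=\dim\g_0+\dim\g_1$. Since $\g_0=Z(h)$ is reductive of rank $\rk\g$ we have $\dim\g_0\ge\rk\g$, and as $\dim Z(e)=\rk\g$ this forces $\g_1=0$ and $\dim\g_0=\rk\g$: the grading is even and $\g_0=\h$ is a Cartan subalgebra. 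Declaring the roots $\alpha$ with $\alpha(h)>0$ positive, every positive root satisfies $\alpha(h)\in 2\mathbb{Z}_{>0}$, so $\g_2=\bigoplus_{\alpha(h)=2}\g_\alpha$ contains only simple root spaces. If some simple root $\alpha_k$ had $\alpha_k(h)>2$, or more generally if $e$ had vanishing component along some simple root space, then $e$ would lie in a proper Levi subalgebra $\mathfrak{l}\subsetneq\g$; grading $\g$ by the coefficient of a simple root absent from $\mathfrak{l}$ and applying the $\mathfrak{sl}_2$-count once more yields $\dim Z(e)>\rk\g$, a contradiction. Hence $\alpha_i(h)=2$ for every simple root, $\g_2=\bigoplus_{i=1}^{\ell}\g_{\alpha_i}$ has dimension $\ell=\rk\g$, and $e=\sum_{i=1}^{\ell}c_i e_{\alpha_i}$ with all $c_i\neq 0$; in particular $e$ lies in a unique Borel $\mathfrak{b}=\h\oplus\bigoplus_{\alpha>0}\g_\alpha$.

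To finish, take two triples $(e,h,f)$, $(e',h',f')$ with $e,e'$ principal nilpotent and apply the previous step to each, getting Borels $\mathfrak{b},\mathfrak{b}'$ in which $h,h'$ are characterized by taking the value $2$ on all simple roots. All Borel subalgebras of $\g$ being $G$-conjugate, after conjugating we may assume $\mathfrak{b}=\mathfrak{b}'$, hence $h=h'$ and $e,e'\in\g_2=\bigoplus_{i=1}^{\ell}\g_{\alpha_i}$ with all components nonzero. The maximal torus $T=Z_G(h)^{\circ}$ acts on $\g_{\alpha_i}$ through the character $\alpha_i$; as $\alpha_1,\dots,\alpha_\ell$ are linearly independent, the morphism $T\to(\C^*)^\ell$, $t\mapsto(\alpha_1(t),\dots,\alpha_\ell(t))$, is surjective, so $T$ acts transitively on the set of elements of $\g_2$ with all components nonzero. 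Thus some $t\in T$ carries $e'$ to $e$, which completes the argument.

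The main obstacle is the middle paragraph: squeezing out of the single numerical hypothesis $\dim Z(e)=\rk\g$ the full rigidity of the picture --- even grading, $\g_0$ a Cartan, $\dim\g_2=\rk\g$, and genericity of $e$ in $\g_2$. This is precisely where ``regular'' does its work, and it relies on the $\mathfrak{sl}_2$-module decomposition of $\g$ together with the inequality $\dim Z(x)\ge\rk\g$ already recorded in the text. The remaining ingredients --- Jacobson--Morozov, conjugacy of Borel subalgebras, and the torus computation --- are routine.
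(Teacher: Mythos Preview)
The paper does not supply its own proof of this theorem; it is quoted as background from Kostant's paper \cite{Kost}, so there is nothing in the text to compare against. Your argument is correct and is essentially the classical one: embed the principal nilpotent in an $\mathfrak{sl}_2$-triple via Jacobson--Morozov, use $\dim Z(e)=\dim\g_0+\dim\g_1=\rk\g$ together with $\dim\g_0\ge\rk\g$ to force $\g_0=\h$ and an even grading, show $h$ takes the value $2$ on every simple root and that $e$ has nonzero component along each simple root space, then finish with conjugacy of Borel subalgebras and the transitive action of the maximal torus on generic elements of $\g_2$.

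The only place your write-up is compressed is the Levi step ruling out a missing simple root. It works, but it is worth making explicit that (i) the Levi $\mathfrak{l}$ containing $e$ still has full rank $\rk\g$ (it contains $\h$), so $\dim Z_{\mathfrak{l}}(e)\ge\rk\g$ by the inequality already recorded in the paper, and (ii) the grading of $\g$ by the coefficient of the absent simple root has at least one nonzero graded piece in nonzero degree, on which the nilpotent operator $\ad_e$ must have nontrivial kernel, pushing $\dim Z(e)$ strictly above $\rk\g$. With that clarification the argument is complete.
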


For a principal nilpotent element $F$, its centralizer has properties quite analogous to a Cartan subalgebra (the centralizer of a regular semisimple element):

\begin{theorem}\label{thmKost}
For $F$ a principal nilpotent element, its centralizer $Z(F)$ is abelian and nilpotent.
\end{theorem}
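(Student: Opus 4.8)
The plan is to reduce everything to the structure theory of $\mathfrak{sl}_2$-triples. By the Jacobson--Morozov theorem, a principal nilpotent element $F$ can be completed to an $\mathfrak{sl}_2$-triple $(E,H,F)$ in $\g$, where $H$ is a semisimple element (the \emph{defining} semisimple element) and $[H,F]=-2F$, $[H,E]=2E$, $[E,F]=H$. The first step is to recall the standard fact that for a \emph{principal} nilpotent, the element $H$ is a \emph{regular} semisimple element, indeed the sum of positive coroots when $H$ is placed in a Cartan subalgebra $\h$ with $F$ a sum of simple root vectors. Consequently $H$ acts on $\g$ with integer eigenvalues, giving the grading $\g=\bigoplus_{j\in\mathbb{Z}}\g_j$ where $\g_j=\{x\in\g\mid [H,x]=jx\}$, and $\g_1=0$ because $H$ is regular principal — the eigenvalues of $\ad_H$ on the nilradical of a Borel are exactly the heights of positive roots, and a height-one root is simple, so the root-space contributions to $\g_1$ are the simple root vectors, all of which combine into $F$... more precisely the key classical input is $\dim \g_0=\rk\g$ and the representation theory of the triple forces $Z(F)\subset\bigoplus_{j\geq 0}\g_j$.

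Next I would use $\mathfrak{sl}_2$-representation theory: decompose $\g$ as a module over the triple $(E,H,F)$ into irreducibles $V_{d_1}\oplus\cdots\oplus V_{d_r}$ (with $r=\rk\g$ since $\dim Z(F)=\dim\ker(\ad_F)=r$, one lowest-weight vector per summand). The centralizer $Z(F)$ is precisely the span of the lowest-weight vectors $f_1,\dots,f_r$, and $f_i$ lies in $\g_{-d_i}$, i.e. in strictly negative $\ad_H$-degree (each $d_i\geq 1$ since $\g$ is simple and the triple is nonzero, in fact $d_i$ are even and positive — these are twice the exponents of $\g$). Therefore $[Z(F),Z(F)]\subset \g_{\leq -2}$ if it is nonzero; but one can do better. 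To get \emph{abelian}: note $[f_i,f_j]\in\ker(\ad_F)$? That is not immediate, so instead I would argue as follows. Since $Z(F)\subseteq \bigoplus_{j<0}\g_j$, the subalgebra generated by $Z(F)$ lies in $\bigoplus_{j<0}\g_j$ and hence is nilpotent (the grading is bounded below, as $\ad_H$ is diagonalizable with finitely many eigenvalues, so a sufficiently long bracket vanishes). This already gives \emph{nilpotent}. For \emph{abelian}, the cleanest route is a dimension/weight count: $Z(F)$ is a subalgebra of the nilradical $\mathfrak{n}^-$ of the opposite Borel, and it is known (Kostant) that $Z(F)$ is spanned by elements of distinct $\ad_H$-weights $-d_1<\cdots<-d_r$ (the exponents are distinct for simple $\g$... with the caveat $D_{2k}$ has a repeated exponent — here one must be slightly more careful and use that the lowest weight spaces within a single isotypic block can still be chosen so that brackets land in no available weight space). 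The bracket $[f_i,f_j]$ has $\ad_H$-weight $-d_i-d_j$; since $d_i+d_j> d_r=$ (largest exponent = Coxeter number minus one... ), actually $d_i+d_j$ can exceed the top weight appearing in $Z(F)$, and more to the point $[f_i,f_j]$ must lie in $Z(F)$ is false, so the real argument is: $[f_i,f_j]\in\g_{-d_i-d_j}$ and one checks this space, intersected with $\ker\ad_F$, is zero — equivalently $[f_i,f_j]$, if nonzero, would generate a new lowest-weight vector of weight $<-d_r$, contradicting that all lowest weights are among $-d_1,\dots,-d_r$ with $-d_r$ minimal. Hence $[f_i,f_j]=0$.

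The main obstacle I anticipate is the abelian claim: the naive weight argument has a genuine gap for the Lie algebras whose exponents are not all distinct (type $D_{2k}$, where $2k-1$ appears twice). There, two lowest-weight vectors share the same $\ad_H$-weight, so one cannot separate $[f_i,f_j]$ by weight alone, and one must invoke a finer fact — e.g. that $Z(F)$ is actually a commutative \emph{associative} algebra isomorphic to $\C[t]/(t^r)$-type structure via the identification with invariant polynomials, or Kostant's explicit description of $Z(F)$ inside a Heisenberg-type filtration. I would therefore either cite Kostant \cite{Kost} directly for the abelian statement in those cases, or reduce to it via the theory of the principal $\mathfrak{sl}_2$ and the fact that $\ad_E$ maps $\g_{j}\to\g_{j+2}$ injectively for $j<0$, which pins down $[f_i,f_j]$ by applying $\ad_E$ repeatedly and using $[\ad_E,\ad_{f_k}]$-type identities to show the bracket must vanish. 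The nilpotent part, by contrast, is immediate from $Z(F)\subset\bigoplus_{j<0}\g_j$ together with boundedness of the grading.
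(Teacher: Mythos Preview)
The paper does not actually prove this theorem: it simply cites Kostant \cite[Theorem 5.7]{Kost} for the abelian part (via a limit argument from the regular semisimple case) and Steinberg \cite[Section 3.7]{Steinberg} for nilpotency. So there is no ``paper's proof'' to match; the question is whether your sketch stands on its own.

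Your argument for \emph{nilpotency} is correct and is essentially the standard one: the lowest-weight vectors for the principal $\mathfrak{sl}_2$ all sit in strictly negative $\ad_H$-degree, so $Z(F)\subset\bigoplus_{j<0}\g_j$, and a bounded $\mathbb{Z}$-grading forces any subalgebra of the negative part to be nilpotent.

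Your argument for \emph{abelian} has a genuine gap, and it is broader than you diagnose. First, a small error: the claim ``$[f_i,f_j]$ must lie in $Z(F)$ is false'' is itself false --- $Z(F)$ is a Lie subalgebra (Jacobi), so $[f_i,f_j]\in Z(F)$ automatically. Second, your key inequality $d_i+d_j>d_r$ for $i\neq j$ is not true in general, and not only in type $D_{2k}$. Already for $\mathfrak{sl}_4$ the exponents are $1,2,3$, so $d_1+d_2=2+4=6=d_3$; thus $[f_1,f_2]$ has exactly the weight of $f_3$ and the weight argument alone cannot exclude it. The same collision happens throughout type $A$ and elsewhere. So the obstacle you flag for $D_{2k}$ is real, but it is a symptom of a wider failure of the weight-counting approach.

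Two clean fixes are available. The one the paper cites is Kostant's limit argument: the centralizer of a regular semisimple element is a Cartan subalgebra (abelian of dimension $\rk\g$), and a semicontinuity argument shows every regular element has an abelian subalgebra of dimension $\rk\g$ inside its centralizer; for $F$ principal nilpotent $\dim Z(F)=\rk\g$, so $Z(F)$ is that abelian subalgebra. Alternatively --- and this is implicit in the paper's later use of $\sigma_0$ --- Hitchin's involution $\sigma_0$ negates every lowest-weight vector, hence negates $Z(F)$; since $\sigma_0$ is a Lie algebra automorphism, $[Z(F),Z(F)]$ is $\sigma_0$-fixed, but it also lies in $Z(F)\subset\g^{-\sigma_0}$, forcing $[Z(F),Z(F)]=0$.
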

Using a limit argument one can show even more: for any element $x\in \g$, there is an abelian subalgebra of $Z(x)$ of dimension $\rk (\g)$; see \cite[Theorem 5.7]{Kost}.
The nilpotency of $Z(f)$ can be found in \cite[ Corollary in Section 3.7]{Steinberg}. 

The \emph{Jacobson--Morozov lemma} states that any non-zero nilpotent element $F\in\g$ can be included into an $\mathfrak{sl}_2$-subalgebra, the image of an injective homomorphism from $\mathfrak{sl}_2(\C)$ into $\g$. 
An $\mathfrak{sl}_2$-subalgebra is called \emph{principal} if it contains a principal nilpotent element. It follows from Theorem \ref{Thm:one-reg-orbit} that all principal $\mathfrak{sl}_2$-subalgebras are conjugate. Given any $\mathfrak{sl}_2$-subalgebra, the Lie algebra $\g$ splits into irreducible $\mathfrak{sl}_2$-modules. For a principal one, none of these modules is trivial and exactly one module is of dimension 3 (the principle $\mathfrak{sl}_2$-subalgebra itself).

\begin{proposition}\label{Prop:center-in-image}
    For a principal nilpotent element $F\in \g$, we have $Z(F)\subset \mathrm{Im}(\ad_F)$.
\end{proposition}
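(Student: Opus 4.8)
The plan is to embed $F$ into a principal $\mathfrak{sl}_2$-subalgebra and then read off $\ker(\ad_F)$ and $\mathrm{Im}(\ad_F)$ directly from the $\mathfrak{sl}_2$-module structure of $\g$. First I would apply the Jacobson--Morozov lemma to obtain an $\mathfrak{sl}_2$-triple $(E,H,F)$ with $[H,E]=2E$, $[H,F]=-2F$, $[E,F]=H$; the subalgebra it spans is principal since it contains the principal nilpotent $F$. Decompose $\g$ as a sum of irreducible $\mathfrak{sl}_2$-modules under $\ad$, say $\g=\bigoplus_{i=1}^{r}V_i$ with $\dim V_i=2m_i+1$ and $r=\rk\g$. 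The structural input, recalled in the paragraph preceding the proposition, is that for a principal $\mathfrak{sl}_2$-subalgebra none of the $V_i$ is the trivial module, i.e. $m_i\geq 1$ for all $i$.

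Next I would analyze $\ad_F$ on each summand. On the irreducible module $V_i$, whose weights are $-2m_i,-2m_i+2,\dots,2m_i$, the operator $\ad_F$ is the lowering operator: it maps the weight-$k$ space isomorphically onto the weight-$(k-2)$ space whenever $k>-2m_i$, and annihilates the lowest-weight space. Hence $\ker(\ad_F|_{V_i})$ is the one-dimensional weight-$(-2m_i)$ space, while $\mathrm{Im}(\ad_F|_{V_i})$ is the sum of the weight spaces of weights $-2m_i,-2m_i+2,\dots,2m_i-2$, that is, all of $V_i$ except its top-weight line. Because $m_i\geq 1$, the weight $-2m_i$ still occurs in that list, so $\ker(\ad_F|_{V_i})\subseteq\mathrm{Im}(\ad_F|_{V_i})$. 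Summing over $i$ yields $Z(F)=\ker(\ad_F)\subseteq\mathrm{Im}(\ad_F)$, which is the claim. As a sanity check, $\dim Z(F)=\sum_i 1=r=\rk\g$, consistent with $F$ being regular.

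I do not expect a real obstacle; the only point needing care is the fact that a principal $\mathfrak{sl}_2$-subalgebra has no trivial $\mathfrak{sl}_2$-summand (equivalently, all exponents are positive), but this is precisely what is recorded just above in the text. As an alternative, grading-free route one can argue via the Killing form $\kappa$: since $\ad_F$ is $\kappa$-skew-adjoint and $\kappa$ is non-degenerate, $\mathrm{Im}(\ad_F)=Z(F)^{\perp}$, so the statement amounts to $Z(F)$ being totally isotropic; and indeed $Z(F)$ is abelian by Theorem \ref{thmKost} and consists of ad-nilpotent elements (its elements lie in strictly negative $\ad_H$-weight), whence $\tr(\ad_x\ad_y)=0$ for all $x,y\in Z(F)$. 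I would present the $\mathfrak{sl}_2$-module computation as the main proof, since it is the most transparent, and mention the Killing-form argument only as a remark.
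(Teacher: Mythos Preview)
Your main argument is correct and is exactly the approach the paper takes: embed $F$ in a principal $\mathfrak{sl}_2$-triple, decompose $\g$ into irreducible $\mathfrak{sl}_2$-modules, and use that none of them is trivial so that on each summand the lowest-weight line (the kernel of the lowering operator) is still hit by $\ad_F$. The paper states this in one sentence; you have simply written out the details.

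One small comment on your alternative Killing-form argument: it is not genuinely ``grading-free'', since to conclude that elements of $Z(F)$ are ad-nilpotent you invoke that they sit in strictly negative $\ad_H$-weight, which is again the $\mathfrak{sl}_2$-module picture together with the absence of trivial summands. (Theorem~\ref{thmKost} only gives that $Z(F)$ is nilpotent as an abstract Lie algebra, which for an abelian algebra is vacuous and does not by itself force ad-nilpotency in $\g$.) So the remark is fine as a reformulation, but it does not really bypass the main ingredient.
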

To see this, include $F$ into a principal $\mathfrak{sl}_2$-subalgebra, decompose $\g$ into irreducible $\mathfrak{sl}_2$-modules and use $\mathfrak{sl}_2$-representation theory. The important point is that there is no trivial $\mathfrak{sl}_2$-module.

\begin{proposition}\label{Prop:space-of-sl2-f}
The $\mf{sl}_2$-subalgebras containing a given principal nilpotent $F$ are acted on transitively by $Z_G(F)\cong \exp(Z(F))\times Z(G)$ with stabilizer $\exp(F)\times Z(G)$. In particular, the set of $\mf{sl}_2$-subalgebras containing $F$ is a torsor for the quotient group which is isomorphic to $\C^{\mathrm{rk}(\mf{g}) -1}$.
In particular, this space is contractible. 
\end{proposition}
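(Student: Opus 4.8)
The plan is to exhibit the set $\mc S$ of $\mf{sl}_2$-subalgebras of $\g$ containing $F$ as a homogeneous space for $Z_G(F)$, going through the set $\mc T$ of $\mf{sl}_2$-triples with nilpositive element $F$, that is, pairs $(H,F')$ with $[H,F]=2F$, $[H,F']=-2F'$, $[F,F']=H$. Both sets are nonempty by the Jacobson--Morozov lemma. The technical core is a lemma which I will apply both to $\g$ and to $\mf{sl}_2$ itself: \emph{for a semisimple Lie algebra $\mf a$ and a regular nilpotent $F\in\mf a$, the group $\exp(Z_{\mf a}(F))$ acts simply transitively on the set of $\mf{sl}_2$-triples of $\mf a$ with nilpositive element $F$.} To prove it I would fix a reference triple $(H_0,F_0)$, grade $\mf a=\bigoplus_i\mf a_i$ by $\ad_{H_0}$, and use that no irreducible $\mf{sl}_2$-submodule of $\mf a$ is trivial: then $Z_{\mf a}(F)=\ker\ad_F$ is spanned by highest weight vectors of strictly positive $H_0$-weight, so $\ad_{H_0}$ is invertible on $Z_{\mf a}(F)$, which moreover is abelian. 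Given another triple $(H_1,F_1)$, one has $\zeta:=H_1-H_0\in Z_{\mf a}(F)$; putting $\xi:=-\ad_{H_0}^{-1}\zeta\in Z_{\mf a}(F)$ and truncating the exponential series (legitimate since $Z_{\mf a}(F)$ is abelian) gives $\Ad_{\exp\xi}H_0=H_0+[\xi,H_0]=H_1$. After conjugating by $\exp(-\xi)$ we may assume $H_1=H_0$, and then $F_1-F_0\in Z_{\mf a}(F)$ has $\ad_{H_0}$-eigenvalue $-2$, hence vanishes. The same weight computation shows that an element of $\exp(Z_{\mf a}(F))$ fixing $H_0$ is trivial, which yields simple transitivity.

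Next I would record the structure of $Z_G(F)$. By Theorem~\ref{thmKost}, $Z(F)$ is abelian, and by the weight argument just used it consists of nilpotent elements, so $\exp$ is a group isomorphism from $(Z(F),+)$ onto a connected abelian unipotent subgroup $U:=\exp(Z(F))\cong\C^{\rk\g}$, which equals $Z_G(F)^\circ$ since $\mathrm{Lie}\,Z_G(F)=Z(F)$; the subgroup $\exp(\C F)\subset U$ corresponds to the line $\C F\subset Z(F)$. Since $Z(G)$ acts trivially in the adjoint representation we have $Z(G)\subset Z_G(F)$, and $Z(G)\cap U=\{1\}$ as $U$ is unipotent. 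It remains to see $Z_G(F)=U\cdot Z(G)$: given $g\in Z_G(F)$, the lemma provides $u\in U$ with $\Ad_u$ agreeing with $\Ad_g$ on the reference triple, so $u^{-1}g$ centralizes the principal subalgebra $\mf s_0=\langle F,H_0,F_0\rangle$; as $\mf s_0$ has no trivial $\mf{sl}_2$-summand, $Z_\g(\mf s_0)=0$ and $Z_G(\mf s_0)$ is finite, and since $H_0$ is regular and $F$ meets every simple root space one identifies $Z_G(\mf s_0)$ with $\bigcap_{\alpha}\ker\alpha=Z(G)$. Thus $Z_G(F)\cong U\times Z(G)$.

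Finally I would assemble the statement. The map $\pi\colon\mc T\to\mc S$, $(H,F')\mapsto\langle F,H,F'\rangle$, is surjective (Jacobson--Morozov inside any $\mf{sl}_2$-subalgebra containing $F$) and $Z_G(F)$-equivariant, and since such a triple is a basis of the $3$-dimensional subalgebra it spans, the fiber $\pi^{-1}(\mf s)$ is exactly the set of triples contained in $\mf s$ with nilpositive element $F$; by the lemma applied to $\mf a=\mf s$, where $Z_{\mf s}(F)=\C F$, this is a single $\exp(\C F)$-orbit. Combining this with the simple transitivity of $U$ on $\mc T$ (and noting that the $\exp(\C F)$-action commutes with the $U$-action, $U$ being abelian), $\mc S\cong U/\exp(\C F)\cong Z(F)/\C F\cong\C^{\rk\g-1}$; feeding in the trivially-acting factor $Z(G)$, the group $Z_G(F)\cong U\times Z(G)$ acts transitively on $\mc S$ with stabilizer $\exp(\C F)\times Z(G)=\exp(F)\times Z(G)$. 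In particular $\mc S$ is a torsor under $\C^{\rk\g-1}$, hence contractible.

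I expect the one genuinely non-formal point to be the determination of the component group of $Z_G(F)$, i.e.\ the identity $Z_G(F)=\exp(Z(F))\cdot Z(G)$ (equivalently $Z_G(\mf s_0)=Z(G)$ for a principal $\mf{sl}_2$-subalgebra); this is classical and could alternatively be quoted from the structure theory of centralizers of regular nilpotents. Everything else reduces --- thanks to the commutativity of $Z(F)$, which kills all higher terms in the relevant exponentials --- to elementary weight computations in the $\ad_{H_0}$-grading.
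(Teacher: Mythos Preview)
Your argument is correct. The paper's own treatment is quite different in style: it does not give a proof but simply cites Lemma~3.7.3 of Collingwood--McGovern for the decomposition $Z_G(x)\cong \exp(Z(x)\cap\mathrm{Im}(\ad_x))\ltimes Z_G(\mathfrak{s})$, and then notes that in the principal case $Z(F)\subset\mathrm{Im}(\ad_F)$ (their Proposition~\ref{Prop:center-in-image}) and $Z_G(\mathfrak{s})=Z(G)$. The transitivity on $\mathfrak{sl}_2$-subalgebras and the identification of the stabilizer are left implicit in that citation. Your route instead reproves the relevant content of Collingwood--McGovern directly: the weight computation exhibiting $\exp(Z(F))$ as acting simply transitively on triples through $F$ is precisely Kostant's conjugacy argument, and your torus computation $Z_G(\mathfrak{s}_0)=\bigcap_\alpha\ker\alpha=Z(G)$ is the component-group step. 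The payoff of your approach is that it is self-contained and makes transparent why the stabilizer of a \emph{subalgebra} (as opposed to a triple) picks up the extra factor $\exp(\C F)$, a point the paper does not spell out; the paper's citation is terser but hides the mechanism. One small remark: your lemma is stated for semisimple $\mathfrak a$, but you only apply it to simple $\mathfrak a$ (namely $\mathfrak g$ and $\mathfrak{sl}_2$), where the ``no trivial summand'' input is standard; the semisimple case follows by decomposing into simple factors, but you do not need it.
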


The structure of the centralizer $Z_G(F)$ is described in Lemma 3.7.3 in \cite{Collingwood}. It states that for a non-zero nilpotent element $x$ the centralizer $Z_G(x)$ is a semidirect product between $\exp(Z(x)\cap \mathrm{Im}(\ad_x))$, which is $\exp(Z(F))$ for $x=F$ by Proposition \ref{Prop:center-in-image}, and the centralizer of an $\mathfrak{sl}_2$-subalgebra containing $x$, which in the principal case is $Z(G)$.

A basis $(F,H,E)$ of an $\mathfrak{sl}_2$-subalgebra, satisfying the standard relations $[H,E]=2E$, $[H,F]=-2F$ and $[E,F]=H$, is called an \emph{$\mathfrak{sl}_2$-triple}. Fix a principal $\mathfrak{sl}_2$-triple $(F,H,E)$. To this triple, Hitchin \cite{Hit92} associates a Lie algebra involution $\sigma_0:\mf{g}\to \mf{g}$ as follows. Using the decomposition of $\g$ into irreducible $\mathfrak{sl}_2$-modules, the involution $\sigma_0$ is uniquely determined by negating all highest and lowest weight vectors \cite[Proposition 6.1]{Hit92}. He also defines a compact real form $\rho_0:\mf{g}\to \mf{g}$ which extends 
\[ E \mapsto -F, \;\;\;\; H \mapsto -H, \;\;\;\; F, \mapsto -E\]
and commutes with $\sigma_0$. Finally, he shows that $\tau_0 := \sigma_0\rho_0$ is a split real structure. In fact, $\sigma_0$ can be defined up to conjugation by inner automorphisms as the product of commuting split and compact real forms. 

It will be useful to understand the set of involutions conjugate to $\sigma_0$ which negate a given principal nilpotent element. 
\begin{lemma} \label{lem:space_of_sigma}
    The collection of involutions $\sigma$ conjugate to $\sigma_0$ which negate a given principal nilpotent $F$ is acted on simply transitively by $\exp(Z(F))$. In particular, this space is contractible.
\end{lemma}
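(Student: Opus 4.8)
The plan is to identify $\Sigma_F$, the set of involutions $\sigma$ of $\g$ that are $\Ad(G)$-conjugate to $\sigma_0$ and satisfy $\sigma(F)=-F$, with a torsor under $\exp(Z(F))$, and then conclude via Proposition~\ref{Prop:space-of-sl2-f}. First I would make two harmless reductions. Since only the adjoint action on $\g$ enters, we may take $G$ to be adjoint, so that $Z(G)$ is trivial and $Z_G(F)=\exp(Z(F))$ by Proposition~\ref{Prop:space-of-sl2-f}. And since "being conjugate to $\sigma_0$" is independent of the representative, we may replace $\sigma_0$ by a conjugate whose underlying principal $\mathfrak{sl}_2$-triple $(F,H,E)$ contains the given $F$; then $\sigma_0(F)=-F$ because $F$ is a lowest weight vector, so $\sigma_0\in\Sigma_F$ and in particular $\Sigma_F\neq\varnothing$. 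The group $\exp(Z(F))=Z_G(F)$ acts on $\Sigma_F$ by conjugation, and this is well defined since it centralizes $F$ and hence sends an involution negating $F$ to another.

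To prove transitivity, I would take $\sigma=\Ad(g)\sigma_0\Ad(g)^{-1}\in\Sigma_F$ and observe that $\sigma(F)=-F$ forces $\sigma_0(F')=-F'$ for $F':=\Ad(g^{-1})F$; thus $F'$ lies in $\mathfrak p:=\g^{-\sigma_0}$ and, being $\Ad(G)$-conjugate to $F$, in the principal nilpotent orbit $\mathcal O$ (Theorem~\ref{Thm:one-reg-orbit}). The key step is the claim that $\mathcal O\cap\mathfrak p$ is a single orbit under $K:=\{g\in G:\Ad(g)\ \text{commutes with}\ \sigma_0\}$. Granting it, one chooses $k\in K$ with $\Ad(k)F'=F$, so that $\Ad(kg^{-1})F=F$, hence $kg^{-1}=\exp(w)$ for a unique $w\in Z(F)$, and then, using that $\Ad(k)$ commutes with $\sigma_0$,
\[
\sigma=\Ad(g)\sigma_0\Ad(g)^{-1}=\Ad(\exp(-w))\,\Ad(k)\,\sigma_0\,\Ad(k)^{-1}\,\Ad(\exp(w))=\Ad(\exp(-w))\,\sigma_0\,\Ad(\exp(-w))^{-1},
\]
so $\sigma$ is in the $\exp(Z(F))$-orbit of $\sigma_0$.

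For freeness I would use that $Z(F)=\ker(\ad_F)$ has a basis of lowest weight vectors, one from each irreducible $\mathfrak{sl}_2$-summand of $\g$ (there are $\rk\g$ of them, matching $\dim Z(F)$), and Hitchin's involution negates all lowest weight vectors, so $\sigma_0|_{Z(F)}=-\mathrm{id}$. Hence if $\Ad(\exp w)$ commutes with $\sigma_0$ for some $w\in Z(F)$, then $\Ad(\exp w)=\sigma_0\Ad(\exp w)\sigma_0^{-1}=\Ad(\exp(\sigma_0 w))=\Ad(\exp(-w))$, so $\Ad(\exp 2w)=\mathrm{id}$; as $\exp(2w)$ is unipotent and $\Ad$ is faithful, $\exp(2w)=e$, and injectivity of $\exp$ on the nilpotent abelian algebra $Z(F)$ gives $w=0$. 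So the action is simply transitive. Finally $\exp(Z(F))$ is a unipotent abelian group isomorphic to $(Z(F),+)\cong\C^{\rk\g}$, and the orbit map $w\mapsto\Ad(\exp(-w))\sigma_0\Ad(\exp(-w))^{-1}$ is an isomorphism of varieties onto $\Sigma_F$, which is therefore contractible.

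The main obstacle will be the claim that $\mathcal O\cap\mathfrak p$ is a single $K$-orbit — everything else above is formal bookkeeping. This is a statement about the symmetric pair $(\g,\g^{\sigma_0})$, which, because $\tau_0=\sigma_0\rho_0$ is the split real structure, is the one attached to the split real form of $\g$ and so has restricted root system of full rank $\rk\g$; the assertion is precisely the Lie-theoretic input making Hitchin's section well defined in arbitrary rank (cf.\ \cite{Hit92}). I would prove it by invoking the $\sigma_0$-equivariant Jacobson--Morozov theorem (Kostant--Rallis) to embed $F'$ in a $\sigma_0$-stable $\mathfrak{sl}_2$-triple $(E',H',F')$ with $H'\in\g^{\sigma_0}$ and $E'\in\mathfrak p$ — necessarily principal since $F'$ is regular — and then comparing it with $(F,H,E)$. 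The delicate point is to conjugate the semisimple elements $H,H'\in\g^{\sigma_0}$ into one another by $K$; as the $\mathfrak{sl}_2$ case already shows, where $\mathcal O\cap\mathfrak p$ consists of two lines interchanged by the nontrivial component of $K$, one must allow $K$ to be disconnected. Once $H=H'$, the two triples differ only by the action of $Z_K(H)$ on the $2$-eigenspace of $\ad_H$ in $\mathfrak p$, where the regular nilpotents form a single orbit, and the claim follows.
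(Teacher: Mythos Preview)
Your argument is correct but takes a considerably heavier route than the paper's. For transitivity you pass through the Kostant--Rallis statement that the principal nilpotent $G$-orbit meets $\mf{p}=\g^{-\sigma_0}$ in a single $K$-orbit, which requires the $\sigma_0$-equivariant Jacobson--Morozov theorem and a careful analysis of the (possibly disconnected) group $K$. The paper bypasses all of this with a one-line observation: any two involutions $\sigma,\sigma'$ conjugate to $\sigma_0$ satisfy $\sigma'=\sigma\circ\Ad_\gamma$ for some $\gamma\in G$ (since $\sigma^{-1}\sigma'$ is automatically inner), and then $\sigma(F)=\sigma'(F)=-F$ forces $\Ad_\gamma F=\sigma\sigma'(F)=F$, so $\gamma\in Z_G(F)\cong\exp(Z(F))\times Z(G)$; the center acts trivially via $\Ad$, giving transitivity immediately. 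Your approach, by contrast, conjugates $F$ back to $\sigma_0$'s frame and then has to move the resulting $F'$ back to $F$ inside $\mf{p}$, which is exactly the hard Kostant--Rallis step you identify.

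For freeness your argument is essentially the same as the paper's: both use that $\sigma_0$ negates $Z(F)$ (you deduce it from Hitchin's construction; the paper records it separately as Corollary~\ref{cor:sigma_negate_centralizer}), hence inverts $\exp(Z(F))$, and then a fixed point must be central and unipotent, so trivial. What your approach buys is a direct connection to the geometry of the symmetric pair and to Hitchin's section; what the paper's approach buys is that no structure theory beyond Proposition~\ref{Prop:space-of-sl2-f} is needed.
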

\begin{proof}
    Let $\sigma,\sigma'$ be two such involutions which negate $F$. They differ by an inner automorphism: $\sigma' = \sigma\circ \Ad_\gamma$ where $\gamma\in G$. We see that $\Ad_\gamma F = \sigma\sigma'\cdot F = F$, so $\gamma\in Z_G(F)$. Since $Z_G(F) \cong \exp(Z(F))\times Z(G)$, we see that $\exp(Z(F))$ must act transitively. From this fact, we deduce Corollary \ref{cor:sigma_negate_centralizer} below, stating that $\sigma$ negates the whole centralizer $Z(F)$. Hence it inverts $\exp(Z(F))$.
    We have to check that if $z\in \exp(Z(F))$ is different from the identity, then $\sigma\circ \Ad_z \neq \sigma$. For $X\in \mf{g}$ we have:
    \[\sigma(\Ad_z X) = \Ad_{\sigma(z)} \sigma(X) = \Ad_{z^{-1}} \sigma(X).\]
    If this was always equal to $\sigma(X)$, then $z$ would be central, which contradicts the fact that it is the exponential of a nilpotent element.
\end{proof}
\begin{corollary}\label{cor:sigma_negate_centralizer} Any $\sigma$ conjugate to $\sigma_0$ which negates $F$, actually negates all of $Z(F)$.
\end{corollary}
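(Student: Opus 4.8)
The plan is to reduce everything to Hitchin's reference involution and then exploit that the centralizer $Z(F)$ is abelian. First I would fix a principal $\mathfrak{sl}_2$-triple $(F,H,E)$ containing the given principal nilpotent $F$ (this exists by the Jacobson--Morozov lemma) and let $\sigma_0$ be the involution Hitchin attaches to it, characterised by negating all highest and lowest weight vectors in the decomposition of $\g$ into irreducible $\mathfrak{sl}_2$-modules. Since all principal $\mathfrak{sl}_2$-triples are conjugate (Theorem \ref{Thm:one-reg-orbit}), this $\sigma_0$ lies in the conjugacy class appearing in the statement, and it negates $F$ because $F$ is the lowest weight vector of the adjoint copy of $\mathfrak{sl}_2$ inside $\g$. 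I then want to observe that this particular $\sigma_0$ already negates all of $Z(F)$: being a Lie algebra automorphism with $\sigma_0(F)=-F$, it satisfies $\sigma_0\circ\ad_F=-\ad_F\circ\sigma_0$, hence it preserves $Z(F)=\ker\ad_F$; and in the $\mathfrak{sl}_2$-decomposition $Z(F)$ is precisely the span of the lowest weight vectors (one per irreducible summand, $\rk\g$ of them in total), each of which is negated by $\sigma_0$ by construction.

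The second step is to relate an arbitrary $\sigma$ from the conjugacy class that negates $F$ to this $\sigma_0$. By the same opening observation used in the proof of Lemma \ref{lem:space_of_sigma}, any two such involutions differ by an inner automorphism $\Ad_\gamma$ with $\gamma\in Z_G(F)$; and by Proposition \ref{Prop:space-of-sl2-f} (equivalently Lemma 3.7.3 in \cite{Collingwood}) we have $Z_G(F)\cong\exp(Z(F))\times Z(G)$. Since $Z(G)$ acts trivially in the adjoint representation, we may assume $\sigma=\sigma_0\circ\Ad_{\exp(w)}$ for some $w\in Z(F)$.

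Finally, for $y\in Z(F)$, abelianness of $Z(F)$ (Theorem \ref{thmKost}) gives $[w,y]=0$, so $\Ad_{\exp(w)}y=\exp(\ad_w)y=y$ and therefore $\sigma(y)=\sigma_0(y)=-y$, which is the claim. I do not expect a genuine obstacle here; the two points requiring care are the $\mathfrak{sl}_2$-representation-theoretic identification of $Z(F)$ with the span of the lowest weight vectors (so that Hitchin's description of $\sigma_0$ applies to it verbatim), and checking that nothing here is circular with respect to Lemma \ref{lem:space_of_sigma} --- but only the first line of that proof, namely that two such involutions differ by an inner automorphism lying in $Z_G(F)$, is used, and that line does not invoke the corollary.
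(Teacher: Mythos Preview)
Your proof is correct and follows essentially the same approach as the paper's: verify that Hitchin's reference involution negates $Z(F)$ because $Z(F)$ is spanned by lowest weight vectors, then use the transitivity part of Lemma~\ref{lem:space_of_sigma} together with abelianness of $Z(F)$ to transport this to any other $\sigma$. Your discussion of circularity is on point---only the first few lines of Lemma~\ref{lem:space_of_sigma} (showing $\gamma\in Z_G(F)$) are needed here, and the paper's own proof of the corollary is really just a two-sentence compression of exactly what you wrote out.
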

\begin{proof}This is true for the involution $\sigma$ of Hitchin's construction, and this property is unchanged under inner automorphisms by elements in $\exp(Z(F))$. 
\end{proof}

\begin{lemma}\label{lemma:unique-sigma-sl2}
        Let $\sigma$ be an involution conjugate to $\sigma_0$ and let $F\in\g$ be principal nilpotent with $\sigma(F)=-F$. Then there is a unique $\sigma$-invariant $\mathfrak{sl}_2$-subalgebra containing $F$.
\end{lemma}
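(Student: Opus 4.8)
The plan is to present the set of $\mathfrak{sl}_2$-subalgebras containing $F$ as an affine space over a complex vector space on which $\sigma$ acts as an affine involution with linear part $-\mathrm{id}$, and then observe that such an involution has exactly one fixed point because one can divide by $2$.

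First I would set up the torsor. Write $\mathcal{S}_F$ for the set of $\mathfrak{sl}_2$-subalgebras containing $F$; it is nonempty by the Jacobson--Morozov lemma. By Proposition \ref{Prop:space-of-sl2-f}, $Z_G(F)\cong\exp(Z(F))\times Z(G)$ acts transitively on $\mathcal{S}_F$ with stabilizer $\exp(\C F)\times Z(G)$, and the $Z(G)$-factor acts trivially. Since $Z(F)$ is abelian and consists of nilpotent elements (Theorem \ref{thmKost}), $\exp\colon Z(F)\to\exp(Z(F))$ is an isomorphism of abelian groups, so $\mathcal{S}_F$ is naturally a torsor under the complex vector space $V:=Z(F)/\C F$ (of dimension $\rk\g-1$); I will write the action additively as $v\cdot\mathfrak{s}$ for $v\in V$, $\mathfrak{s}\in\mathcal{S}_F$.

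Next I would check that $\sigma$ acts on $\mathcal{S}_F$ and compute how it intertwines with the $V$-action. For $\mathfrak{s}\in\mathcal{S}_F$, the image $\sigma(\mathfrak{s})$ is again an $\mathfrak{sl}_2$-subalgebra (as $\sigma$ is a Lie algebra automorphism) and it contains $\sigma(F)=-F$, hence contains $F$; thus $\sigma$ maps $\mathcal{S}_F$ to itself, and a $\sigma$-invariant $\mathfrak{sl}_2$-subalgebra containing $F$ is precisely a fixed point of this action. For $X\in Z(F)$ one has $\sigma\circ\exp(\ad_X)\circ\sigma^{-1}=\exp(\ad_{\sigma(X)})$, and $\sigma(X)=-X$ by Corollary \ref{cor:sigma_negate_centralizer}; applying this to a subalgebra gives
\[\sigma(v\cdot\mathfrak{s}) = (-v)\cdot\sigma(\mathfrak{s})\]
for all $v\in V$ and $\mathfrak{s}\in\mathcal{S}_F$. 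Finally, fixing a base point $\mathfrak{s}_0\in\mathcal{S}_F$ and writing $\sigma(\mathfrak{s}_0)=w\cdot\mathfrak{s}_0$ with $w\in V$, the point $v\cdot\mathfrak{s}_0$ is $\sigma$-fixed iff $(w-v)\cdot\mathfrak{s}_0=v\cdot\mathfrak{s}_0$, i.e. iff $v=\tfrac12 w$, which has a unique solution since $V$ is a complex vector space. Hence $\tfrac12 w\cdot\mathfrak{s}_0$ is the unique $\sigma$-invariant $\mathfrak{sl}_2$-subalgebra containing $F$.

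There is no genuine obstacle here; the only points that need a little care are the identification of $\mathcal{S}_F$ with an affine space over $V$ so that ``dividing by $2$'' is literally meaningful — which uses only that $Z(F)$ is abelian and unipotent — and the observation that the conjugation identity $\sigma\,\exp(\ad_X)\,\sigma^{-1}=\exp(\ad_{\sigma X})$ is purely infinitesimal, so no lift of $\sigma$ to a group automorphism of $G$ is needed. One should also note explicitly that $Z(G)$ acts trivially on $\mathcal{S}_F$ and therefore plays no role.
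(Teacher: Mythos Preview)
Your proof is correct and follows essentially the same approach as the paper: both identify the set of $\mathfrak{sl}_2$-subalgebras containing $F$ as a torsor under (a quotient of) $\exp(Z(F))$, use Corollary~\ref{cor:sigma_negate_centralizer} to see that $\sigma$ intertwines this action with its inverse, and then find the unique fixed point as a ``midpoint''. Your passage to the quotient $V=Z(F)/\C F$ to make the action simply transitive is a clean touch, but otherwise the argument is the same as the paper's.
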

\begin{proof}
    Proposition \ref{Prop:space-of-sl2-f} shows that $\exp(Z(F))$ acts transitively on the space of $\mathfrak{sl}_2$-subalgebras containing $F$. By Corollary \ref{cor:sigma_negate_centralizer}, we know that $\sigma$ inverts $\exp(Z(F))$. The involution $\sigma$ acts on the set of principal $\mf{sl}_2$-subalgebras containing $F$, intertwining the action of $\exp(Z(F))$ with its negative. More explicitly, let $\mf{p}\subset \mf{g}$ be a principal $\mf{sl}_2$-subalgebra containing $F$, and let $z\in \exp(Z(F))$. Then: 
    \[\sigma(\Ad_{z}\mf{p})=\Ad_{z^{-1}}\sigma(\mf{p}).\]
    It will follow from this property that there is a unique fixed point of the action by $\sigma$, which we can construct as a kind of midpoint. Let $z$ be the unique element of $\exp(Z(F))$ such that $\Ad_{z^2}(\mf{p}) = \sigma(\mf{p})$. Moreover,
    \[\sigma(\Ad_{z}(\mf{p})) = \Ad_{z^{-1}}(\sigma(\mf{p})) = \Ad_{z^{-1}}(\Ad_{z^2}(\mf{p})) = \Ad_{z}(\mf{p}).\]
    Therefore, $\Ad_{z}(\mf{p})$ is an $\mf{sl}_2$-subalgebra fixed by $\sigma$. If there were two fixed subalgebras $\mf{p},\mf{p}'$, then $\mf{p}' = \Ad_z \mf{p}$ for some $z$. Such $z$ satisfies $\sigma(z)=z$, thus is the identity.
\end{proof}

Finally, we will need the following lemma:
\begin{lemma}\label{lemma:im-incl}
    For a principal nilpotent element $F\in\g$ and an element $F'\in Z(F)$, we have $\mathrm{Im}(\ad_{F'})\subset\mathrm{Im}(\ad_{F})$.
\end{lemma}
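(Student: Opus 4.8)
The plan is to reduce the statement to $\mathfrak{sl}_2$-representation theory by working with a principal $\mathfrak{sl}_2$-triple adapted to both $F$ and $F'$. First, recall from Proposition \ref{Prop:center-in-image} that $Z(F)\subset\mathrm{Im}(\ad_F)$. Since $F'\in Z(F)$, we may include $F$ into a principal $\mathfrak{sl}_2$-triple $(F,H,E)$ and decompose $\g$ into irreducible $\mathfrak{sl}_2$-modules $V_1\oplus\cdots\oplus V_r$, where $r=\rk(\g)$ and no $V_i$ is trivial. Each $V_i$ has a one-dimensional lowest-weight space annihilated by $\ad_F$ and a one-dimensional highest-weight space; the centralizer $Z(F)$ is spanned by the $r$ lowest-weight vectors (equivalently the highest-weight vectors of the dual grading), so $F'=\sum_i c_i w_i$ where $w_i$ is a lowest-weight vector of $V_i$. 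The key structural input is that $Z(F)$ is abelian (Theorem \ref{thmKost}), so $\ad_{F'}$ is a derivation that, restricted to $Z(F)$, vanishes.

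The main step is to show $\mathrm{Im}(\ad_{F'})\subset\mathrm{Im}(\ad_F)$ at the level of these gradings. Grade $\g=\bigoplus_k \g_k$ by the $\ad_H$-eigenvalue; then $\mathrm{Im}(\ad_F)=\bigoplus_k [F,\g_k]$, and since $\ad_F:\g_k\to\g_{k-2}$ is injective precisely when $\g_k$ contains no lowest-weight vector, one has $\mathrm{Im}(\ad_F)=\bigoplus_{k}\g_{k-2}$ complemented exactly by the span of the lowest-weight vectors, i.e. by $Z(F)$. So $\mathrm{Im}(\ad_F)$ is a graded complement to $Z(F)$; concretely, $\mathrm{Im}(\ad_F) = \bigoplus_i (V_i \ominus \text{lowest weight line of } V_i)$. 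Thus it suffices to prove that $[F',x]$ has no component along any lowest-weight line of any $V_i$, for all $x\in\g$. Equivalently, pairing against the dual basis, it suffices to show that for each highest-weight vector $u_i$ of $V_i$ (which spans the image of the projection dual to $w_i$), the bracket $[F',x]$ pairs to zero — but more directly, since $F'\in Z(F)$ is itself a sum of lowest-weight vectors, $\ad_{F'}$ raises $\ad_H$-weight by $\deg(w_i)$, and one checks via the Killing-form adjointness $\langle [F',x], \cdot\rangle = -\langle x, [F',\cdot]\rangle$ together with $[F',Z(F)]=0$ (abelianness) that the image avoids $Z(F)$.

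The cleanest route, which I would actually write out, uses the Killing form $B$: the orthogonal complement of $\mathrm{Im}(\ad_F)$ is $\ker(\ad_F)=Z(F)$, so $x\in\mathrm{Im}(\ad_F)$ iff $B(x,Z(F))=0$. Hence I must show $B([F',y], z)=0$ for all $y\in\g$ and all $z\in Z(F)$. By invariance of $B$, $B([F',y],z) = -B(y,[F',z]) = 0$ since $[F',z]=0$ by commutativity of the abelian algebra $Z(F)$ (Theorem \ref{thmKost}). This completes the argument. The only subtle point — and the step I expect to require the most care — is the identification $\mathrm{Im}(\ad_F)^\perp = Z(F)$ with respect to the Killing form; this holds because $B$ is nondegenerate and $\ad_F$ is skew-adjoint with respect to $B$ (so $\mathrm{Im}(\ad_F)^\perp = \ker(\ad_F^{\,*}) = \ker(-\ad_F) = Z(F)$), which is standard but worth stating explicitly since it is the crux of why the abelianness of $Z(F)$ suffices.
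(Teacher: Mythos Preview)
Your final ``cleanest route'' argument via the Killing form is correct and complete: invariance gives $\mathrm{Im}(\ad_F)^\perp = Z(F)$, the double-perp property for a nondegenerate bilinear form gives $Z(F)^\perp = \mathrm{Im}(\ad_F)$, and then $B([F',y],z)=-B(y,[F',z])=0$ for $z\in Z(F)$ by abelianness of $Z(F)$. The first two paragraphs are exploratory and can be dropped; the third paragraph is the proof.

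This is a genuinely different route from the paper. The paper introduces a compact real form $\rho$ with $E=-\rho(F)$ and $E'=-\rho(F')$, uses the positive-definite hermitian product $\tr(\rho(\cdot)\cdot)$ so that $\ad_E$ is the hermitian adjoint of $\ad_F$, obtains $\mathrm{Im}(\ad_F)=Z(E)^\perp$ and $\mathrm{Im}(\ad_{F'})=Z(E')^\perp$, and concludes from $Z(E)\subset Z(E')$. Your argument is more intrinsic: it stays with the complex-bilinear Killing form and never introduces a real form, and it uses abelianness of $Z(F)$ directly rather than passing through $Z(E)$. The paper's route has the mild advantage that orthogonal complements for a positive-definite hermitian form are automatically genuine direct-sum complements (no appeal to the double-perp identity is needed), and it fits the hermitian machinery used elsewhere in the paper; your route is shorter and avoids auxiliary structure.
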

\begin{proof}
    By the Jacobson--Morozov lemma we can complete $F$ into a principal $\mathfrak{sl}_2$-triple $(F,H,E)$. Choose a compact real form $\rho$ such that $E=-\rho(F)$. Let $E'=-\rho(F')$. Since $F'\in Z(F)$ we get $E'\in Z(E)$. The commutator $Z(E)$ is abelian, hence we have $Z(E)\subset Z(E')$. With respect to the hermitian inner product $\tr(\rho(.).)$ on $\g$, $E$ and $F$ are adjoint, and so are $E'$ and $F'$. This implies that $\mathrm{Im}(\ad_F)$ is the perp of $\ker(\ad_E)=Z(E)$, and $\mathrm{Im}(\ad_F')$ is the perp of $\ker(\ad_{E'})=Z(E')$. This concludes the proof since $Z(E)\subset Z(E')$.
    \end{proof}

\subsection{Involutions and reductions of structure group}\label{Sec:involutions}

There are three basic structures often put on a complex vector bundle: a hermitian structure, a symmetric pairing, and a real structure. In this section we explain precisely how to generalize these notions to principal $G$-bundles for connected complex simple $G$. 

Fix an antiholomorphic involution $\rho_0:G\to G$ whose fixed point locus is a maximal compact subgroup, and another commuting antiholomorphic involution $\tau_0$ whose fixed point locus is a split real subgroup. Call their composition $\sigma_0 := \rho_0\tau_0$. These involutions induce, (and are determined by) anti-linear involutions of the lie algebra $\mf{g}$ which we will refer to by the same symbols. 

\begin{remark} The involution $\sigma_0$ can equivalently be obtained by Hitchin's construction using a principal $\mathfrak{sl}_2$-triple \cite[Proposition 6.1]{Hit92}. 
\end{remark}

\begin{definition}
Let $G$ be a group, let $\epsilon_0:G\to G$ be an involution, and let $P$ be a principal $G$-bundle on a manifold $M$. An \emph{$\epsilon_0$-structure} on $P$ is an involution $\epsilon: P\to P$ such that
\[\epsilon(p.g)=p.\epsilon_0(g)\;\;\;\forall\, p\in P, g\in G.\]
\end{definition}
An $\epsilon_0$-structure on a $G$-bundle $P\to M$ is equivalent to a reduction of structure group of $P$ from $G$ to the group of fixed points $G^{\epsilon_0}$. Indeed, the fixed point locus of $\epsilon$ is naturally a $G^{\epsilon_0}$-bundle. 

The special case of $\rho_0$-structures, which we will refer to as a \emph{hermitian structure}, can be understood more concretely. A hermitian structure $\rho$ on a $G$-bundle $P$ induces an involution (which we also call $\rho$) on the adjoint bundle $\mf{g}_P$. The eigenspaces of $\rho$ give a fiberwise Cartan decomposition of $\mf{g}_P$. Since $G^{\rho_0}$ is precisely the subgroup of $G$ which preserves a Cartan decomposition, the reduction to $G^{\rho_0}$ is fully specified by this involution of $\mf{g}_P$.

In the case of $\sigma_0$- and $\tau_0$-structures, we also get involutions $\sigma$ and $\tau$ of $\mf{g}_P$, but there is slightly more data in the structure. This is because the subgroup of $G$ commuting with $\tau_0$ is not just $G^{\tau_0}$, but also the center of $G$. So just specifying an involution $\tau$ of $\mf{g}_P$, which is conjugate to $\tau_0$ in each fiber, only gives a reduction to a slightly larger group. The same is true for $\sigma$. In the case $G=G_{ad}$ where the center is trivial, this is not an issue, and we may think of $\tau_0$- and $\sigma_0$-structures purely as involutions of the adjoint bundle.

\section{Fock bundles}\label{Sec:Fock-bundles}

In this section, we introduce Fock bundles, analyze the so-called Fuchsian locus and describe the variation in the Fock field.

\subsection{Fock bundles and higher complex structures}

Fix a smooth closed orientable surface $S$ with genus at least 2. Consider a complex simple Lie group $G$ with associated Lie algebra $\mathfrak{g}$. Throughout, we will fix commuting involutions $\rho_0, \tau_0$ and $ \sigma_0$ of $G$ such that $\rho_0$ is a compact real form, $\tau_0$ is a split real form, and $\sigma_0 = \rho_0\tau_0$. 
For a principal $G$-bundle $P$, denote by $\g_P$ the associated $\g$-bundle using the adjoint action of $G$ on $\g$. 

Recall the notions of a principal nilpotent element and a $\sigma_0$-structure from the previous Subsections \ref{Sec:princ-nilp-elements} and \ref{Sec:involutions}. The main notion we introduce in this article is the following:

\begin{definition}\label{defn_Fock-bundle}
    A \emph{$G$-Fock bundle} over $S$ is a triple $(P,\Phi,\sigma)$, where $P$ is a principal $G$-bundle over $S$, $\sigma$ is a $\sigma_0$-structure, and $\Phi\in \Omega^1(S,\mathfrak{g}_P)$ is a $\mathfrak{g}_P$-valued 1-form satisfying
    \begin{enumerate}
        \item $[\Phi\wedge\Phi] = 0$,
        \item $\Phi(v)(z)$ is principal nilpotent for all $z\in S$ and all non-zero vectors $v\in T_zS$.
        \item $\sigma(\Phi) = -\Phi$.
    \end{enumerate}
\noindent We shall call a field $\Phi$ defined as above a \emph{Fock field} and will often refer to the second condition above as the \emph{nilpotency condition}. An isomorphism of Fock bundles $(P,\Phi,\sigma) \to (P',\Phi',\sigma')$ is a $G$-bundle isomorphism $P \to P'$ taking $\Phi$ to $\Phi'$ and $\sigma$ to $\sigma'$. 
\end{definition}

Note that all conditions in Definition \ref{defn_Fock-bundle} are given pointwise. We stress that no term in the definition of a Fock bundle is considered to be holomorphic. 

There are two main cases one should highlight:
\begin{itemize}
    \item An $\SL_n(\C)$-Fock bundle is a vector bundle $E$ of rank $n$ with fixed volume form, equipped with a symmetric pairing $g$ (a complex bilinear non-degenerate symmetric form) and a Fock field $\Phi$ satisfying the three conditions above.
    \item For $G=G_{ad}$ the adjoint group, a $G$-Fock bundle is specified up to isomorphism by the pair $(P,\Phi)$. 
\end{itemize}

The data contained in $\sigma$ is actually very little: We will see later in Proposition \ref{Prop:sigma-dependence} that a $\sigma_0$-structure $\sigma$ which negates $\Phi$ always exists locally, and always exists globally if $G=G_{ad}$ is the adjoint group. For any $G$, any two choices of $\sigma$ are conjugate by an automorphism fixing $\Phi$, thus give isomorphic Fock bundles. That is why we will sometimes write $(P,\Phi)$ for a $G$-Fock bundle.

By \cite[Proposition 2.21]{Tho22}, we know that there is a unique complex line $L\subset T^\C_zS$ such that for all $v\in L$ the matrix $\Phi(v)(z)$ is not principal nilpotent. Note that the uniqueness needs the Lie algebra $\g$ to be simple. The nilpotency condition in Definition \ref{defn_Fock-bundle} then implies that this direction $L$ is avoiding the real locus and hence encodes a complex structure on $S$ (see Section \ref{Sec:HCS}).

\begin{proposition}
    A $G$-Fock bundle induces a complex structure on $S$.
\end{proposition}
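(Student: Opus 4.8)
The plan is to extract from $\Phi$ a smooth rank-one complex subbundle $L\subset T^{\C}S$ that, fiberwise, contains no nonzero real tangent vector, and then to recognize such a subbundle as one of the two eigendistributions of an almost complex structure $J$ on $S$; since every almost complex structure on a surface is integrable, $J$ is the sought complex structure. This is essentially a formalization of the paragraph preceding the statement.

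First I would establish the pointwise picture. Fix $z\in S$ and a nonzero $v_0\in T_zS$; by the nilpotency condition, $\Phi_1:=\Phi(v_0)(z)$ is principal nilpotent. Condition (1), $[\Phi\wedge\Phi]=0$, evaluated on pairs of real tangent vectors, forces all the endomorphisms $\Phi(v)(z)$, $v\in T_zS$, to commute pairwise; choosing $v_1$ so that $(v_0,v_1)$ is a basis of $T_zS$ then gives $\Phi_2:=\Phi(v_1)(z)\in Z(\Phi_1)$. This is precisely the algebraic input of \cite[Proposition 2.21]{Tho22}, which — using that $\g$ is simple — produces a \emph{unique} complex line $L_z\subset T^{\C}_zS$ on which $\Phi(\cdot)(z)$ is never principal nilpotent; concretely $L_z=\C(\mu_2 v_0-v_1)$ for the unique $\mu_2=\mu_2(z)\in\C$ with $\mu_2\Phi_1-\Phi_2$ not principal nilpotent.

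Next I would check that $L_z\cap T_zS=\{0\}$: a nonzero real $w\in L_z$ would, by condition (2), make $\Phi(w)(z)$ principal nilpotent, contradicting $w\in L_z$. Hence $L_z\neq\overline{L_z}$, so $T^{\C}_zS=L_z\oplus\overline{L_z}$, and this splitting defines $J_z\in\End(T_zS)$ with $J_z^2=-\mathrm{id}$ (acting as $-i$ on $L_z$ and $+i$ on $\overline{L_z}$, then restricted to the real form $T_zS$). Fixing an orientation of $S$ removes the residual ambiguity between $L_z$ and $\overline{L_z}$, equivalently between $J$ and $-J$; note that the Fock-bundle axioms only force $|\mu_2|\neq 1$, not $|\mu_2|<1$, so without this choice one obtains a complex structure determined only up to conjugation.

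The one point requiring genuine care is smoothness of $z\mapsto L_z$, so that $J$ is a smooth tensor field. For this I would work locally: choose a trivialization of $P$ and local coordinates, and use that the principal nilpotents form a single $G$-orbit (Theorem \ref{Thm:one-reg-orbit}) together with the fact that $G\to G/Z_G(F)$ is a locally trivial smooth fibration, to gauge $\Phi(v_0)$ to a constant principal nilpotent $F$; then $\mu_2(z)$, hence $L_z$, depends smoothly on the smoothly varying element $\Phi(v_1)(z)\in Z(F)$ by the constructive, algebraic nature of the proof of \cite[Proposition 2.21]{Tho22}. With $J$ smooth and $J^2=-\mathrm{id}$, integrability is automatic in real dimension two, so $J$ is a complex structure on $S$. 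The main (and essentially only) obstacle is this smoothness verification; every other ingredient is either quoted from \cite{Tho22} or an immediate consequence of Definition \ref{defn_Fock-bundle}.
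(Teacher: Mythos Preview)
Your proposal is correct and follows essentially the same approach as the paper: invoke \cite[Proposition 2.21]{Tho22} to produce the unique complex line $L_z\subset T^{\C}_zS$ on which $\Phi$ fails to be principal nilpotent, use the nilpotency axiom to see that $L_z$ avoids the real locus, and conclude that $L$ encodes a complex structure. You have supplied more detail than the paper does (the smoothness verification, the explicit construction of $J$, and the automatic integrability in dimension two), but the argument is the same.
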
 

In the sequel, unless stated explicitly otherwise, whenever we work with complex local coordinates on $S$, we use the complex structure induced by the Fock bundle. For such a complex coordinate $z$ on $S$, we can locally write $\Phi=\Phi_1 dz+\Phi_2 d\bar{z}$. The condition $[\Phi\wedge\Phi]=0$ for a Fock field can be then written as $[\Phi_1,\Phi_2]=0$. By construction of the complex structure on $S$, we know that $\Phi_2$ is not principal nilpotent. Hence $\Phi_1$ has to be principal nilpotent. 

\medskip
We now describe more explicitly what $\mathrm{SL}_n(\mathbb{C})$-Fock bundles look like locally. We start with $n=2$. Let $E$ be a complex vector bundle of degree zero and rank $2$ over $S$ with a fixed volume form $\nu$. 

\begin{lemma}
Fix an arbitrary complex coordinate on $S$. An $\mathrm{SL}_2(\C)$-Fock field on $E$ locally is of the form $$\Phi=\begin{pmatrix} 0&0\\1&0\end{pmatrix}dz+\begin{pmatrix} 0&0\\\mu_2&0\end{pmatrix}d\bar{z},$$
where $\mu_2$ is a local complex function on $S$ with $\mu_2\bar{\mu}_2\neq 1$. 
\end{lemma}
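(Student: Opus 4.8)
The plan is to start from the abstract definition of an $\mathrm{SL}_2(\C)$-Fock field, which by the previous discussion decomposes in a complex coordinate $z$ as $\Phi = \Phi_1\, dz + \Phi_2\, d\bar z$ with $[\Phi_1,\Phi_2]=0$, with $\Phi_1$ principal nilpotent, and with $\Phi_2\in Z(\Phi_1)$. For $\g=\mathfrak{sl}_2(\C)$, a principal nilpotent element is simply a nonzero nilpotent matrix, i.e. a rank-one nilpotent. The first step is to use the gauge freedom: the bundle $E$ is trivializable on a small coordinate patch, and we may further act by $\mathrm{SL}_2(\C)$-valued gauge transformations on that patch. Since all principal nilpotent elements of $\mathfrak{sl}_2(\C)$ are conjugate (Theorem \ref{Thm:one-reg-orbit}), I would choose a local frame in which $\Phi_1 = \begin{pmatrix} 0&0\\1&0\end{pmatrix}$ identically. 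The residual gauge freedom preserving this normalization is the stabilizer $Z_G(\Phi_1)$, which here consists of lower-triangular unipotent matrices times the center $\{\pm I\}$.

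Next I would compute the centralizer: $Z(\Phi_1)$ in $\mathfrak{sl}_2(\C)$ is one-dimensional, spanned by $\Phi_1$ itself (this is exactly the statement $\dim Z(\Phi_1)=\rk\mathfrak{sl}_2=1$ together with $\Phi_1\in Z(\Phi_1)$). Therefore $\Phi_2 = \mu_2(z,\bar z)\,\Phi_1$ for a unique local complex function $\mu_2$, which gives precisely the claimed matrix form $\Phi_2 = \begin{pmatrix}0&0\\\mu_2&0\end{pmatrix}$. This already yields the stated shape of $\Phi$; the self-adjointness condition (condition (3)) is automatically compatible and imposes no further constraint on the local normal form, since by Proposition \ref{Prop:sigma-dependence} a suitable $\sigma$ negating $\Phi$ exists locally.

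The remaining point is the inequality $\mu_2\bar\mu_2\neq 1$. This comes from the construction of the induced complex structure: by \cite[Proposition 2.21]{Tho22} there is a unique complex direction $L\subset T^\C_z S$ on which $\Phi$ is \emph{not} principal nilpotent; explicitly, for $v = a\,\partial_z + b\,\partial_{\bar z}$ one has $\Phi(v) = (a+b\mu_2)\Phi_1$, which is principal nilpotent precisely when $a + b\mu_2 \neq 0$, so $L$ is the line $\bar p = \mu_2 p$, i.e. $\mu_2$ plays the role of a Beltrami coefficient. The nilpotency condition in the definition of a Fock bundle forces this line to avoid the real tangent directions, which is exactly the condition $|\mu_2|\neq 1$ (equivalently $\mu_2\bar\mu_2\neq 1$); see Remark \ref{rem:orientation} and the discussion in Section \ref{Sec:HCS}. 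I do not expect a serious obstacle here — the whole argument is a direct linear-algebra computation in $\mathfrak{sl}_2$ combined with the already-established structure theory. The only mildly delicate point is bookkeeping the residual gauge group after fixing $\Phi_1$ and checking it does not let one further normalize $\mu_2$ away, which is clear since lower unipotent matrices commute with $\Phi_1$ and hence act trivially on $\Phi_2 = \mu_2\Phi_1$.
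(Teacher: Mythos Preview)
Your proof is correct and follows essentially the same approach as the paper: gauge-fix $\Phi_1$ to the standard nilpotent using conjugacy of principal nilpotents, read off $\Phi_2 = \mu_2 \Phi_1$ from the one-dimensional centralizer, and derive $|\mu_2| \neq 1$ from the nilpotency condition on real tangent vectors. Your phrasing via the non-principal line $L$ is a mild repackaging of the paper's direct check that $v + \mu_2\bar v \neq 0$ for all nonzero real $v$, but the content is identical.
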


Note that globally, $\mu_2$ is the Beltrami differential of the complex structure on $S$ induced by the Fock bundle.
\begin{proof}
    There is a local trivialization of $E$ in which $\Phi_1$ is given as in the statement of the lemma since all principal nilpotent elements are conjugate. The condition $[\Phi_1,\Phi_2]=0$ then implies the form of $\Phi_2$. Finally, the nilpotency condition implies that for any non-zero real tangent vector $v\partial+\bar{v}\bar\partial$ (where $v$ is a complex function) the combination $v\Phi_1+\bar{v}\Phi_2$ is principal nilpotent. This means that $v+\mu_2 \bar{v}\neq 0$ for all $v\neq 0$. This is equivalent to the condition $\lvert\mu_2\rvert\neq 1$.
\end{proof}

We can generalize this local description to higher rank. Let $E$ be a complex vector bundle of degree zero and rank $n$ over $S$ with a fixed volume form $\nu$. 

\begin{lemma}\label{lemma:sln-fock-field}
Fix an arbitrary complex coordinate on $S$. An $\mathrm{SL}_n(\C)$-Fock field on $E$ locally is of the form $\Phi=\Phi_1dz+\Phi_2d\bar{z}$ with 
$$\Phi_1=\sum_{i=1}^{n-1}E_{i+1,i}\;\text{ and }\; \Phi_2=\mu_2 \Phi_1+\mu_3\Phi_1^2+...+\mu_n \Phi_1^{n-1},$$ 
where the $\mu_k$ are local complex functions on $S$ with $\mu_2\bar{\mu}_2\neq 1$.
\end{lemma}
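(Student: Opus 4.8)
The plan is to reduce the higher-rank case to the rank-2 case already established, exploiting the fact that $\Phi_1$ is principal nilpotent and that the centralizer of a principal nilpotent in $\mathfrak{sl}_n(\C)$ consists exactly of polynomials in $\Phi_1$ with no constant term. First I would fix a complex coordinate $z$ on $S$ adapted to the complex structure induced by the Fock bundle and write $\Phi = \Phi_1\,dz + \Phi_2\,d\bar z$, as discussed above; by the remark following the proposition on the induced complex structure, $\Phi_2$ is not principal nilpotent, hence $\Phi_1$ is. By Theorem \ref{Thm:one-reg-orbit} all principal nilpotent elements of $\mathfrak{sl}_n(\C)$ are conjugate, so after applying a local gauge transformation (a local change of trivialization of $E$) I may assume $\Phi_1 = \sum_{i=1}^{n-1} E_{i+1,i}$, the standard regular nilpotent. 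This is the only nontrivial normalization; note that the residual gauge freedom preserving this normal form is exactly the stabilizer $Z_G(\Phi_1)$.

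Next I would use the condition $[\Phi\wedge\Phi]=0$, which in the chosen coordinate reads $[\Phi_1,\Phi_2]=0$, i.e. $\Phi_2\in Z(\Phi_1)$. For the standard regular nilpotent in $\mathfrak{sl}_n(\C)$, a direct computation (or the general statement recalled in Section \ref{Sec:HCS}) shows $Z(\Phi_1) = \operatorname{span}_{\C}\{\Phi_1,\Phi_1^2,\dots,\Phi_1^{n-1}\}$: indeed $Z(\Phi_1)$ has dimension $n-1 = \operatorname{rk}\mathfrak{sl}_n(\C)$ by Theorem \ref{thmKost} and regularity, and the traceless polynomials $\Phi_1^k$ for $k=1,\dots,n-1$ are linearly independent and commute with $\Phi_1$. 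Hence there are unique local complex functions $\mu_2,\dots,\mu_n$ on $S$ with $\Phi_2 = \mu_2\Phi_1 + \mu_3\Phi_1^2 + \dots + \mu_n\Phi_1^{n-1}$, which is the asserted form.

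Finally I would extract the inequality $\mu_2\bar\mu_2\neq 1$ from the nilpotency condition. For a non-zero real tangent vector, writing it as $v\partial_z + \bar v\,\partial_{\bar z}$ with $v$ a nonvanishing complex scalar, the nilpotency condition requires $v\Phi_1 + \bar v\Phi_2 = (v + \bar v\mu_2)\Phi_1 + \bar v\mu_3\Phi_1^2 + \dots + \bar v\mu_n\Phi_1^{n-1}$ to be principal nilpotent. A nonzero element of $Z(\Phi_1)$ written as $c_1\Phi_1 + c_2\Phi_1^2 + \dots + c_{n-1}\Phi_1^{n-1}$ is principal nilpotent precisely when $c_1\neq 0$: it equals $c_1$ times a lower-triangular unipotent-minus-identity conjugate of $\Phi_1$ (one can conjugate by an appropriate element of $\exp(Z(\Phi_1))$, or simply observe the matrix has rank $n-1$ iff its subdiagonal entries $c_1$ are nonzero, while $c_1=0$ forces rank $\le n-2$). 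So the condition is $v + \bar v\mu_2 \neq 0$ for all $v\neq 0$, equivalently $|\mu_2|\neq 1$, i.e. $\mu_2\bar\mu_2 \neq 1$.

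The main obstacle, such as it is, is the clean identification of which elements of $Z(\Phi_1)$ are principal nilpotent — i.e. the claim that regularity of $c_1\Phi_1 + \dots + c_{n-1}\Phi_1^{n-1}$ is equivalent to $c_1\neq 0$. This is elementary for $\mathfrak{sl}_n$ by a rank computation on the explicit matrix, but if one wants the argument in a form that generalizes (toward the $\mathfrak g$-complex structure statement of \cite[Proposition 2.21]{Tho22}), it is the place where the simplicity of $\mathfrak g$ and the structure of the principal $\mathfrak{sl}_2$-triple enter; here I would just do the matrix computation directly since we are in type $A$. Everything else is bookkeeping with local trivializations.
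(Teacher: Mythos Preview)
Your proof is correct and follows the same outline as the paper's sketch (which simply refers back to the $n=2$ case): gauge $\Phi_1$ to the standard principal nilpotent, identify $Z(\Phi_1)$ as the traceless polynomials in $\Phi_1$, and extract $|\mu_2|\neq 1$ from the nilpotency condition on real tangent vectors. One small discrepancy: the lemma fixes an \emph{arbitrary} complex coordinate, not the induced one, so your justification that $\Phi_1$ is principal nilpotent should instead invoke that $\Phi(\partial_z)$ is regular whenever $\partial_z$ avoids the unique non-regular line $L\subset T^\C S$ --- your derivation of $|\mu_2|\neq 1$ already works in that generality (and indeed in the induced coordinate one simply has $\mu_2=0$, as the paper notes).
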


The proof is similar to the one given above; there is only one conjugacy class of principal nilpotent elements which explains the form  of $\Phi_1$. The centralizer of $\Phi_1$ is the set of polynomials in $\Phi_1$ which provides the form of $\Phi_2$. In the complex structure induced by the Fock field, we always have $\mu_2=0$.
The case when all higher Beltrami differentials vanish is the so-called \emph{Fuchsian locus} which will be analysed in Section \ref{Sec:fuchsian-locus} below.

Comparing Definition \ref{defn_Fock-bundle} of a $G$-Fock bundle to Definition \ref{def-g-C-str} of a $\mathfrak{g}$-complex structure, we immediately see the following: 
\begin{proposition}\label{Prop:link-to-g-C-str}
    Any $G$-Fock bundle induces a $\g$-complex structure on $S$. For the adjoint group $G_{ad}$, the isomorphism class of a $G_{ad}$-Fock bundle is equivalent to a $\g$-complex structure.
\end{proposition}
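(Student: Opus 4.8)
The plan is to obtain the statement by unwinding the two definitions, the only real work being the local normal form of a Fock field and, in the adjoint case, the observation that the $\sigma_0$-structure carries no moduli together with a gluing argument. For the first assertion I would proceed as follows. We already know that a $G$-Fock bundle $(P,\Phi,\sigma)$ induces a complex structure on $S$, determined by the unique complex line $L\subset T^\C_z S$ on which $\Phi$ fails to be principal nilpotent, which the nilpotency condition forces to avoid the real locus. Work in a local coordinate $z$ holomorphic for this complex structure and in a local trivialisation of $P$, and write $\Phi=\Phi_1\,dz+\Phi_2\,d\bar z$. Then $[\Phi\wedge\Phi]=0$ reads $[\Phi_1,\Phi_2]=0$, so $\Phi_2\in Z(\Phi_1)$; by construction $\Phi_2=\Phi(\partial_{\bar z})$ is not principal nilpotent, hence $\Phi_1$ is (this uses simplicity of $\g$ and \cite[Proposition 2.21]{Tho22}); and the unique non-principal-nilpotent combination $\mu_2\Phi_1-\Phi_2$ must be proportional to $\Phi_2$, so $\mu_2=0$ in this coordinate and the inequality $|\mu_2|<1$ of Definition \ref{def-g-C-str} holds automatically. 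Thus in every such chart $\Phi$ has exactly the local shape required of a $\g$-complex structure, and the locally defined $\g$-valued $1$-forms patch, by conjugation by the transition functions of $P$, into the required $G$-conjugacy class. To see that this is genuinely a field on $S$ up to global gauge rather than a twisted object, one either checks that the underlying bundle is topologically trivial --- its topological type being rigidified by the flag of $\ker$-subbundles of $\Phi$, exactly as for the uniformizing Higgs bundle in the Fuchsian locus --- or simply reads Definition \ref{def-g-C-str} with an implicit bundle, which is the natural reading for the $G_{ad}$ statement anyway. This $\g$-complex structure is visibly unchanged under isomorphisms of Fock bundles, since these act on $\Phi$ by gauge transformations.

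For the equivalence in the adjoint case I would first dispose of the involution. By Proposition \ref{Prop:sigma-dependence} and the discussion following Definition \ref{defn_Fock-bundle}, for $G=G_{ad}$ a $\sigma_0$-structure negating $\Phi$ always exists and any two are related by an automorphism of $P$ fixing $\Phi$; hence on isomorphism classes the datum $(P,\Phi,\sigma)$ reduces to $(P,\Phi)$. It then remains to invert the map of the previous paragraph. Given a $\g$-complex structure, choose a representative $\g$-valued $1$-form, normalise it locally to the shape of Definition \ref{def-g-C-str}, and note that the gauge transformations relating overlapping charts take values in the stabiliser of the local normal form, which lies in $Z_{G_{ad}}(\Phi_1)=\exp(Z(\Phi_1))$ by Proposition \ref{Prop:space-of-sl2-f}. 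As this group is unipotent --- in particular connected and free of topology --- these transition functions can be corrected to satisfy the cocycle condition, producing a $G_{ad}$-bundle $P$ carrying a global $\g_P$-valued Fock field, to which Proposition \ref{Prop:sigma-dependence} attaches a $\sigma$. A short check, again using the rigidity of these stabiliser groups and the essential uniqueness of $\sigma$, shows that the two constructions are mutually inverse on isomorphism, respectively $C^\infty(S,G_{ad})$-conjugacy, classes.

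The part that requires genuine input, as opposed to unwinding definitions, is the handling of the $\sigma_0$-structure: its local existence, its global existence for $G_{ad}$, and its essential uniqueness. This is Proposition \ref{Prop:sigma-dependence}, and it rests on the centraliser analysis of Section \ref{Sec:princ-nilp-elements}, in particular Lemmas \ref{lem:space_of_sigma} and \ref{lemma:unique-sigma-sl2}. The second, more bookkeeping-heavy point --- promoting ``local data modulo conjugation'' to a bona fide global bundle and verifying that the correspondence is an honest bijection --- hinges entirely on the same rigidity (unipotence, contractibility) of the relevant stabiliser and automorphism groups. Everything else is the direct comparison of Definitions \ref{defn_Fock-bundle} and \ref{def-g-C-str} via the local normal form of Lemma \ref{lemma:sln-fock-field} and its $\g$-analogue implicit in \cite[Proposition 2.21]{Tho22}.
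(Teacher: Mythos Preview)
Your approach is essentially the same as the paper's in spirit---compare Definitions~\ref{defn_Fock-bundle} and~\ref{def-g-C-str}, and for $G_{ad}$ dispose of $\sigma$ via Proposition~\ref{Prop:sigma-dependence}---but you take a longer route where the paper takes a shortcut. The paper's entire argument is one line: the first assertion is ``compare the definitions'', and the second ``follows directly from Corollary~\ref{Prop:P-topology}''. The point is that once $P$ is known to be topologically trivial, a global trivialisation turns $\Phi\in\Omega^1(S,\g_P)$ into a genuine $\Phi\in\Omega^1(S,\g)$, well-defined up to $G_{ad}$-conjugacy, which is literally a $\g$-complex structure; conversely a $\g$-complex structure is already a global $\g$-valued field, so the trivial bundle carries it as a Fock field.

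Two places where your write-up works harder than necessary. First, your gluing argument for the inverse direction is superfluous: a $\g$-complex structure in Definition~\ref{def-g-C-str} is a global section of $\Omega^1(S,\g)$ up to conjugacy, not merely local data, so there is nothing to patch---just put it on the trivial bundle. Second, your sketch of topological triviality via ``the flag of $\ker$-subbundles'' is vague and, for general $G$, not obviously sufficient; the paper instead proves Corollary~\ref{Prop:P-topology} by deforming $\Phi_2$ to zero (reaching the Fuchsian locus), reducing to the $\PSL_2(\C)$ case, and invoking the explicit description in Proposition~\ref{Prop:psl2-fock-bundle}. Your alternative of ``reading Definition~\ref{def-g-C-str} with an implicit bundle'' changes the statement rather than proving it. None of this is a genuine error, but the cleanest fix is to replace your local-to-global machinery with a direct appeal to Corollary~\ref{Prop:P-topology}.
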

The second assertion follows directly from Corollary \ref{Prop:P-topology} below.
For $G=\mathrm{SL}_n(\C)$, we get a direct link between Fock bundles and the ideals describing higher complex structures of order $n$:
\begin{proposition}\label{link_hcs}
Let $(E, \Phi)$ be an $\mathrm{SL}_n(\C)$-Fock bundle over a surface $S$. The map 
    $$p:\left \{ \begin{array}{ccc}
    \mathrm{Sym}(T^{\mathbb{C}}S) & \longrightarrow &\mathrm{End}(E) \\
    v_1\cdots v_k & \mapsto &\Phi(v_1)\cdots \Phi(v_k)
    \end{array} \right.$$
is well-defined and the kernel of $p$ defines a higher complex structure.
\end{proposition}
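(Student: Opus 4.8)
The statement has two parts: first, that the map $p$ is well-defined (i.e.\ descends from $\mathrm{Sym}(T^\C S)$ to $\End(E)$), and second, that $\ker p$ is an ideal of the form \eqref{Eq:hcs-ideal}, hence a higher complex structure. For well-definedness, the only issue is that $\mathrm{Sym}(T^\C S)$ records products of vector fields in an order-independent way, so I need $\Phi(v)\Phi(w) = \Phi(w)\Phi(v)$ for all $v,w \in T^\C_z S$. Since $T^\C_z S$ is two-dimensional, this reduces to a single commutator $[\Phi_1,\Phi_2]$ in any local complex coordinate $z$, and this vanishes precisely by condition $[\Phi\wedge\Phi]=0$ rewritten as $[\Phi_1,\Phi_2]=0$, as already noted in the text after the proposition ``A $G$-Fock bundle induces a complex structure on $S$.'' So $p$ factors through the symmetric algebra. (One should also check $p$ is well-defined fiberwise as a bundle map, but since $\Phi$ is a global section of $\Omega^1(S,\mf{sl}(E))$ and multiplication in $\End(E)$ is a bundle map, this is immediate.)

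For the second part, I work in a local complex coordinate $z$ for the complex structure induced by the Fock bundle, so that by Lemma~\ref{lemma:sln-fock-field} we have $\Phi_1 = \sum_{i=1}^{n-1} E_{i+1,i}$ (the principal nilpotent), $\Phi_2 = \mu_2\Phi_1 + \mu_3\Phi_1^2 + \cdots + \mu_n\Phi_1^{n-1}$, and moreover $\mu_2 = 0$ in this coordinate. Writing $p = \del_z$, $\bar p = \del_{\bar z}$ for the dual fiber coordinates on $T^{*\C}S$, the map $p$ sends a polynomial $Q(p,\bar p)$ to $Q(\Phi_1,\Phi_2)$, which makes sense because $\Phi_1,\Phi_2$ commute. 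So $\ker p = \{Q : Q(\Phi_1,\Phi_2)=0 \text{ in } \End(E)\}$. I need to identify this ideal. First, $\Phi_1^n = 0$ since $\Phi_1$ is a single Jordan block of size $n$, so $p^n \in \ker p$. Second, $\Phi_2 - (\mu_3\Phi_1^2 + \cdots + \mu_n\Phi_1^{n-1}) = 0$, so the generator $-\bar p + \mu_3 p^2 + \cdots + \mu_n p^{n-1}$ (with $\mu_2 = 0$ here) lies in $\ker p$. This exhibits $\ker p$ as containing the ideal $I$ of the required form \eqref{Eq:hcs-ideal}. For the reverse inclusion: modulo $I$, every polynomial reduces to one of the form $a_0 + a_1 p + \cdots + a_{n-1}p^{n-1}$ with coefficients functions of $(z,\bar z)$, and applying $p$ gives $a_0 \cdot \mathrm{id} + a_1\Phi_1 + \cdots + a_{n-1}\Phi_1^{n-1}$. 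Since $\mathrm{id}, \Phi_1, \dots, \Phi_1^{n-1}$ are linearly independent in $\End(E)$ (they have distinct ``diagonals'' — $\Phi_1^k$ is supported on the $k$-th subdiagonal), this vanishes only if all $a_k = 0$. Hence $\ker p = I$, which is precisely a higher complex structure of order $n$ in the sense of \eqref{Eq:hcs-ideal}.

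\textbf{Remark on the main obstacle.} The routine linear-algebra computations above are straightforward; the one point requiring a little care is the transition to general coordinates. The definition of a higher complex structure in \eqref{Eq:hcs-ideal} is stated relative to an \emph{arbitrary} reference complex structure with a nonzero Beltrami differential $\mu_2$, whereas I have computed $\ker p$ in the coordinate adapted to the Fock bundle (where $\mu_2 = 0$). I should note that $\ker p$ is a coordinate-independent object — it is defined intrinsically from the global tensor $\Phi$ — and that a change of complex coordinate transforms an ideal of the form \eqref{Eq:hcs-ideal} into another ideal of the same form (this is exactly Proposition~1 of \cite{FT}, the compatibility of the local normal forms). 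Thus the condition ``$\ker p$ is a higher complex structure'' is well-posed and verified by the adapted-coordinate computation. The only genuine subtlety, then, is ensuring the equivalence between the ``ideal'' and ``$\g$-complex structure / Fock field'' pictures is the same equivalence already set up in Section~\ref{Sec:HCS} for $\g = \mf{sl}_n(\C)$ — which it is, by construction of $p$.
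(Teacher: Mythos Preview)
Your proof is correct and follows essentially the same approach as the paper's. The paper's own proof is extremely terse: for well-definedness it simply invokes $\Phi\wedge\Phi=0$, and for the identification of $\ker p$ it defers entirely to \cite[Section 4.2]{Tho22}. Your argument unpacks this citation by carrying out the explicit local computation via Lemma~\ref{lemma:sln-fock-field}, which is exactly the ``matrix viewpoint'' the paper alludes to; so your proof is a self-contained elaboration of the paper's sketch rather than a different route.
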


\begin{proof}
To prove that $p$ is well-defined, one has to show that the expression $\Phi(v_1)\cdots \Phi(v_k)$ remains unchanged under permutation of $(v_1,...,v_k)$. This follows from $\Phi\wedge \Phi=0$.
The matrix viewpoint of higher complex structures analyzed in \cite[Section 4.2]{Tho22} implies that the kernel of $p$ is a higher complex structure.
\end{proof}

\begin{proposition}\label{Prop:no-automorphisms}
     A $G$-Fock bundle $(P,\Phi,\sigma)$ has no infinitesimal automorphisms. Thus, all Fock bundles are stable in this sense.
\end{proposition}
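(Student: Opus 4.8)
The plan is to show that an infinitesimal automorphism of a $G$-Fock bundle $(P,\Phi,\sigma)$ must vanish identically. An infinitesimal automorphism is a section $\psi\in\Omega^0(S,\g_P)$ which is infinitesimally $\sigma$-invariant (so $\sigma(\psi)=\psi$) and preserves the Fock field, i.e. $[\psi,\Phi]=0$ as an element of $\Omega^1(S,\g_P)$. The key local observation is that the nilpotency condition forces $\psi$ to lie, pointwise, in the centralizer of a principal nilpotent element, so $\psi$ takes values in an abelian nilpotent subalgebra; combining this with $\sigma$-invariance and the fact (Corollary \ref{cor:sigma_negate_centralizer}) that $\sigma$ negates the whole centralizer $Z(\Phi_1)$ will force $\psi=0$.

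First I would work in a local complex coordinate $z$ adapted to the complex structure induced by the Fock bundle, writing $\Phi=\Phi_1\,dz+\Phi_2\,d\bar z$ with $\Phi_1$ principal nilpotent and $\Phi_2\in Z(\Phi_1)$. The condition $[\psi,\Phi]=0$ splits into $[\psi,\Phi_1]=0$ and $[\psi,\Phi_2]=0$; the first already says $\psi(z)\in Z(\Phi_1(z))$ at every point. Second, I would invoke $\sigma$-invariance: since $\sigma(\psi)=\psi$ while $\sigma$ negates all of $Z(\Phi_1)$ by Corollary \ref{cor:sigma_negate_centralizer}, and $\psi\in Z(\Phi_1)$, we get $\psi=\sigma(\psi)=-\psi$, hence $\psi\equiv 0$ on any coordinate patch, and therefore globally. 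This is the cleanest route; it uses essentially only the nilpotency condition, the $\sigma$-equivariance, and the structure theory of principal nilpotents already assembled in Subsection \ref{Sec:princ-nilp-elements}.

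I should be careful about one subtlety: for general $G$ (not adjoint), a priori an infinitesimal automorphism of the triple could also come from the center of $\g$, but $\g$ is simple so $Z(\g)=0$ and this does not arise; moreover the condition defining infinitesimal automorphisms of the $\sigma_0$-structure is exactly $\sigma$-invariance of $\psi$ at the Lie-algebra level, so there is no discrepancy between $\Aut(P,\sigma)$ and its Lie algebra here. I would also remark that the argument in fact shows something slightly stronger and purely pointwise: at each $z\in S$, any $\sigma$-invariant element of $Z(\Phi_1(z))$ vanishes, so no complex structure, no harmonic metric, and no global analysis is needed.

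The main obstacle, such as it is, is making sure the local-to-global passage is legitimate and that $\sigma$ genuinely negates $Z(\Phi_1)$ in the bundle (not merely in a fixed model fiber): this requires that $\sigma$, being a $\sigma_0$-structure negating $\Phi$, restricts on each fiber to an involution conjugate to $\sigma_0$ that negates the principal nilpotent $\Phi_1(z)$, which is precisely the hypothesis of Corollary \ref{cor:sigma_negate_centralizer}. Once that is checked, the conclusion is immediate and uniform in $z$, so the global statement follows with no further work; stability in the stated sense is then just a restatement of the vanishing of infinitesimal automorphisms.
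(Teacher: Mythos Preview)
Your argument is correct and follows essentially the same route as the paper: an infinitesimal automorphism must commute with $\Phi$, hence lie in $Z(\Phi)$, and since $\sigma$ negates $Z(\Phi)$ by Corollary~\ref{cor:sigma_negate_centralizer} while an automorphism of $(P,\Phi,\sigma)$ must be $\sigma$-invariant, it vanishes. The paper's proof is just the two-line version of what you wrote; your local-coordinate elaboration and the remarks about the center and local-to-global passage are correct but not needed for the argument.
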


\begin{proof}
    Consider an infinitesimal gauge transformation $\eta\in\Omega^0(S,\g_P)$. In order to preserve $\Phi$, we need $\eta\in Z(\Phi)$. Since $Z(\Phi)$ is negated by $\sigma$ (see Corollary \ref{cor:sigma_negate_centralizer}), the only way for $\eta$ to preserve $\sigma$ is to be zero.
\end{proof}

For $\SL_n(\C)$, the Fock field $\Phi$ induces a natural filtration $\mathcal{F}$ on the bundle $E$: 
\begin{proposition}\label{Prop:filtration}
    An $\SL_n(\C)$-Fock bundle has a natural increasing filtration given by $\mathcal{F}_k=\ker \Phi(v)^k$, where $v$ is a local non-vanishing vector field. The filtration is independent of the choice of $v$.
\end{proposition}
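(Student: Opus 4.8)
The plan is to work locally and then check that the local construction is canonical enough to glue. Fix a point $z_0\in S$ and choose a local complex coordinate (the one induced by the Fock bundle) together with a local non-vanishing vector field $v=\partial_z$; write $N:=\Phi(v)$, a local section of $\mathfrak{sl}(E)$ which is fiberwise principal nilpotent by the nilpotency condition. Since $N(z)$ is principal nilpotent, i.e.\ of rank $n-1$ for $\mathfrak{sl}_n(\C)$ (see the discussion after Theorem~\ref{Thm:one-reg-orbit}), the powers $N,N^2,\dots,N^{n-1}$ have ranks $n-1,n-2,\dots,1$ pointwise, so $\dim\ker N(z)^k = k$ is constant in $z$. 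Hence each $\mathcal{F}_k := \ker N^k$ is a smooth subbundle of $E$ of rank $k$, and $\mathcal{F}_1\subset\mathcal{F}_2\subset\cdots\subset\mathcal{F}_n=E$ since $\ker N^k\subset\ker N^{k+1}$ always. This gives the filtration locally.

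The key point is independence of the choice of $v$: if $w$ is another local non-vanishing vector field on the same (simply connected) patch, then $w = f\partial_z + g\partial_{\bar z}$ for smooth complex functions $f,g$, and $\Phi(w) = f\Phi_1 + g\Phi_2$. By Lemma~\ref{lemma:sln-fock-field} we have $\Phi_2 = \mu_2\Phi_1+\mu_3\Phi_1^2+\cdots+\mu_n\Phi_1^{n-1}$ in a suitable trivialization, and more intrinsically $\Phi_2\in Z(\Phi_1)$ is a polynomial in $\Phi_1$ without constant term; consequently $\Phi(w) = f\Phi_1\bigl(1 + (\text{polynomial in }\Phi_1)\bigr)$, i.e.\ $\Phi(w) = \Phi_1\cdot Q$ where $Q$ is invertible (its value at each point is $f\cdot\mathrm{id}$ plus a nilpotent, and $f\neq 0$ away from the real locus, which is where $\Phi(w)$ is required to be principal nilpotent — indeed for $w$ real and non-zero, $f\neq 0$ precisely because the combination must be principal nilpotent, using $|\mu_2|\neq 1$). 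Since $Q$ commutes with $\Phi_1$, we get $\Phi(w)^k = \Phi_1^k Q^k$ with $Q^k$ invertible, hence $\ker\Phi(w)^k = \ker\Phi_1^k = \mathcal{F}_k$. The same computation shows $\mathcal{F}_k$ does not depend on the trivialization of $E$ either, since it is defined purely in terms of the intrinsic operator $\Phi(w)$ on $E$.

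Independence of $v$ immediately gives gluing: on overlaps $U\cap U'$, the locally defined subbundles agree because they are both computed as $\ker\Phi(v)^k$ for any common choice of $v$, so the $\mathcal{F}_k$ patch to global subbundles of $E$, yielding the asserted global increasing filtration. The main (and only real) obstacle is the $v$-independence step: one must be careful that the "polynomial in $\Phi_1$" description of $\Phi_2$ is coordinate-independent — this is exactly the content that $Z(\Phi_1)$ is the algebra of polynomials in $\Phi_1$ without constant term (valid since $\Phi_1$ is regular nilpotent in $\mathfrak{sl}_n$), combined with the fact that the coefficient $f$ of $\partial_z$ in any non-vanishing real vector field is non-zero, which is forced by the nilpotency condition together with $|\mu_2|\neq 1$. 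Everything else is the routine observation that kernels of powers of a pointwise-regular-nilpotent endomorphism have locally constant rank.
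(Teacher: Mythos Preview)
Your proof is correct and follows essentially the same approach as the paper's (terse) argument: principal nilpotency of $\Phi(v)$ gives the rank-$k$ subbundles, and independence of $v$ comes from $\Phi\wedge\Phi=0$ together with the fact that the centralizer of a regular nilpotent in $\mathfrak{sl}_n$ consists of polynomials in it, so that $\Phi(w)=\Phi(v)\cdot Q$ with $Q$ invertible. One small expository wobble: since you chose the complex coordinate induced by the Fock bundle you have $\mu_2=0$, so the constant term of $Q$ really is $f$ and $f\neq 0$ follows immediately from $w=f\partial_z+\bar f\partial_{\bar z}$ being real and nonzero; the appeal to $|\mu_2|\neq 1$ and the phrase ``away from the real locus'' are out of place (in an arbitrary coordinate the constant term would instead be $f+g\mu_2$, whose nonvanishing is exactly the principal-nilpotency of $\Phi(w)$).
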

The proposition follows directly from the fact that $\Phi(v)$ is a principal nilpotent element. Independence follows from $\Phi\wedge\Phi=0$ and the fact that $Z(\Phi)$ is abelian.

\subsection{Fuchsian Locus}\label{Sec:fuchsian-locus}

The Fuchsian locus in a Hitchin component, for an adjoint group, is the set of representations which factor through $\PSL_2(\R)$. Similarly, we will introduce the Fuchsian locus in the space of Fock bundles for an adjoint group as the subset induced from $\PSL_2(\C)$-Fock bundles. When $G$ has center, we call a Fock bundle Fuchsian if the associated Fock bundle for $G_{ad}$ is Fuchsian.
It will turn out that every Fock bundle with $\Phi_2 = 0$ is Fuchsian, so all Fock bundles are deformable to the Fuchsian locus. Every Fock bundle in the Fuchsian locus can be equipped with a holomorphic structure, making it into a Higgs bundle. This is the uniformizing Higgs bundle. 

\begin{proposition}
    Any $\SL_2(\C)$-Fock bundle is of the form $(E,\Phi,g)$ where $E = K^{1/2}\oplus K^{-1/2}$, where we use the complex structure on $S$ induced from the Fock bundle, $g$ is the apparent symmetric pairing in which these line subbundles are isotropic, and $$\Phi=\begin{pmatrix} 0&0\\1&0\end{pmatrix}$$
    where $1$ denotes the canonical 1-form valued in $\Hom(K^{1/2},K^{-1/2})\cong K^{-1}$.
\end{proposition}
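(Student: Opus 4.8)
The plan is to start with an arbitrary $\SL_2(\C)$-Fock bundle $(E,\Phi,g)$ and work in the complex structure induced by $\Phi$, so that locally $\Phi=\Phi_1\,dz$ with $\Phi_1$ principal nilpotent (the $d\bar z$-component vanishes since $\mu_2=0$ in the induced complex structure, by Lemma \ref{lemma:sln-fock-field}). First I would use the rank-1 principal nilpotency of $\Phi_1$ to produce a canonical line subbundle $L_1=\ker\Phi\subset E$, and observe that $\Phi$ descends to a nowhere-vanishing bundle map $E/L_1\to L_1\otimes K$; equivalently, the natural filtration $\mathcal F$ of Proposition \ref{Prop:filtration} here is just $0\subset L_1\subset E$. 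Taking a local flat frame adapted to $\Phi_1=\left(\begin{smallmatrix}0&0\\1&0\end{smallmatrix}\right)$ shows that the transition functions of $E$ are lower-triangular; the induced extension class and the isomorphism $E/L_1\cong L_1\otimes K$ then force, after comparing determinants (here is where the fixed unit volume form $\nu$ enters: $\det E$ is trivial, so $L_1\otimes(E/L_1)$ is trivial, hence $L_1\otimes(L_1\otimes K)$ is trivial and $L_1^2\cong K^{-1}$), that $L_1\cong K^{-1/2}$ for some choice of square root. So as a \emph{holomorphic} bundle in the induced complex structure (using the holomorphic structure $\bar\partial_E$ for which $\Phi$ is holomorphic — this is exactly the Chern/Higgs structure that will come from Theorem \ref{Thm-filling-in} applied later, but here one only needs that $\Phi$ is $\bar\partial$-holomorphic to define $L_1$), $E$ sits in an extension $0\to K^{-1/2}\to E\to K^{1/2}\to 0$.

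Next I would bring in the symmetric pairing $g$. Condition (3) in Definition \ref{defn_Fock-bundle}, i.e. $\sigma(\Phi)=-\Phi$, translates (see Section \ref{Sec:involutions}, and in the $\SL_2$ case $\sigma$ is just the $g$-self-adjointness of $\Phi$ up to the Hitchin sign conventions) into $\Phi$ being $g$-self-adjoint. A $g$-self-adjoint rank-1 nilpotent has $\ker\Phi=L_1$ equal to the $g$-orthogonal complement of $\operatorname{Im}\Phi$; since $\operatorname{Im}\Phi=L_1\otimes K\hookrightarrow E$ is again $L_1$ (the image of the rank-1 nilpotent is its kernel), we get that $L_1$ is $g$-isotropic. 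Thus $g$ identifies $E/L_1$ with $L_1^\vee$, which is consistent with the above and pins down the pairing to be precisely the "apparent" one in which $K^{1/2}$ and $K^{-1/2}$ are isotropic and dually paired. Then I would choose the trivialization of $\operatorname{Hom}(K^{1/2},K^{-1/2})\cong K^{-1}$ by the canonical $1$-form and check that in the resulting frame $\Phi$ is exactly $\left(\begin{smallmatrix}0&0\\1&0\end{smallmatrix}\right)$ with the off-diagonal entry the tautological section; the only ambiguity left — rescaling the splitting — is killed by the $g$-isotropy plus the volume normalization, so the Fock bundle is determined up to isomorphism.

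The main obstacle I anticipate is the \textbf{global splitting} $E\cong K^{1/2}\oplus K^{-1/2}$ rather than merely an extension: a priori $E$ is only an extension of $K^{1/2}$ by $K^{-1/2}$, and such extensions are classified by $H^1(S,K^{-1})\neq 0$, so one must argue the class is zero. The clean way is to use the symmetric pairing: a nondegenerate $g$ for which $L_1=K^{-1/2}$ is isotropic induces an isomorphism $E\cong E^\vee$ compatible with the filtration, and self-duality of the extension forces the extension class to be symmetric; but $H^1(S,K^{-1})$ carries a self-duality under which the relevant symmetrization is the \emph{negative} (the obstruction lives in the skew-part), so the class vanishes and the sequence splits $g$-orthogonally. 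Alternatively one can invoke that this Fock bundle is Fuchsian and that the splitting is the one coming from the uniformizing Higgs bundle of Hitchin \cite{Hit87,Hit92} — but I would prefer the intrinsic argument via $g$ so as not to presuppose the identification with the uniformizing Higgs bundle that this very proposition is meant to establish. Once the splitting is in hand, every remaining step is a short local computation.
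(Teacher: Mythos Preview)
Your approach reverses the order of the paper's proof, and in doing so you manufacture an obstacle that is not there. The paper starts from the $\sigma_0$-structure: for $\SL_2(\C)$ the fixed subgroup $G^{\sigma_0}$ is $\mathrm{SO}(2,\C)\cong\C^*$, so the symmetric pairing $g$ \emph{already is} a reduction of structure group to $\C^*$, hence a smooth decomposition $E=L\oplus L^{-1}$ into the two isotropic line subbundles. With this splitting in hand, one reads off that $\Phi$ (being in $\g_P^{-\sigma}$ and nilpotent, nowhere zero) is a nonvanishing section of $T^*S\otimes L^{-2}$, forcing $L^2\cong K$. No filtration, no extension class, no cohomology.

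Your route via $L_1=\ker\Phi$ can also be made to work, but the ``main obstacle'' you raise is spurious. Fock bundles are smooth objects: there is no holomorphic structure on $E$ in the hypotheses, so speaking of an extension class in $H^1(S,K^{-1})$ has no meaning here, and your appeal to Theorem~\ref{Thm-filling-in} to produce one is both unnecessary and circular (that theorem needs a hermitian structure you have not introduced). The correct and immediate fix, once you have shown $L_1$ is $g$-isotropic, is to observe that at each point the quadric $g=0$ in $\mathbb{P}(E_z)$ consists of exactly two points; the second isotropic line bundle $L_2$ is therefore a smooth complement to $L_1$, giving $E=L_1\oplus L_2$ with $L_2\cong L_1^{-1}$ via $g$. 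This is precisely the paper's starting observation, reached one step later. After that your identification $L_1^2\cong K^{-1}$ and the description of $\Phi$ are fine.
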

\begin{proof}
    Let $(E,\Phi,g)$ be an $\SL_2(\C)$-Fock bundle. The symmetric pairing $g$, gives an isotropic decomposition into dual line bundles $E = L\oplus L^{-1}$. We get a decomposition
    \[\mf{sl}(E) = L^{-2} \oplus \underline{\C} \oplus L^{2}\]
    in which $\sigma:X\mapsto -X^{*_g}$ acts by $(-1,1,-1)$. The Fock field $\Phi$ must be valued in the $-1$ eigenspace $L^2 \oplus L^{-2}$. Since $\Phi$ is nilpotent, it is valued in only one of these line bundles at any given point, and since it is nowhere vanishing, it is valued in only one of the line bundles globally. Without loss of generality, suppose it is valued in $L^{-2}$. Since it is nowhere vanishing, $\Phi$ is an isomorphism from the holomorphic tangent bundle of $S$ to $L^{-2}$, so $L$ must be a square root of the canonical bundle. 
\end{proof}
As a corollary, any $\SL_2(\C)$-Fock bundle has a natural upgrade to a Higgs bundle for the induced complex structure, because $K^{1/2}\oplus K^{-1/2}$ is naturally a holomorphic vector bundle. We see that an $\SL_2(\C)$-Fock bundle is equivalent to a complex structure together with a spin structure. Similarly, a $\PSL_2(\C)$-Fock bundle is equivalent to a complex structure.
\begin{proposition}\label{Prop:psl2-fock-bundle}
    Any $\PSL_2(\C)$-Fock bundle is of the form $(P,\Phi,\sigma)$ where $P = K_* \times_{\C^*} \PSL_2(\C)$
    with adjoint bundle  $\mf{sl}_2(\C)_P = K^{-1}\oplus \underline{\C} \oplus K$, $\Phi$ is the canonical $K^{-1}$-valued $1$-form, and $\sigma$ acts by $(-1,1,-1)$ on the adjoint bundle. Here, $K_*$ denotes the $\C^*$-bundle of nonzero covectors.
\end{proposition}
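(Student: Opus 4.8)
The plan is to reduce the $\PSL_2(\C)$ case to the $\SL_2(\C)$ case already treated, using the fact that $\PSL_2(\C)$ is the adjoint group. First I would recall that for the adjoint group, a $G$-Fock bundle is specified up to isomorphism by the pair $(P,\Phi)$ (as remarked after Definition \ref{defn_Fock-bundle}, since all choices of $\sigma$ negating $\Phi$ are conjugate by an automorphism fixing $\Phi$, and such $\sigma$ exists globally for $G_{ad}$). So it suffices to identify $P$ and $\Phi$, and then observe that the stated $\sigma$ is an admissible $\sigma_0$-structure negating $\Phi$.

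Next I would use the complex structure on $S$ induced by the Fock bundle (the Proposition preceding this statement), and work holomorphically with respect to it. Given a $\PSL_2(\C)$-Fock bundle $(P,\Phi)$, the adjoint bundle $\mf{sl}_2(\C)_P$ is a rank-$3$ bundle with a nondegenerate symmetric pairing (the Killing form) and the Fock field $\Phi\in\Omega^1(S,\mf{sl}_2(\C)_P)$ is pointwise principal nilpotent, i.e. nonzero nilpotent. The key step is to produce from $\Phi$ a reduction of $P$ to a $\C^*$-subbundle: pointwise, a nonzero nilpotent element of $\mf{sl}_2(\C)$ determines a unique line in $\C^2$ (its image/kernel), hence a Borel, and the set of such nilpotents with fixed associated Borel is a single $\C^*$-orbit under the adjoint action of the maximal torus. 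Globally this should exhibit $P$ as associated to a $\C^*$-bundle; the nowhere-vanishing condition on $\Phi$ together with the fact that $\Phi_1$ is principal nilpotent (established in the local discussion after Proposition \ref{Prop:link-to-g-C-str}) identifies $\Phi$ with the canonical $K^{-1}$-valued $1$-form, forcing the associated $\C^*$-bundle to be $K_*$ (up to the subtlety that $K_*$ and $\overline{K}_*$ give isomorphic adjoint bundles, resolved by the orientation convention of Remark \ref{rem:orientation}, i.e. $|\mu_2|<1$). One could instead deduce this more cheaply from the $\SL_2(\C)$ proposition: lift $(P,\Phi)$ to an $\SL_2(\C)$-Fock bundle locally, apply the previous proposition to get $E=K^{1/2}\oplus K^{-1/2}$, and observe $P = \mathrm{Frame}_{\PSL_2}(E)$ together with $\mf{sl}(E)=\mf{sl}_2(\C)_P = K^{-1}\oplus\underline{\C}\oplus K$; the choice of square root of $K$ disappears upon passing to $\PSL_2(\C)$, so $P = K_*\times_{\C^*}\PSL_2(\C)$ is well-defined globally even though $E$ is not.

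Finally I would check the $\sigma$-part: the stated $\sigma$ acting by $(-1,1,-1)$ on $K^{-1}\oplus\underline{\C}\oplus K$ is exactly Hitchin's involution $\sigma_0$ for the principal (here the only) $\mf{sl}_2$-triple in $\mf{sl}_2(\C)$ — it negates highest and lowest weight vectors of the adjoint representation — so it is a genuine $\sigma_0$-structure, and it manifestly negates $\Phi$ which lies in the $K^{-1}$ summand. By the uniqueness of $\sigma$ up to automorphism fixing $\Phi$ (Lemma \ref{lem:space_of_sigma} together with the adjoint-group case of Proposition \ref{Prop:sigma-dependence}), any $\sigma$ in the given Fock bundle agrees with this one after an isomorphism, completing the identification.

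The main obstacle I anticipate is the global step: going from the pointwise statement ``a nonzero nilpotent determines a line/Borel and a $\C^*$-orbit'' to a genuine global reduction of $P$ to a $\C^*$-bundle, and pinning down \emph{which} $\C^*$-bundle (i.e. that it is precisely $K_*$, not some twist). The cleanest route is the reduction-to-$\SL_2(\C)$ argument sketched above, which sidesteps this by importing the already-proven structure of $\SL_2(\C)$-Fock bundles; the only care needed there is checking that the non-canonical choice of spin structure $K^{1/2}$ genuinely becomes canonical data after passing to the adjoint group, which is clear since $\pm 1\subset\SL_2(\C)$ acts trivially in the adjoint representation.
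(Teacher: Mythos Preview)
Your proposal is correct, and your preferred route---reducing to the $\SL_2(\C)$ proposition and observing that the choice of $K^{1/2}$ disappears upon passing to the adjoint group---is exactly what the paper does, only the paper does it implicitly: no proof is written out, the proposition is stated immediately after the sentence ``Similarly, a $\PSL_2(\C)$-Fock bundle is equivalent to a complex structure,'' and the remark following it (``Any $\PSL_2(\C)$-Fock bundle can be upgraded to an $\SL_2(\C)$-Fock bundle by choosing a square root of $K$'') confirms that the intended argument is the one you give.

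Your alternative direct route (extract a Borel reduction from the line of nilpotents, then identify the $\C^*$-bundle) is also viable but, as you correctly flag, the passage from a Borel reduction to a $\C^*$-reduction is where the work lies; one really needs $\sigma$ (its $+1$-eigenline picks out the Cartan direction and hence the opposite nilpotent line) to split the Borel and get the torus reduction canonically. The $\SL_2$-reduction sidesteps this entirely, since the $\SL_2$ proof already uses the pairing $g$ to produce the isotropic splitting, and $g$ lifts $\sigma$ locally.
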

In particular, $P$ is topologically trivial because $K$ has even degree. Any $\PSL_2(\C)$-Fock bundle can be upgraded to an $\SL_2(\C)$-Fock bundle by choosing a square root of $K$.

Now restrict attention to an adjoint group $G_{ad}$. Then, the principal 3-dimensional subgroup is always $\PSL_2(\C)$. This is because the $\mf{sl}_2$-representations appearing in $\mf{g}$ are always odd dimensional. Fix a principal embedding $\PSL_2(\C)\to G_{ad}$ such that the diagram
\[
\begin{tikzcd}
  \PSL_2(\C) \arrow[r] \arrow[d,"\sigma_0"]
    & G_{ad} \arrow[d, "\sigma_0"] \\
  \PSL_2(\C) \arrow[r]
& G_{ad} \end{tikzcd}
\]
commutes. 
\begin{definition}
    If $(P,\Phi,\sigma)$ is a $\PSL_2(\C)$-Fock bundle, then the \emph{induced $G_{ad}$-Fock bundle} is the triple $(P',\Phi',\sigma')$ where 
    \begin{itemize}
        \item $P'$ is the induced bundle $P\times_{\PSL_2(\C)} G_{ad}$, or alternatively $P^\sigma \times_{\PSL_2(\C)^{\sigma_0}} G_{ad}$.
        \item $\Phi'$ is the composition of $\Phi$ with the inclusion $\mf{sl}_2(\C)_{P}\to \mf{g}_{P'}$, and
        \item $\sigma'$ is $\sigma_0$ acting on the right factor $G_{ad}$ in the second description of $P'$.
    \end{itemize}
\end{definition}
The same definition will work for general $G$ with $\PSL_2(\C)$ possibly replaced by $\SL_2(\C)$.
\begin{definition}
    A $G_{ad}$-Fock bundle is in the \emph{Fuchsian locus} if it is induced from a $\PSL_2(\C)$-Fock bundle.
    For general $G$, a $G$-Fock bundle is in the Fuchsian locus if the associated $G_{ad}$-Fock bundle is in the Fuchsian locus.
\end{definition}

\begin{example}\label{Ex:Fuchsian-locus}
    The $\SL_n(\C)$-Fock bundle induced from an $\SL_2(\C)$-Fock bundle is the pair $(E,\Phi)$ given by 
    $$E=K^{(n-1)/2}\oplus K^{(n-3)/2}\oplus ...\oplus K^{(1-n)/2} \;\; \text{ and } \;\; \Phi=\sum_{i=1}^{n-1}E_{i+1,i},$$
    where $K$ denotes the canonical bundle on $S$, $K^{1/2}$ is a choice of a square-root and $E_{i+1,i}$ is the canonical identity 1-form valued in $\mathrm{Hom}(K^{(n-k)/2},K^{(n-k-2)/2}) = K^{-1}$. Again, if we view $E$ as a smooth vector bundle, this is a Fock bundle, but if we view it as a holomorphic bundle, it becomes the uniformizing $\mathrm{SL}_n(\C)$-Higgs bundle.
\end{example}

\begin{proposition}\label{prop:fock-fuchsian-locus}
A Fock bundle $(P,\Phi,\sigma)$ is in the Fuchsian locus if and only if $\Phi_2=0$.
\end{proposition}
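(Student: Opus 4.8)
The plan is to prove both implications of the equivalence $(P,\Phi,\sigma)$ Fuchsian $\iff$ $\Phi_2=0$, where $\Phi=\Phi_1\,dz+\Phi_2\,d\bar z$ in the complex coordinate induced by the Fock bundle. The forward direction is essentially immediate: if the Fock bundle is induced from a $\PSL_2(\C)$-Fock bundle, then by Proposition \ref{Prop:psl2-fock-bundle} the underlying $\PSL_2(\C)$-Fock field is the canonical $K^{-1}$-valued $1$-form, which in the induced complex structure is purely of type $(1,0)$; since the principal embedding is holomorphic (the induced bundle $P'$ and the inclusion $\mf{sl}_2(\C)_P \hookrightarrow \mf{g}_{P'}$ are built without reference to any extra data), $\Phi'$ remains of type $(1,0)$, i.e. $\Phi_2' = 0$. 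One should note here that the complex structure induced by the $G_{ad}$-Fock bundle agrees with that induced by the underlying $\PSL_2(\C)$-Fock bundle, which follows from the characterization via the unique non-principal-nilpotent direction together with the fact that the principal embedding sends principal nilpotents to principal nilpotents and is injective on centralizers.

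For the converse, suppose $\Phi_2 = 0$, so that in the induced complex structure $\Phi = \Phi_1\,dz$ with $\Phi_1$ principal nilpotent at every point. First reduce to the adjoint case: by definition a $G$-Fock bundle is Fuchsian iff its associated $G_{ad}$-Fock bundle is, and the passage to $G_{ad}$ does not change $\Phi_2$, so we may assume $G = G_{ad}$. Now the goal is to construct a $\PSL_2(\C)$-reduction. Pointwise, $\Phi_1(z)$ is principal nilpotent; by the Jacobson--Morozov lemma and Proposition \ref{Prop:space-of-sl2-f}, the set of $\mf{sl}_2$-subalgebras of $\mf{g}_{P,z}$ containing $\Phi_1(z)$ is a contractible torsor — but we need to pin one down canonically, using $\sigma$. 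Here I would invoke Lemma \ref{lemma:unique-sigma-sl2}: since $\sigma(\Phi) = -\Phi$ gives $\sigma(\Phi_1) = -\Phi_1$ and $\sigma$ is fiberwise conjugate to $\sigma_0$, there is a \emph{unique} $\sigma$-invariant $\mf{sl}_2$-subalgebra of $\mf{g}_{P,z}$ containing $\Phi_1(z)$. This produces a well-defined sub-bundle $\mf{s}\subset \mf{g}_P$ of $\mf{sl}_2$-subalgebras, and one checks it is smooth (uniqueness plus smoothness of $\Phi_1$ and $\sigma$; the midpoint construction in the proof of Lemma \ref{lemma:unique-sigma-sl2} is manifestly smooth in the data).

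From the sub-bundle $\mf{s}$ of principal $\mf{sl}_2$-subalgebras one recovers a reduction of structure group of $P$ to the subgroup $N_{G_{ad}}(\PSL_2(\C))$ normalizing a principal $\PSL_2(\C)$; since $G_{ad}$ is adjoint and the principal $\PSL_2(\C)$ is its own normalizer in the relevant sense (the centralizer of a principal $\mf{sl}_2$ is the center, which is trivial here), this is a genuine reduction $Q$ to $\PSL_2(\C)$. It remains to check that $\Phi$ and $\sigma$ on $P$ are the ones induced from the corresponding data on $Q$: the $\sigma$-invariance of $\mf{s}$ means $\sigma$ restricts to $\sigma_0$ on the $\PSL_2(\C)$-reduction, and $\Phi_1$, being a principal nilpotent section of $\mf{s}$ which is $\sigma$-negated, is exactly the image of the canonical nilpotent of a $\PSL_2(\C)$-Fock field under $\mf{sl}_2(\C)_Q \hookrightarrow \mf{g}_P$ — and by Proposition \ref{Prop:psl2-fock-bundle} the $\PSL_2(\C)$-Fock bundle so obtained has underlying bundle $K_*\times_{\C^*}\PSL_2(\C)$. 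The main obstacle is the gluing/smoothness step: verifying that the fiberwise-unique $\sigma$-invariant $\mf{sl}_2$-subalgebra varies smoothly and that the resulting reduction is genuinely to $\PSL_2(\C)$ rather than to a larger group, together with matching the canonical $1$-form. I expect all of this to go through cleanly given Lemma \ref{lemma:unique-sigma-sl2} and Corollary \ref{cor:sigma_negate_centralizer}, but it is where the real content lies.
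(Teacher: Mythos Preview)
Your proposal is correct and follows essentially the same approach as the paper: the forward direction is immediate, and for the converse you use Lemma~\ref{lemma:unique-sigma-sl2} to produce the unique $\sigma$-invariant principal $\mf{sl}_2$-subbundle, then observe that the normalizer of a principal $\mf{sl}_2$ in $G_{ad}$ is the principal $\PSL_2(\C)$ itself (since its centralizer is trivial), yielding the desired reduction. The paper's proof is somewhat terser about the smoothness and matching steps you flag, but the structure and key ingredients are identical.
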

\begin{proof}
    A Fock field in the Fuchsian locus clearly has no $(0,1)$-part. For the converse, by Lemma \ref{lemma:unique-sigma-sl2}, we know that there is a unique $\sigma$-invariant $\mathfrak{sl}_2$-subbundle $\mathfrak{p}$ of $\g_P$ containing the image of $\Phi$, because the image of $\Phi$ is a line bundle of principal nilpotents negated by $\sigma$. 

    The normalizer in $G_{ad}$ of a principal $\mathfrak{sl}_2$-subalgebra is just the principal $\PSL_2(\C)$ it generates. To see this, first recall that the centralizer of a principal $\mf{sl}_2$ is trivial, then note that anything in the normalizer can be multiplied by an element of $\PSL_2(\C)$ to be in the centralizer. The subbundle $\mf{p}$ thus gives a reduction of structure group of $G_{ad}$ to $\PSL_2(\C)$. Call this reduction of structure group $Q\subset P$. The induced map of adjoint bundles is simply the inclusion $\mf{p}\to \mf{g}_P$. The involution $\sigma$ on $\mf{g}_P$ restricts to an involution on $\mf{p}$, and in fact is the unique extension of this involution conjugate to $\sigma_0$. We see that $(P,\Phi,\sigma)$ is the induction from $\PSL_2(\C)$ to $G$ of the Fock bundle $(Q,\Phi,\sigma|_{Q})$. 
\end{proof}

\begin{corollary}\label{Prop:P-topology}
    Let $(P,\Phi,\sigma)$ be a $G$-Fock bundle. Then $P$ is topologically trivial.
\end{corollary}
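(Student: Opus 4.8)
The plan is to reduce everything to the Fuchsian locus, where topological triviality has already been recorded. First I would replace the Fock field $\Phi$ by its $(1,0)$-part $\Phi^{1,0}$ with respect to the complex structure that $\Phi$ induces on $S$, and check that $(P,\Phi^{1,0},\sigma)$ is again a $G$-Fock bundle inducing the \emph{same} complex structure. Condition (1) is then automatic, since a Riemann surface carries no nonzero $(2,0)$-forms; condition (3) holds because $\sigma$ is $\C$-linear on $\g_P$ (being the composition of the two conjugate-linear involutions $\rho_0$ and $\tau_0$), hence preserves the Hodge decomposition, so $\sigma(\Phi^{1,0})=(\sigma\Phi)^{1,0}=-\Phi^{1,0}$; and for condition (2) one works in a local complex coordinate $z$ adapted to $\Phi$, where $\Phi^{1,0}=\Phi_1\,dz$ with $\Phi_1$ principal nilpotent, so that $\Phi^{1,0}(v)=dz(v)\,\Phi_1$ is a nonzero scalar multiple of $\Phi_1$ for every nonzero real tangent vector $v$. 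The same local picture shows that the unique non-principal-nilpotent direction of $\Phi^{1,0}$ is still $\C\,\partial_{\bar z}$, which avoids the real locus, so the induced complex structure does not change; in particular the $(0,1)$-part of the new Fock field vanishes.

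Next I would apply Proposition~\ref{prop:fock-fuchsian-locus}: since $(P,\Phi^{1,0},\sigma)$ has vanishing $(0,1)$-part, it lies in the Fuchsian locus, which means that the associated $G_{ad}$-Fock bundle $(\bar P,\bar\Phi^{1,0},\bar\sigma)$ with $\bar P=P\times_G G_{ad}$ is induced from a $\PSL_2(\C)$-Fock bundle $(Q,\psi,\sigma_Q)$; hence $\bar P\cong Q\times_{\PSL_2(\C)}G_{ad}$. By Proposition~\ref{Prop:psl2-fock-bundle}, $Q\cong K_*\times_{\C^*}\PSL_2(\C)$ is topologically trivial because $\deg K=2g-2$ is even, and therefore $\bar P$, being associated to a topologically trivial bundle, is topologically trivial as well.

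Finally I would transfer triviality back from $\bar P$ to $P$. A complex simple Lie group has finite center, so the long exact homotopy sequence attached to $Z(G)\to G\to G_{ad}$ shows that $\pi_1(G)\hookrightarrow\pi_1(G_{ad})$ is injective. Since topological principal $H$-bundles over the closed oriented surface $S$ are classified by $[S,BH]\cong H^2(S;\pi_1 H)\cong\pi_1(H)$ for connected $H$, and this classification is natural in $H$, the topological class of $P$ maps to that of $\bar P$ under $\pi_1(G)\to\pi_1(G_{ad})$. The class of $\bar P$ vanishes, so injectivity forces the class of $P$ to vanish, i.e.\ $P$ is topologically trivial.

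I expect the only genuinely delicate point to be the verification in the first paragraph: one must make sure that discarding the $(0,1)$-part of $\Phi$ both preserves all three Fock-bundle axioms \emph{and} leaves the induced complex structure unchanged, so that Proposition~\ref{prop:fock-fuchsian-locus} applies verbatim. Everything after that is routine bookkeeping with induced bundles and the standard homotopy classification of bundles over a surface.
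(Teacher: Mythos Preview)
Your proof is correct and follows essentially the same strategy as the paper: reduce to the Fuchsian locus, pass to the adjoint group, invoke Proposition~\ref{Prop:psl2-fock-bundle}, and lift triviality back to $G$. The only noteworthy differences are that you replace the paper's unspecified ``continuous deformation'' of $\Phi$ by the single step $\Phi\mapsto\Phi^{1,0}$ (which is cleaner, since $P$ never moves), and you actually justify the lift from $G_{ad}$ to $G$ via the injection $\pi_1(G)\hookrightarrow\pi_1(G_{ad})$ and the classification of bundles over surfaces, whereas the paper simply asserts this step.
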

\begin{proof}
    We can deform $\Phi$ continuously to get to the Fuchsian locus where $\Phi_2 = 0$. This does not alter the topology of $P$. We then consider the induced $G_{ad}$-bundle $P_{ad}$. If $P_{ad}$ is trivial, it follows the same for $P$. We know that $P_{ad}$ is induced from a $\PSL_2(\mathbb{C})$-Fock bundle. We have seen in Proposition \ref{Prop:psl2-fock-bundle} that those are topologically trivial.
\end{proof}

As with the $\SL_n(\C)$-case, $G$-Fock bundles in the Fuchsian locus are exactly those obtained by uniformizing $G$-Higgs bundles by forgetting the holomorphic structure. For any $G$, there is a finite collection of $G$-Fock bundles which induce a given $G_{ad}$-Fock bundle.

\begin{proposition}
    Let $A$ be the subgroup of the center of $G$ which is fixed by $\sigma$. There is a simply transitive action of $\mathrm{H}^1(S,A)$ on the set of $G$-Fock bundles which induce a given $G_{ad}$-Fock bundle.
\end{proposition}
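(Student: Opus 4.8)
The plan is to recognize this as a standard ``twisted lifts'' / gerbe-type counting problem. Fix a $G_{ad}$-Fock bundle $(P_{ad},\Phi_{ad},\sigma_{ad})$ in question. A $G$-Fock bundle $(P,\Phi,\sigma)$ inducing it consists of: a principal $G$-bundle $P$, together with an identification $P\times_G G_{ad}\cong P_{ad}$, a lift $\Phi$ of $\Phi_{ad}$, and a $\sigma_0$-structure $\sigma$ on $P$ compatible with $\sigma_{ad}$. The center $Z(G)$ acts on the set of such lifts (change of the identification with $P_{ad}$), and I claim this action is transitive with a $\mathrm{H}^1(S,A)$ worth of choices, once we check that the $\Phi$ and $\sigma$ data are canonically carried along. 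Concretely, I would first argue that once $P$ is fixed, both $\Phi$ and $\sigma$ are uniquely determined: the lift $\Phi\in\Omega^1(S,\g_P)$ is forced since $\g_P=\g_{P_{ad}}$ canonically (the adjoint representation factors through $G_{ad}$), so $\Phi=\Phi_{ad}$ with no choice; and by the discussion after Definition~\ref{defn_Fock-bundle} (the paragraph citing Proposition~\ref{Prop:sigma-dependence}), any two $\sigma_0$-structures negating $\Phi$ on the same bundle $P$ are conjugate by an automorphism fixing $\Phi$, and such automorphisms are sections of $Z(\Phi)\cap(\text{something})$; combined with Corollary~\ref{cor:sigma_negate_centralizer} and Proposition~\ref{Prop:no-automorphisms}-style rigidity this pins $\sigma$ down uniquely up to the center. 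So the classification reduces to: classify $G$-bundles $P$ with a chosen isomorphism $P/Z(G)\cong P_{ad}$, modulo isomorphism.

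Next I would set up the cohomological count. Lifts of a $G_{ad}$-bundle to a $G$-bundle are governed by the exact sequence $1\to Z(G)\to G\to G_{ad}\to 1$, giving the obstruction/torsor statement: the set of isomorphism classes of $G$-lifts of $P_{ad}$, if nonempty, is a torsor over $\mathrm{H}^1(S,Z(G))$ (using that $Z(G)$ is abelian, so $\mathrm{H}^1$ is a group). The Fuchsian hypothesis guarantees nonemptiness: by Proposition~\ref{prop:fock-fuchsian-locus} and the constructions in Example~\ref{Ex:Fuchsian-locus}, the $G_{ad}$-Fock bundle is induced from a $\PSL_2(\C)$-Fock bundle, and choosing a square root of $K$ produces an explicit $G$-Fock lift (the uniformizing Higgs bundle construction), hence a basepoint. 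Then I would cut down from $\mathrm{H}^1(S,Z(G))$ to $\mathrm{H}^1(S,A)$ using the $\sigma_0$-compatibility constraint: a lift $P$ must admit a $\sigma_0$-structure extending $\sigma_{ad}$. Twisting the basepoint by a class $c\in\mathrm{H}^1(S,Z(G))$, the new bundle carries a $\sigma_0$-structure lifting $\sigma_{ad}$ if and only if $c$ is $\sigma_0$-invariant (i.e. lies in $\mathrm{H}^1(S,Z(G)^{\sigma_0})=\mathrm{H}^1(S,A)$) — because $\sigma$ acts on the set of lifts compatibly with its action $\sigma_0$ on $Z(G)$, and a compatible $\sigma$ exists iff the twisting class is fixed by that action. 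Finally, within the $\sigma_0$-invariant locus, the choice of $\sigma$ itself adds no further multiplicity (by the uniqueness from the first paragraph), and distinct $c\in\mathrm{H}^1(S,A)$ give non-isomorphic Fock bundles because an isomorphism of Fock bundles in particular is an isomorphism of the underlying $G$-bundles commuting with the maps to $P_{ad}$, which would force the difference class to vanish. This yields the simply transitive $\mathrm{H}^1(S,A)$-action.

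The main obstacle I anticipate is bookkeeping the $\sigma$-compatibility precisely: one must be careful that $\sigma_0$ acts on $Z(G)$ (not necessarily trivially — e.g. for $\SL_n$ with $\sigma_0$ a real form, $\sigma_0$ can act by inversion on $\mu_n\subset Z(\SL_n)$), so ``$\sigma$-invariant lift'' genuinely means fixed under a possibly-nontrivial involution on the coefficient group, which is exactly why the answer is $\mathrm{H}^1(S,A)$ with $A=Z(G)^{\sigma}$ rather than $\mathrm{H}^1(S,Z(G))$. I would also need the technical input, surely established earlier or easy, that $\sigma_0$'s action on isomorphism classes of lifts is affine over its action on $\mathrm{H}^1(S,Z(G))$, so that the fixed-point set of the former is a torsor over the fixed subgroup of the latter. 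A secondary (routine) point is checking nonemptiness carefully for general $G$ rather than just $\SL_n$ and $G_{ad}$: one uses that the Fuchsian locus is, by definition, the image of $\PSL_2(\C)$- (or $\SL_2(\C)$-) Fock bundles under induction, and that the induction functor for $G$ (via $\SL_2(\C)\to G$ through a fixed principal $\sigma_0$-equivariant embedding) supplies the required basepoint with its $\sigma$ already built in.
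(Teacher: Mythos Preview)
The paper does not actually prove this proposition: it writes ``The proof is once more abstract bundle theory, which we leave to the reader,'' and then only illustrates the $\SL_n(\C)$ case. Your outline is precisely the abstract bundle theory the authors have in mind, so in that sense the approaches coincide.

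Two points in your sketch deserve tightening. First, your nonemptiness argument invokes the Fuchsian locus, but the proposition is stated for an \emph{arbitrary} $G_{ad}$-Fock bundle, not one with $\Phi_2=0$. The clean fix is to use Corollary~\ref{Prop:P-topology}: $P_{ad}$ is topologically trivial, hence lifts to a trivial $G$-bundle $P$; then $\Phi$ is forced (since $\g_P=\g_{P_{ad}}$), and the existence of a $\sigma_0$-structure on the trivial $P$ negating $\Phi$ follows because the obstruction to lifting the involution $\bar\sigma$ on $\g_P$ (which exists by Proposition~\ref{Prop:sigma-dependence}) to a genuine $\sigma_0$-structure is a reduction-to-identity-component problem on a trivial bundle. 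Second, the step ``a compatible $\sigma$ exists on the $c$-twist iff $c$ is $\sigma_0$-fixed, i.e.\ $c\in \mathrm{H}^1(S,A)$'' silently identifies $\mathrm{H}^1(S,Z(G))^{\sigma_0}$ with $\mathrm{H}^1(S,Z(G)^{\sigma_0})$; you should check that the map $\mathrm{H}^1(S,A)\to \mathrm{H}^1(S,Z(G))^{\sigma_0}$ is an isomorphism for the relevant coefficient groups (it is in the $\SL_n$ example the paper works out, and more generally because $S$ is a surface and the groups are finite, but it is not a tautology). Your remark that $\sigma_0$ may act nontrivially on $Z(G)$ (by inversion for $\SL_n$) is exactly right and is the mechanism behind the paper's computation that $A=\{\pm 1\}$ or $\{1\}$ according to the parity of $n$.
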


The proof is once more abstract bundle theory, which we leave to the reader. Instead, we describe the case of $\SL_n(\C)$. The center of $\SL_n(\C)$ is the $n$-th roots of unity and $\sigma$ acts by inversion, so for $n$ even $A$ is $\{1,-1\}$ whereas for $n$ odd $A$ is trivial. This means that for $n$ even there are $2^{2g}$ $\SL_n(\C)$-Fock bundles for every $\mathrm{PSL}_n(\C)$-Fock bundle, whereas for $n$ odd $\mathrm{SL}_n(\C)$- and $\mathrm{PSL}_n(\C)$-Fock bundles are the same.

\subsection{Variations of Fock bundles}\label{Sec:var-Fock-fields}

Simpson \cite{Simpson} studied the infinitesimal deformation space of stable Higgs bundles via a hypercohomology group for suitable chain complexes. In a similar way, we study the variations of $G$-Fock bundles $(P, \Phi,\sigma)$. We will first forget about $\sigma$ and study the variations of Fock fields, then see what happens when we introduce $\sigma$.

\subsubsection{Variation complex without $\sigma_0$-structure}

Consider the complex $\Omega^{\bullet}(S,\g_P)$ of $\g$-valued differential forms, with differential given by the adjoint action of the Fock field $\Phi$:
\begin{equation}\label{phi-cohom}
\Omega^0(S,\g_P)\xrightarrow{\ad_\Phi\;} \Omega^1(S,\g_P)\xrightarrow{\ad_\Phi\;}\Omega^2(S,\g_P).
\end{equation}

\begin{proposition}
    The complex \eqref{phi-cohom} is a chain complex.
\end{proposition}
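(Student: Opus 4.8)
The plan is to check that the only composition occurring in \eqref{phi-cohom}, namely $\ad_\Phi\circ\ad_\Phi\colon\Omega^0(S,\g_P)\to\Omega^2(S,\g_P)$, vanishes identically. Since $\Phi\in\Omega^1(S,\g_P)$, the operator $\ad_\Phi=[\Phi\wedge\,\cdot\,]$ raises form degree by one, so the two maps are well-defined and land in the stated spaces; everything then reduces to the pointwise identity $\ad_\Phi^2=0$. As all the hypotheses defining a Fock bundle are pointwise, it suffices to verify this in a local coordinate.

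Concretely, choose a complex coordinate $z$ on $S$ induced by the Fock bundle, as in the discussion following Definition~\ref{defn_Fock-bundle}, and write $\Phi=\Phi_1\,dz+\Phi_2\,d\bar z$. The condition $[\Phi\wedge\Phi]=0$ from Definition~\ref{defn_Fock-bundle} is then equivalent to $[\Phi_1,\Phi_2]=0$. For $\eta\in\Omega^0(S,\g_P)$ one has $\ad_\Phi\eta=[\Phi_1,\eta]\,dz+[\Phi_2,\eta]\,d\bar z$, and therefore
\[
\ad_\Phi(\ad_\Phi\eta)=\bigl([\Phi_1,[\Phi_2,\eta]]-[\Phi_2,[\Phi_1,\eta]]\bigr)\,dz\wedge d\bar z=[[\Phi_1,\Phi_2],\eta]\,dz\wedge d\bar z=0,
\]
where the middle equality is the Jacobi identity in the Lie algebra bundle $\g_P$ and the last uses $[\Phi_1,\Phi_2]=0$. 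This establishes $\ad_\Phi^2=0$, so \eqref{phi-cohom} is a chain complex.

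Coordinate-free, the same computation reads $\ad_\Phi^2\eta=\tfrac12[[\Phi\wedge\Phi]\wedge\eta]$, which vanishes by the first condition in Definition~\ref{defn_Fock-bundle}; this is just the graded Jacobi identity for $\g_P$-valued forms applied to the triple $(\Phi,\Phi,\eta)$ of degrees $(1,1,0)$. There is no genuine obstacle in this argument: the only mildly delicate point is the sign bookkeeping in the bracket of Lie-algebra-valued forms, and the substance of the statement is precisely that the condition $[\Phi\wedge\Phi]=0$ is exactly what makes $\ad_\Phi$ a differential.
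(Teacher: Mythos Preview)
Your proof is correct and follows essentially the same approach as the paper: both use the Jacobi identity together with $[\Phi\wedge\Phi]=0$ to conclude $\ad_\Phi^2=0$. The paper gives only the coordinate-free version (your final paragraph), while you additionally spell out the local-coordinate computation, but the substance is identical.
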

\begin{proof}
Indeed, using the Jacobi identity and $[\Phi\wedge \Phi]=0$, we get for all $A\in \Omega^0(S,\g_P)$:
$$[\Phi\wedge [\Phi,A]]=-[\Phi\wedge [\Phi,A]]+[A,[\Phi\wedge\Phi]],$$ 
which implies $\ad_\Phi^2(A)=0$.
\end{proof}

We call the cohomology groups defined for this chain complex, \emph{$\Phi$-cohomology groups},  and denote them by $\mathrm{H}^{\bullet}(\Phi)$. The zero-th cohomology group $\mathrm{H}^0(\Phi)$ describes the centralizer of $\Phi$. To be more precise, the centralizers of all $\Phi(v)(z)$ for a non-zero real vector $v\in T_zS$ are all equal. This follows from $[\Phi\wedge\Phi]=0$, the nilpotency condition and the general fact that the centralizer of a principal nilpotent element is abelian. This is why we often write $Z(\Phi)\subset \Omega^0(S,\g_P)$ for any of these centralizers. We have the following:
\begin{proposition}\label{Prop:dim-phi-cohom}
    The dimensions of $\mathrm{H}^0(\Phi), \mathrm{H}^1(\Phi)$ and $\mathrm{H}^2(\Phi)$ are respectively $\mathrm{rk}(\g)$, $2\mathrm{rk}(\g)$ and $\mathrm{rk}(\g)$.
\end{proposition}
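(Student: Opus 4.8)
The plan is to exploit that the differential $\ad_\Phi$ in \eqref{phi-cohom} is $C^\infty(S)$-linear, so that \eqref{phi-cohom} is really a complex of vector bundles over $S$ whose cohomology may be computed fiberwise. First I would fix a point $z\in S$ and the complex coordinate induced by the Fock bundle, and write $\Phi=\Phi_1\,dz+\Phi_2\,d\bar z$; as recalled above, $F:=\Phi_1(z)$ is then principal nilpotent and $\Phi_2(z)\in Z(F)$. Splitting forms into their $dz$- and $d\bar z$-parts identifies the fiber of \eqref{phi-cohom} at $z$ with the complex of complex vector spaces
\[
\g\;\xrightarrow{\;A\mapsto([\Phi_1,A],[\Phi_2,A])\;}\;\g\oplus\g\;\xrightarrow{\;(\alpha_1,\alpha_2)\mapsto[\Phi_1,\alpha_2]-[\Phi_2,\alpha_1]\;}\;\g,
\]
all brackets taken at $z$. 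It then suffices to show that the three cohomology spaces here have dimensions $\rk(\g)$, $2\rk(\g)$ and $\rk(\g)$ for every $z$, for then they assemble into vector bundles of these ranks.

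For $\mathrm{H}^0$, the kernel of the first map is $Z(F)\cap Z(\Phi_2(z))$; since $\Phi_2(z)$ lies in the abelian subalgebra $Z(F)$ (Theorem \ref{thmKost}), we get $Z(F)\subseteq Z(\Phi_2(z))$, hence $\mathrm{H}^0=Z(F)$ has dimension $\rk(\g)$. For $\mathrm{H}^2$, the cokernel of the second map is $\g/(\mathrm{Im}(\ad_{\Phi_1(z)})+\mathrm{Im}(\ad_{\Phi_2(z)}))$; applying Lemma \ref{lemma:im-incl} to $F$ and $F'=\Phi_2(z)\in Z(F)$ gives $\mathrm{Im}(\ad_{\Phi_2(z)})\subseteq\mathrm{Im}(\ad_F)$, so the two images collapse and $\mathrm{H}^2\cong\g/\mathrm{Im}(\ad_F)$, of dimension $\dim\ker(\ad_F)=\rk(\g)$. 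Finally $\mathrm{H}^1$ is forced by the Euler characteristic: since $\dim\g-2\dim\g+\dim\g=0$, the alternating sum of the cohomology dimensions vanishes too, whence $\dim\mathrm{H}^1=\dim\mathrm{H}^0+\dim\mathrm{H}^2=2\rk(\g)$.

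I do not expect a genuine obstacle here; the one step that uses structure theory rather than plain linear algebra is $\mathrm{H}^2$, where the inclusion $\mathrm{Im}(\ad_{\Phi_2})\subseteq\mathrm{Im}(\ad_{\Phi_1})$ of Lemma \ref{lemma:im-incl} is precisely what lets the two images merge. The only other point to check is that all three dimensions are independent of $z$, so that the fiberwise count globalizes to vector bundles of the claimed ranks — and, should one prefer to read $\mathrm{H}^\bullet(\Phi)$ as the cohomology of the complex of global sections, the constancy of these ranks (together with a partition-of-unity argument) identifies that cohomology with the space of sections of the corresponding cohomology bundle.
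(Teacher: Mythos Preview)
Your argument is correct, and for $\mathrm{H}^0$ and $\mathrm{H}^1$ it coincides with the paper's. The one genuine difference is in $\mathrm{H}^2$: the paper does not compute the cokernel directly but instead observes that the Killing form pairing $(a,b)\mapsto \tr(ab)$ (pointwise, or integrated) descends to a perfect pairing $\mathrm{H}^0(\Phi)\times\mathrm{H}^2(\Phi)\to\C$ by cyclicity, giving $\mathrm{H}^2(\Phi)\cong\mathrm{H}^0(\Phi)^*$. Your route via Lemma~\ref{lemma:im-incl} is a legitimate alternative: it is more hands-on and avoids invoking the duality, at the cost of using a structural lemma whose proof itself passes through a compact real form. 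The paper's duality argument is slightly more economical and yields the extra information $\mathrm{H}^2\cong(\mathrm{H}^0)^*$, which is used later (e.g.\ in the proof of Theorem~\ref{Thm:mu-holo}). Note also that your two approaches are secretly close: since $\mathrm{Im}(\ad_F)=Z(F)^{\perp}$ under the Killing form, the inclusion $\mathrm{Im}(\ad_{\Phi_2})\subseteq\mathrm{Im}(\ad_{\Phi_1})$ is equivalent to $Z(\Phi_1)\subseteq Z(\Phi_2)$, which is exactly the $\mathrm{H}^0$ step dualized. Finally, your care about the fiberwise versus global-sections reading of $\mathrm{H}^\bullet(\Phi)$ is well placed; the paper is tacitly working fiberwise and ``dimension'' means rank of the resulting vector bundle.
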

\begin{proof}
    We have $\mathrm{H}^0(\Phi)=\ker\,\ad_\Phi=Z(\Phi)$, which by the nilpotency condition is of dimension $\mathrm{rk}(\g)$.
    The natural pairing between $a\in\ker\,\ad_\Phi\subset\Omega^0(S,\g_P)$ and $b\in\Omega^2(S,\g_P)$ given by $\int_S \tr (ab)$, where $\tr$ denotes the Killing form on $\g$, descends to cohomology. This follows from the cyclicity property $\tr([a,b]c)=\tr([b,c]a)$ of the Killing form. Hence $\mathrm{H}^2(\Phi)\cong \mathrm{H}^0(\Phi)^*$ which gives $\dim \mathrm{H}^2(\Phi) =\dim \mathrm{H}^0(\Phi)= \mathrm{rk}(\g)$. Finally, the dimension of $\mathrm{H}^1(\Phi)$ can be computed via the Euler characteristic of the complex which is zero.
\end{proof}

The first $\Phi$-cohomology group describes variations of $G$-Fock fields which do not necessarily preserve the nilpotency condition:
\begin{proposition}
The first $\Phi$-cohomology group $\mathrm{H}^1(\Phi)$ describes variations of a $G$-Fock field $\Phi$ leaving the condition $[\Phi\wedge\Phi]=0$ invariant, modulo gauge transformations.
\end{proposition}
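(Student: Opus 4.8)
The statement is essentially the unwinding of the definition of $\mathrm{H}^1(\Phi)$ for the complex \eqref{phi-cohom}, so the plan is short. I would identify the space of first-order variations of $\Phi$ that preserve $[\Phi\wedge\Phi]=0$ with the $1$-cocycles $\ker\bigl(\ad_\Phi:\Omega^1(S,\g_P)\to\Omega^2(S,\g_P)\bigr)$, identify the variations arising from infinitesimal gauge transformations with the $1$-coboundaries $\mathrm{im}\bigl(\ad_\Phi:\Omega^0(S,\g_P)\to\Omega^1(S,\g_P)\bigr)$, and then observe that the quotient is $\mathrm{H}^1(\Phi)$ by definition.

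\textbf{Linearizing the constraint.} First I would take a path of $\g_P$-valued $1$-forms $\Phi_t=\Phi+t\,\dot\Phi+O(t^2)$ with $\dot\Phi\in\Omega^1(S,\g_P)$ and expand $[\Phi_t\wedge\Phi_t]=[\Phi\wedge\Phi]+t\bigl([\dot\Phi\wedge\Phi]+[\Phi\wedge\dot\Phi]\bigr)+O(t^2)$. Using that $[\alpha\wedge\beta]=[\beta\wedge\alpha]$ for $\g_P$-valued $1$-forms, the first-order term equals $2[\Phi\wedge\dot\Phi]=2\,\ad_\Phi(\dot\Phi)$. Since $\Phi$ itself satisfies $[\Phi\wedge\Phi]=0$, the variation $\dot\Phi$ preserves this condition to first order precisely when $\ad_\Phi(\dot\Phi)=0$, i.e.\ when $\dot\Phi$ is a $1$-cocycle of \eqref{phi-cohom}.

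\textbf{Identifying the gauge directions.} Next I would compute the effect of an infinitesimal gauge transformation $\eta\in\Omega^0(S,\g_P)$, which acts by $\Phi\mapsto\Ad_{\exp(t\eta)}\Phi$. This action preserves the constraint locus because $\Ad$ is by Lie algebra automorphisms, so $[\Ad_g\Phi\wedge\Ad_g\Phi]=\Ad_g[\Phi\wedge\Phi]=0$; differentiating at $t=0$ gives the variation $[\eta,\Phi]=-\ad_\Phi(\eta)$. Hence the gauge-induced variations are exactly $\mathrm{im}(\ad_\Phi:\Omega^0\to\Omega^1)$, the $1$-coboundaries. (Consistency: coboundaries are contained in cocycles, which is precisely the statement that \eqref{phi-cohom} is a complex, already established above.) Taking the quotient of cocycles by coboundaries yields $\mathrm{H}^1(\Phi)$, which proves the claim.

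\textbf{Main obstacle.} There is no real difficulty here; the only point that needs to be stated carefully is that this is a \emph{first-order} assertion — it computes the Zariski tangent space to $\{[\Phi\wedge\Phi]=0\}$ modulo gauge, and says nothing about whether a given cocycle integrates to an actual deformation of $\Phi$ (such obstructions would live in $\mathrm{H}^2(\Phi)$). I would also remark that the nilpotency condition of Definition \ref{defn_Fock-bundle} is an open condition, hence automatically preserved under sufficiently small variations and not needed as a constraint here, and that the $\sigma_0$-structure is deliberately left out at this stage, to be incorporated in the next subsection.
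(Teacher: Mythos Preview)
Your proof is correct and follows exactly the same approach as the paper's (very brief) argument: first-order variations preserving $[\Phi\wedge\Phi]=0$ are the $1$-cocycles, gauge-induced variations are the $1$-coboundaries, and the quotient is $\mathrm{H}^1(\Phi)$. One small correction to your closing remark: the principal nilpotency condition is \emph{not} open (principal nilpotents form a single conjugacy orbit, hence a closed subvariety of $\g$), so it is not automatically preserved by small variations --- this is precisely why the paper treats it separately in the next proposition on $\mathrm{Var}(\Phi)$.
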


\begin{proof}
   A variation $\delta \Phi$ preserves the condition $[\Phi\wedge\Phi]=0$ if and only if $[\Phi\wedge\delta \Phi]=0$, that is, if and only if $\delta \Phi$ is a cocycle. An infinitesimal gauge transformation $\eta\in \Omega^0(S,\g_P)$ induces $\delta\Phi=[\Phi,\eta]$ which is a coboundary.
\end{proof}

We next analyze variations of Fock bundles including the nilpotency condition. The space $\Omega^1(S,\g_P)$ has a natural symplectic structure $\omega$ defined by
\begin{equation}\label{Eq:sympl-str-1-forms}
\omega(\alpha,\beta)=\int_S \tr\, \alpha\wedge\beta,
\end{equation}
where $\tr$ denotes the Killing form of $\g$.
The symplectic structure $\omega$ descends to a symplectic structure on $\mathrm{H}^1(\Phi)$. Indeed, for $\alpha,\beta\in \Omega^1(S,\g_P)$ representing classes in $\mathrm{H}^1(\Phi)$, adding a coboundary $[\gamma,\Phi]$ to $\alpha$ adds
$$\mathrm{tr}([\gamma,\Phi] \wedge\beta) = \mathrm{tr}(\gamma [\Phi\wedge\beta]) = 0.$$
The same holds when adding a coboundary to $\beta$. Hence the symplectic form only depends on the cohomology classes. Denote by $\mathrm{Var}(\Phi)$ the variations of $\Phi$ which both preserve $[\Phi\wedge\Phi]=0$ and the nilpotency condition.

\begin{proposition}\label{Prop:Phi-variation}
    The variations of Fock fields $\mathrm{Var}(\Phi)$ are described by the direct sum $$\mathrm{Var}(\Phi)\cong \mathrm{Im}(\ad_\Phi)\oplus Z(\Phi)\bar{K},$$
    where $Z(\Phi)\subset \Omega^0(S,\g_P)$ denotes the kernel of $\ad_\Phi$ and $\bar{K}$ the conjugated canonical bundle.
\end{proposition}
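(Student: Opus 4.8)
The plan is to linearize the two pointwise conditions defining a Fock field at $\Phi$ and then match the resulting linear space with the claimed direct sum, working throughout in a local complex coordinate $z$ for the complex structure induced by the Fock bundle. In such a coordinate write $\Phi=\Phi_1\,dz+\Phi_2\,d\bar{z}$, so that $\Phi_1$ is principal nilpotent and $\Phi_2\in Z(\Phi_1)$, and a variation as $\delta\Phi=a\,dz+b\,d\bar{z}$. I record at the outset that $Z(\Phi_1)$ is abelian and $\rk\g$-dimensional (Theorem \ref{thmKost} and regularity of $\Phi_1$) and contains $\Phi_2$, so $Z(\Phi)=Z(\Phi_1)\subseteq Z(\Phi_2)$; and by Lemma \ref{lemma:im-incl}, since $\Phi_2\in Z(\Phi_1)$, also $\mathrm{Im}(\ad_{\Phi_2})\subseteq\mathrm{Im}(\ad_{\Phi_1})$.

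The first step is to determine $\mathrm{Var}(\Phi)$ as the solution space of the two linearizations. Linearizing $[\Phi\wedge\Phi]=0$ gives $[\Phi\wedge\delta\Phi]=0$, which in the coordinate is the cocycle equation $[\Phi_1,b]=[\Phi_2,a]$. For the nilpotency condition I invoke Theorem \ref{Thm:one-reg-orbit}: the principal nilpotent elements form a single $G$-orbit, hence a smooth submanifold $\mathcal{N}\subset\g$ with $T_X\mathcal{N}=\mathrm{Im}(\ad_X)$, so a variation preserves nilpotency iff $\delta\Phi(v)\in\mathrm{Im}(\ad_{\Phi(v)})$ for every nonzero real tangent vector $v$. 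Writing such a vector as $v_w=w\partial_z+\bar{w}\partial_{\bar{z}}$ with $w\in\C^{*}$, we have $\Phi(v_w)=w\Phi_1+\bar{w}\Phi_2\in Z(\Phi_1)$, which is principal nilpotent, so $\mathrm{Im}(\ad_{\Phi(v_w)})\subseteq\mathrm{Im}(\ad_{\Phi_1})$ by Lemma \ref{lemma:im-incl}; hence the condition becomes $wa+\bar{w}b\in\mathrm{Im}(\ad_{\Phi_1})$ for all $w\in\C^{*}$, which — evaluating at $w=1$ and $w=i$ and using that $\mathrm{Im}(\ad_{\Phi_1})$ is a complex subspace — is equivalent to $a,b\in\mathrm{Im}(\ad_{\Phi_1})$ at every point. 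Thus $\mathrm{Var}(\Phi)=\{a\,dz+b\,d\bar{z} : a,b\in\mathrm{Im}(\ad_{\Phi_1})\text{ pointwise}, [\Phi_1,b]=[\Phi_2,a]\}$.

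Next I would verify the two inclusions into this set and the directness of the sum. For a coboundary $[\Phi,\eta]$ nilpotency holds because $w[\Phi_1,\eta]+\bar{w}[\Phi_2,\eta]=[\,w\Phi_1+\bar{w}\Phi_2,\eta\,]\in\mathrm{Im}(\ad_{\Phi(v_w)})$, and the cocycle equation $[\Phi_1,[\Phi_2,\eta]]=[\Phi_2,[\Phi_1,\eta]]$ is the Jacobi identity with $[\Phi_1,\Phi_2]=0$; so $\mathrm{Im}(\ad_\Phi)\subseteq\mathrm{Var}(\Phi)$. For $\psi\,d\bar{z}$ with $\psi\in Z(\Phi)$ we have $a=0$ and $b=\psi$, the cocycle equation is trivial, and $\psi\in Z(\Phi)=Z(\Phi(v_w))\subseteq\mathrm{Im}(\ad_{\Phi(v_w)})$ by Proposition \ref{Prop:center-in-image}; so $Z(\Phi)\bar{K}\subseteq\mathrm{Var}(\Phi)$. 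Directness: if $[\Phi,\eta]=\psi\,d\bar{z}$ then $[\Phi_1,\eta]=0$, so $\eta$ is valued in $Z(\Phi_1)\subseteq Z(\Phi_2)$, whence $\psi=[\Phi_2,\eta]=0$ and $[\Phi,\eta]=0$.

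It then remains to prove $\mathrm{Var}(\Phi)\subseteq\mathrm{Im}(\ad_\Phi)\oplus Z(\Phi)\bar{K}$, which is the real content. Given $\delta\Phi\in\mathrm{Var}(\Phi)$, its $(1,0)$-part $(\delta\Phi)^{1,0}$ is, by the characterization above, a smooth section of the sub-bundle $\mathrm{Im}(\ad_{\Phi^{1,0}})\subset K\otimes\g_P$. Since $\ad_{\Phi^{1,0}}\colon\g_P\to K\otimes\g_P$ is a $C^{\infty}$ bundle map of constant rank $\dim\g-\rk\g$ — constant precisely because $\Phi(v)$ is everywhere principal nilpotent — it restricts to a surjective bundle map onto $\mathrm{Im}(\ad_{\Phi^{1,0}})$, which therefore admits a smooth section; choosing one yields $\eta\in\Omega^0(S,\g_P)$ with $[\Phi^{1,0},\eta]=(\delta\Phi)^{1,0}$, i.e. $[\Phi_1,\eta]=a$ in the coordinate. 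Then $\psi\,d\bar{z}:=\delta\Phi-[\Phi,\eta]$ has vanishing $(1,0)$-part, hence is genuinely of type $(0,1)$ with $\psi=b-[\Phi_2,\eta]$, and the Jacobi identity together with the cocycle equation gives $[\Phi_1,\psi]=[\Phi_1,b]-[\Phi_1,[\Phi_2,\eta]]=[\Phi_1,b]-[\Phi_2,a]=0$, so $\psi$ is valued in $Z(\Phi_1)=Z(\Phi)$. Hence $\delta\Phi=[\Phi,\eta]+\psi\,d\bar{z}$ lies in $\mathrm{Im}(\ad_\Phi)+Z(\Phi)\bar{K}$, which with directness finishes the proof. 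The step I expect to require the most care is precisely this global solvability of $[\Phi^{1,0},\eta]=(\delta\Phi)^{1,0}$: fiberwise everything is $\mathfrak{sl}_2$-representation theory (Lemma \ref{lemma:im-incl} and Proposition \ref{Prop:center-in-image}), but one must make sure $\mathrm{Im}(\ad_{\Phi^{1,0}})$ is a genuine sub-bundle — guaranteed by the locally constant rank of $\ad_{\Phi^{1,0}}$ — so that the lift $\eta$ can be chosen globally and smoothly.
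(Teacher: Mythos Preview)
Your proof is correct and follows essentially the same strategy as the paper: work in the induced complex coordinate so that $\Phi_1$ is principal nilpotent, use the single-orbit property of principal nilpotents to absorb $\delta\Phi_1$ into a coboundary $[\Phi,\eta]$, and then read off that the residual $(0,1)$-part lies in $Z(\Phi_1)\,d\bar z$ from the cocycle equation. The paper compresses all of this into two sentences, whereas you spell out the linearization of the nilpotency condition explicitly (via $T_X\mathcal{N}=\mathrm{Im}(\ad_X)$ and Lemma~\ref{lemma:im-incl}), verify both inclusions and directness, and address the global smooth solvability of $[\Phi_1,\eta]=a$ through the constant-rank argument; these are exactly the points the paper's sketch leaves implicit, so your version is a more complete rendering of the same idea rather than a different route.
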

\begin{proof}
    Using the complex structure on $S$ induced from the Fock bundle, we can locally write $\Phi=\Phi_1 dz+\Phi_2d\bar{z}$, where $\Phi_1$ is principal nilpotent. Since all principal nilpotent matrices are conjugate, any variation preserving the nilpotency condition is equivalent modulo $\mathrm{Im} \ad_{\Phi}$ to a a variation which changes $\Phi_2$ without changing $\Phi_1$. Such a variation is of the form $Z(\Phi_1)d\bar{z}$ since $[\Phi\wedge\delta\Phi]=0$.
\end{proof}

\begin{corollary}
    For a $G$-Fock bundle $(P, \Phi)$ over $S$, $\mathrm{Var}(\Phi)$ forms an isotropic subspace in $(\Omega^1(S,\g_P), \omega)$. Modulo $\mathrm{Im}(\ad_{\Phi})$, it descends to an isotropic subspace of $\mathrm{H}^1(\Phi)$.
\end{corollary}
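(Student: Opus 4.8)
The goal is to show that $\mathrm{Var}(\Phi)$ is isotropic for $\omega$, and that this descends to $\mathrm{H}^1(\Phi)$.

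By Proposition \ref{Prop:Phi-variation} we have the decomposition $\mathrm{Var}(\Phi) \cong \mathrm{Im}(\ad_\Phi) \oplus Z(\Phi)\bar K$, so the plan is to check the vanishing of $\omega$ on each of the three types of pairs: two elements of $\mathrm{Im}(\ad_\Phi)$, one element of each summand, and two elements of $Z(\Phi)\bar K$.

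First, if $\alpha = [\Phi,\gamma]$ is a coboundary, then for any $\beta$ which is a cocycle (in particular for any $\beta\in\mathrm{Var}(\Phi)$, since such $\beta$ satisfies $[\Phi\wedge\beta]=0$ by the proof of Proposition \ref{Prop:Phi-variation}) we compute, exactly as in the paragraph preceding that proposition,
\[
\omega(\alpha,\beta) = \int_S \tr\big([\Phi,\gamma]\wedge\beta\big) = \int_S \tr\big(\gamma\,[\Phi\wedge\beta]\big) = 0,
\]
using the cyclicity of the Killing form. This disposes of the first two cases at once and also shows that $\omega$ descends to $\mathrm{H}^1(\Phi)$, so it suffices to handle the remaining case on representatives of the $Z(\Phi)\bar K$ summand. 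Such a representative, in the complex structure induced by the Fock bundle, is of the form $\alpha = a\,d\bar z$ with $a\in Z(\Phi_1)$, and likewise $\beta = b\,d\bar z$ with $b\in Z(\Phi_1)$. Then $\alpha\wedge\beta = a b\, d\bar z\wedge d\bar z = 0$ pointwise, so $\omega(\alpha,\beta)=0$ trivially. Hence $\mathrm{Var}(\Phi)$ is isotropic, and since $\mathrm{Im}(\ad_\Phi)$ is precisely the subspace of coboundaries, the image of $\mathrm{Var}(\Phi)$ in $\mathrm{H}^1(\Phi)$ is a well-defined isotropic subspace.

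There is no real obstacle here; the only point requiring a little care is the claim that every $\beta\in\mathrm{Var}(\Phi)$ is an $\ad_\Phi$-cocycle, which is needed to kill the mixed terms — but this is immediate from the definition of $\mathrm{Var}(\Phi)$ as the variations preserving $[\Phi\wedge\Phi]=0$, i.e.\ those with $[\Phi\wedge\beta]=0$. One could also note that the isotropic subspace has the expected half-dimension: $\dim_{\mathbb C}\mathrm{Var}(\Phi)/\mathrm{Im}(\ad_\Phi) = \dim_{\mathbb C} Z(\Phi)\bar K = \mathrm{rk}(\g)$ pointwise, matching half of $\dim\mathrm{H}^1(\Phi) = 2\,\mathrm{rk}(\g)$ from Proposition \ref{Prop:dim-phi-cohom}, so in fact the image is Lagrangian — though the statement only asks for isotropy.
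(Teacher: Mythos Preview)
Your argument is correct and follows essentially the same route as the paper: decompose via $\mathrm{Var}(\Phi)\cong \mathrm{Im}(\ad_\Phi)\oplus Z(\Phi)\bar K$, kill the $d\bar z\wedge d\bar z$ term trivially, and handle the remaining terms using that coboundaries pair to zero with cocycles. The only organizational difference is that you dispose of the $\mathrm{Im}(\ad_\Phi)$--$\mathrm{Im}(\ad_\Phi)$ and the mixed terms in one stroke via the general coboundary--cocycle vanishing, whereas the paper treats them as separate cases (invoking $\ad_\Phi^2=0$ for the first and $C\in Z(\Phi_1)$ for the cross terms); both are equivalent.
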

\begin{proof}
Take the variations of two Fock fields $\delta\Phi=\ad_\Phi(\eta)+Cd\bar{z}$ and $\delta\Phi'=\ad_{\Phi'}(\eta')+C'd\bar{z}$, with $C\in Z(\Phi_1)$, $C'\in Z(\Phi'_1)$ and $\eta,\eta'\in\Omega^0(S,\g_P)$. One now checks that the symplectic form $\omega(\delta\Phi,\delta\Phi')$ defined by \eqref{Eq:sympl-str-1-forms} vanishes. Indeed, since $\ad_\Phi$ squares to zero, we get $\mathrm{tr} \ad_\Phi(\eta)\wedge \ad_{\Phi'}(\eta')=0$. Moreover, $Cd\bar{z}\wedge C'd\bar{z}=0$ and the wedge product of the crossed terms also vanishes since $C\in Z(\Phi_1)$ and $C'\in Z(\Phi'_1)$.
\end{proof}
Note that pointwise, $\mathrm{Var}(\Phi)$ is a Lagrangian in $T^*S\otimes \g$ (where we choose a volume form at a point to identify $\Lambda^2(TS)$ with $\C$).

\subsubsection{Introduction of $\sigma_0$-structures}\label{sec_sigma_split_coh} 

We first analyse the possible $\sigma_0$-structures $\sigma$ one can put on a pair $(P,\Phi)$ where $\Phi$ satisfies the first two conditions of Definition \ref{defn_Fock-bundle}.

\begin{proposition}\label{Prop:sigma-dependence}
    For any such pair $(P,\Phi)$, there 
    locally exist $\sigma_0$-structures making it into a Fock bundle. If $G=G_{ad}$, they exist globally.
\end{proposition}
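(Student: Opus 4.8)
I would reduce the problem to negating only the $(1,0)$-part of $\Phi$, settle that locally by a gauge-fixing argument, and settle it globally (for $G_{ad}$) by an affine-bundle argument; the only genuinely delicate point is why the global statement needs $G=G_{ad}$.

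\textbf{Step 1: reduction to $\Phi_1$.} Conditions (1) and (2) alone already yield, via \cite[Proposition 2.21]{Tho22}, an induced complex structure on $S$, so locally we may write $\Phi=\Phi_1\,dz+\Phi_2\,d\bar z$ with $\Phi_1(z)$ principal nilpotent for every $z$ and, since $[\Phi\wedge\Phi]=0$ forces $[\Phi_1,\Phi_2]=0$, with $\Phi_2\in Z(\Phi_1)$. By Corollary \ref{cor:sigma_negate_centralizer}, any involution conjugate to $\sigma_0$ that negates a principal nilpotent element automatically negates its whole centralizer. Hence it suffices to produce a $\sigma_0$-structure $\sigma$ with $\sigma(\Phi_1)=-\Phi_1$; then $\sigma(\Phi_2)=-\Phi_2$ for free and $\sigma(\Phi)=-\Phi$. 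Here $\Phi_1$ is a global section of $\g_P\otimes K$ for the induced canonical bundle $K$, and since principal nilpotency and the centralizer are scale-invariant, nothing depends on a trivialization of $K$.

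\textbf{Step 2: local existence (any $G$).} On a contractible chart $U$, trivialize $P|_U\cong U\times G$. The principal nilpotent cone is a single $G$-orbit (Theorem \ref{Thm:one-reg-orbit}), so over the contractible $U$ a gauge transformation brings $\Phi_1$ to a constant principal nilpotent $F$, and then $\Phi_2(z)\in Z(F)$ for all $z$. Complete $F$ to a principal $\mf{sl}_2$-triple and let $\sigma_0$ be Hitchin's associated involution: it negates the lowest weight vector $F$, and is conjugate to the reference $\sigma_0$ because all principal $\mf{sl}_2$-triples are conjugate. Then $\mathrm{id}\times\sigma_0$ is a $\sigma_0$-structure on $U\times G$ negating $F$, hence (by Step 1) negating $\Phi$; conjugating it back by the gauge transformation gives the required local $\sigma_0$-structure.

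\textbf{Step 3: global existence ($G=G_{ad}$).} Now a $\sigma_0$-structure is exactly a fiberwise-$\sigma_0$-conjugate involution of the adjoint bundle $\g_P$. Let $\mc S\to S$ be the bundle whose fiber over $z$ is the set of such involutions negating $\Phi_1(z)$. Because $\Phi_1$ is everywhere principal nilpotent of the same type, its centralizer $Z(\Phi_1)$ is a smooth rank-$\rk(\g)$ subbundle of $\g_P$; by Lemma \ref{lem:space_of_sigma} each fiber of $\mc S$ is a simply transitive torsor under $\exp(Z(\Phi_1(z)))$. Since $Z(\Phi_1)$ is abelian, $\exp$ identifies the bundle of groups $\exp(Z(\Phi_1))$ with the vector bundle $Z(\Phi_1)$ endowed with its additive structure, so $\mc S\to S$ is an affine bundle modeled on $Z(\Phi_1)$. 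Affine bundles over a paracompact base admit global sections (combine local sections by a partition of unity), and such a section is precisely the desired $\sigma$: it negates $\Phi_1$, hence by Step 1 all of $\Phi$.

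\textbf{Main obstacle.} Everything is routine once Step 1 is in place; the one delicate point, and the reason the global statement is restricted to $G_{ad}$, is that for general $G$ a $\sigma_0$-structure is a reduction of structure group to $G^{\sigma_0}$, which is strictly more data than an involution of $\g_P$ (the stabilizer of such an involution can exceed $G^{\sigma_0}$ by the center $Z(G)$; see Section \ref{Sec:involutions}). Upgrading the global $\g_P$-involution of Step 3 to an honest reduction could be obstructed by a class in $\mathrm{H}^2(S;Z(G))$, which need not vanish in general but vanishes trivially when $Z(G)$ is trivial.
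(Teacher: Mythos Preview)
Your proof is correct and follows essentially the same route as the paper: both reduce to negating $\Phi_1$ via Corollary~\ref{cor:sigma_negate_centralizer}, use Lemma~\ref{lem:space_of_sigma} to see that the fiberwise choices of involution negating $\Phi_1$ form a bundle with contractible fibers (you phrase this as an affine bundle modeled on $Z(\Phi_1)$, the paper just says ``contractible''), and then observe that for $G_{ad}$ an adjoint-bundle involution is already a $\sigma_0$-structure, while for general $G$ the extra discrete data can only be arranged locally. Your Step~2 gauge-fixing argument for local existence is a bit more explicit than the paper's, which instead gets local existence by noting that the remaining obstruction is a section of a bundle with discrete fibers; and your remark that the obstruction lives in $\mathrm{H}^2(S;Z(G))$ is not quite the right cohomological home (it is really an $\mathrm{H}^1$-type obstruction to reducing to the identity component), but this side comment does not affect the proof.
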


\begin{proof}
Recall that $\Phi_1$ is valued in principal nilpotent elements. By Lemma \ref{lem:space_of_sigma} there is a contractible space of choices of involution of  each fiber of the adjoint bundle which negate the image of $\Phi_1$. We can thus choose $\bar{\sigma}:{\mf{g}_P}\to \mf{g}$ which negates $\Phi_1$ globally. By Lemma $\ref{cor:sigma_negate_centralizer}$, $\bar{\sigma}$ will also negate the centralizer of $\Phi_1$, so must negate $\Phi_2$ as well. When $G$ is adjoint, $\bar{\sigma}$ uniquely determines a $\sigma_0$ structure $\sigma:P\to P$.

When $G$ is not adjoint, there is slightly more data to a $\sigma_0$-structure. The involution $\bar{\sigma}$ gives a reduction to $\pi^{-1}(G_{ad}^{\sigma})$ where $\pi:G\to G_{ad}$ is the projection. A $\sigma_0$-structure on the other hand is a reduction to $G^{\sigma}$ which is the identity component of this group. Reductions of a principal bundle to its identity component are sections of a bundle with discrete fibers, so they always exist locally.
\end{proof}

\begin{proposition}\label{Prop:var-sigma}
    The infinitesimal variations of $\sigma_0$-structures negating $\Phi$ are described by $\mathrm{H}^0(\Phi)$.  
\end{proposition}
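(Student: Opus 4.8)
The plan is to realize the space of $\sigma_0$-structures negating $\Phi$ as (a subset of) a torsor under the gauge group and differentiate. First, observe that any two $\sigma_0$-structures on the same bundle $P$ differ by a gauge transformation: if $\sigma,\sigma'$ are both $\sigma_0$-structures, then $u:=\sigma'\circ\sigma$ is $G$-equivariant (the two copies of $\sigma_0$ in the equivariance relation cancel), hence $u\in\Aut(P)$, and $\sigma'=u\circ\sigma$. The involution condition $\sigma'\circ\sigma'=\mathrm{id}$ translates into $u\cdot\sigma(u)=1$, where $\sigma(u)$ denotes the action of $\sigma$ on the adjoint group bundle. Linearizing $u=\exp(\eta)$ with $\eta\in\Omega^0(S,\g_P)$, this condition becomes $\sigma(\eta)=-\eta$; so the tangent space to the space of all $\sigma_0$-structures at $\sigma$ is $\Omega^0(S,\g_P^{-\sigma})$, the sections of the $(-1)$-eigenbundle of $\sigma$ acting on $\g_P$.

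Next I would compute the action of $\sigma'$ on the Fock field. At the level of the induced involution of $\g_P$, one has $\sigma'=\Ad_u\circ\sigma$, so $\sigma'(\Phi)=\Ad_u(\sigma(\Phi))=-\Ad_u(\Phi)$. Therefore $\sigma'(\Phi)=-\Phi$ if and only if $\Ad_u(\Phi)=\Phi$, and linearizing this gives $[\eta,\Phi]=0$, i.e.\ $\eta\in\ker(\ad_\Phi\colon\Omega^0(S,\g_P)\to\Omega^1(S,\g_P))=\mathrm{H}^0(\Phi)$. Hence the infinitesimal variations of $\sigma_0$-structures negating $\Phi$ are exactly the sections $\eta$ satisfying both $\sigma(\eta)=-\eta$ and $[\eta,\Phi]=0$.

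It remains to see that the first condition is implied by the second, so that the answer is precisely $\mathrm{H}^0(\Phi)$. Work pointwise: a $\sigma_0$-structure induces on each fiber $(\g_P)_z$ an involution conjugate to $\sigma_0$, and it negates the principal nilpotent $\Phi_1(z)$ (the $(1,0)$-component of $\Phi$ in the induced complex structure). By Corollary~\ref{cor:sigma_negate_centralizer}, such an involution negates the entire centralizer $Z(\Phi_1(z))$, which coincides with the pointwise centralizer of $\Phi$ at $z$. Thus $Z(\Phi)\subseteq\g_P^{-\sigma}$ as subbundles, and the intersection $\Omega^0(S,\g_P^{-\sigma})\cap\ker\ad_\Phi$ equals $\ker\ad_\Phi=\mathrm{H}^0(\Phi)$. (Existence of at least one such $\sigma$ to base the torsor at is guaranteed locally, and globally for $G_{ad}$, by Proposition~\ref{Prop:sigma-dependence}.)

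The only genuinely delicate point is the linearization of $u\cdot\sigma(u)=1$ and of $\Ad_u(\Phi)=\Phi$, together with checking that the fiberwise involution of a $\sigma_0$-structure is indeed conjugate to $\sigma_0$ so that Corollary~\ref{cor:sigma_negate_centralizer} applies; both are straightforward, so I expect no real obstacle.
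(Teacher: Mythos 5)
Your proof is correct and takes essentially the same approach as the paper: parametrize nearby $\sigma_0$-structures by gauge transformations $u=\exp(\eta)$, extract the constraints $\sigma(\eta)=-\eta$ and $[\eta,\Phi]=0$, and identify the result with $\mathrm{H}^0(\Phi)$. The one thing you spell out that the paper leaves implicit is that the $\sigma$-anti-invariance of $\eta$ is automatic once $[\eta,\Phi]=0$, via Corollary~\ref{cor:sigma_negate_centralizer}, so the answer is the full $\ker\ad_\Phi$ rather than a proper subspace of it.
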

\begin{proof}
    Consider a $\sigma_0$-structure $\sigma$ satisfying $\sigma(\Phi)=-\Phi$. Since all $\sigma_0$-structures are conjugate, any infinitesimal variation $\delta\sigma$ is described by $\xi\in\g^{-\sigma}$ and given by $\delta\sigma(x)=[\xi,x]$ for $x\in\g$.
    The variations negating $\Phi$ have to satisfy $\delta\sigma(\Phi)=0$, hence $\xi\in\ker\ad_\Phi$, i.e. $\xi\in \mathrm{H}^0(\Phi)$. 
\end{proof}

Actually a stronger statement is true: by Lemma \ref{lem:space_of_sigma} the exponential of $\mathrm{H}^0(\Phi)=Z(\Phi)$ acts simply transitively on the space of choices of $\sigma_0$-structure. In particular, all $\sigma_0$ structures are conjugate.

We can now use the $\sigma_0$-structure $\sigma$ in the $\Phi$-cohomology.
Define $\Omega^{k,\sigma}(S,\g_P)$ to be the space of $\sigma$-invariant $\g_P$-valued $k$-forms, and similarly $\Omega^{k,-\sigma}(S,\g_P)$ to be the space of $\sigma$-anti-invariant forms. Since $\sigma(\Phi)=-\Phi$, the $\Phi$-cohomology complex splits as a direct sum of two complexes
$$\Omega^{0,\pm \sigma}(S,\g_P) \xrightarrow{\ad_\Phi\;} \Omega^{1,\mp \sigma}(S,\g_P) \xrightarrow{\ad_\Phi\;} \Omega^{2,\pm \sigma}(S,\g_P),$$
and we may define $\mathrm{H}^{k,\sigma}(\Phi)$ and $\mathrm{H}^{k,-\sigma}(\Phi)$ as the subspaces represented by $\sigma$-invariant and respectively $\sigma$-anti-invariant elements of these complexes. Equivalently, we can define them as the $\sigma$-invariant and $\sigma$-anti-invariant parts of $\mathrm{H}^k(\Phi)$:
$$\mathrm{H}^k(\Phi)\cong \mathrm{H}^{k,\sigma}(\Phi)\oplus \mathrm{H}^{k,-\sigma}(\Phi).$$

Moreover, one sees that the Lie bracket respects the $\sigma$-grading, because $\sigma$ commutes with the Lie bracket. 

\begin{proposition}\label{Prop-no-sigma-cohom}
We have $\mathrm{H}^{k,\sigma}(\Phi)=0$, for $k=0,1,2$.
\end{proposition}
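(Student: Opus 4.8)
The plan is to reduce everything to a fiberwise statement in $\g$: these $\Phi$-cohomology groups are computed fiberwise from the $\sigma$-eigenspace decomposition of $\g_P$, so it suffices to check the claim at one point of $S$. There, choosing a complex coordinate adapted to the induced complex structure, write $\Phi=\Phi_1\,dz+\Phi_2\,d\bar z$ with $\Phi_1$ principal nilpotent, $\Phi_2\in Z(\Phi_1)$, and $\sigma(\Phi_1)=-\Phi_1$, $\sigma(\Phi_2)=-\Phi_2$ (the last two because $\sigma(\Phi)=-\Phi$); set $\g=\g^\sigma\oplus\g^{-\sigma}$. The decomposition of the $\Phi$-complex recorded above identifies $\mathrm{H}^{0,\sigma}$ and $\mathrm{H}^{2,\sigma}$ with the outer cohomology of $C_A\colon \g^\sigma\xrightarrow{\ad_\Phi}T_z^*S\otimes\g^{-\sigma}\xrightarrow{\ad_\Phi}\Lambda^2T_z^*S\otimes\g^\sigma$, while $\mathrm{H}^{1,\sigma}$ is the middle cohomology of the complementary complex $C_B\colon \g^{-\sigma}\xrightarrow{\ad_\Phi}T_z^*S\otimes\g^\sigma\xrightarrow{\ad_\Phi}\Lambda^2T_z^*S\otimes\g^{-\sigma}$.

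\emph{Degrees $0$ and $2$.} If $\eta\in\g^\sigma$ satisfies $\ad_\Phi\eta=0$ then $\eta\in Z(\Phi_1)$, which is negated by $\sigma$ (Corollary \ref{cor:sigma_negate_centralizer}), so $\eta=0$; hence $\mathrm{H}^{0,\sigma}(\Phi)=0$. For degree $2$ I would show the second map of $C_A$, namely $\alpha_z\,dz+\alpha_{\bar z}\,d\bar z\mapsto([\Phi_1,\alpha_{\bar z}]-[\Phi_2,\alpha_z])\,dz\wedge d\bar z$, is onto; taking $\alpha_z=0$, it is enough that $[\Phi_1,\g^{-\sigma}]=\g^\sigma$. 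Now $\ad_{\Phi_1}$ is skew for the Killing form, so $\mathrm{Im}(\ad_{\Phi_1})=Z(\Phi_1)^\perp$, and $\g^\sigma\perp Z(\Phi_1)$ because $\tr(xy)=\tr(\sigma x\,\sigma y)=-\tr(xy)$ for $x\in\g^\sigma$, $y\in Z(\Phi_1)$. Thus $\g^\sigma\subseteq\mathrm{Im}(\ad_{\Phi_1})$; and since $\mathrm{Im}(\ad_{\Phi_1})=[\Phi_1,\g^{-\sigma}]\oplus[\Phi_1,\g^\sigma]$ with $[\Phi_1,\g^{-\sigma}]\subseteq\g^\sigma$ and $[\Phi_1,\g^\sigma]\subseteq\g^{-\sigma}$, intersecting with $\g^\sigma$ gives $[\Phi_1,\g^{-\sigma}]=\g^\sigma$, so $\mathrm{H}^{2,\sigma}(\Phi)=0$.

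\emph{Degree $1$.} Here I would argue by Euler characteristics. By the previous step $C_A$ has cohomology $(0,\mathrm{H}^{1,-\sigma},0)$, so its fiberwise Euler characteristic $\dim\g^\sigma-2\dim\g^{-\sigma}+\dim\g^\sigma$ equals $-\dim\mathrm{H}^{1,-\sigma}$. The key input is then the dimension identity $\dim\g^{-\sigma}-\dim\g^\sigma=\rk\g$: since $\sigma$ is conjugate to $\sigma_0$, which is the $\C$-linear extension of the Cartan involution of the split real form of $\g$, the space $\g^\sigma$ is the complexification of a maximal compact subalgebra of that split form, hence has dimension $\tfrac12(\dim\g-\rk\g)$ (the number of positive roots). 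Therefore $\dim\mathrm{H}^{1,-\sigma}(\Phi)=2\rk\g$, which by Proposition \ref{Prop:dim-phi-cohom} exhausts $\mathrm{H}^1(\Phi)=\mathrm{H}^{1,\sigma}(\Phi)\oplus\mathrm{H}^{1,-\sigma}(\Phi)$; so $\mathrm{H}^{1,\sigma}(\Phi)=0$. (Alternatively one can compute $\mathrm{H}^{1,\sigma}$ directly from $C_B$, using Lemma \ref{lemma:im-incl} to see $[\Phi_2,\g^\sigma]\subseteq[\Phi_1,\g]\cap\g^{-\sigma}=[\Phi_1,\g^\sigma]$, so that the second map of $C_B$ has image exactly $[\Phi_1,\g^\sigma]$, and then match dimensions.)

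The main obstacle is the degree-$1$ case. Degrees $0$ and $2$ are formal once Corollary \ref{cor:sigma_negate_centralizer} and $\sigma$-invariance of the Killing form are in hand, but degree $1$ needs both careful bookkeeping of which $\sigma$-parity of representatives contributes to each cohomological degree of which summand, and the external Lie-theoretic fact $\dim\g^{\sigma_0}=\tfrac12(\dim\g-\rk\g)$ for the complexified Cartan involution of the split real form.
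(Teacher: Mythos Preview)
Your argument is correct, but for degrees $1$ and $2$ it takes a genuinely different route from the paper's.

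For degree $2$, the paper simply uses the $\sigma$-invariant perfect pairing $\mathrm{H}^0(\Phi)\times\mathrm{H}^2(\Phi)\to\C$ already set up in Proposition~\ref{Prop:dim-phi-cohom}: since $\sigma$ acts by $-1$ on $\mathrm{H}^0(\Phi)$, it must act by $-1$ on the dual $\mathrm{H}^2(\Phi)$. Your explicit computation that $[\Phi_1,\g^{-\sigma}]=\g^\sigma$ via Killing-form orthogonality is more hands-on and in fact proves a slightly sharper statement (surjectivity of a specific map, not just vanishing of cohomology), but the duality argument is a one-liner once the pairing is available.

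For degree $1$, the paper again leans on a bilinear structure already in place: the symplectic form $\omega$ on $\mathrm{H}^1(\Phi)$ is $\sigma$-invariant, and $Z(\Phi)\bar K\subset\mathrm{H}^{1,-\sigma}(\Phi)$ is Lagrangian. Since $\mathrm{H}^{1,\sigma}$ and $\mathrm{H}^{1,-\sigma}$ are $\omega$-orthogonal, $\mathrm{H}^{1,\sigma}$ lies in the annihilator of this Lagrangian, hence in the Lagrangian itself, forcing $\mathrm{H}^{1,\sigma}=0$. This is entirely intrinsic. Your Euler-characteristic argument is equally valid but imports the external Lie-theoretic fact $\dim\g^{-\sigma}-\dim\g^\sigma=\rk\g$ (that $\g^{\sigma_0}$ complexifies the maximal compact of the split form), which the paper never needs. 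So the paper's approach is shorter and more self-contained; yours is more direct but brings in outside structure theory.
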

\begin{proof}
One needs to show that $\sigma$ acts by $-1$ on $\Phi$-cohomology. This is a pointwise statement.
For $k=0$, the 0-th cohomology is simply the center $Z(\Phi)$, on which $\sigma$ acts by $-1$. The natural pairing between 0-forms and 2-forms is $\sigma$-invariant. Hence $\sigma$ also acts by $-1$ on $\mathrm{H}^2(\Phi)$.

For $k=1$, the centralizer $Z(\Phi)\bar{K}$ descends to a Lagrangian subspace in $\mathrm{H}^1(\Phi)$. This subspace is negated by $\sigma$, so is its complement under the symplectic pairing between 1-forms. This describes the entire space $\mathrm{H}^1(\Phi)$.
\end{proof}

In view of Proposition \ref{Prop-no-sigma-cohom}, we have $\mathrm{H}^k(\Phi)=\mathrm{H}^{k,-\sigma}(\Phi)$, hence $[\mathrm{H}^k(\Phi),\mathrm{H}^\ell(\Phi)]\subset \mathrm{H}^{k+\ell,\sigma}(\Phi)=0$. We thus have the following:
\begin{corollary}\label{coboundaries}
For representatives $\alpha,\beta$ of cohomology classes $[\alpha] \in \mathrm{H}^k(\Phi)$ and  $[\beta] \in \mathrm{H}^\ell(\Phi)$, the bracket $[\alpha, \beta]$ is a coboundary.
\end{corollary}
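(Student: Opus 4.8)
The plan is to treat Corollary \ref{coboundaries} as a formal packaging of Proposition \ref{Prop-no-sigma-cohom}. First I would show that the wedge-bracket descends to a well-defined pairing $\mathrm{H}^k(\Phi)\times\mathrm{H}^\ell(\Phi)\to\mathrm{H}^{k+\ell}(\Phi)$; then I would identify the resulting class inside the subspace $\mathrm{H}^{k+\ell,\sigma}(\Phi)$ and invoke its vanishing from Proposition \ref{Prop-no-sigma-cohom}, concluding that $[\alpha,\beta]$ is exact.

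The well-definedness rests on the graded Leibniz identity for $\ad_\Phi$ acting on the wedge-bracket of $\g_P$-valued forms,
$$\ad_\Phi[\alpha,\beta]=[\ad_\Phi\alpha,\beta]+(-1)^k[\alpha,\ad_\Phi\beta],$$
which follows from the graded Jacobi identity together with $[\Phi\wedge\Phi]=0$, exactly as in the proof that \eqref{phi-cohom} is a chain complex. Two consequences are needed: if $\alpha,\beta$ are both $\Phi$-cocycles then $[\alpha,\beta]$ is a $\Phi$-cocycle; and if one of the two is a coboundary $\ad_\Phi\gamma$ while the other is a cocycle, then $[\ad_\Phi\gamma,\beta]=\ad_\Phi[\gamma,\beta]$ (up to sign) is a coboundary, and symmetrically in the other slot. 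Together these say that the $\Phi$-cohomology class of $[\alpha,\beta]$ depends only on $[\alpha]$ and $[\beta]$.

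Next I would use that $\sigma(\Phi)=-\Phi$ makes $\ad_\Phi$ interchange the $\sigma$-invariant and $\sigma$-anti-invariant parts of the complex (this is precisely the splitting recorded before Proposition \ref{Prop-no-sigma-cohom}); hence the $\sigma$-invariant and $\sigma$-anti-invariant components of any $\Phi$-cocycle are themselves cocycles, and by Proposition \ref{Prop-no-sigma-cohom} the $\sigma$-invariant one is exact. Thus every class in $\mathrm{H}^k(\Phi)$ admits a $\sigma$-anti-invariant representative, and by the previous step I may compute $[\alpha,\beta]$ using such representatives. For $\sigma$-anti-invariant $\alpha,\beta$ one has $\sigma[\alpha,\beta]=[\sigma\alpha,\sigma\beta]=[-\alpha,-\beta]=[\alpha,\beta]$, so $[\alpha,\beta]$ is a $\sigma$-invariant cocycle, representing a class in $\mathrm{H}^{k+\ell,\sigma}(\Phi)=0$. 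Therefore $[\alpha,\beta]$ is a coboundary for these representatives, and hence for arbitrary ones. The only point requiring genuine care — and the nearest thing to an obstacle — is the bookkeeping in the reduction to $\sigma$-anti-invariant representatives, which uses both the coboundary-ideal property from the Leibniz identity and the $\sigma$-splitting of the complex; there is no analytic input, since the entire statement is pointwise and finite-dimensional.
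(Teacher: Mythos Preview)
Your proposal is correct and follows essentially the same argument as the paper: the paper observes (just before the corollary) that $\mathrm{H}^k(\Phi)=\mathrm{H}^{k,-\sigma}(\Phi)$ by Proposition \ref{Prop-no-sigma-cohom}, so the bracket on cohomology lands in $\mathrm{H}^{k+\ell,\sigma}(\Phi)=0$. You have merely been more explicit about the Leibniz identity ensuring the bracket descends to cohomology and about the passage from $\sigma$-anti-invariant representatives back to arbitrary ones, which the paper leaves implicit.
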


\section{Connections associated to Fock bundles}\label{Sec:can-connection}

In nonabelian Hodge theory, one associates to polystable Higgs bundles, gauge equi\-valence classes of Hermitian-Yang-Mills metrics and subsequently flat bundles over a Riemann surface. In this section, we will associate to Fock bundles a certain family of connections. Our main conjecture is that we can moreover choose these connections to be flat. To begin with, we define a compatible connection on a Fock bundle:

\begin{definition}\label{Phi_comp_conn}
On a $G$-Fock bundle $(P,\Phi, \sigma)$, a connection $d_A$ is called \emph{$\Phi$-compatible} if $d_A\Phi=0$.
\end{definition}

Note that $d_A\Phi=0$ is an affine equation in the connection matrix $A$, thus easily provides the existence of $\Phi$-compatible connections. Indeed, local solutions can be found by choosing a local trivialization of $P$ in which the centralizer of $\Phi$ is constant. In this trivialization, $d\Phi$ will be valued in the centralizer of $\Phi$, thus it will be in the image of $\ad_{\Phi}$ (see Proposition \ref{Prop:center-in-image}), allowing us to find a matrix valued $1$-form $A_0$ with $d\Phi + [A_0\wedge\Phi]=0$. These local solutions can be now patched together using a partition of unity. We thus have the following: 

\begin{proposition}\label{Prop:comp-connections}
There exist $\Phi$-compatible connections on any given $G$-Fock bundle.
\end{proposition}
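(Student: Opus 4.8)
\textit{Proof plan.} The equation $d_A\Phi=0$ is affine in the connection, and this is the structural fact I would exploit. Fix once and for all a background connection $d_{A_0}$ on $P$, which exists because $S$ is a closed (hence paracompact) manifold. Any other connection has the form $d_{A_0}+\alpha$ for a tensor $\alpha\in\Omega^1(S,\g_P)$, and $\Phi$-compatibility becomes the \emph{linear} equation $\ad_\Phi(\alpha)=-d_{A_0}\Phi$ in $\Omega^2(S,\g_P)$, where $\ad_\Phi(\alpha)=[\Phi\wedge\alpha]$ as in the complex \eqref{phi-cohom}. Thus it suffices to (i) solve this equation over coordinate patches and (ii) patch the local solutions to a global one.

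For step (i), I would use the complex structure induced by the Fock bundle to write $\Phi=\Phi_1\,dz+\Phi_2\,d\bar z$ on a coordinate patch, with $\Phi_1$ principal nilpotent and $\Phi_2\in Z(\Phi_1)$ (these facts are recorded in the discussion following Definition \ref{defn_Fock-bundle}). Since all principal nilpotent elements form a single $G$-orbit (Theorem \ref{Thm:one-reg-orbit}) and the orbit map $G\to\g$, $g\mapsto\Ad_g F$, is a submersion onto that orbit, it admits smooth local sections; composing one with $z\mapsto\Phi_1(z)$ yields, after shrinking the patch and combining with a local trivialization of $P$, a smooth gauge transformation carrying $\Phi_1$ to a constant standard principal nilpotent $F$. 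After this gauge change $\Phi_2$ takes values in the fixed linear subspace $Z(F)$, hence so does $\partial_z\Phi_2$, and since $\Phi_1\equiv F$ is constant one computes $d\Phi=\partial_z\Phi_2\,dz\wedge d\bar z$, still valued in $Z(F)$. By Proposition \ref{Prop:center-in-image}, $Z(F)\subset\mathrm{Im}(\ad_F)$, so fixing a linear right inverse of $\ad_F$ we can solve $[b,F]=\partial_z\Phi_2$ for a smooth $\g$-valued function $b$; then $A:=b\,d\bar z$ satisfies $d\Phi+[A\wedge\Phi]=0$, i.e.\ $d_A$ is a local $\Phi$-compatible connection.

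For step (ii), cover $S$ by coordinate patches $U_i$ carrying local $\Phi$-compatible connections $d_{A_i}$ as above, set $\alpha_i:=A_i-A_0|_{U_i}\in\Omega^1(U_i,\g_P)$ so that $\ad_\Phi(\alpha_i)=-d_{A_0}\Phi$ on $U_i$, and take a subordinate partition of unity $\{\chi_i\}$. Because the equation $\ad_\Phi(\alpha)=-d_{A_0}\Phi$ is affine and $\sum_i\chi_i\equiv1$, the global tensor $\alpha:=\sum_i\chi_i\alpha_i$ again satisfies $\ad_\Phi(\alpha)=\sum_i\chi_i\ad_\Phi(\alpha_i)=-d_{A_0}\Phi$, so $d_{A_0}+\alpha$ is the desired global $\Phi$-compatible connection. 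The argument is entirely soft; I do not expect a genuine obstacle, and the only points needing a little care are the smoothness of the local straightening gauge for $\Phi_1$ and the (standard) remark that patching by a partition of unity works here precisely because the defining equation is affine, so that convex combinations of solutions remain solutions.
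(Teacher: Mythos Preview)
Your proof is correct and follows essentially the same approach as the paper: observe that the equation is affine in the connection, construct local solutions by gauging so that the centralizer $Z(\Phi)$ is constant (you do this by fixing $\Phi_1=F$), use Proposition \ref{Prop:center-in-image} to see that $d\Phi\in Z(F)\subset\mathrm{Im}(\ad_F)$ and hence solve locally, then patch with a partition of unity. Your version is a bit more explicit about the local straightening gauge and the affine patching, but the strategy is identical.
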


\subsection{Three-term connections}\label{Sec:3-term-connections}

In Section \ref{sec:examples of 3-term conn} we reviewed two sorts of families of 3-term connections from the works of Hitchin \cite{Hit87} and Fock \cite{Fock} subject to a parameter $\lambda \in \mathbb{C}^{*}$. The form of these families comes from the general theory of hyperk\"{a}hler manifolds and their twistor space \cite{HKLR87}. Within this framework, we consider a family of connections on a principal $G$-bundle $P$ over a surface $S$ of the form 
\begin{equation}
 \mathcal{A}(\lambda)= \lambda^{-1}\Phi+d_A+\lambda \Psi,
\end{equation}
where $\lambda\in \mathbb{C}^*$ is a parameter, $d_A$ is a fixed background connection on $P$ and $\Phi,\Psi\in\Omega^1(S,\mathfrak{g}_P)$.
The curvature of a connection in this family is a Laurent polynomial in $\l$ given by:
\begin{align}\label{curvature_3term}
F(\mathcal{A}(\lambda))&=[\mathcal{A}(\lambda)\wedge\mathcal{A}(\lambda)] \nonumber \\
& = \lambda^{-2}[\Phi\wedge\Phi]+\lambda^{-1}d_A(\Phi)+ F(A)+[\Phi\wedge\Psi]+\lambda d_A(\Psi)+\lambda^2[\Psi\wedge\Psi],
\end{align}
where $F(A)$ denotes the curvature of the fixed background connection $d_A$.

In order to have a family $\mathcal{A}(\lambda)$ of flat connections, i.e. flat for all values of $\lambda$, all five coefficients of the Laurent polynomial in \eqref{curvature_3term} have to vanish. Note that if $\Phi$ and $\Psi$ are Fock fields, then the terms $\lambda^{-2}[\Phi\wedge\Phi]$ and  $\lambda^2[\Psi\wedge\Psi]$ in (\ref{curvature_3term}) do vanish. If, in addition, the background connection $d_A$ is a connection compatible with both $\Phi$ and $\Psi$, then the curvature $F(\mathcal{A}(\lambda))$ becomes independent of the parameter $\lambda$.
In this situation, the flatness of $\mathcal{A}(\lambda)$ is equivalent to the equation
\begin{equation}\label{Fock equation}
F(A)+[\Phi\wedge\Psi] = 0.
\end{equation}
Note the similarity of this equation to the form of the well-known Hitchin equation from \cite{Hit87}.

In Section \ref{Sec:canonical-connection} below, we will show the existence of a connection $d_A$ compatible with both Fock fields $\Phi$ and $\Psi$. For this to be possible, the two fields have to satisfy a certain transversality condition.

\subsection{Transversality}

We next define the notion of transverse Fock fields.
\begin{definition}
For a pair of Fock fields $\Phi, \Psi \in \Omega^1(S,\mathfrak{g}_P)$, we call $\Phi$ and $\Psi$ \emph{transverse} if $\Omega^1(S,\g_P)=\ker(\ad_\Phi)\oplus \mathrm{Im}(\ad_\Psi)$ and $\Omega^1(S,\g_P)=\ker(\ad_\Psi)\oplus \mathrm{Im}(\ad_\Phi)$.
\end{definition}
Figure \ref{Fig:transversality} illustrates the transversality condition; note that the figure shows the decomposition of $\Omega^1(S,\g_P)$ as a direct sum, not as a union.

\begin{figure}[h!]
\centering
\includegraphics[height=4.5cm]{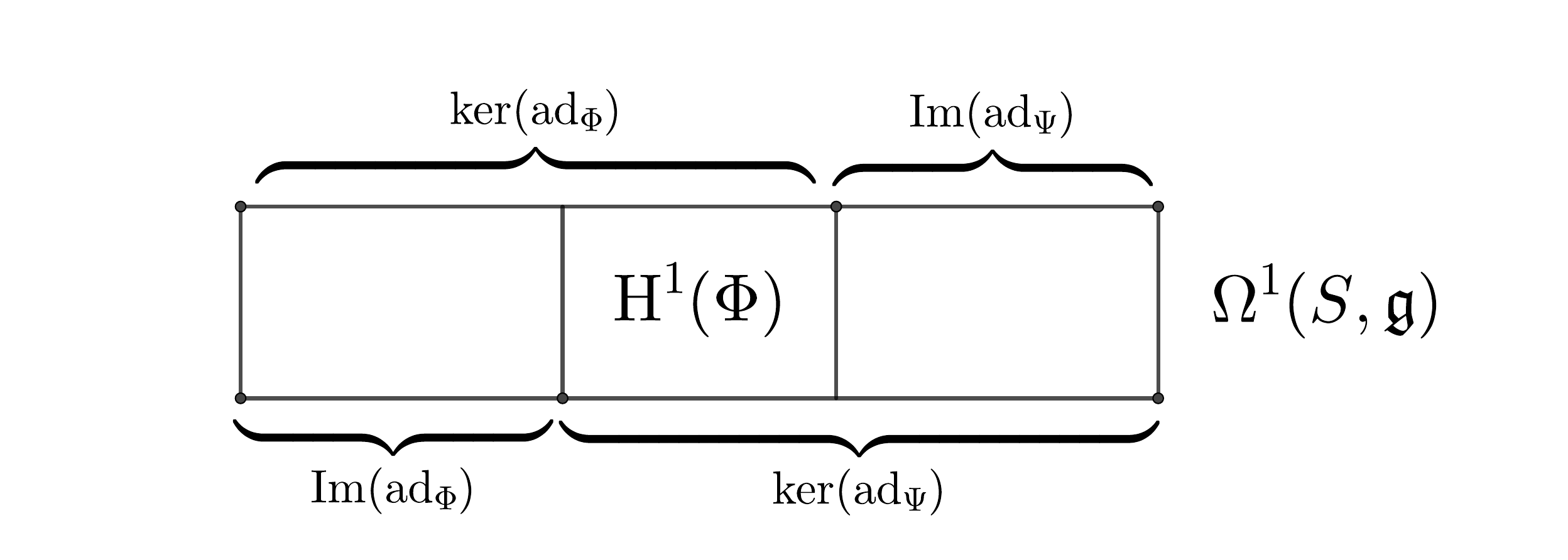}
\caption{Decomposition of $\Omega^1(S,\g_P)$ for transverse $\Phi$ and $\Psi$.}\label{Fig:transversality}
\end{figure}

\begin{proposition}\label{Prop:repr-for-cohom}
    For transverse $\Phi$ and $\Psi$, the cohomology $\mathrm{H}^1(\Phi)$ can be identified with $\ker(\ad_\Phi)\cap\ker(\ad_\Psi)$.
\end{proposition}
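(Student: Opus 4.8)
The plan is to compute $\mathrm{H}^1(\Phi)$ directly from the complex \eqref{phi-cohom}, namely as $\ker(\ad_\Phi : \Omega^1 \to \Omega^2)/\mathrm{Im}(\ad_\Phi : \Omega^0 \to \Omega^1)$, and to use the two transversality splittings to pick a canonical representative in each cohomology class. First I would observe that, using the second transversality condition $\Omega^1(S,\g_P) = \ker(\ad_\Psi)\oplus \mathrm{Im}(\ad_\Phi)$, every class in $\mathrm{H}^1(\Phi)$ has a unique representative lying in $\ker(\ad_\Psi)$: given a cocycle $\alpha \in \ker(\ad_\Phi)$, decompose it as $\alpha = \alpha' + \ad_\Phi(\eta)$ with $\alpha' \in \ker(\ad_\Psi)$ and $\eta \in \Omega^0(S,\g_P)$, so that $\alpha'$ represents the same class; uniqueness of the representative in $\ker(\ad_\Psi)$ follows because any coboundary $\ad_\Phi(\eta)$ lies in $\mathrm{Im}(\ad_\Phi)$, which meets $\ker(\ad_\Psi)$ only in $0$ by the same splitting.

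The second step is to check that this representative $\alpha'$ in fact lies in $\ker(\ad_\Phi)\cap\ker(\ad_\Psi)$. Since $\alpha'$ is cohomologous to the cocycle $\alpha$ and the coboundary $\ad_\Phi(\eta)$ is automatically in $\ker(\ad_\Phi)$ (because $\ad_\Phi^2 = 0$), we get $\alpha' = \alpha - \ad_\Phi(\eta) \in \ker(\ad_\Phi)$; and $\alpha' \in \ker(\ad_\Psi)$ by construction. Conversely, any element of $\ker(\ad_\Phi)\cap\ker(\ad_\Psi)$ is a cocycle and hence defines a class in $\mathrm{H}^1(\Phi)$, so we have a natural map $\ker(\ad_\Phi)\cap\ker(\ad_\Psi) \to \mathrm{H}^1(\Phi)$ in both directions. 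I would then verify these are mutually inverse: the composite $\ker(\ad_\Phi)\cap\ker(\ad_\Psi) \to \mathrm{H}^1(\Phi) \to \ker(\ad_\Phi)\cap\ker(\ad_\Psi)$ is the identity by the uniqueness of representatives, and the other composite is the identity on $\mathrm{H}^1(\Phi)$ because each class is hit.

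The only genuinely non-formal point — and the step I expect to require the most care — is injectivity, i.e. that an element $\beta \in \ker(\ad_\Phi)\cap\ker(\ad_\Psi)$ which is a coboundary $\beta = \ad_\Phi(\eta)$ must vanish. This is precisely where the second transversality hypothesis $\Omega^1(S,\g_P) = \ker(\ad_\Psi)\oplus\mathrm{Im}(\ad_\Phi)$ is used: $\beta$ lies in $\ker(\ad_\Psi)\cap\mathrm{Im}(\ad_\Phi) = 0$. So the argument is clean once one is careful to invoke the \emph{right} one of the two splittings at each stage; the symmetry of the statement in $\Phi$ and $\Psi$ is slightly misleading since only the decomposition involving $\mathrm{Im}(\ad_\Phi)$ is needed here. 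One could also package the whole argument more abstractly by noting that $\ker(\ad_\Phi)\cap\mathrm{Im}(\ad_\Phi)$ is a complement to $\ker(\ad_\Phi)\cap\ker(\ad_\Psi)$ inside $\ker(\ad_\Phi)$, but the direct representative-chasing above is the most transparent route and is the one I would write out.
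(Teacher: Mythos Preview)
Your proof is correct and follows essentially the same route as the paper: both use the splitting $\Omega^1(S,\g_P)=\ker(\ad_\Psi)\oplus\mathrm{Im}(\ad_\Phi)$ to project a cocycle onto $\ker(\ad_\Psi)$, obtaining a preferred representative in $\ker(\ad_\Phi)\cap\ker(\ad_\Psi)$. Your write-up is in fact more thorough than the paper's two-sentence argument, spelling out injectivity explicitly and noting that only one of the two transversality splittings is actually needed.
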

\begin{proof}
    Let $[\alpha]\in \mathrm{H}^1(\Phi)$ be represented by $\alpha\in \ker(\ad_\Phi)$. Using the transversality condition $\Omega^1(S,\mathfrak{g}_P)=\ker(\ad_\Psi)\oplus \mathrm{Im}(\ad_\Phi)$, we see that the projection onto $\ker(\ad_\Psi)$ changes $\alpha$ only by a coboundary. Hence we can use this projection as a preferred representative for the class $[\alpha]$.
\end{proof}

Moreover, in the setting of $\sigma$-splitted $\Phi$-cohomology from Section \ref{sec_sigma_split_coh}, we get the following corollary:
\begin{corollary}\label{Coro:sigma-decompo-transverse}
    For a pair of transverse Fock fields $\Phi, \Psi \in \Omega^1(S,\mathfrak{g}_P)$, one has: 
\begin{equation*}
\Omega^1(S,\mathfrak{g}_P)^\sigma=\mathrm{Im}(\ad_\Phi)^\sigma\oplus \mathrm{Im}(\ad_\Psi)^\sigma.
\end{equation*}
\end{corollary}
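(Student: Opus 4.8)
The plan is to derive the claimed decomposition of the $\sigma$-invariant 1-forms from the transversality decomposition $\Omega^1(S,\g_P)=\ker(\ad_\Phi)\oplus\mathrm{Im}(\ad_\Psi)$ together with the vanishing of $\sigma$-invariant $\Phi$-cohomology established in Proposition \ref{Prop-no-sigma-cohom}. The key point is that $\sigma$ commutes with $\ad_\Phi$ up to sign: since $\sigma(\Phi)=-\Phi$, for any $\g_P$-valued form $\alpha$ we have $\sigma(\ad_\Phi\alpha)=\ad_{\sigma\Phi}\sigma\alpha=-\ad_\Phi\sigma\alpha$, so $\ad_\Phi$ sends $\sigma$-invariant forms to $\sigma$-anti-invariant forms and vice versa; the same holds for $\ad_\Psi$. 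Consequently both $\ker(\ad_\Phi)$ and $\mathrm{Im}(\ad_\Phi)$ (and likewise for $\Psi$) are $\sigma$-stable subspaces, so each inherits a direct sum splitting into its $\pm\sigma$ parts.

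First I would observe that, because $\ad_\Phi$ anti-commutes with $\sigma$, we have $\mathrm{Im}(\ad_\Phi)^\sigma=\ad_\Phi\bigl(\Omega^{0,-\sigma}(S,\g_P)\bigr)$, i.e.\ the $\sigma$-invariant part of the image of $\ad_\Phi$ on $1$-forms is exactly the image of $\ad_\Phi$ restricted to $\sigma$-anti-invariant $0$-forms; similarly for $\Psi$. Next, intersecting the transversality decomposition $\Omega^1(S,\g_P)=\ker(\ad_\Phi)\oplus\mathrm{Im}(\ad_\Psi)$ with the $+\sigma$ eigenspace (a direct sum of $\sigma$-stable subspaces restricts to each eigenspace) gives
\begin{equation*}
\Omega^1(S,\g_P)^\sigma=\ker(\ad_\Phi)^\sigma\oplus\mathrm{Im}(\ad_\Psi)^\sigma.
\end{equation*}
So it remains to identify $\ker(\ad_\Phi)^\sigma$ with $\mathrm{Im}(\ad_\Phi)^\sigma$. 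For this I would use Proposition \ref{Prop-no-sigma-cohom}: since $\mathrm{H}^{1,\sigma}(\Phi)=0$, every $\sigma$-invariant cocycle for $\ad_\Phi$ is a $\sigma$-invariant coboundary, i.e.\ $\ker(\ad_\Phi)^\sigma\subset\mathrm{Im}(\ad_\Phi)$, and combined with $\sigma$-invariance this gives $\ker(\ad_\Phi)^\sigma\subset\mathrm{Im}(\ad_\Phi)^\sigma$. The reverse inclusion $\mathrm{Im}(\ad_\Phi)^\sigma\subset\ker(\ad_\Phi)^\sigma$ is immediate from $\ad_\Phi^2=0$. Hence $\ker(\ad_\Phi)^\sigma=\mathrm{Im}(\ad_\Phi)^\sigma$, and substituting into the displayed equation yields the claim.

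The only subtlety — and the step I would be most careful about — is the bookkeeping that intersecting a direct sum of $\sigma$-stable subspaces with an eigenspace of $\sigma$ gives the direct sum of the intersections; this is elementary linear algebra once one checks $\ker(\ad_\Phi)$ and $\mathrm{Im}(\ad_\Psi)$ are genuinely $\sigma$-stable, which follows from the anti-commutation relation above. There is no real analytic obstacle here since everything reduces to the pointwise algebraic facts already assembled in Section \ref{sec_sigma_split_coh} and the transversality hypothesis; the work is purely formal. One could alternatively phrase the whole argument by applying Proposition \ref{Prop:repr-for-cohom} together with $\mathrm{H}^{1,\sigma}(\Phi)=\mathrm{H}^{1,\sigma}(\Psi)=0$, but the direct route above seems shortest.
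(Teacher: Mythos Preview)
Your proof is correct and follows essentially the same approach as the paper: both use the transversality decomposition together with the vanishing $\mathrm{H}^{1,\sigma}(\Phi)=0$ from Proposition~\ref{Prop-no-sigma-cohom} to identify $\ker(\ad_\Phi)^\sigma$ with $\mathrm{Im}(\ad_\Phi)^\sigma$. The paper's one-line justification invokes the identification $\mathrm{H}^1(\Phi)\cong\ker(\ad_\Phi)\cap\ker(\ad_\Psi)$ from Proposition~\ref{Prop:repr-for-cohom} rather than arguing directly as you do, but this is exactly the alternative you mention at the end; your direct route is if anything slightly cleaner.
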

This follows from the fact that $\mathrm{H}^{1,\sigma}(\Phi)=0$ (see Proposition \ref{Prop-no-sigma-cohom}) and the identification of $\ker(\ad_\Phi)\cap \ker(\ad_\Psi)$ with $\mathrm{H}^1(\Phi)$.

The transversality condition is an open condition. In the unitary setting (where the bundle $P$ is equipped with a hermitian structure $\rho$), we will see that the Fock field $\Psi=\Phi^{*}$ can be made transverse to $\Phi$, by a suitable diagonal gauge transformation.

\subsection{Canonical connection from transverse Fock fields}\label{Sec:canonical-connection}

We now associate a canonical connection $d_A$ to a given pair of transverse Fock fields $\Phi$ and $\Psi$. This allows to get a family of 3-term connections $\l^{-1}\Phi+d_A+\l\Psi$. The existence and uniqueness of $d_A$ brings to mind the Chern connection on a holomorphic bundle with hermitian structure.

\begin{theorem}\label{Thm-filling-in}
Let $(P,\Phi,\sigma)$ be a $G$-Fock bundle equipped with a second Fock field $\Psi$, transverse to $\Phi$. Then there is a unique $\sigma$-invariant connection $d_A$ on $P$ which is compatible with both $\Phi$ and $\Psi$, i.e. solving the equations 
\[d_A\Phi=d_A\Psi=0.\] 
\end{theorem}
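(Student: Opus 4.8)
The plan is to solve the pair of equations $d_A\Phi = 0$, $d_A\Phi$ replaced by $d_A\Psi=0$ simultaneously, working locally first and then showing that local solutions patch to a global one because they are unique. The key algebraic input is the transversality hypothesis, which says that $\Omega^1(S,\g_P) = \ker(\ad_\Phi)\oplus\mathrm{Im}(\ad_\Psi)$ and symmetrically with $\Phi,\Psi$ swapped. First I would fix an auxiliary $\sigma$-invariant background connection $d_{A_0}$ (any such exists by Proposition~\ref{Prop:comp-connections} for $\Phi$ alone — actually one must be a bit careful, so I would instead just take \emph{any} $\sigma$-invariant connection $d_{A_0}$, not yet compatible with either field) and write the unknown connection as $d_A = d_{A_0} + a$ with $a\in\Omega^1(S,\g_P)^\sigma$. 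The equations become the pair of linear (affine) equations $[a\wedge\Phi] = -d_{A_0}\Phi$ and $[a\wedge\Psi] = -d_{A_0}\Psi$ in the single unknown $a$.

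The heart of the argument is to check that this overdetermined-looking linear system has a unique solution, pointwise on $S$. Pointwise, $\ad_\Phi$ and $\ad_\Psi$ are maps on $\g$-valued covectors, and the transversality condition is exactly the statement that the map $a\mapsto ([a\wedge\Phi],[a\wedge\Psi])$ from $\Omega^1(S,\g_P)$ to $\mathrm{Im}(\ad_\Psi)\oplus\mathrm{Im}(\ad_\Phi)$ is an isomorphism: injectivity because $\ker(\ad_\Phi)\cap\ker(\ad_\Psi) = 0$ (this follows from Proposition~\ref{Prop:repr-for-cohom} together with $\mathrm{H}^1(\Phi) = \mathrm{H}^{1,-\sigma}(\Phi)$ and... in fact one needs $\ker(\ad_\Phi)\cap\ker(\ad_\Psi)$ to vanish, which is a separate point I would verify from the transversality decomposition and a dimension count — $\dim\ker(\ad_\Phi) = \dim\mathrm{Im}(\ad_\Psi)$ forces the intersection to be zero), and surjectivity by a dimension count from the two direct-sum decompositions. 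So I must first show the right-hand sides $d_{A_0}\Phi$ and $d_{A_0}\Psi$ actually lie in the correct target subspaces $\mathrm{Im}(\ad_\Psi)$ and $\mathrm{Im}(\ad_\Phi)$ respectively. Here I would use the Bianchi-type identity: since $[\Phi\wedge\Phi]=0$, applying $d_{A_0}$ gives $[d_{A_0}\Phi\wedge\Phi]=0$, i.e. $d_{A_0}\Phi\in\ker(\ad_\Phi)$ as a $2$-form; a parallel computation shows $d_{A_0}\Psi\in\ker(\ad_\Psi)$. Then I invoke Proposition~\ref{Prop:center-in-image} / Lemma~\ref{lemma:im-incl}, which give $\ker(\ad_\Phi)=Z(\Phi)\subset\mathrm{Im}(\ad_\Phi)$ at the level of $\g$ — but I need it at the level of $2$-forms and I need $d_{A_0}\Phi$ in $\mathrm{Im}(\ad_\Psi)$, \emph{not} $\mathrm{Im}(\ad_\Phi)$. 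This is where transversality enters again: $d_{A_0}\Phi\in\ker(\ad_\Phi)$, and since $\Omega^2(S,\g_P)$ also decomposes (by tensoring the transversality decomposition with $\Lambda^2 T^*S$) as $\ker(\ad_\Phi)\oplus\mathrm{Im}(\ad_\Psi)$... one must be slightly careful about whether the decomposition of $1$-forms implies the analogous decomposition of $2$-forms, but since $\Lambda^2T^*_zS$ is one-dimensional this is immediate. Hence $d_{A_0}\Phi\in\ker(\ad_\Phi)$ forces $d_{A_0}\Phi\in\mathrm{Im}(\ad_\Psi)$ is \emph{not} automatic; rather I should argue that $\ker(\ad_\Phi)\subseteq\mathrm{Im}(\ad_\Psi)$ — indeed $\ker(\ad_\Phi)$ has the same dimension as $\mathrm{Im}(\ad_\Psi)$ and their intersection with $\ker(\ad_\Psi)$ behaves well, but the cleanest route is: $\ker(\ad_\Phi) = Z(\Phi)\bar K$-type bundle and by transversality $\ker(\ad_\Phi)\cap\ker(\ad_\Psi)=0$, so $\ker(\ad_\Phi)$ injects into $\Omega^1/\ker(\ad_\Psi)\cong\mathrm{Im}(\ad_\Phi)$; a more direct approach uses Lemma~\ref{lemma:im-incl} after observing that $\Psi$ and $\Phi$ share the same $(1,0)$ part up to the structure, but I would need to think about the honest statement.

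Once the pointwise linear-algebra fact — the map $a\mapsto([a\wedge\Phi],[a\wedge\Psi])$ is an isomorphism onto $\mathrm{Im}(\ad_\Psi)\oplus\mathrm{Im}(\ad_\Phi)$, and the two right-hand sides land in these summands — is established, the solution $a$ exists and is unique pointwise, hence defines a smooth global section (smoothness follows from smoothness of $\Phi$, $\Psi$, $d_{A_0}$ and the fact that matrix inversion is smooth), giving a global connection $d_A = d_{A_0}+a$. Uniqueness is automatic from the pointwise uniqueness. Finally I must check $d_A$ is $\sigma$-invariant: $d_{A_0}$ is $\sigma$-invariant by choice, and $a$ is forced to be $\sigma$-invariant because the right-hand sides $d_{A_0}\Phi$, $d_{A_0}\Psi$ are $\sigma$-anti-invariant (as $\sigma(\Phi)=-\Phi$, $\sigma(\Psi)=-\Psi$ and $d_{A_0}$ is $\sigma$-invariant), and by the uniqueness, applying $\sigma$ to the equation shows $\sigma(a)$ solves the same system, hence $\sigma(a)=a$. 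The main obstacle, I expect, is precisely the claim that $d_{A_0}\Phi$ lands in $\mathrm{Im}(\ad_\Psi)$ (and symmetrically) — verifying that the Bianchi identity output $d_{A_0}\Phi\in\ker(\ad_\Phi)$ is contained in the correct image space requires combining the transversality hypothesis with the Lie-theoretic fact $Z(F)\subset\mathrm{Im}(\ad_F)$ for principal nilpotents (Proposition~\ref{Prop:center-in-image}), and possibly Lemma~\ref{lemma:im-incl}, in a way that respects the Hodge bidegree; everything else is linear algebra, a dimension count, and a patching/uniqueness argument.
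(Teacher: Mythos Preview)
Your central claim --- that the pointwise map $a\mapsto([a\wedge\Phi],[a\wedge\Psi])$ is injective because $\ker(\ad_\Phi)\cap\ker(\ad_\Psi)=0$ --- is false, and this breaks both uniqueness and your final $\sigma$-invariance argument. By Proposition~\ref{Prop:repr-for-cohom}, $\ker(\ad_\Phi)\cap\ker(\ad_\Psi)$ is identified with $\mathrm{H}^1(\Phi)$, which is pointwise of dimension $2\,\rk(\g)$ (Proposition~\ref{Prop:dim-phi-cohom}), not zero. Your attempted dimension count ``$\dim\ker(\ad_\Phi)=\dim\mathrm{Im}(\ad_\Psi)$ forces the intersection to be zero'' confuses the two transversality decompositions: what transversality gives is $\ker(\ad_\Phi)\cap\mathrm{Im}(\ad_\Psi)=0$, not $\ker(\ad_\Phi)\cap\ker(\ad_\Psi)=0$. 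Consequently your linear system has a $2\,\rk(\g)$-dimensional affine space of solutions at each point, and the argument ``$\sigma(a)$ solves the same system, hence $\sigma(a)=a$'' fails because it relies on the uniqueness you have not established.

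The fix --- and this is how the paper proceeds --- is that $\sigma$-invariance is not an afterthought but the source of uniqueness: Proposition~\ref{Prop-no-sigma-cohom} says $\sigma$ acts by $-1$ on $\mathrm{H}^1(\Phi)\cong\ker(\ad_\Phi)\cap\ker(\ad_\Psi)$, so the $\sigma$-invariant part of the solution space is a single point. The paper builds in $\sigma$-invariance from the start: begin with a $\sigma$-invariant $\Phi$-compatible $d_{A_0}$ (Proposition~\ref{Prop:comp-connections}), write $d_{A_0}\Psi=[R\wedge\Psi]$ with $R$ $\sigma$-invariant, use Corollary~\ref{Coro:sigma-decompo-transverse} to split $R=R_1+R_2$ with $R_i\in\mathrm{Im}(\ad_\Phi)^\sigma,\mathrm{Im}(\ad_\Psi)^\sigma$, and set $d_A=d_{A_0}-R_1$; then $d_A\Psi=0$ by construction and $d_A\Phi=-[R_1\wedge\Phi]=0$ since $R_1\in\mathrm{Im}(\ad_\Phi)$ and $\ad_\Phi^2=0$. (Note also that your target labeling is swapped: $[a\wedge\Phi]$ lies in $\mathrm{Im}(\ad_\Phi)\subset\Omega^2$, not $\mathrm{Im}(\ad_\Psi)$, so the condition you need on $d_{A_0}\Phi$ is membership in $\mathrm{Im}(\ad_\Phi)$ --- which is exactly what Proposition~\ref{Prop:center-in-image} gives --- not $\mathrm{Im}(\ad_\Psi)$.)
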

In the sequel, we call $d_A$ the \emph{canonical connection} and refer to the construction of $d_A$ as the \emph{filling-in procedure} (since it gives the middle term in the 3-term connection). The associated 3-term connection $\Phi+d_A+\Psi$ will be called the \emph{canonical 3-term connection}. 

\begin{proof}
\emph{Existence}: By Proposition \ref{Prop:comp-connections}, we first find a compatible connection $d_{A_0}$ on $P$, i.e. such that $d_{A_0}\Phi = 0$.
The existence of $d_{A_0}$ relies on the fact that for appropriate local trivializations, $d\Phi$ lies in the image of $\ad_\Phi$.
Note that we can choose $d_{A_0}$ to be $\sigma$-invariant since $\sigma(\Phi)=-\Phi$.

\noindent The same argument as for $\Phi$ shows that $d_{A_0}\Psi=d\Psi+[A_0\wedge\Psi]$ lies in the image of $\ad_\Psi$. Hence there is a section $R\in\Omega^1(S,\g_P)$ such that $d_{A_0}\Psi=[R\wedge\Psi]$. Since both $\Psi$ and $d_{A_0}\Psi$ are $\sigma$-anti-invariant, we can choose $R$ to be $\sigma$-invariant. By Corollary \ref{Coro:sigma-decompo-transverse} (since $\Phi$ and $\Psi$ are transverse) we can decompose $R=R_1+R_2$ where $R_1\in\mathrm{Im}(\ad_\Phi)^\sigma$ and $R_2\in\mathrm{Im}(\ad_\Psi)^\sigma$. Since $\ad_\Psi^2=0$, we have $[R\wedge\Psi]=[R_1\wedge\Psi]$.

We claim that $d_A:=d_{A_0}-R_1$ is a solution. Indeed, $d_A\Psi=0$ by definition of $R_1$ and $d_A\Phi=-[R_1\wedge\Phi]=0$ since $R_1\in\mathrm{Im}(\ad_\Phi)$ and $\ad_\Phi^2=0$. Finally, both $d_{A_0}$ and $R_1$ are $\sigma$-invariant. Therefore $d_A$ is indeed a solution.

\emph{Uniqueness}: The space of solutions to the equations $d_{A}\Phi=d_{A}\Psi=0$ is an affine subspace of the space of connections. If $d_A$ is one such solution, then all solutions are of the form $d_A+C$, where $C\in \ker(\ad_{\Phi})\cap \ker(\ad_{\Psi})$. Lemma \ref{Prop-no-sigma-cohom} shows that the involution $\sigma$ acts by $-1$ on the $\Phi$-cohomology, so acts by $-1$ on $\ker(\ad_{\Phi})\cap \ker(\ad_{\Psi})$. Since $d_A+C$ and $d_A$ are $\sigma$-invariant, $C$ has to be $\sigma$-invariant. Therefore $C=0$. 
\end{proof}

\subsection{Unitary case}\label{Sec:unitary-case}

We equip a $G$-Fock bundle $(P,\Phi,\sigma)$ with a compatible hermitian structure $\rho$. Ideally we would like to apply the construction of a canonical connection from Theorem \ref{Thm-filling-in} for the pair of Fock fields $\Phi$ and $\Psi=\Phi^{*}=-\rho(\Phi)$. However, the construction can not be used directly since the Fock fields $\Phi$ and $\Phi^{*}$ need not be necessarily transversal. We will see that we can always conjugate $\Phi$ so that $\Phi$ and $\Phi^{*}$ become transversal.

\begin{definition}
    A hermitian structure $\rho$ on a $G$-Fock bundle $(P,\Phi,\sigma)$ is called \emph{compatible} if the involutions $\rho$ and $\sigma$ of $\g_P$ commute. 
\end{definition}
In the case of an $\mathrm{SL}_n(\C)$-Fock bundle $(E,\Phi,g)$, this is equivalent to the existence of a real vector bundle $E^{\mathbb{R}}$ equipped with a real metric $g^{\mathbb{R}}$ such that the complexification of $E^{\mathbb{R}}$ gives the bundle $E$, the complex linear extension of the real metric $g^{\mathbb{R}}$ gives $g$ and the hermitian extension gives the hermitian form $h$.

\begin{proposition}
    For any $G$-Fock bundle $(P,\Phi, \sigma)$, there is a compatible hermitian structure.
\end{proposition}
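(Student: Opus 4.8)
The plan is to produce the compatible hermitian structure by an averaging argument, reducing the problem to the statement that a single fiber admits the required compatibility and that such compatibilities form a contractible (hence non-empty, and amenable to gluing) space. First I would recall that a hermitian structure $\rho$ compatible with $\sigma$ in the sense of the definition is an involution of $P$ associated to the compact real form $\rho_0$ which commutes with $\sigma$; equivalently, on the adjoint bundle $\g_P$ it is a bundle involution whose fixed locus is fiberwise a compact real form and which commutes with the involution $\sigma$. Since $\sigma$ is a $\sigma_0$-structure, in each fiber it is conjugate to $\sigma_0$, and by construction (see Subsection \ref{Sec:involutions} and \cite[Proposition 6.1]{Hit92}) $\sigma_0 = \rho_0\tau_0$ is the product of commuting compact and split real forms. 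Thus in each fiber there exists at least one compact real form commuting with the given copy of $\sigma_0$ — namely $\rho_0$ itself in a trivialization where $\sigma$ becomes $\sigma_0$.

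The key point is then to show that the space of such fiberwise choices is contractible, so that a global section exists. Here I would argue as follows: fix the fiber and the involution $\sigma$; the compact real forms of $\g$ form a single $G$-conjugacy class (any two are conjugate by an inner automorphism), and by a standard Cartan-type argument the ones commuting with a fixed $\sigma$ are acted on transitively by the fixed-point group $G^\sigma$, with stabilizer $(G^\sigma)^{\rho}$, a maximal compact of $G^{\sigma}$. Hence the space of compatible hermitian structures on a single fiber is a homogeneous space $G^\sigma/(G^\sigma)^{\rho}$, which deformation retracts onto a point because $G^\sigma$ retracts onto its maximal compact subgroup. (Concretely, $G^{\sigma}$ is a complex reductive group and $(G^{\sigma})^{\rho}$ one of its maximal compacts, so the quotient is diffeomorphic to a Euclidean space.) Therefore the bundle over $S$ whose fiber is the space of $\sigma$-compatible hermitian structures has contractible fibers, and since $S$ is a (paracompact, finite-dimensional) manifold, it admits a global section. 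That section is the desired compatible hermitian structure $\rho$.

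Alternatively, and perhaps more cleanly for the write-up, I would give a direct partition-of-unity construction: cover $S$ by opens $U_i$ over which $P$ is trivialized compatibly with $\sigma$ (i.e. $\sigma = \sigma_0$ in the trivialization), take $\rho = \rho_0$ on each $U_i$, and then average the resulting local hermitian structures. Averaging of hermitian metrics (equivalently, of positive-definite pairings, or of points in the non-positively curved symmetric space $G/H$) is well-defined and convex, and the averaging preserves commutation with $\sigma$ because all the pieces being averaged already commute with the \emph{same} involution $\sigma$ and the $\sigma$-fixed condition is preserved under convex combination in the symmetric space (the $\sigma$-fixed locus is a totally geodesic, hence convex, subspace). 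This yields a globally defined compatible $\rho$.

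The main obstacle is making the averaging step precise while keeping track of the constraint imposed by $\sigma$: one must check that the geodesic-convexity / barycenter construction on the symmetric space $G/G^{\rho_0}$ restricts to the totally geodesic subspace fixed by $\sigma$ (equivalently, cut out by the commutation condition $[\rho,\sigma]=0$), so that barycenters of compatible local structures remain compatible. This is where the structure theory of \cite{Hit92} — that $\sigma_0$, $\rho_0$, $\tau_0$ form a commuting triple — is essential: it guarantees the relevant fixed-point subspace is non-empty and totally geodesic, and Corollary \ref{cor:sigma_negate_centralizer} together with Lemma \ref{lem:space_of_sigma} provides the needed control on how $\sigma$ interacts with the choice. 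Everything else (existence of $\sigma$-compatible local trivializations, existence of partitions of unity, smoothness of the averaged section) is routine.
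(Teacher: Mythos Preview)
Your proposal is correct and follows essentially the same idea as the paper: reduce to a fiberwise statement about the existence of a compact real form commuting with the given involution, and then pass to a global section. The paper's own proof is far terser than yours --- it simply remarks that ``the statement is pointwise'' and then invokes \cite[Theorem 6.16]{knapp} (any Lie algebra involution admits a commuting compact real form), leaving the gluing implicit. Your write-up spells out exactly what the paper suppresses: that the space of compatible choices in each fiber is the homogeneous space $G^{\sigma}/(G^{\sigma})^{\rho}$, which is contractible because $(G^{\sigma})^{\rho}$ is a maximal compact of the complex reductive group $G^{\sigma}$, and hence the associated bundle of choices admits a global section. Your alternative averaging argument (barycenters in the symmetric space $G/G^{\rho_0}$, restricted to the totally geodesic $\sigma$-fixed locus) is also sound and is a standard way to make the ``pointwise hence global'' step explicit; the paper does not mention it. In short: same approach, but you supply the contractibility/convexity justification that the paper takes for granted.
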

\begin{proof}
    The statement is pointwise, so we can reduce to a problem of Lie algebra involutions on $\g$. The proposition directly follows from the fact that for any Lie algebra involution (here $\sigma$), there is a compact real form which commutes with it \cite[Theorem 6.16]{knapp}. 
\end{proof}

\begin{proposition}
    Let $(P,\Phi,\sigma)$ be a $G$-Fock bundle equipped with a compatible hermitian structure $\rho$. Then $(P,\Phi^*,\sigma)$ is again a $G$-Fock bundle.
\end{proposition}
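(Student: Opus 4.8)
The plan is to check, for the $1$-form $\Phi^{*}=-\rho(\Phi)$ together with the unchanged involution $\sigma$, the three defining conditions of Definition~\ref{defn_Fock-bundle}. The one preliminary observation is that hermitian conjugation is pointwise: writing $X\mapsto X^{*}:=-\rho(X)$ for the hermitian adjoint on $\g_P$ determined by $\rho$, one has $\Phi^{*}(v)=-\rho\bigl(\Phi(v)\bigr)$ for every real tangent vector $v\in T_zS$. Indeed, in a local holomorphic coordinate with $\Phi=\Phi_1\,dz+\Phi_2\,d\bar z$ one has $\Phi^{*}=-\rho(\Phi_1)\,d\bar z-\rho(\Phi_2)\,dz$, so $\Phi^{*}(v)=-\rho(\Phi_1)\,d\bar z(v)-\rho(\Phi_2)\,dz(v)=-\rho\bigl(dz(v)\,\Phi_1+d\bar z(v)\,\Phi_2\bigr)=-\rho(\Phi(v))$, where the middle equality uses the anti-linearity of $\rho$ and the relation $\overline{dz(v)}=d\bar z(v)$ valid for real $v$. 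With this identity, each of the three conditions becomes a pointwise statement about $\g$.

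The substantive point is the nilpotency condition. The key fact is that $\rho$, though anti-linear, is a Lie algebra automorphism of $\g$; hence $\ad_{\rho(X)}=\rho\circ\ad_X\circ\rho^{-1}$, so $\rho$ preserves $\ad$-nilpotency, and $Z(\rho(X))=\rho\bigl(Z(X)\bigr)$. Since an anti-linear bijection preserves the complex dimension of a complex subspace, $\dim Z(\rho(X))=\dim Z(X)$. Therefore $\rho$, and hence $X\mapsto-\rho(X)=X^{*}$, maps principal nilpotent elements to principal nilpotent elements. Applying this to $\Phi^{*}(v)=\Phi(v)^{*}$ and using that $\Phi(v)$ is principal nilpotent for all nonzero $v$ yields condition~(2) for $\Phi^{*}$.

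Condition~(1) follows from the anti-automorphism property $[X,Y]^{*}=[Y^{*},X^{*}]$ together with complex conjugation of the form part, which give $[\Phi^{*}\wedge\Phi^{*}]=-\bigl([\Phi\wedge\Phi]\bigr)^{*}=0$; explicitly, in the coordinate above, $[\Phi^{*}\wedge\Phi^{*}]=2\,[-\rho(\Phi_2),-\rho(\Phi_1)]\,dz\wedge d\bar z=-2\,\rho([\Phi_1,\Phi_2])\,dz\wedge d\bar z$, which vanishes because $[\Phi\wedge\Phi]=0$ is equivalent to $[\Phi_1,\Phi_2]=0$. Condition~(3) is the only place where compatibility of $\rho$ with $\sigma$ is used: since $\sigma$ commutes with $\rho$ (and trivially with conjugation of the form part), $\sigma(\Phi^{*})=\sigma\bigl(-\rho(\Phi)\bigr)=-\rho\bigl(\sigma(\Phi)\bigr)=-\rho(-\Phi)=\rho(\Phi)=-\Phi^{*}$. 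This establishes that $(P,\Phi^{*},\sigma)$ is again a $G$-Fock bundle.

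There is no genuine obstacle here — the proof is bookkeeping. The only step that deserves a moment of care is reconciling the anti-linearity of $\rho$ with the complex conjugation of the $1$-form part that is built into the definition of $\Phi^{*}$; once the resulting identity $\Phi^{*}(v)=-\rho(\Phi(v))$ is isolated, conditions~(1)--(3) all follow from the single structural fact that $\rho$ is a Lie algebra automorphism commuting with $\sigma$ (so it preserves principal nilpotency and respects brackets up to the expected signs).
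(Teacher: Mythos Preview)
Your proof is correct and follows essentially the same approach as the paper's: both verify the three conditions of Definition~\ref{defn_Fock-bundle} directly, using that $\rho$ is a Lie algebra automorphism (hence preserves principal nilpotency and brackets) and that $\rho$ commutes with $\sigma$. Your version is more explicit about the interaction between the anti-linearity of $\rho$ and the complex conjugation of the form part, isolating the identity $\Phi^{*}(v)=-\rho(\Phi(v))$ for real $v$, whereas the paper's proof is terse and leaves this bookkeeping implicit.
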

\begin{proof}
     Applying the hermitian conjugation to $[\Phi\wedge\Phi]=0$ we get the same condition for $\Phi^{*}$. Since the set of principal nilpotent matrices is invariant under hermitian conjugation, $\Phi^*$ satisfies the nilpotency condition. Finally, $\Phi^{*}$ is negated by $\sigma$: $$\sigma(\Phi^{*})=-\sigma\circ\rho(\Phi) = -\rho\circ\sigma(\Phi)=\rho(\Phi)=-\Phi^{*}.$$
\end{proof}

In order to use the filling-in procedure of Theorem \ref{Thm-filling-in}, we need $\Phi$ and $\Phi^{*}$ to be transverse.
For that, note that the spaces $\mathrm{Im}(\ad_{\Phi})$ and $\ker(\ad_{\Phi^{*}})$ are orthogonal complements with respect to the (pseudo-)hermitian form on $T^*S\otimes\g_P$ given by 
\begin{equation*}
\langle \alpha, \beta\rangle d\bar{z}\wedge dz=\mathrm{tr}(\alpha^{*}\wedge \beta),
\end{equation*}
where $\tr$ denotes the Killing form on $\g$.
These are indeed orthogonal complements because $\ad_\Phi$ and $\ad_{\Phi^{*}}$ are adjoint with respect to the inner product on the entire space $\Lambda^\bullet T^*S\otimes\g_P$ given by this same formula. Note that we chose a volume form to define this, but different choices of volume form will lead to metrics which differ by a scalar, so the notions of adjoint, and orthogonal complement do not depend on this choice. This hermitian form is not positive definite though, in fact, the norm of $a\,dz+b\,d\bar{z}$ is given by $\mathrm{tr}(a^{*} a)-\mathrm{tr}(b^{*} b)$. 

On $\Omega^1(S,\g_P)$, the pseudo-hermitian form is given by the formula
\begin{equation}\label{Eq:herm-form}
    \langle \alpha,\beta\rangle = -\tfrac{i}{2}\int_S\tr \alpha^*\wedge\beta.
\end{equation}
Recall that any subspace $W$ of a vector space equipped with an indefinite form is complementary to a subspace $W^\perp$, precisely whenever the form is non-degenerate when restricted to $W$. This motivates the following:

\begin{definition}\label{Def:pos-Higgs-field}
    For a $G$-Fock bundle $(P,\Phi)$ with hermitian form $\rho$, the Fock field $\Phi$ is called \emph{positive} if $\mathrm{Im}(\ad_\Phi)\subset \Omega^1(S,\g_P)$ is positive-definite with respect to the pseudo-hermitian form in \eqref{Eq:herm-form}. The space of positive Fock fields is denoted by $\mathcal{P}$. We also refer to $\rho$ being a positive hermitian structure.
\end{definition}

Note that for a positive Fock field $\Phi\in\mathcal{P}$, the fields $\Phi$ and $\Phi^{*}$ are transverse. Note further that the Fuchsian locus, as described in Section \ref{Sec:fuchsian-locus}, is included in $\mathcal{P}$ since $\mathrm{Im}(\ad_{\Phi})$ consists only of $(1,0)$-forms. 

\begin{lemma}\label{lemma:diag-gauge-to-positive}
For every Fock field $\Phi\in \Omega^1(S,\g_P)$ there is a diagonal gauge transformation $a\in \mathrm{Aut}(P,\sigma)$ such that $a^{-1} \Phi a$ is positive.
\end{lemma}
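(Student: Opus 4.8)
The plan is to reduce the statement to a pointwise problem on the fibers and then solve it using the Cartan decomposition of $\g$ associated to $\rho$, exploiting the fact that $\Phi$ is valued in principal nilpotent elements. First I would observe that the condition ``$\Phi$ is positive'' is an open condition on the Fock field, and that the Fuchsian locus is already contained in $\mathcal{P}$; moreover, being diagonal (i.e. lying in $\Aut(P,\sigma)$ and commuting with $\rho$), the gauge transformations in question act fiberwise by elements of $G^\sigma$ and preserve the three defining properties of a Fock bundle as well as the hermitian structure. So the question is whether, at each point $z\in S$, we can conjugate $\Phi(z)\in T_z^*S\otimes\g$ by a fixed $\sigma$-invariant element $a\in G$ so that the image of $\ad_{\Phi}$ becomes positive-definite for the pseudo-hermitian form \eqref{Eq:herm-form}.

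The key local computation is to understand the pseudo-hermitian form on $\mathrm{Im}(\ad_\Phi)$ in terms of the $\mathfrak{sl}_2$-structure. Writing $\Phi=\Phi_1\,dz$ (we may assume $\mu_2=0$ in the induced complex structure), complete $\Phi_1$ to a principal $\mathfrak{sl}_2$-triple $(F,H,E)$ with $E=-\rho_t(F)$ for a suitable hermitian structure $\rho_t$; by Lemma \ref{lemma:unique-sigma-sl2} there is a unique such triple compatible with $\sigma$. The image of $\ad_{\Phi_1}$ decomposes into $\mathfrak{sl}_2$-weight spaces, and the form $\mathrm{tr}(\alpha^*\wedge\beta)$ restricted to $\mathrm{Im}(\ad_{\Phi_1})\,dz$ is, up to sign, the Killing-type pairing $\mathrm{tr}(\rho(\cdot)\cdot)$ — which is positive-definite precisely when $\rho$ agrees with the $\mathfrak{sl}_2$-adapted compact form $\rho_t$ on the relevant weight spaces. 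The point is then that any two compact real forms commuting with $\sigma$ are conjugate by an element of $\exp(i\,\h)\cap\ldots$, in fact by a $\sigma$-invariant positive element, so there is $a\in G^\sigma$ with $a\,\rho\,a^{-1}=\rho_t$; conjugating $\Phi$ by $a^{-1}$ instead moves $\mathrm{Im}(\ad_\Phi)$ into the positive cone. Concretely, one uses the $\exp(i\,\mathfrak{p})$-action (where $\mathfrak{p}$ is the $(-1)$-eigenspace of $\rho$, intersected with $\g^\sigma$) to diagonalize the Gram matrix of the form on $\mathrm{Im}(\ad_\Phi)$; since $\Phi$ nowhere vanishes and $\mathrm{Im}(\ad_\Phi)$ is a smoothly varying subbundle of $T^*S\otimes\g_P$, the diagonalizing transformation can be chosen to depend smoothly on $z$, giving a global $a\in\Aut(P,\sigma)$.

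I expect the main obstacle to be the \emph{global} regularity and the interaction between the $(1,0)$ and $(0,1)$ parts of $\Phi$. Pointwise positivity is essentially $\mathfrak{sl}_2$-representation theory, but one must check: (i) that the required conjugating element can be chosen in the identity component $G^\sigma$ and varies smoothly (no monodromy obstruction — this should follow from contractibility of the space of compact forms commuting with $\sigma$, as in Lemma \ref{lem:space_of_sigma}); and (ii) that even though $\Phi$ has a $d\bar z$-component $\Phi_2=\mu_3\Phi_1^2+\cdots$, the image $\mathrm{Im}(\ad_\Phi)$ is controlled — here one uses Lemma \ref{lemma:im-incl}, which gives $\mathrm{Im}(\ad_{\Phi_2})\subset\mathrm{Im}(\ad_{\Phi_1})$ since $\Phi_2\in Z(\Phi_1)$, so $\mathrm{Im}(\ad_\Phi)$ is spanned by $(1,0)$-forms valued in $\mathrm{Im}(\ad_{\Phi_1})$ plus their counterparts, and the sign of the pseudo-hermitian form on the $dz$-part and the $d\bar z$-part must be balanced correctly. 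A clean way to finish is to note that $\mathrm{Im}(\ad_\Phi)\,dz$ is isomorphic as a Hermitian space to $\mathrm{Im}(\ad_{\Phi_1})$ with the form $\mathrm{tr}(\rho(\cdot)\cdot)$, which becomes positive-definite after the single conjugation making $\rho$ adapted to the $\mathfrak{sl}_2$-triple through $\Phi_1$; the $d\bar z$-contributions, being higher-weight and suppressed, do not spoil definiteness once $\rho$ is adapted. Assembling these local adaptations into a global gauge transformation, using that the relevant space of choices is contractible, completes the argument.
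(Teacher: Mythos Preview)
Your proposal has a genuine gap: adapting the hermitian structure to the principal $\mathfrak{sl}_2$-triple through $\Phi_1$ is not enough to force positivity when $\Phi_2\neq 0$. Recall that for $\eta\in\Omega^0(S,\g_P)$ one has $[\Phi,\eta]=[\Phi_1,\eta]\,dz+[\Phi_2,\eta]\,d\bar z$, and the pseudo-hermitian norm is $\tr([\Phi_1,\eta]^*[\Phi_1,\eta])-\tr([\Phi_2,\eta]^*[\Phi_2,\eta])$. Even with $\rho$ perfectly adapted (so that each summand is the standard Killing norm), this quantity can be negative once the higher Beltrami differentials are large: nothing in your conjugation controls the \emph{size} of $\Phi_2$ relative to $\Phi_1$. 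Your remark that the $d\bar z$-contributions are ``higher-weight and suppressed'' is the right intuition, but you never implement any mechanism that actually suppresses them; diagonalising the Gram matrix cannot turn an indefinite form into a definite one.

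The paper's argument supplies exactly this missing mechanism. It takes a global section $H$ of the semisimple line in the $\sigma$-invariant principal $\mathfrak{sl}_2$-subbundle containing $\Phi_1$ and sets $a_t=\exp(tH)\in\Aut(P,\sigma)$. Under the $\ad_H$-weight decomposition one has $\Phi_1\in\g_{P,-2}$, while $\Phi_2\in\bigoplus_{k\le -4}\g_{P,k}$ because in the induced complex structure $\Phi_2$ is non-regular and hence has no weight $-2$ component. Consequently $e^{2t}a_t^{-1}\Phi a_t\to \Phi_1$ as $t\to\infty$. Since $\Phi_1\,dz$ has vanishing $(0,1)$-part it is manifestly positive, and positivity is open, so $a_t^{-1}\Phi a_t$ is positive for large $t$. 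This scaling-to-the-Fuchsian-locus idea is the heart of the proof and is absent from your outline.
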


\begin{proof}
Consider a $G$-Fock bundle $(P,\Phi,\sigma)$. Decompose $\Phi=\Phi_1+\Phi_2$. There is an $\mathfrak{sl}_2$-subbundle $\mathfrak{p}\subset \g_P$ obtained by completing $\Phi_1$ into a principal $\mathfrak{sl}_2$-triple. Let $H$ be a non-vanishing section of the line subbundle $\mathfrak{l}\subset \mathfrak{p}$ consisting of the semisimple elements of the principal $\mathfrak{sl}_2$-triples (acting on the nilpotent part by scaling). Note that $\mathfrak{l}$ is a degree zero line bundle, so this section indeed exists. 

Consider the gauge transformation $a_t=\exp(tH)$. Since $H$ is $\sigma$-invariant, we have $a_t\in\mathrm{Aut}(P,\sigma)$. The element $H$ allows to decompose the bundle $\g_P\cong \oplus_{k\in\mathbb{Z}}\, \g_{P,k}$ where elements in $\g_{P,k}$ are eigenvectors of $\ad_H$ with eigenvalue $k$. We know that $\Phi_1\in \g_{P,-2}$ since $\Phi_1$ and $H$ are part of the same $\mathfrak{sl}_2$-triple. We have $\Phi_2\in Z(\Phi_1)$, i.e. $\Phi_2$ is a linear combination of lowest weight vectors. In addition, $\Phi_2$ is not regular since we work in the complex structure induced from $(P,\Phi)$. This implies that $\Phi_2\in\oplus_{k\leq -4}\g_{P,k}$ since in the decomposition of $\g_P$ into irreducible $\mathfrak{sl}_2$-modules (via adjoint action by $\mathfrak{p}$), only one module is of dimension 3.

Therefore the quantity $e^{2t}a_t^{-1}\Phi a_t$ converges to $\Phi_1$ for $t\to\infty$. Since $\Phi_1$ is clearly positive and positive Fock fields form an open set, we conclude.
\end{proof}

For transverse $\Phi$ and $\Phi^{*}$ we can use the filling-in procedure from Theorem \ref{Thm-filling-in} to get a canonical connection $d_A$.
\begin{proposition}
    For a Fock bundle $(P,\Phi,\sigma)$ equipped with a compatible positive hermitian structure $\rho$, then the canonical connection $d_A$ is unitary.
\end{proposition}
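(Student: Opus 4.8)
The plan is to show that the canonical connection $d_A$ produced by Theorem \ref{Thm-filling-in} applied to the transverse pair $(\Phi, \Phi^*)$ is unitary, i.e.\ preserves the hermitian structure $\rho$. Since $d_A$ was characterized \emph{uniquely} as the $\sigma$-invariant connection solving $d_A\Phi = d_A\Phi^* = 0$, the natural strategy is to exhibit $\rho(d_A)$ (the connection obtained by conjugating $d_A$ by the hermitian involution) as a connection with the \emph{same} defining properties, and then invoke uniqueness to conclude $\rho(d_A) = d_A$. Recall that a unitary connection is exactly one fixed by the involution $d_B \mapsto \rho(d_B)$ on the affine space of connections.

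First I would check that $\rho(d_A)$ is again $\sigma$-invariant. This uses the compatibility hypothesis: $\rho$ and $\sigma$ commute as involutions of $\g_P$. Hence $\sigma(\rho(d_A)) = \rho(\sigma(d_A)) = \rho(d_A)$, using that $d_A$ is $\sigma$-invariant. Second, I would verify that $\rho(d_A)$ is compatible with both Fock fields. The key computation is that conjugating the equation $d_A\Phi = 0$ by $\rho$ gives $\rho(d_A)\,\rho(\Phi) = 0$, i.e.\ $\rho(d_A)(-\Phi^*) = 0$, so $\rho(d_A)\Phi^* = 0$. Symmetrically, applying $\rho$ to $d_A\Phi^* = 0$ and using $\rho(\Phi^*) = \rho(-\rho(\Phi)) = -\Phi$ (since $\rho^2 = \mathrm{id}$) yields $\rho(d_A)\Phi = 0$. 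Thus $\rho(d_A)$ solves $d_{A'}\Phi = d_{A'}\Phi^* = 0$ and is $\sigma$-invariant. By the uniqueness clause of Theorem \ref{Thm-filling-in}, $\rho(d_A) = d_A$, which is precisely the statement that $d_A$ is unitary.

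The one point requiring care — and the main (mild) obstacle — is to make precise how the hermitian involution $\rho$ acts on connections and on covariant derivatives $d_A\Phi$, and to confirm the identity $\rho(d_A\Phi) = \rho(d_A)\rho(\Phi)$ at the level of the exterior covariant derivative $d_A\colon \Omega^1(S,\g_P)\to\Omega^2(S,\g_P)$. Concretely, $\rho$ acts on the adjoint bundle $\g_P$ fiberwise, and an antilinear or involutive bundle map intertwines $d_A$ with $\rho(d_A)$ by construction of the latter; the Leibniz rule then gives the intertwining on $\g_P$-valued forms. Once this naturality is spelled out, the rest is the formal uniqueness argument above, and the transversality of $\Phi$ and $\Phi^*$ (guaranteed by positivity of $\rho$, Definition \ref{Def:pos-Higgs-field}) is exactly what makes Theorem \ref{Thm-filling-in} applicable in the first place.
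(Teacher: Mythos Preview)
Your proposal is correct and follows essentially the same argument as the paper: show that $\rho(d_A)$ is again a $\sigma$-invariant connection annihilating both $\Phi$ and $\Phi^*$ (using that $\rho$ and $\sigma$ commute, and that applying $\rho$ swaps the two equations $d_A\Phi=0$ and $d_A\Phi^*=0$), then invoke the uniqueness clause of Theorem~\ref{Thm-filling-in}. Your additional remark about making precise the action of $\rho$ on connections and the intertwining $\rho(d_A\Phi)=\rho(d_A)\rho(\Phi)$ is a reasonable point of care but is not elaborated in the paper either.
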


\begin{proof}
One can first check that both $d_A$ and $\rho(d_A)$ are $\sigma$-invariant solutions to the equations $d_A(\Phi)=0$ and $d_A(\Phi^{*})=0$. Indeed, since $\sigma$ and $\rho$ commute, $\rho(d_A)$ is $\sigma$-invariant. Applying $\rho$ to the equation $d_A\Phi=0$ gives $\rho(d_{A})(\Phi^{*})=0$. Similarly, applying $\rho$ to $d_A(\Phi^{*})=0$ gives $\rho(d_A)(\Phi)=0$. Hence $\rho(d_A)$ is also a solution. By uniqueness from Theorem \ref{Thm-filling-in}, we get $\rho(d_A)=d_A$, thus the connection $d_A$ is unitary.
\end{proof}

An important example of positive Fock fields is the Fuchsian locus:
\begin{proposition}\label{Prop:chern-like-connection}
If $(P,\Phi,\sigma)$ lies in the Fuchsian locus, then there is a compatible positive hermitian structure $\rho$. In addition, the canonical connection $d_A$ is the Chern connection.
\end{proposition}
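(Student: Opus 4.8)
The plan is to build an explicit candidate for the pair $(\rho,d_A)$ out of the uniformizing Higgs bundle and then invoke the uniqueness statement of Theorem \ref{Thm-filling-in}. For the first assertion I would recall that a compatible hermitian structure $\rho$ exists by the general existence result in Section \ref{Sec:unitary-case}, and then observe that in the Fuchsian locus positivity is automatic: by Proposition \ref{prop:fock-fuchsian-locus} one has $\Phi_2=0$, so $\Phi=\Phi_1\,dz$ is of type $(1,0)$ and $\mathrm{Im}(\ad_\Phi)\subset\Omega^{1,0}(S,\g_P)$; the pseudo-hermitian form \eqref{Eq:herm-form} restricts to the positive-definite pairing with pointwise norm $\tr(a^*a)$ on $(1,0)$-forms $a\,dz$, hence is positive-definite on $\mathrm{Im}(\ad_\Phi)$. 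This is exactly the remark following Definition \ref{Def:pos-Higgs-field}; it shows that \emph{every} compatible $\rho$ is positive here, so in particular $\Phi$ and $\Phi^*$ are transverse and the canonical connection $d_A$ of Theorem \ref{Thm-filling-in} is defined.

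For the identification of $d_A$ with the Chern connection, I would use the natural holomorphic structure $\bar\partial$ carried by a Fuchsian Fock bundle (in the $\SL_n(\C)$-case this comes from a chosen square root of $K$, turning $(E,\bar\partial)$ into the uniformizing Higgs bundle with holomorphic Higgs field $\Phi$; for general $G$ it is inherited by induction from $\PSL_2(\C)$ or $\SL_2(\C)$), together with the fact that $\sigma$ is holomorphic for this structure --- concretely, in the $\SL_n(\C)$-picture the symmetric pairing, viewed as $g\colon E\to E^*$, is the holomorphic section of $\mathrm{Sym}^2E^*$ given by the canonical pairing of $K^{m/2}$ with $K^{-m/2}$. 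Letting $d_A^{\mathrm{Ch}}$ be the Chern connection of $(\bar\partial,\rho)$, I would verify the four properties characterising the canonical connection: $d_A^{\mathrm{Ch}}$ is unitary by construction; $d_A^{\mathrm{Ch}}\Phi=0$ because on a curve $(d_A^{\mathrm{Ch}})^{1,0}\Phi\in\Omega^{2,0}(S,\g_P)=0$ while $(d_A^{\mathrm{Ch}})^{0,1}\Phi=\bar\partial\Phi=0$ by holomorphicity of $\Phi$; then $d_A^{\mathrm{Ch}}\Phi^*=-d_A^{\mathrm{Ch}}\rho(\Phi)=-\rho(d_A^{\mathrm{Ch}}\Phi)=0$ by unitarity; and finally $d_A^{\mathrm{Ch}}$ is $\sigma$-invariant because compatibility of $\rho$ with $\sigma$ says precisely that $g\colon E\to E^*$ is an isometry onto the dual hermitian structure, so that, being holomorphic as well, $g$ intertwines the two Chern connections, i.e.\ $d_A^{\mathrm{Ch}}g=0$ and $d_A^{\mathrm{Ch}}$ reduces to $\g_P^\sigma$. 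Having checked these, the uniqueness in Theorem \ref{Thm-filling-in} forces $d_A=d_A^{\mathrm{Ch}}$, which is the asserted Chern connection.

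The step I expect to be the main obstacle is the $\sigma$-invariance of $d_A^{\mathrm{Ch}}$: one has to pin down that ``$\rho$ compatible with $\sigma$'' is exactly the hypothesis that makes the symmetric pairing parallel for the Chern connection. For the $\SL_n(\C)$ uniformizing bundle this unwinds to the bookkeeping that the metric induced on each line $\mathrm{Hom}(K^{m/2},K^{-m/2})\cong\underline{\C}$ carrying a nonzero entry of $g$ is the trivial metric, whence the corresponding block of the Chern connection is trivial and kills the constant section $g$; for general $G$ one first reduces to $\PSL_2(\C)$ or $\SL_2(\C)$ using the induction of Section \ref{Sec:fuchsian-locus} and the commuting square defining the principal embedding, and lets the resulting connection on $P$ inherit $\sigma$-invariance. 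Everything else is formal, the one genuinely surface-specific ingredient being the vanishing $\Omega^{2,0}(S,\g_P)=0$ in the computation of $d_A^{\mathrm{Ch}}\Phi$.
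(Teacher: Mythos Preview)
Your proposal is correct and follows essentially the same route as the paper: show $\Phi$ is positive because $\mathrm{Im}(\ad_\Phi)\subset\Omega^{1,0}$, then verify that the Chern connection of the underlying uniformizing Higgs bundle is unitary, $\sigma$-invariant and annihilates $\Phi$, and conclude by the uniqueness clause of Theorem~\ref{Thm-filling-in}. The only difference is cosmetic---the paper outsources the $\sigma$-invariance of the Chern connection and the compatibility of $\rho$ to \cite[Proposition 6.1 and Equation (7.2)]{Hit92}, whereas you unwind these by hand via parallelness of the holomorphic symmetric pairing $g$; your argument for this step is sound.
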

\begin{proof}
    From Proposition \ref{prop:fock-fuchsian-locus}, we know that a Fock bundle in the Fuchsian locus comes from some uniformizing $G$-Higgs bundle $(V,\Phi)$. From \cite{Hit92} we know that $V$ can be equipped with a compatible hermitian structure $\rho$ and $\sigma_0$-structure $\sigma$. Since $\Phi\in\Omega^{1,0}(S,\g_V)$, the image of $\ad_\Phi$ is positive definite for the form defined in \eqref{Eq:herm-form}, i.e. $\Phi$ is positive.
    
    From nonabelian Hodge theory, we know that the Chern connection $d_A$ satisfies $d_A\Phi=0$ and $\sigma(d_A)=d_A$ \cite[Equation (7.2)]{Hit92}. Hence the Chern connection is the result of the filling-in procedure by the uniqueness part of Theorem \ref{Thm-filling-in}.
\end{proof}

\subsection{Main Conjecture}

We have seen that on a $G$-Fock bundle $(P,\Phi,\sigma)$ with compatible positive hermitian structure $\rho$, there is a canonical 3-term connection $\Phi+d_A+\Phi^{*}$. Our main conjecture is that we can choose $\rho$ to get a \emph{flat connection}. We give evidence in favor of the conjecture.

Instead of varying $\rho$, we can equivalently fix the hermitian structure $\rho$ and vary the gauge class of the Fock field.
\begin{conjecture}\label{main-conj}
Let $(P, \Phi_0, \sigma)$ be a $G$-Fock bundle over $S$ equipped with a compatible hermitian structure $\rho$. Then there exists a unique hermitian endomorphism-valued function \[\eta\in \Omega^0(S,\g_P^{\sigma, -\rho})\] such that $\Phi = e^{-\eta} \Phi_0 e^{\eta}$ is positive, and the corresponding 3-term connection $\Phi + d_A + \Phi^{*}$ is flat, that is, it satisfies the equation 
\begin{equation}\label{Eq:Hitchin-like-3}
F(A) + [\Phi\wedge\Phi^{*}] = 0.
\end{equation}
\end{conjecture}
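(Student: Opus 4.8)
The plan is to treat \eqref{Eq:Hitchin-like-3} as a Hitchin-type elliptic system and to upgrade the local analysis already available --- ellipticity together with the implicit function theorem --- to a global existence-and-uniqueness statement by means of an a priori estimate, in close analogy with the proof of the nonabelian Hodge correspondence. The one structural novelty is that no complex structure on $S$ is fixed in advance: the relevant one is induced by the varying Fock field, so the underlying elliptic operator has no fixed background.

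First I would fix the functional-analytic framework. A compatible hermitian structure $\rho$ and the involution $\sigma$ split $\g_P$ into joint eigenbundles; for $\eta\in\Omega^0(S,\g_P^{\sigma,-\rho})$ one checks that $\Phi_\eta=e^{-\eta}\Phi_0e^{\eta}$ is again a Fock field, that after a preliminary diagonal gauge transformation it is positive (Lemma \ref{lemma:diag-gauge-to-positive}), that the filling-in procedure of Theorem \ref{Thm-filling-in} produces a \emph{unitary}, $\sigma$-invariant connection $d_{A_\eta}$, and hence that the curvature $F(A_\eta)+[\Phi_\eta\wedge\Phi_\eta^{*}]$ lies in $\Omega^2(S,\g_P^{\sigma,\rho})$. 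Completing in suitable Sobolev norms, this defines a smooth map
\[
\mathcal{F}\colon \mathcal{U} \longrightarrow \Omega^2(S,\g_P^{\sigma,\rho}), \qquad \eta \longmapsto F(A_\eta)+[\Phi_\eta\wedge\Phi_\eta^{*}],
\]
on the open set $\mathcal{U}\subset\Omega^0(S,\g_P^{\sigma,-\rho})$ of $\eta$ for which $\Phi_\eta$ is positive (nonempty by Lemma \ref{lemma:diag-gauge-to-positive}), and the conjecture asserts that $0$ has exactly one preimage.

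By Theorem \ref{Thm:ellipticity}, $d\mathcal{F}_\eta$ is an elliptic \emph{isomorphism} at every $\eta\in\mathcal{U}$, so $\mathcal{F}$ is a local diffeomorphism on all of $\mathcal{U}$; combined with the existence of a solution on the Fuchsian locus supplied by nonabelian Hodge theory (Proposition \ref{Prop:chern-like-connection}), the implicit function theorem already yields the conjecture in a neighborhood of the Fuchsian locus (Theorem \ref{Thm:neighborhood}). To reach the global statement I would argue that, the target being a vector space, it is enough to prove that $\mathcal{F}$ is proper: a proper local diffeomorphism into a connected, simply connected manifold is a covering map and hence a diffeomorphism onto its image, which is then open, closed, and therefore the whole target --- giving existence and uniqueness at once. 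Concretely this can be organized as a continuity method along the path $s\mapsto\Phi_1\,dz+s\,\Phi_2\,d\bar z$, $s\in[0,1]$, joining $(P,\Phi_0)$ to the Fuchsian locus (Proposition \ref{prop:fock-fuchsian-locus}; shrink $|\mu_2|$ away from $1$ if needed): the set of $s$ admitting a solution is nonempty and open, and the point is to show it is closed.

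The main obstacle is exactly this a priori estimate, together with isolating the variational quantity behind it. In Hitchin's theorem it comes from a maximum principle: writing the equation for the metric $h=h_0e^{\eta}$ one obtains $\Delta\eta+(\text{bounded})=(\text{bounded})$ with the \emph{fixed} Laplacian of the underlying Riemann surface, and stability bounds $\sup\eta$ through Donaldson's functional and a topological constant. Here two genuinely new features intervene. First, the relevant Laplacian is that of the complex structure \emph{induced by} $\Phi_\eta$, which itself moves with $\eta$ through the diagonal part, so one must first confine the induced complex structures to a precompact family in Teichm\"uller space. Second, $P$ is always topologically trivial (Corollary \ref{Prop:P-topology}), so the topological/stability input that anchors the $\sup\eta$ bound in Hitchin's theorem has no direct counterpart; the fact that every Fock bundle has no infinitesimal automorphisms (Proposition \ref{Prop:no-automorphisms}) is a form of stability, but not by itself a properness estimate. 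I expect the proof to require a Donaldson-type functional on the space of compatible positive hermitian structures whose gradient flow is a ``Fock flow'', together with an Uhlenbeck-type compactness theorem adapted to the moving complex structure; in the absence of these, the present article establishes the conjecture only near the Fuchsian locus.
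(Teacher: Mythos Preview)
This statement is labeled a \emph{conjecture} in the paper and is not proved there. What the paper actually establishes is only the partial results you already cite: the linearization $L$ is an elliptic isomorphism (Theorem \ref{Thm:ellipticity}), and an application of the implicit function theorem then gives solutions in an open neighborhood of the Fuchsian locus (Proposition \ref{prop:openness}, Theorem \ref{Thm:neighborhood}). The remaining items listed in the paper after the conjecture --- the Fuchsian case via nonabelian Hodge, harmonic representatives, the line decomposition for $\SL_n(\C)$, and a presymplectic/moment-map interpretation --- are presented as evidence, not as a proof.

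So there is no ``paper's own proof'' to compare your proposal against. What you have written is a coherent research outline rather than a proof: recast the problem as properness of the curvature map $\mathcal{F}$ and run a continuity method along $s\mapsto\Phi_1\,dz+s\,\Phi_2\,d\bar z$ back to the Fuchsian locus. You are explicit that the decisive step --- an a priori bound on $\eta$, equivalently properness of $\mathcal{F}$ --- is missing, and you correctly isolate the two obstacles that make this harder than Hitchin's case: the background complex structure moves with $\eta$, and there is no degree/slope input to anchor a Donaldson-type functional. These are precisely the issues the paper leaves open. One small additional caveat on your topological reduction: even granting properness, the covering-space argument (``proper local diffeomorphism into a simply connected target is a diffeomorphism'') also needs the domain $\mathcal{U}\subset\Omega^0(S,\g_P^{\sigma,-\rho})$ to be connected, which is not obvious for the positivity locus and would itself have to be argued.
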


The conjecture is appealingly similar to nonabelian Hodge theory associating a flat connection to the gauge equivalent class of any polystable Higgs bundle. In our setting, Fock bundles are automatically stable, see Proposition \ref{Prop:no-automorphisms}.
Apart from the similarity with nonabelian Hodge theory, there is strong evidence for the conjecture which is discussed below:

\begin{enumerate}
\item There is a class of Fock bundles for which  Conjecture \ref{main-conj} is indeed true, namely, for Fock bundles in the Fuchsian locus from Section \ref{Sec:fuchsian-locus}. Fix a complex structure on $S$ and consider the uniformizing $G$-Higgs bundle $(V,\Phi)$, i.e. the Higgs bundle in the Hitchin section with principal nilpotent $\Phi$. Forgetting about the holomorphic structure, we get a $G$-Fock bundle $(P,\Phi)$. Proposition \ref{Prop:chern-like-connection} tells us that $\Phi$ and $\Phi^{*}$ are transverse and that the Chern connection is the result from our construction of 3-term connections. Therefore, Conjecture \ref{main-conj} follows from the nonabelian Hodge correspondence in this case.

\item The strongest argument in favor of the conjecture is that the linearization is an elliptic isomorphism. This is developed below in Section \ref{Sec:elliptic}. We can then conclude that the subset of Fock fields for which there exists a solution is open.

\item Another interesting class of examples pertains to \emph{harmonic higher complex structures} as introduced in \cite{Nolte}. Fix a complex structure on $S$ giving a hyperbolic metric $g_S$ (via uniformization). A higher complex structure is called \emph{harmonic} if its induced complex structure on $S$ coincides with the fixed one and if 
\begin{equation}\label{harmonicity_cond}
\bar\partial(\bar{\mu}_k g_S^{k-1})=0,
\end{equation}
for all $k\in\{3,...,n\}$.  The gauge-theoretic meaning of condition (\ref{harmonicity_cond}) can be given as follows: 
denote by $h_S$ the hermitian structure on $V=K^{(n-1)/2}\oplus...\oplus K^{(1-n)/2}$ induced from the hyperbolic metric on $S$. Denote again by $E$ the underlying smooth complex vector bundle of $V$. Then a Fock field $\Phi$, with $(1,0)$-part fixed to be the Fuchsian Fock field in Example \ref{Ex:Fuchsian-locus}, corresponds to a harmonic higher complex structure (via Proposition \ref{link_hcs}) if and only if $\bar\partial(\Phi^{*_{h_S}})=0$. Indeed, the hermitian metric $h_S$ is diagonal and is given by $h_S=\mathrm{diag}(g_S^{(1-n)/2},g_S^{(3-n)/2},...,g_S^{(n-1)/2})$. The non-zero entries of $(\Phi^{0,1})^{*_{h_S}}$ are given by $\bar{\mu}_k g_S^{k-1}$. 

One of the main results of \cite{Nolte} is that every equivalence class of higher complex structures (modulo higher diffeomorphisms) has a harmonic representative (unique up to the action of usual diffeomorphisms isotopic to the identity). In Section \ref{Sec:higher-diffeos} below, we give the gauge-theoretic meaning of higher diffeomorphisms as special gauge transformations. Hence it seems that we can reduce Conjecture \ref{main-conj} to harmonic higher complex structures, in which the elliptic condition $\bar\partial(\Phi^{*_{h_S}})=0$ holds. We are very grateful to Alexander Nolte for his insight in this description. 

\item In the case of an $\mathrm{SL}_n(\C)$-Fock bundle $(E,\Phi,g)$ equipped with a hermitian structure $h$, we have two filtrations $\mathcal{F}$ and $\mathcal{F}^*$ associated to $(E,\Phi)$ and $(E,\Phi^{*})$ respectively by Proposition \ref{Prop:filtration}. They are transverse since $\mathcal{F}_k$ is $h$-orthogonal to $\mathcal{F}_{n-k}^*$. Therefore by considering $L_k=\mathcal{F}_k\cap \mathcal{F}_{n-k}^*$, we get a line decomposition of $E$ for which $\Phi$ is lower triangular, $\Phi^*$ is upper triangular and $h$ is diagonal. This line decomposition might help to find estimates.

\item Further evidence for the validity of the conjecture comes from a symplectic viewpoint. The Atiyah--Bott symplectic form on the space of all connections gives a presymplectic form on the space of appropriate Fock bundles. The gauge group action gives a moment map which is precisely given by $F(A)+[\Phi\wedge\Phi^{*}]$ in this case. Then Conjecture \ref{main-conj} becomes equivalent to an infinite-dimensional symplectic reduction. The difficulty here comes from the fact that the presymplectic form has degenerate directions, in particular, the orbits of the higher diffeomorphism action. Along the complex gauge orbits, the form is non-degenerate though. This is work in progress. 
\end{enumerate}

\subsection{Flat Fock bundles and the Hitchin component}\label{Flat_Fock_vs_Hitchin_comp}

In this section, Fock bundles with a flat canonical 3-term connection are linked to the Hitchin component. We show that the monodromy is always in the split real form, and assuming Conjecture \ref{main-conj}, one then gets a map from Fock bundles to the Hitchin component.

Let $(P,\Phi, \sigma)$ be a $G$-Fock bundle equipped with a compatible positive hermitian structure $\rho$. 

\begin{proposition}\label{Prop:real-monodr-1}
Suppose the canonical 3-term connection $\nabla=\Phi+d_A+\Phi^{*}$ associated to the tuple $(P,\Phi,\sigma,\rho)$ as above is flat. Then the monodromy of $\nabla$ lies in the split real form of the Lie algebra $\mathfrak{g}$.
\end{proposition}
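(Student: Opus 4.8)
The plan is to exhibit an explicit $\tau_0$-structure $\tau$ on $P$ which is preserved by the canonical $3$-term connection $\nabla=\Phi+d_A+\Phi^*$. Once we have a $\nabla$-parallel $\tau_0$-structure, the connection $\nabla$ reduces to a flat $G^{\tau_0}$-connection, and since $G^{\tau_0}$ is (a subgroup associated to) the split real form, the monodromy lies in the split real form as desired. The natural candidate is $\tau := \sigma\circ\rho$. Since $\sigma$ and $\rho$ commute (compatibility of the hermitian structure) and are respectively a $\sigma_0$- and $\rho_0$-structure, their composition $\tau$ is indeed a $\tau_0$-structure on $P$ by the relation $\tau_0=\sigma_0\rho_0$ fixed at the start of Section~\ref{Sec:Fock-bundles}.

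The key step is then to check that $\nabla$ is $\tau$-invariant, i.e. $\tau(\nabla)=\nabla$. We compute the effect of $\tau=\sigma\rho$ on each of the three terms. First, $\rho(\Phi)=-\Phi^*$ by definition of $\Phi^*$, and $\sigma(\Phi^*)=-\Phi^*$ since $\Phi^*$ is again a Fock field negated by $\sigma$ (Proposition in Section~\ref{Sec:unitary-case}); hence $\tau(\Phi)=\sigma(\rho(\Phi))=\sigma(-\Phi^*)=\Phi^*$. Symmetrically, $\rho(\Phi^*)=-\Phi$ (as $\rho$ is an involution) and $\sigma(\Phi)=-\Phi$, so $\tau(\Phi^*)=\sigma(-\Phi)=\Phi$. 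Thus $\tau$ swaps the outer two terms $\Phi\leftrightarrow\Phi^*$. For the middle term, $d_A$ is $\sigma$-invariant by construction (Theorem~\ref{Thm-filling-in}) and unitary, i.e. $\rho$-invariant, by the Proposition in Section~\ref{Sec:unitary-case}; since $\sigma$ and $\rho$ commute, $\tau(d_A)=\sigma(\rho(d_A))=d_A$. Putting the three computations together, $\tau(\nabla)=\tau(\Phi)+\tau(d_A)+\tau(\Phi^*)=\Phi^*+d_A+\Phi=\nabla$. Therefore $\nabla$ is a flat connection on $P$ which is invariant under the $\tau_0$-structure $\tau$.

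It remains to translate this into the statement about monodromy. The fixed-point locus $P^\tau$ is a principal $G^{\tau_0}$-bundle, and $\tau$-invariance of $\nabla$ means precisely that $\nabla$ restricts to a connection on $P^\tau$; flatness is inherited, so we obtain a flat $G^{\tau_0}$-bundle and hence a holonomy representation $\pi_1 S\to G^{\tau_0}$. Since $G^{\tau_0}$ is the split real subgroup (its Lie algebra is $\mathfrak{g}^{\tau_0}$, the split real form of $\mathfrak{g}$), the monodromy of $\nabla$ lies in the split real form, as claimed. The main obstacle in writing this out cleanly is bookkeeping rather than mathematics: one must be careful that the involution $\tau$ of the total space $P$, as opposed to merely the adjoint bundle $\mathfrak{g}_P$, is genuinely well defined — this is exactly the subtlety flagged in Section~\ref{Sec:involutions} for non-adjoint $G$, where a $\tau_0$-structure carries slightly more data than an involution of $\mathfrak{g}_P$. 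For $G=G_{ad}$ there is nothing to check, and in general one uses that $\sigma$ and $\rho$ are honest involutions of $P$ (not just of $\mathfrak{g}_P$) so that $\tau=\sigma\rho$ is too, and that parallelism of $\tau$ with respect to $\nabla$ can be verified on $\mathfrak{g}_P$ since a connection acts on $\mathrm{Aut}$-type bundles through the adjoint representation.
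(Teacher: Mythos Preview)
Your proof is correct and follows essentially the same approach as the paper: define $\tau=\sigma\rho$, verify that $\tau$ swaps $\Phi$ and $\Phi^*$ and fixes $d_A$ (using that $d_A$ is both $\sigma$-invariant and unitary), and conclude that $\nabla$ is $\tau$-invariant so its monodromy lies in the split real form. The paper is terser---it computes $\tau(\Phi)=\rho(\sigma(\Phi))=\rho(-\Phi)=\Phi^*$ directly and then cites \cite[Proposition 6.1]{Hit92} for the final step---while you spell out the reduction to $P^\tau$ and flag the $G$ versus $G_{ad}$ bookkeeping, but the substance is identical.
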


\begin{proof}
We simply show that $\Phi+d_A+\Phi^{*}$ is invariant under the involution $\tau=\sigma\rho=\rho\sigma$. Recall that $\sigma(\Phi)=-\Phi$. Hence
$$\tau(\Phi)=\rho\circ\sigma(\Phi)=\rho(-\Phi)=\Phi^{*}.$$

Since the canonical connection $d_A$ is unitary, we have $\rho(d_A)=d_A$. Since $d_A$ is also $\sigma$-invariant, we get
$\tau(d_A)=\rho(\sigma(d_A))=d_A$.
Therefore, $\Phi+d_A+\Phi^{*}$ is $\tau$-invariant. By \cite[Proposition 6.1]{Hit92}, the monodromy of the connection $\nabla$ lies in the split real form of $\g$.
\end{proof}

\begin{proposition}\label{Prop:real-monodr-2}
    Assume the main conjecture \ref{main-conj} is true. Then we get a map from the space of $G$-Fock bundles equipped with compatible hermitian structure to the Hitchin component.
\end{proposition}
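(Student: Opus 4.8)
The plan is to define the map as the holonomy of the canonical 3-term connection supplied by Conjecture \ref{main-conj}, and then to locate its image by a connectedness argument anchored at the Fuchsian locus, where the nonabelian Hodge correspondence already identifies the answer.

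First I would make the map precise. Given a $G$-Fock bundle $(P,\Phi_0,\sigma)$ with compatible hermitian structure $\rho$, Conjecture \ref{main-conj} furnishes a unique $\eta\in\Omega^0(S,\g_P^{\sigma,-\rho})$ for which $\Phi=e^{-\eta}\Phi_0 e^{\eta}$ is positive and $F(A)+[\Phi\wedge\Phi^{*}]=0$, with $d_A$ the canonical connection of Theorem \ref{Thm-filling-in}. Then $\nabla=\Phi+d_A+\Phi^{*}$ is a flat connection on the topologically trivial bundle $P$ (Corollary \ref{Prop:P-topology}), so its holonomy is a representation $\pi_1 S\to G$ well defined up to conjugation; by Proposition \ref{Prop:real-monodr-1} it is valued in the split real form $G^{\tau}$, where $\tau=\sigma\rho$. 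We thus obtain a well-defined point of the character variety $\mathcal{X}(\pi_1 S,G^{\tau})$ (passing to the adjoint group if one wants to land in Hitchin's original component); call the resulting assignment $\mathrm{hol}$.

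Next I would establish continuity and then locate the image. Uniqueness of solutions of \eqref{Eq:Hitchin-like-3} together with the fact that its linearization is an elliptic isomorphism (Theorem \ref{Thm:ellipticity}) shows, via the implicit function theorem of Section \ref{Sec:impl-fct-thm}, that $\eta$, and hence $\Phi$, $d_A$ and $\nabla$, depends continuously (indeed smoothly) on $(P,\Phi_0,\sigma,\rho)$; since holonomy depends continuously on a flat connection, $\mathrm{hol}$ is continuous. Now every $G$-Fock bundle is joined, through a path of $G$-Fock bundles, to one in the Fuchsian locus by continuously scaling the $(0,1)$-part of $\Phi_0$ to zero, exactly as in the proof of Corollary \ref{Prop:P-topology} and using Proposition \ref{prop:fock-fuchsian-locus}; and, with an orientation of $S$ fixed, the Fuchsian locus is connected, being a copy of Teichm\"{u}ller space via Proposition \ref{Prop:link-to-g-C-str}. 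Hence the space of $G$-Fock bundles with compatible hermitian structure and fixed induced orientation is connected (the finite $\mathrm{H}^1(S,A)$-torsor of lifts to $G$ over $G_{ad}$-Fock bundles only adds finitely many further components, each again meeting the Fuchsian locus). On the Fuchsian locus, the first item in the discussion following Conjecture \ref{main-conj} identifies $\nabla$ with the flat connection produced by the nonabelian Hodge correspondence from the uniformizing $G$-Higgs bundle, whose holonomy is the Fuchsian representation --- a point of the Hitchin component. Since Hitchin \cite{Hit92} proved the Hitchin component to be a connected component of the character variety of the split real form, the continuous $\mathrm{hol}$-image of a connected set of Fock bundles is a connected subset of $\mathcal{X}(\pi_1 S,G^{\tau})$ meeting the Hitchin component, hence contained in it; this yields the asserted map.

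The main obstacle is the passage from Conjecture \ref{main-conj} to continuity: the conjecture is phrased as a pointwise existence-and-uniqueness statement, so upgrading it to continuous dependence on the Fock bundle genuinely requires the ellipticity of Theorem \ref{Thm:ellipticity} together with the implicit function theorem, and without this the connectedness argument cannot be started. A secondary, bookkeeping difficulty is to keep track of the connected components of the space of Fock bundles --- the orientation of the induced complex structure and the finite $\mathrm{H}^1(S,A)$-torsor --- and to verify that each of them meets the Fuchsian locus, so that the entire space, not merely one component, maps into the Hitchin component (or, for the reversed orientation, into its complex-conjugate copy).
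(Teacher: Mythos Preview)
Your proof takes essentially the same route as the paper's: holonomy of the conjectural flat connection, Proposition \ref{Prop:real-monodr-1} for the split real form, deformation to the Fuchsian locus by scaling the $(0,1)$-part of $\Phi$ to zero, and the fact that the Hitchin component is a connected component of the split real character variety. You are more careful than the paper about continuous dependence (invoking Theorem \ref{Thm:ellipticity} and the implicit function theorem, whereas the paper simply asserts that the conjecture yields a continuous path), and your extra bookkeeping on orientation and the $\mathrm{H}^1(S,A)$-torsor is not strictly needed since the argument is pointwise---each individual Fock bundle is deformed to the Fuchsian locus---so global connectedness of the space of Fock bundles plays no role.
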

   
\begin{proof}
Consider a $G$-Fock bundle $(P,\Phi,\sigma)$ together with a compatible hermitian structure $\rho$. The main conjecture associates a flat connection $\Phi+d_A+\Phi^{*}$ which by the previous proposition has monodromy in the split real form.

We can continuously deform any Fock field $\Phi$ to a Fock field with vanishing $(0,1)$-part, which is a point in the Fuchsian locus by Proposition \ref{prop:fock-fuchsian-locus}. 
Using the main conjecture, this gives a continuous path in the character variety for the split real form. Since the monodromy of any point in the Fuchsian locus is in the Hitchin component, the same has to be true for $(P,\Phi,\sigma)$.
\end{proof}

\section{Neighborhood of the Fuchsian locus}\label{Sec:ellipticity}

We prove the Main Conjecture \ref{main-conj} in a neighborhood of the Fuchsian locus. In the whole section, let $(P,\Phi, \sigma)$ be a $G$-Fock bundle equipped with a compatible positive hermitian structure $\rho$.

\subsection{Linearized Equation}\label{Sec:elliptic}

We show that the differential of the map from positive Fock fields to curvature $F(A) + [\Phi\wedge \Phi^{*}]$, is an elliptic operator of order two and, in fact, is an isomorphism of appropriate Sobolev spaces. We compute the change in curvature as we infinitesimally conjugate $\Phi$ by a hermitian endomorphism. 

Let $(P,\sigma,\rho)$ be a principal $G$-bundle equipped with $\sigma_0$-structure $\sigma$ and compatible hermitian structure $\rho$. Define $F$ to be the map $\mathcal{P}\to\Omega^2(S,\g_P)$ given by the curvature of the canonical 3-term connection:
\begin{equation}\label{Eq:curv-operator}
    F(\Phi):=F(A)+[\Phi\wedge\Phi^*].
\end{equation}

\begin{lemma} 
Let $\Phi_t$ be a smooth path of positive Fock fields on a $G^{\sigma,\rho}$-bundle with $\Phi_0 = \Phi$, and whose derivative at $t=0$ is\[\dot{\Phi} :=  [\Phi,\eta]\] for some $\eta\in \Omega^0(S,\mf{g}_P^{\sigma,-\rho})$. Let $A_t$ be the path of connections obtained by the filling-in procedure of Theorem \ref{Thm-filling-in}. We have
\[\dot{A} = Q d_A \eta,\]
where $Q$ is the involution on $\Omega^{1}(S,\mf{g}_P)^\sigma\cong \mathrm{Im}(\ad_\Phi)^\sigma\oplus\mathrm{Im}(\ad_{\Phi^*})^\sigma$ defined by $-1$ on $\mathrm{Im}(\ad_\Phi)^\sigma$ and by $1$ on $\mathrm{Im}(\ad_{\Phi^*})^\sigma$. Note that we used Corollary \ref{Coro:sigma-decompo-transverse} here.
\end{lemma}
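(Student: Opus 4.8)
The plan is to differentiate the two defining equations of the filling-in procedure, $d_{A_t}\Phi_t = 0$ and $d_{A_t}\Phi_t^* = 0$, at $t=0$, and to solve the resulting linear system for $\dot A$. Write $\dot A \in \Omega^1(S,\mf g_P)$ for the derivative of the connection; since each $A_t$ is $\sigma$-invariant, $\dot A$ is $\sigma$-invariant as well, so by Corollary \ref{Coro:sigma-decompo-transverse} it decomposes uniquely as $\dot A = \dot A_\Phi + \dot A_{\Phi^*}$ with $\dot A_\Phi \in \mathrm{Im}(\ad_\Phi)^\sigma$ and $\dot A_{\Phi^*} \in \mathrm{Im}(\ad_{\Phi^*})^\sigma$. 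The claim is precisely that $\dot A_\Phi = -(\,d_A\eta\,)_\Phi$ and $\dot A_{\Phi^*} = (\,d_A\eta\,)_{\Phi^*}$, where the subscripts denote the projections to the two summands; equivalently $\dot A = Q\, d_A\eta$. Note $d_A\eta$ is itself $\sigma$-invariant because $\eta \in \Omega^0(S,\mf g_P^{\sigma,-\rho})$ and $d_A$ is $\sigma$-invariant, so the projections make sense.

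The first step is to differentiate $d_{A_t}\Phi_t = 0$. Using $\dot\Phi = [\Phi,\eta] = -\ad_\Phi\eta$ and the Leibniz-type rule for the variation of $d_A\Phi$, one gets
\begin{equation*}
[\dot A \wedge \Phi] + d_A\dot\Phi = 0,
\end{equation*}
that is $\ad_\Phi(\dot A) = -d_A\dot\Phi = d_A(\ad_\Phi\eta) = \ad_\Phi(d_A\eta)$, where in the last equality I used $d_A\Phi = 0$ so that $d_A$ commutes with $\ad_\Phi$ on $\Omega^\bullet(S,\mf g_P)$. Hence $\ad_\Phi(\dot A - d_A\eta) = 0$, i.e. $\dot A - d_A\eta \in \ker(\ad_\Phi)$. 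Since $\dot A_{\Phi} \in \mathrm{Im}(\ad_\Phi)$ and $\ad_\Phi^2 = 0$, we have $\ad_\Phi(\dot A) = \ad_\Phi(\dot A_{\Phi^*})$; combined with $\ker(\ad_\Phi) \cap \mathrm{Im}(\ad_{\Phi^*})$ being the component in which the "new" part of $\dot A$ relative to $d_A\eta$ lives, this equation will pin down the $\mathrm{Im}(\ad_\Phi)^\sigma$-component. Concretely: write $d_A\eta = (d_A\eta)_\Phi + (d_A\eta)_{\Phi^*}$; then $\dot A - d_A\eta \in \ker(\ad_\Phi)$ together with the transversality decomposition $\ker(\ad_\Phi) = \mathrm{Im}(\ad_{\Phi^*}) \oplus (\ker\ad_\Phi \cap \ker\ad_{\Phi^*})$ — but the last intersection vanishes by Proposition \ref{Prop-no-sigma-cohom} on the $\sigma$-invariant part — forces $\dot A_\Phi = -(d_A\eta)_\Phi$.

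Symmetrically, differentiating $d_{A_t}\Phi_t^* = 0$ and using $\dot{\Phi^*} = \overline{\dot\Phi}{}^{\,*} = [\Phi^*,\eta]$ (here the sign works out because $\eta$ is $\rho$-anti-invariant, so conjugating $\dot\Phi = [\Phi,\eta]$ by $\rho$ gives $[\Phi^*,\eta]$ rather than $-[\Phi^*,\eta]$, which is exactly what keeps the construction consistent), the same argument with the roles of $\Phi$ and $\Phi^*$ exchanged yields $\ad_{\Phi^*}(\dot A - d_A\eta) = 0$ — wait, one must be careful with signs here, since $\dot{\Phi^*} = [\Phi^*,\eta] = -\ad_{\Phi^*}\eta$ — giving $\ad_{\Phi^*}(\dot A - d_A\eta) = 0$ and hence, by the mirror-image vanishing argument, $\dot A_{\Phi^*} = +(d_A\eta)_{\Phi^*}$. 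Adding the two conclusions gives $\dot A = -(d_A\eta)_\Phi + (d_A\eta)_{\Phi^*} = Q\,d_A\eta$.

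I expect the main obstacle to be bookkeeping rather than substance: one must verify carefully that the relation $\dot{\Phi^*} = [\Phi^*,\eta]$ (with the correct sign) follows from $\dot\Phi = [\Phi,\eta]$ and the compatibility $\rho\sigma = \sigma\rho$ together with $\eta \in \mf g_P^{\sigma,-\rho}$, and that in differentiating $d_{A_t}\Phi_t = 0$ the cross term genuinely is $[\dot A\wedge\Phi]$ with no extra contribution from the variation of the volume form or of $\rho$ (there is none: $\rho$ and $\sigma$ are fixed, only the gauge class of $\Phi$ moves). A secondary subtlety is that $\Phi_t$ need not be of the literal form $e^{-t\eta}\Phi e^{t\eta}$ — the hypothesis only fixes the first-order jet $\dot\Phi = [\Phi,\eta]$ — but since both sides of the asserted formula for $\dot A$ depend only on $\dot\Phi$ (through $\eta$, uniquely determined modulo $\ker\ad_\Phi$, and the $\ker\ad_\Phi$ ambiguity in $\eta$ changes $d_A\eta$ only by a $\sigma$-invariant element of $\ker\ad_\Phi = 0$), the statement is well-posed.
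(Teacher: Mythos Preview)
Your overall strategy---differentiate the two defining relations $d_{A_t}\Phi_t=0$ and $d_{A_t}\Phi_t^*=0$ and read off the components of $\dot A$ in the splitting $\Omega^{1,\sigma}=\mathrm{Im}(\ad_\Phi)^\sigma\oplus\mathrm{Im}(\ad_{\Phi^*})^\sigma$---is exactly what the paper does. But the bookkeeping is off in several places, and the errors happen to cancel rather than the argument being correct.

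First, the sign of $\dot{\Phi^*}$. Since $X^*=-\rho(X)$ and $\rho$ is a Lie algebra homomorphism with $\rho(\eta)=-\eta$, one has
\[
\dot{\Phi^*}=-\rho(\dot\Phi)=-\rho([\Phi,\eta])=-[\rho\Phi,\rho\eta]=-[-\Phi^*,-\eta]=-[\Phi^*,\eta],
\]
not $+[\Phi^*,\eta]$ as you write; you dropped the outer minus coming from the definition of $*$. With the correct sign, differentiating $d_{A_t}\Phi_t^*=0$ gives $[\Phi^*\wedge(\dot A+d_A\eta)]=0$, i.e.\ $\dot A+d_A\eta\in\ker\ad_{\Phi^*}$, exactly as in the paper.

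Second, the decomposition you invoke, $\ker(\ad_\Phi)=\mathrm{Im}(\ad_{\Phi^*})\oplus(\ker\ad_\Phi\cap\ker\ad_{\Phi^*})$, is false: $\mathrm{Im}(\ad_{\Phi^*})$ is not contained in $\ker(\ad_\Phi)$. The correct transversality consequence is $\ker(\ad_\Phi)=\mathrm{Im}(\ad_\Phi)\oplus(\ker\ad_\Phi\cap\ker\ad_{\Phi^*})$, which on $\sigma$-invariants (using Proposition~\ref{Prop-no-sigma-cohom}) collapses to $\ker(\ad_\Phi)^\sigma=\mathrm{Im}(\ad_\Phi)^\sigma$. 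Hence $\dot A-d_A\eta\in\ker(\ad_\Phi)^\sigma=\mathrm{Im}(\ad_\Phi)^\sigma$ determines the $\mathrm{Im}(\ad_{\Phi^*})^\sigma$-component, namely $\dot A_{\Phi^*}=(d_A\eta)_{\Phi^*}$; and symmetrically $\dot A+d_A\eta\in\ker(\ad_{\Phi^*})^\sigma=\mathrm{Im}(\ad_{\Phi^*})^\sigma$ gives $\dot A_\Phi=-(d_A\eta)_\Phi$. You have these two conclusions attached to the wrong equations.

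The paper sidesteps all of this by simply \emph{verifying}: one checks that $\dot A=Qd_A\eta$ makes both $[\Phi\wedge(\dot A-d_A\eta)]$ and $[\Phi^*\wedge(\dot A+d_A\eta)]$ vanish, because $\dot A-d_A\eta\in\mathrm{Im}(\ad_\Phi)$ and $\dot A+d_A\eta\in\mathrm{Im}(\ad_{\Phi^*})$ together with $\ad_\Phi^2=\ad_{\Phi^*}^2=0$. Uniqueness of $\dot A$ is inherited from the uniqueness in Theorem~\ref{Thm-filling-in}. This is both shorter and less error-prone than solving for the components.
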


\begin{proof}
    It suffices to check that the derivatives of $d_A\Phi$ and $d_A\Phi^{*}$ vanish at $t=0$. One sees
    \[\frac{d}{dt} d_A\Phi = d_A \dot\Phi + [\dot{A}\wedge\Phi]=-[\Phi\wedge d_A \eta] + [\dot{A}\wedge\Phi] = [\Phi\wedge(\dot{A} -d_A\eta)]\] and similarly
    \[\frac{d}{dt} d_A\Phi^{*} = [\Phi^{*}\wedge(\dot{A} +d_A\eta)].\]
    The fact that $\ad_{\Phi}^2=0$ and $\ad_{\Phi^{*}}^2=0$, implies that $\dot{A} = Q d_A \eta$ indeed solves both of these equations. 
\end{proof}

We can now easily calculate the variation of curvature as we move the Fock field in the direction of $\eta$. 

\begin{theorem}\label{Thm:ellipticity}
The derivative of the map $F$ from \eqref{Eq:curv-operator} is the differential operator $L:\Omega^0(S,\mf{g}_P^{\sigma,-\rho}) \to \Omega^2(S,\mf{g}_P^{\sigma,\rho})$ given by 
\[L\eta = d_A Q d_A \eta + [[\Phi,\eta]\wedge\Phi^{*}] - [\Phi\wedge [\Phi^{*},\eta]]. \]
The operator $L$ is elliptic, and extends to a continuous linear isomorphism of Sobolev spaces 
\[H_{l+2}(S,\mf{g}_P^{\sigma,-\rho})\to H_{l}(S,\Lambda^2T^*S\otimes\mf{g}_P^{\sigma,\rho}).\]
for all $l\in\mathbb{N}$, where $H_l$ denotes the Sobolev space with $l$ square integrable derivatives.
\end{theorem}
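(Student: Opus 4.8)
The plan is to read off the formula for $L$ from the preceding lemma, compute the principal symbol to prove ellipticity, and then combine Fredholm theory with a Weitzenböck-type positivity estimate to upgrade this to an isomorphism.

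\emph{The formula.} Differentiating $F(\Phi_t)=F(A_t)+[\Phi_t\wedge\Phi_t^{*}]$ at $t=0$ along $\dot\Phi=[\Phi,\eta]$ with $\eta\in\Omega^0(S,\g_P^{\sigma,-\rho})$, the preceding lemma gives $\dot A=Q\,d_A\eta$, hence $\tfrac{d}{dt}F(A_t)|_0=d_A Q d_A\eta$. Since $\rho(\eta)=-\eta$ one computes $\dot{\Phi^{*}}=-\rho(\dot\Phi)=-[\Phi^{*},\eta]$, so $\tfrac{d}{dt}[\Phi_t\wedge\Phi_t^{*}]|_0=[[\Phi,\eta]\wedge\Phi^{*}]-[\Phi\wedge[\Phi^{*},\eta]]$, and adding the two gives $L$. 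That $L\eta$ is valued in $\g_P^{\sigma,\rho}$ follows from the $\sigma$- and $\rho$-behaviour of $d_A$, $\Phi$ and $\eta$ (use $d_A$ unitary and $\sigma$-invariant, $\sigma\Phi=-\Phi$, $\rho\Phi=-\Phi^{*}$, $\rho\eta=-\eta$, and $\rho Q=-Q\rho$).

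\emph{Ellipticity.} The two bracket terms of $L$ are algebraic in $\eta$, so the principal symbol at a nonzero $\xi\in T_z^{*}S$ is $\mathfrak p_\xi(\eta)=\xi\wedge Q(\xi\otimes\eta)\in\Lambda^2T_z^{*}S\otimes\g_P^{\sigma,\rho}$; here the presence of $Q$ is essential, as $\xi\wedge(\xi\otimes\eta)=0$. Since $\sigma$ is complex-linear and $\rho$ antilinear, multiplication by $i$ gives an isomorphism $\g^{\sigma,-\rho}\cong\g^{\sigma,\rho}$, and $\Lambda^2T_z^{*}S$ is one-dimensional, so it is enough to show $\mathfrak p_\xi$ is injective. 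If $\xi\wedge Q(\xi\otimes\eta)=0$ then, as $\xi\wedge\cdot$ annihilates precisely the $1$-forms proportional to $\xi$, writing $\xi\otimes\eta=u+w$ with $u\in\mathrm{Im}(\ad_\Phi)^\sigma$ and $w\in\mathrm{Im}(\ad_{\Phi^{*}})^\sigma$ (Corollary \ref{Coro:sigma-decompo-transverse}, which holds fibrewise) forces $u=\xi\otimes a$ and $w=\xi\otimes b$ with $a+b=\eta$. Working in a complex coordinate adapted to the complex structure induced by $\Phi$, so that $\Phi=\Phi_1\,dz+\Phi_2\,d\bar z$ with $\Phi_1$ principal nilpotent, $\Phi_2\in Z(\Phi_1)$, $\mu_2=0$, and both coefficients of the real covector $\xi$ nonzero, the relation $\xi\otimes a\in\mathrm{Im}(\ad_\Phi)$ reads $[\Phi_1,\eta']=\xi_1 a$, $[\Phi_2,\eta']=\xi_2 a$ for some $\eta'$, hence $\eta'\in Z(\xi_2\Phi_1-\xi_1\Phi_2)$; since $\xi_2\neq0$ and $\mu_2=0$, the element $\xi_2\Phi_1-\xi_1\Phi_2$ is principal nilpotent (\cite[Proposition 2.21]{Tho22}), so its centralizer equals $Z(\Phi_1)$ and $a=\xi_1^{-1}[\Phi_1,\eta']=0$. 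The identical argument applied to the Fock field $\Phi^{*}$ (again a Fock bundle, inducing the conjugate complex structure) gives $b=0$, hence $\eta=0$. Thus $L$ is elliptic of order two.

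\emph{The isomorphism.} Being elliptic of order two over the closed surface $S$, $L$ is Fredholm $H_{l+2}\to H_l$ for every $l$, with smooth kernel and index independent of $l$; we claim it is injective with index zero, so bijective on $H_2\to H_0$, after which the elliptic a priori estimate bounding $\|\eta\|_{H_{l+2}}$ in terms of $\|L\eta\|_{H_l}$ and $\|\eta\|_{H_0}$ propagates bijectivity to all $l$. For the index: under the identification of $\Lambda^2T^{*}S\otimes\g_P^{\sigma,\rho}$ with $\g_P^{\sigma,-\rho}$ by the Hodge star and multiplication by $i$, $L$ is, modulo zeroth-order terms, formally self-adjoint (it is the Hessian of a natural, Donaldson-type energy functional on $\Omega^0(S,\g_P^{\sigma,-\rho})$), so its index vanishes. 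For injectivity, pair $L\eta=0$ with $\eta$ and integrate over $S$: an integration by parts should exhibit $0$ as a sum of non-negative terms built from $\|d_A\eta\|^2$, $\|[\Phi,\eta]\|^2$ and $\|[\Phi^{*},\eta]\|^2$, the favourable sign coming exactly from positivity of $\Phi$ (that $Q=-1$ on the positive-definite subspace $\mathrm{Im}(\ad_\Phi)^\sigma$); this forces $[\Phi,\eta]=0$, so $\eta\in Z(\Phi)$, which is negated by $\sigma$ (Corollary \ref{cor:sigma_negate_centralizer}), while $\sigma\eta=\eta$, whence $\eta=0$ (the infinitesimal rigidity of Proposition \ref{Prop:no-automorphisms}).

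\emph{Main obstacle.} The hard part is the positivity estimate in the last step, i.e. showing the quadratic form $\eta\mapsto\langle L\eta,\eta\rangle$ is positive-definite; this is the Fock-bundle analogue of the convexity of Donaldson's functional behind smoothness of the Hitchin moduli space at stable points, and it is where the compatible positive hermitian structure (hence transversality of $\Phi$ and $\Phi^{*}$) and the vanishing $\mathrm H^{\bullet,\sigma}(\Phi)=0$ of Proposition \ref{Prop-no-sigma-cohom} enter essentially.
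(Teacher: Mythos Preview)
Your derivation of the formula for $L$ matches the paper. Your ellipticity argument is correct but takes a different route: you show the symbol is injective via the Lie-theoretic structure of centralizers (using that $\xi_2\Phi_1-\xi_1\Phi_2$ is again principal nilpotent with the same centralizer as $\Phi_1$), whereas the paper pairs the symbol against $\eta$ and computes $-\tfrac{i}{2}\tr\!\big(\eta\,\alpha\wedge Q(\alpha\eta)\big)=-2\lVert\pi_{\mathrm{Im}\,\ad_\Phi}(\alpha\eta)\rVert^2$, which is definite precisely by the positivity of $\Phi$. Your argument is more structural and works; the paper's is shorter and uses only the defining property of $\mathcal P$.

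For the isomorphism you outline Fredholm plus index zero plus trivial kernel, leaving the positivity estimate as the ``main obstacle''. The paper dispenses with Fredholm theory and simply carries out that estimate: an integration by parts yields
\[
-\tfrac{i}{2}\int_S\tr(\eta\,L\eta)\;=\;2\lVert\pi_{\mathrm{Im}\,\ad_\Phi}(d_A\eta)\rVert^2+2\lVert[\Phi,\eta]\rVert^2,
\]
which is a norm equivalent to the standard $H_1$-norm on $\Omega^0(S,\g_P^{\sigma,-\rho})$; the isomorphism $H_1\to H_{-1}$ then follows by duality (Lax--Milgram), and higher $l$ by elliptic regularity. Two remarks on your sketch: first, the quadratic form does \emph{not} contain $\lVert d_A\eta\rVert^2$ (which would be indefinite for the pseudo-hermitian form \eqref{Eq:herm-form}) but only its projection onto $\mathrm{Im}(\ad_\Phi)$---your parenthetical about $Q$ points in the right direction but your list of terms is off. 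Second, your index-zero step, justified only by ``Hessian of a natural energy'', is superfluous once you have the coercivity inequality, since that inequality already delivers both injectivity and surjectivity. In short, the obstacle you flag is real and is exactly what the paper does, but it is a two-line computation rather than a Donaldson-type convexity argument.
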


\begin{proof} 
To compute the symbol of $L$, we only need to look at the highest order term $d_A Q d_A$. This is a composition of three operators, so its symbol is the composition of three symbols. The symbol $\sigma_{d_A}$ of $d_A$ is given by the formula
\[\sigma_{d_A}(\alpha)\eta = \alpha\wedge\eta.\]
This is a map $T^*S\times \Lambda^\bullet T^*S\otimes \mf{g}_P\to \Lambda^\bullet T^*S\otimes \mf{g}_P$. Note that this does not actually depend on the connection $d_A$. Since $Q$ is an operator of order zero, the symbol of $Q$ is just itself. Putting these together, we get the symbol of $L$
\[\sigma_{L}(\alpha)\eta = \alpha \wedge Q (\alpha \eta).\]
The operator $L$ is elliptic if this is an isomorphism from $\mf{g}_P^{\sigma,-\rho}$ to $\Lambda^2T^*S \otimes \mf{g}_P^{\sigma,\rho}$ for all non-zero $\alpha$. This is implied if $-\frac{i}{2}\tr(\eta \alpha \wedge Q(\alpha \eta))$ is a definite form in $\eta$ for all non-zero $\alpha$. This is indeed true, because if we write $\alpha\eta = \xi + \xi^{*}$ where $\xi$ is in $\mathrm{Im}(\ad_\Phi)$, we get
\[-\tfrac{i}{2}\tr(\eta \alpha \wedge Q(\alpha \eta)) = -\tfrac i2\tr((\xi + \xi^{*}) \wedge (-\xi + \xi^{*})) = i \tr(\xi^{*}\wedge\xi)=-2\lVert \xi \rVert^2,\]
which is negative-definite on $\mathcal{P}$ by definition \ref{Def:pos-Higgs-field}. 

Now we prove that $L$ induces an isomorphism $H_1\to H_{-1}$. The essential point is that the bilinear form $\int_S \tr(\eta L\eta)$ is equivalent to the standard $H_1$-norm (defined using any choice of metrics). Recall the pseudo-hermitian product given by Equation \eqref{Eq:herm-form}. We use integration by parts, and conjugation invariance of trace to write 
\begin{align*}
-\tfrac{i}{2}\int_S \tr(\eta L\eta)  &=  -\tfrac{i}{2}\int_S \tr(\eta d_A Q d_A \eta) + \tr(\eta[[\Phi,\eta]\wedge\Phi^{*}]) - \tr(\eta[\Phi\wedge [\Phi^{*},\eta]])\\
&=  -\tfrac{i}{2}\int_S -\tr(d_A \eta \wedge Q d_A \eta) + \tr([\eta,\Phi^{*}]\wedge[\Phi,\eta]) - \tr([\eta,\Phi] \wedge [\Phi^{*},\eta])\\
&=-\tfrac{i}{2}\int_S 2 \tr(\pi_{\mathrm{Im}\,\ad_\Phi}(d_A \eta)^{*} \wedge \pi_{\mathrm{Im}\,\ad_\Phi}(d_A \eta)+ 2\tr([\eta,\Phi^{*}]\wedge[\Phi,\eta])\\
&= 2\lVert \pi_{\mathrm{Im}\,\ad_\Phi}(d_A\eta) \rVert^2 +2\lVert [\Phi,\eta]\rVert^2.
\end{align*}
 
    This is a norm on $\Omega^1(S,\mathfrak{g}^{\sigma,-\rho})$ which is the integral of the sum of a positive definite norm of the first derivatives of $\eta$, and a positive definite norm of $\eta$ pointwise. This means it is equivalent to the standard $H_1$-norm. Recall that $H_{-1}(S,E\otimes \Lambda^2T^*S)$ can be defined as the topological dual to $H_1(E)$ for any vector bundle $E$. The norms being equivalent means that $L$ induces an isomorphism $H_1\to H_{-1}$. This implies that the induced maps $H_l \to H_{l-2}$ are all isomorphisms for $l\in\mathbb{N}, l\geq 1$.
\end{proof}

\subsection{Application of the implicit function theorem}\label{Sec:impl-fct-thm}

In this section we use standard techniques for nonlinear elliptic PDE to show that for Fock fields with small enough $(0,1)$-part, i.e. near the Fuchsian locus, there is a hermitian structure such that the canonical connection is flat. We start by recalling the implicit function theorem:

\begin{theorem}[Implicit function theorem]
Let $W\subset X\times Y$ be an open subset of a product of differentiable Banach manifolds. Let $f:W \to Z$ be a continuously differentiable function to a third differentiable Banach manifold. Suppose that $f(x,y)=z$, and that $df_{(x,y)}$ restricts to an isomorphism from $T_y Y$ to $T_zZ$. Then there is a function $g$ from a neighborhood $U$ of $x$ to $Y$ satisfying
\[f(x,g(x)) = 0\]
for all $x\in U$. Furthermore, $g$ is continuously differentiable. 
\end{theorem}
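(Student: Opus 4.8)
The plan is to reduce to the classical Banach-space implicit function theorem and then run the Banach contraction principle with a parameter. Since the assertion is local near the given point $(x,y)$, write it $(x_0,y_0)$, I would first pass to charts and assume $X$, $Y$, $Z$ are Banach spaces, $W$ an open neighborhood of $(x_0,y_0)$, and, after replacing $f$ by $f-z$, that $f(x_0,y_0)=0$. Set $T := df_{(x_0,y_0)}|_{T_{y_0}Y}$, which is a topological isomorphism $Y\to Z$ by hypothesis.

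The key reformulation is that $f(x,y)=0$ is equivalent to $y$ being a fixed point of $\Psi(x,\cdot)$, where $\Psi(x,y):=y-T^{-1}f(x,y)$. Because $\partial_y\Psi(x_0,y_0)=\mathrm{id}-T^{-1}T=0$, continuity of $df$ yields balls $B_X(x_0,\delta)$ and $B_Y(y_0,r)$ on which $\Psi(x,\cdot)$ is a $\tfrac12$-contraction; after shrinking $\delta$, the bound $\|\Psi(x,y_0)-y_0\|=\|T^{-1}f(x,y_0)\|\le r/2$ (using $f(x_0,y_0)=0$ and continuity) makes $\Psi(x,\cdot)$ also map $\overline{B_Y(y_0,r)}$ into itself. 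The parametrized Banach fixed point theorem then produces a unique $g(x)\in\overline{B_Y(y_0,r)}$ with $\Psi(x,g(x))=g(x)$, i.e. $f(x,g(x))=0$, for $x\in U:=B_X(x_0,\delta)$. Continuity of $g$ follows from the standard uniform-contraction estimate $\|g(x)-g(x')\|\le 2\|\Psi(x,g(x'))-\Psi(x',g(x'))\|$ combined with continuity of $\Psi$.

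For the $C^1$ statement I would shrink $U$ so that $\partial_y f(x,g(x))$ remains invertible — this uses openness of the set of invertible operators and continuity of $x\mapsto\partial_y f(x,g(x))$ — and then prove that $g$ is differentiable with $dg(x)=-\bigl(\partial_y f(x,g(x))\bigr)^{-1}\partial_x f(x,g(x))$. This is done by substituting $y=g(x+h)$ into the first-order Taylor expansion of $f$ at $(x,g(x))$, solving for the increment $g(x+h)-g(x)$, and estimating the remainder; continuity of $dg$ is then immediate from continuity of $g$, of the partials of $f$, and of operator inversion. The one genuinely delicate point is this last remainder estimate: it requires the a priori Lipschitz-type bound $\|g(x+h)-g(x)\|=O(\|h\|)$, not merely continuity of $g$, and that bound has to be extracted beforehand from the contraction estimate (or one may simply cite the classical theorem from a standard reference in global analysis, since nothing in the argument is new).
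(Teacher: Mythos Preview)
The paper does not actually prove this statement: it is merely \emph{recalled} as the classical implicit function theorem for Banach manifolds, stated without proof and then applied. So there is no ``paper's own proof'' to compare against.

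Your sketch is the standard and correct argument --- reduce to charts, normalize $z$ to $0$, rewrite $f(x,y)=0$ as a fixed-point problem for $\Psi(x,y)=y-T^{-1}f(x,y)$, apply the parametrized Banach fixed-point theorem, then bootstrap to $C^1$ using invertibility of $\partial_y f$ near the base point. The only point worth flagging is that you correctly identified the one genuinely delicate step (the a priori $O(\|h\|)$ bound on the increment of $g$ needed for the differentiability argument), and your remark that the contraction estimate supplies it is right. In the context of this paper it would be entirely acceptable, and arguably more appropriate, to simply cite a standard reference (e.g.\ Lang's \emph{Differential Manifolds} or Abraham--Marsden--Ratiu) rather than reproduce the proof.
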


We will apply this where $X$ is the space of Fock fields for a principle bundle $P$ equipped with $\sigma$ and $\rho$, $Y$ is the space of sections of $\mf{g}_P^{\sigma,-\rho}$, $W\subset X\times Y$ is the subset of $(\phi,\eta)$ where $e^{-\eta}\Phi e^\eta\in \mathcal{P}$, $Z$ is the space of two-forms valued in $\mf{g}^{\sigma,\rho}$, and $f$ is the map taking $(\phi,\eta)$ to the curvature of the connection associated to the conjugated Fock field:
$$f: \left \{ \begin{array}{ccc}
W &\longrightarrow & \Omega^2(S, \mf{g}_P^{\sigma,\rho}) \\
(\phi,\eta) &\mapsto & F(e^{-\eta}\Phi e^{\eta})
\end{array} \right.$$
where $F$ is the map from Equation \eqref{Eq:curv-operator}.
We need to equip all these spaces with differentiable Banach manifold structures such that the criteria of the implicit function theorem are satisfied. To do this we should understand the derivative of the map from Fock fields to curvature.

Let $\dot\Phi\in \mathrm{Var}(\Phi)^{-\sigma}$ be a tangent vector at $\Phi$ to the space of Fock fields. Let $\dot A$ be the corresponding variation in canonical connection. The derivative of $d_A\Phi$ and $d_A\Phi^*$ must be zero, giving us two equations on $\dot{A}$.
\[d_A\dot\Phi + [\dot{A}\wedge \Phi] = 0\]
\[d_A\dot\Phi^* + [\dot{A}\wedge \Phi^*] = 0\]
By transversality, we know that $ad_\Phi$ restricts to an isomorphism from $\mathrm{Im}(\ad_{\Phi^*})^{\sigma}\subset \Omega^1(S,\g_P)$ to $\mathrm{Im}(\ad_{\Phi})^{-\sigma}\subset \Omega^2(S,\g_P)$. Let $Y:\mathrm{Im}(\ad_{\Phi})^{-\sigma} \to \mathrm{Im}(\ad_{\Phi^*})^{\sigma}$ denote the inverse of this map. We can express $\dot{A}\in\Omega^1(S,\g_P)^{\sigma,\rho}$ in terms of $Y$, using Corollary \ref{Coro:sigma-decompo-transverse}.

\begin{lemma} 
The variation of the connection $d_A$ is given by
\[\dot{A} = - Y(d_A\dot\Phi) + Y(d_A\dot\Phi)^*.\]
\end{lemma}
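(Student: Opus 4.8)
The plan is to pin down $\dot A$ by differentiating the two defining equations of the canonical connection and then to extract the claimed formula using the transversality splitting of Corollary~\ref{Coro:sigma-decompo-transverse}. First, recall that along a path $\Phi_t$ of positive Fock fields with $\Phi_0=\Phi$ and $\dot\Phi_0=\dot\Phi$ — such a path exists since $\dot\Phi\in\mathrm{Var}(\Phi)^{-\sigma}$ and positivity is an open condition — the canonical connection $d_{A_t}$ of Theorem~\ref{Thm-filling-in} is defined, $\sigma$-invariant and unitary; hence its derivative $\dot A$ lies in $\Omega^1(S,\g_P)^{\sigma,\rho}$ and does not depend on the chosen path. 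Since $\rho$, and therefore the operation $X\mapsto X^{*}=-\rho(X)$, is fixed along the path, the derivative of $\Phi_t^{*}$ at $t=0$ equals $\dot\Phi^{*}$. Differentiating $d_{A_t}\Phi_t=0$ and $d_{A_t}\Phi_t^{*}=0$ at $t=0$ then gives the linear system
\begin{equation}\label{Eq:dotA-linear-system}
d_A\dot\Phi+[\dot A\wedge\Phi]=0,\qquad d_A\dot\Phi^{*}+[\dot A\wedge\Phi^{*}]=0.
\end{equation}

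Next I would decompose $\dot A=B_1+B_2$ with $B_1\in\mathrm{Im}(\ad_\Phi)^{\sigma}$ and $B_2\in\mathrm{Im}(\ad_{\Phi^{*}})^{\sigma}$, using Corollary~\ref{Coro:sigma-decompo-transverse}. Because $\ad_\Phi^2=0=\ad_{\Phi^{*}}^2$, the terms $[B_1\wedge\Phi]$ and $[B_2\wedge\Phi^{*}]$ vanish, so the two equations of \eqref{Eq:dotA-linear-system} reduce to $[\Phi\wedge B_2]=-d_A\dot\Phi$ and $[\Phi^{*}\wedge B_1]=-d_A\dot\Phi^{*}$. The first of these exhibits $d_A\dot\Phi$ as an element of $\mathrm{Im}(\ad_\Phi)^{-\sigma}\subset\Omega^2(S,\g_P)$ — it is $\ad_\Phi$ applied to the $\sigma$-invariant form $B_2$ — so $Y$ is defined on it, and by the very definition of $Y$ as the inverse of $\ad_\Phi\colon\mathrm{Im}(\ad_{\Phi^{*}})^{\sigma}\xrightarrow{\ \sim\ }\mathrm{Im}(\ad_\Phi)^{-\sigma}$ we get $B_2=-Y(d_A\dot\Phi)$. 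To obtain $B_1$, apply $\rho$ to the identity $[\Phi^{*}\wedge B_1]=-d_A\dot\Phi^{*}$: since $\rho(\Phi^{*})=-\Phi$, $\rho$ commutes with the unitary connection $d_A$, and $\rho(\dot\Phi^{*})=-\dot\Phi$, this becomes $[\Phi\wedge\rho(B_1)]=-d_A\dot\Phi$, whence $\rho(B_1)=-Y(d_A\dot\Phi)=B_2$ and therefore $B_1=\rho(B_2)=Y(d_A\dot\Phi)^{*}$ (using $\rho^2=\mathrm{id}$ and $X^{*}=-\rho(X)$). Adding the two pieces yields $\dot A=B_1+B_2=-Y(d_A\dot\Phi)+Y(d_A\dot\Phi)^{*}$.

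The step requiring the most care is the bookkeeping with the anti-linear involution $\rho$: one has to check that $\rho$, hence $X\mapsto X^{*}$, interchanges $\mathrm{Im}(\ad_\Phi)$ with $\mathrm{Im}(\ad_{\Phi^{*}})$ (this uses $\rho(\Phi)=-\Phi^{*}$) while preserving $\sigma$-invariance, so that $Y(d_A\dot\Phi)^{*}$ really lies in $\mathrm{Im}(\ad_\Phi)^{\sigma}$ and the decomposition $\dot A=B_1+B_2$ is compatible with $\rho$; one also has to make sure the sign in $\Phi^{*}=-\rho(\Phi)$ propagates correctly to $\g_P$-valued $1$- and $2$-forms. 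Everything else is formal, resting only on $\ad_\Phi^2=0$ and the transversality isomorphism. As an alternative one may instead verify directly that the right-hand side of the claimed formula solves \eqref{Eq:dotA-linear-system} and lies in $\Omega^1(S,\g_P)^{\sigma,\rho}$, and then invoke uniqueness: the difference of two such solutions lies in $\ker(\ad_\Phi)\cap\ker(\ad_{\Phi^{*}})\cong\mathrm{H}^1(\Phi)$ by Proposition~\ref{Prop:repr-for-cohom}, whose $\sigma$-invariant part vanishes by Proposition~\ref{Prop-no-sigma-cohom}.
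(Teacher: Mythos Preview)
Your proof is correct and follows the same approach as the paper: differentiate the two defining equations $d_{A_t}\Phi_t=0$ and $d_{A_t}\Phi_t^{*}=0$, decompose $\dot A$ via Corollary~\ref{Coro:sigma-decompo-transverse}, and read off the two pieces using the isomorphism $Y$. The paper leaves the lemma essentially unproved, simply recording the two differentiated equations and the definition of $Y$; your write-up supplies exactly the details one would fill in, including the verification via $\rho$ that the $\mathrm{Im}(\ad_\Phi)^\sigma$-component equals $Y(d_A\dot\Phi)^{*}$.
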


It follows that the total change in the curvature $F(\Phi)$ is
\[dF_{\Phi}(\dot\Phi) = -d_A Y(d_A\dot\Phi) + d_A Y(d_A\dot\Phi)^* + [\dot\Phi \wedge \Phi^*] + [\Phi \wedge \dot\Phi^*].\]

All we will use here is that this is a second order operator which varies continuously (in the operator norm topology) with respect to variations of $\Phi$ in the $C^0$-topology. This is because the linear maps $Y$ depend continuously on $\Phi$.

Since $dF_\Phi$ is a second order operator, it extends to a continuous map $H_k(\mathrm{Var}(\Phi)) \to H_{k-2}(\Lambda^2T^*S\otimes\mf{g}^{\rho,\sigma})$ where $H_k$ denotes the Sobolev Hilbert space of sections with $k$ square-integrable derivatives, where $k\in\mathbb{N}$ with $k\geq 2$. The Sobolev embedding theorem tells us that in two dimensions, $H_k$ is in $C^0$. If we equip $\mathcal{P}$ with the $H_k$-topology, and $\Omega^2(S\mf{g}^{\sigma,\rho})$ with the $H_{k-2}$-topology, then $F$ is continuously differentiable. A Sobolev inequality tells us that multiplication $H_k \times H_k\to H_k$ is continuous in dimension 2, so the map $(\Phi,\eta)\mapsto e^{-\eta}\Phi e^{\eta}$ is continuous (and continuously differentiable). Finally, by Theorem \ref{Thm:ellipticity} we know that the partial derivative of $f$ with respect to $\eta$ is an isomorphism. All the axioms of the implicit function theorem are satisfied, so we get the following:

\begin{proposition}\label{prop:openness}
    Let $k\geq 2$. The set of Fock fields $\Phi$ of class $H_k$, such that there exists $\eta \in H_k(S,\mf{g}_P^{\sigma,-\rho})$ with flat canonical connection associated to $e^{-\eta}\Phi e^\eta$, is open. Furthermore, the map from $\Phi$ to this solution $\eta$ is differentiable.
\end{proposition}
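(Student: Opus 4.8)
The plan is to obtain the statement as a straightforward application of the implicit function theorem quoted above, with the elliptic isomorphism of Theorem~\ref{Thm:ellipticity} furnishing the crucial ``invertibility of the $\eta$-differential'' hypothesis; what then remains is to equip the relevant spaces with differentiable Banach manifold structures and to check that the curvature map is continuously differentiable between them. Fix $k\geq 2$. I would take $X$ to be the space of Fock fields of Sobolev class $H_k$ on the fixed bundle $(P,\sigma,\rho)$, $Y=H_k(S,\mf{g}_P^{\sigma,-\rho})$, $Z=H_{k-2}(S,\Lambda^2T^*S\otimes\mf{g}_P^{\sigma,\rho})$, and $W\subset X\times Y$ the set of pairs $(\Phi,\eta)$ with $e^{-\eta}\Phi e^{\eta}\in\mathcal{P}$, with $f(\Phi,\eta):=F(e^{-\eta}\Phi e^{\eta})$ for $F$ as in \eqref{Eq:curv-operator}. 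That $X$ is a smooth Banach manifold, with tangent space $H_k(\mathrm{Var}(\Phi))$ at $\Phi$, I would read off from the local normal form of Fock fields: by Lemma~\ref{lemma:sln-fock-field} and Proposition~\ref{Prop:Phi-variation}, a Fock field is locally a pointwise principal nilpotent $(1,0)$-part --- a section of the fibre bundle with fibre the principal nilpotent orbit, a smooth $G$-orbit by Theorem~\ref{Thm:one-reg-orbit} --- together with a $(0,1)$-part valued in its centralizer and subject to an open non-degeneracy condition. Since positivity is an open condition and, in dimension two, $H_k$ is a multiplication algebra embedding in $C^{0}$, the conjugation map $(\Phi,\eta)\mapsto e^{-\eta}\Phi e^{\eta}$ is continuous, so $W$ is open.

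The next step is to check that $f\colon W\to Z$ is continuously differentiable, which splits in two. First, $(\Phi,\eta)\mapsto e^{-\eta}\Phi e^{\eta}$ is smooth $W\to\mathcal{P}$, being built from $\exp$ and the continuous bilinear multiplication $H_k\times H_k\to H_k$. Second, $F\colon\mathcal{P}\to Z$ is continuously differentiable: $F(\Phi)$ is the curvature of the canonical connection produced by the filling-in procedure of Theorem~\ref{Thm-filling-in}, hence a second-order differential expression in $\Phi$ whose coefficients depend continuously on $\Phi$, as is visible from its derivative, the explicit operator
\[
dF_{\Phi}(\dot\Phi)=-d_AY(d_A\dot\Phi)+d_AY(d_A\dot\Phi)^*+[\dot\Phi\wedge\Phi^*]+[\Phi\wedge\dot\Phi^*]
\]
of Section~\ref{Sec:impl-fct-thm}, whose zeroth-order coefficient maps vary continuously with $\Phi$. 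In dimension two, this together with Sobolev embedding makes $f$ of class $C^{1}$ with the stated domains.

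Now let $\Phi_*\in X$ lie in the set in question, with witness $\eta_*\in Y$; then $(\Phi_*,\eta_*)\in W$, because flatness of the canonical connection presupposes $\Phi_*':=e^{-\eta_*}\Phi_*e^{\eta_*}\in\mathcal{P}$ --- and such points exist, for instance on the Fuchsian locus with $\eta_*=0$ by Proposition~\ref{Prop:chern-like-connection}. I would then compute $\partial_\eta f$ at $(\Phi_*,\eta_*)$. Differentiating $e^{-\eta}\Phi_*e^{\eta}$ in $\eta$ gives the direction $[\Phi_*',\xi]$ with $\xi=\tfrac{1-e^{-\ad_{\eta_*}}}{\ad_{\eta_*}}(\dot\eta)\in\Omega^0(S,\mf{g}_P^\sigma)$; write $\xi=\xi_-+\xi_+$ for its decomposition into $(-\rho)$- and $(+\rho)$-parts. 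Since $\ad_{\eta_*}$ anticommutes with $\rho$ it interchanges these, so extracting even powers gives $\xi_-=\tfrac{\sinh(\ad_{\eta_*})}{\ad_{\eta_*}}(\dot\eta)$, while $\xi_+$ is an infinitesimal unitary $\sigma$-gauge transformation. As the canonical connection, hence $F$, is equivariant under unitary $\sigma$-gauge transformations and $F(\Phi_*')=0$, one has $dF_{\Phi_*'}([\Phi_*',\xi_+])=[F(\Phi_*'),\xi_+]=0$; meanwhile $dF_{\Phi_*'}([\Phi_*',\xi_-])=L\xi_-$ is exactly the operator $L$ of Theorem~\ref{Thm:ellipticity} for the Fock bundle $(P,\Phi_*',\sigma)$ with positive hermitian structure $\rho$. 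Since $\eta_*$ is valued in the $(-\rho)$-eigenspace, $\ad_{\eta_*}$ is self-adjoint for the positive-definite hermitian form attached to $\rho$ and hence has real spectrum, so $\tfrac{\sinh(\ad_{\eta_*})}{\ad_{\eta_*}}$ is a pointwise invertible, smoothly varying endomorphism of $\mf{g}_P^{\sigma,-\rho}$, i.e.\ a topological automorphism of $Y$. Therefore $\partial_\eta f|_{(\Phi_*,\eta_*)}=L\circ\tfrac{\sinh(\ad_{\eta_*})}{\ad_{\eta_*}}$ is a composition of isomorphisms $Y\to Y\to Z$, using that for $k\geq2$ the map $L\colon H_k\to H_{k-2}$ is among the isomorphisms of Theorem~\ref{Thm:ellipticity}.

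With all hypotheses of the implicit function theorem in place, one gets an open neighbourhood $U\ni\Phi_*$ in $X$ and a continuously differentiable $g\colon U\to Y$ with $f(\Phi,g(\Phi))=0$ for all $\Phi\in U$; after shrinking $U$ we may also arrange $(\Phi,g(\Phi))\in W$, that is $e^{-g(\Phi)}\Phi e^{g(\Phi)}\in\mathcal{P}$. This shows the set of such $\Phi$ is open and $\Phi\mapsto g(\Phi)$ is differentiable, which is the assertion. The hard part will not be this formal deduction but the analytic bookkeeping of the middle two steps --- verifying that the nonlinear space of Fock fields is genuinely a Banach manifold and that $f$, assembled from the filling-in procedure, is $C^{1}$ between the Sobolev completions --- which is exactly where the two-dimensionality of $S$ enters, through Sobolev multiplication and embedding; the decisive structural ingredient, ellipticity and invertibility of $L$, is already secured by Theorem~\ref{Thm:ellipticity}.
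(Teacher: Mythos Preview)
Your proof is correct and follows essentially the same route as the paper: set up the implicit function theorem with $X$ the Fock fields, $Y=H_k(S,\mf{g}_P^{\sigma,-\rho})$, $Z=H_{k-2}$ two-forms, use Sobolev multiplication and embedding in dimension two for $C^1$-regularity of $f$, and invoke Theorem~\ref{Thm:ellipticity} for invertibility of $\partial_\eta f$. You are in fact more careful than the paper on one point: the paper simply cites Theorem~\ref{Thm:ellipticity} for ``$\partial_\eta f$ is an isomorphism'', tacitly identifying this partial derivative with $L$, whereas you correctly observe that at a general basepoint $(\Phi_*,\eta_*)$ with $\eta_*\neq 0$ the $\eta$-derivative is $L\circ\tfrac{\sinh(\ad_{\eta_*})}{\ad_{\eta_*}}$ (after discarding the $\mf{g}^{\sigma,\rho}$-component, which acts by unitary gauge and hence annihilates curvature at a flat point) --- your self-adjointness argument for the invertibility of $\tfrac{\sinh(\ad_{\eta_*})}{\ad_{\eta_*}}$ cleanly closes this small gap.
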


Note that solutions $\eta$ for a given $\Phi$ are isolated, but at this point we have not proven them to be unique.

Since the operator $L$ is elliptic in the variable $\eta$, and continuously differentiable with respect to the $C^0$-topology (thus also the $H_k$-topology,) it can be approximated by its linearization for small continuous fluctuations of $\eta$. Standard elliptic regularity arguments then imply that if $\Phi$ is smooth, then $\eta$ must be as well.

\subsection{Solution in a neighborhood}

Now we restrict to $\mathrm{SL}_n(\C)$ to get a link from a neighborhood of the Fuchsian locus in the moduli space of higher complex structures $\mathcal{T}^n(S)$ to the Hitchin component.

Proposition \ref{prop:openness} gives us a map from a neighborhood $U\subset \mathbb{M}^n(S)$ of the Fuchsian locus in the space of higher complex structures to the Hitchin component. In Section \ref{Sec:higher-diffeos} we will see that this map is locally constant along higher diffeomorphism orbits. Unfortunately, this is not quite enough to conclude that we have a canonical map from a neighborhood of the Fuchsian locus in $\mathcal{T}^n(S)$ to the Hitchin component, because we can not guarantee that the intersection of $U$ with higher diffeomorphism orbits is connected.

Luckily there is a convenient slice of the higher diffeomorphism action, namely the harmonic higher complex structures defined in \cite{Nolte}. Let $\mathcal{H}\mathbb{M}^n(S)$ denote the space of harmonic  higher complex structures of order $n$ on $S$. A harmonic higher complex structure amounts to a complex structure, and list of tensors $(\mu_{3},...,\mu_{n})$ with $\mu_k$ a section of $K^{1-n}\otimes \bar{K}$ satisfying $\bar\partial(\bar{\mu}_k g_S^{k-1})=0$, where $g_S$ denotes the hyperbolic metric on $S$ associated to the complex structure. 
Theorem 8.2 in \cite{Nolte} states that any higher complex structure modulo higher diffeomorphism allows a harmonic representative, unique up to isotopy: $$\mathbb{M}^n(S)/\mathrm{Ham}_0(T^*S) \cong \mathcal{H}\mathbb{M}^n(S)/\mathrm{Diff}_0(S).$$
There is a natural action of $\R^*$ on higher complex structures by
\[t\cdot(\mu_{3},...,\mu_{n}) = (t\mu_3, ..., t^{n-2}\mu_n)\]
coming from scalar multiplication by $t$ in $T^*S$. This action clearly preserves $\mathcal{H}\mathbb{M}^n(S)$. Let $\mathcal{H}\mathbb{M}^{n}_{*}(S) \subset \mathcal{H}\mathbb{M}^n(S)$ denote the subset for which there is a path of solutions to (\ref{Eq:Hitchin-like-3}), for $\{(t\mu_{3},...,t^{n-2}\mu_{n}) \vert \,\,   t\in [0,1]\}$ starting at the Fuchsian solution. Proposition \ref{prop:openness} gives that this path of solutions is unique if it exists, so we have a natural map from $\mathcal{H}\mathbb{M}^{n}_{*}(S)$ to the Hitchin component via evaluating at the end of the path. 

Proposition \ref{prop:openness} also implies that $\mathcal{H}\mathbb{M}^{n}_{*}(S)$ is open. Also, note that $\mathcal{H}\mathbb{M}^{n}_{*}(S)$ is necessarily $\mathrm{Diff}_0(S)$-invariant because we can pull-back solutions by diffeomorphisms. We can define $\mathcal{T}^n_{*}(S)\subset \mathcal{T}^n(S)$ to be the quotient of $\mathcal{H}\mathbb{M}^{n}_{*}(S)$ by $\mathrm{Diff}_0(S)$. The map from $\mathcal{H}\mathbb{M}^{n}_{*}(S)$ to the Hitchin component factors through $\mathcal{T}^n_{*}(S)$ because diffeomorphisms isotopic to the identity cannot change monodromy. Therefore, we have the following result:
\begin{theorem} \label{Thm:neighborhood}
    There is an open neighborhood of the Fuchsian locus, namely $\mathcal{T}^n_{*}(S)$, which has a canonical map to the Hitchin component.
\end{theorem}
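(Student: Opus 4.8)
The plan is to assemble Theorem \ref{Thm:neighborhood} from the pieces already set up in Sections \ref{Sec:impl-fct-thm} and from Nolte's harmonic slice theorem, with the only real work being the bookkeeping needed to descend the map to the quotient. First I would recall the setup: for a fixed smooth vector bundle $E$ of rank $n$ with a fixed volume form, a $\sigma_0$-structure $\sigma$ and a compatible positive hermitian structure $\rho$, Proposition \ref{prop:openness} gives an open set of Fock fields $\Phi$ (of Sobolev class $H_k$) and a differentiable assignment $\Phi \mapsto \eta(\Phi)$ such that the canonical $3$-term connection associated to $e^{-\eta}\Phi e^\eta$ is flat. Via Proposition \ref{link_hcs}, Fock fields correspond to higher complex structures of order $n$, so this is an open neighborhood $U\subset\mathbb{M}^n(S)$ of the Fuchsian locus together with a map to the $\PSL_n(\R)$-character variety, whose image lies in the Hitchin component by Proposition \ref{Prop:real-monodr-2} (or rather its proof: one deforms continuously to the Fuchsian locus, staying in the split real character variety, and the Fuchsian locus sits in the Hitchin component by Proposition \ref{Prop:chern-like-connection}).

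Next I would invoke Nolte's harmonic slice: Theorem 8.2 of \cite{Nolte} gives $\mathbb{M}^n(S)/\mathrm{Ham}_0(T^*S) \cong \mathcal{H}\mathbb{M}^n(S)/\mathrm{Diff}_0(S)$, so it suffices to work with harmonic higher complex structures. Here I would introduce the $\R^*$-action $t\cdot(\mu_3,\dots,\mu_n)=(t\mu_3,\dots,t^{n-2}\mu_n)$ (scalar multiplication in $T^*S$), which preserves $\mathcal{H}\mathbb{M}^n(S)$ and fixes the Fuchsian locus, and define $\mathcal{H}\mathbb{M}^n_*(S)$ to be the set of harmonic higher complex structures for which the path $t\mapsto (t\mu_3,\dots,t^{n-2}\mu_n)$, $t\in[0,1]$, admits a (necessarily unique, by the isolation statement in Proposition \ref{prop:openness}) path of solutions to \eqref{Eq:Hitchin-like-3} starting at the Fuchsian solution. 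I would then check three properties: (i) $\mathcal{H}\mathbb{M}^n_*(S)$ is open — this follows from the openness in Proposition \ref{prop:openness} applied along the compact interval $[0,1]$, together with a continuity/compactness argument ensuring the path survives a small perturbation; (ii) $\mathcal{H}\mathbb{M}^n_*(S)$ is $\mathrm{Diff}_0(S)$-invariant, since pulling back a Fock field by a diffeomorphism pulls back its solution $\eta$, and harmonicity and the $\R^*$-path are diffeomorphism-equivariant; (iii) $\mathcal{H}\mathbb{M}^n_*(S)$ contains the Fuchsian locus, because at a Fuchsian point the entire path is Fuchsian and the solution exists by Proposition \ref{Prop:chern-like-connection}.

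Then I would define the map $\mathcal{H}\mathbb{M}^n_*(S)\to$ Hitchin component by evaluating the unique solution path at $t=1$ and taking the monodromy of the resulting flat connection, which lies in the Hitchin component by the same deformation argument as above (the path $t\in[0,1]$ is precisely such a continuous deformation to the Fuchsian locus). Finally, since diffeomorphisms isotopic to the identity act trivially on the character variety, this map factors through the quotient $\mathcal{T}^n_*(S) := \mathcal{H}\mathbb{M}^n_*(S)/\mathrm{Diff}_0(S)$, which by Nolte's slice theorem is identified with an open subset of $\mathcal{T}^n(S)$ containing the Fuchsian locus. This yields the canonical map claimed in the theorem.

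The main obstacle I anticipate is property (i), the openness of $\mathcal{H}\mathbb{M}^n_*(S)$: Proposition \ref{prop:openness} is a local statement around a single Fock field, so to conclude that the existence of the \emph{entire path} $t\in[0,1]$ is an open condition I would cover $[0,1]$ by finitely many intervals on which the implicit-function-theorem neighborhoods apply, and then patch using uniqueness of the solution path on overlaps — this is where one must be careful that the a priori only-isolated (not globally unique) solutions glue consistently, and that the $H_k$-to-$C^0$ Sobolev embedding and the continuous dependence of the operators $Y$ on $\Phi$ are uniform along the compact path. Everything else is essentially formal once this compactness argument is in place.
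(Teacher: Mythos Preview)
Your proposal is correct and follows essentially the same route as the paper: restrict to Nolte's harmonic slice, introduce the $\R^*$-scaling, define $\mathcal{H}\mathbb{M}^n_*(S)$ via existence of a solution path along $t\in[0,1]$ starting at the Fuchsian solution, and then quotient by $\mathrm{Diff}_0(S)$. Your extra care about the openness of $\mathcal{H}\mathbb{M}^n_*(S)$ along the whole compact interval (covering $[0,1]$ by finitely many implicit-function-theorem patches and gluing via uniqueness) is a point the paper states without elaboration, so your concern there is well-placed but not a gap in your argument.
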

Of course it is the nature of the map that is interesting, not its mere existence.

\section{Higher diffeomorphisms}\label{Sec:higher-diffeos}

In this section, we exhibit a gauge-theoretic interpretation for higher diffeomorphisms. These can be described by special $\lambda$-dependent gauge transformations acting on a family of flat 3-term connections $\lambda^{-1}\Phi+d_A+\lambda\Psi$. For $\Psi=\Phi^{*}$, this action does not change the associated point in the Hitchin component since a gauge transformation does not change the monodromy.

\subsection{Special $\lambda$-dependent gauge transformations}

Consider a $G$-Fock bundle $(P,\Phi,\sigma)$ equipped with a second Fock field $\Psi$.
Assume the existence of a connection $d_A$ on $P$ such that 
$$\mathcal{A}(\lambda)=\lambda^{-1}\Phi+d_A+\lambda\Psi$$
is flat for all $\lambda \in \mathbb{C}^*$.
The space of all compatible triples $(\Phi,\Psi,d_A)$ on $(P,\sigma)$ such that the associated connection is flat is denoted by $\mathcal{C}^{fl}(P,\sigma)$.

Consider a 3-term infinitesimal gauge transformation
$$\eta(\lambda)=\l^{-1}\eta_{-1}+\eta_0+\l\eta_1 \;\text{ where }\; \eta_{-1},\eta_0,\eta_1 \in \Omega^0(S,\g_P).$$
The action on $\mathcal{A}(\l)$ is given by
\begin{align}\label{Eq:3-term-gauge-action}
    \eta(\l).\mathcal{A}(\l) &= d\eta(\l)+[\mathcal{A}(\l),\eta(\l)] \nonumber\\
    &= \l^{-2}[\Phi,\eta_{-1}]+\l^{-1}(d_A(\eta_{-1})+[\Phi,\eta_0])+d_A(\eta_0)+[\Phi,\eta_1]+[\Psi,\eta_{-1}] \nonumber\\
    & \;\;\;+ \l(d_A(\eta_1)+[\Psi,\eta_0])+\l^2[\Psi,\eta_1]. 
\end{align}

The action of $\eta_0$ being the usual complex gauge action, we consider $\eta_0=0$ here to concentrate on the new $\l$-dependent gauge action. In order to preserve the space of 3-term connections, the lowest and highest term in $\l$ have to vanish. This gives that $[\Phi,\eta_{-1}]=0$ and $[\Psi,\eta_{1}]=0$. Hence $\eta_{-1}\in Z(\Phi)$ and $\eta_1\in Z(\Psi)$.
The variation of $\Phi$ is then given by
\begin{equation}\label{variation-phi}
   \delta \Phi = d_A(\eta_{-1}). 
\end{equation}
Similarly we get $\delta\Psi =d_A(\eta_1)$ and $\delta A = [\eta_{-1},\Psi]+[\eta_1,\Phi]$.
Note that the variation $\delta A$, coming from the gauge transformation, is the same as the one induced from the filling-in procedure \ref{Thm-filling-in}, since gauge transformations preserve curvature.

\begin{definition}
    A \emph{special $\l$-dependent gauge} of type $(P,\Phi,\Psi)\in\mathcal{C}^{fl}(P,\sigma)$ is an element of $\Omega^0(S,\g_P\otimes \C[\lambda^{\pm 1}])$ of the form $\l^{-1}\eta_{-1}+\l\eta_1$ such that $\eta_{-1}\in Z(\Phi)$ and $\eta_1\in Z(\Psi)$.
\end{definition}
Note that the notion of special $\l$-dependent gauge depends explicitly on $\Phi$ and $\Psi$. Considering Fock fields up to these transformations gives equivalence classes, but this equivalence relation does not come from a group action.

In the unitary setting, where $(P,\Phi)$ is equipped with a compatible hermitian structure $\rho$ and $\Psi=\Phi^{*}=-\rho(\Phi)$, we ask for $\eta_1=\eta_{-1}^*$ in order to preserve $\rho$.

\begin{proposition}
Variations coming from special $\lambda$-dependant gauge transformations are tangent to $\mathcal{C}^{fl}(P,\sigma)$.
\end{proposition}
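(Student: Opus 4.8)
The plan is to verify that the variation $(\dot\Phi,\dot\Psi,\dot A)$ attached by \eqref{variation-phi} and the surrounding formulas to a special $\lambda$-dependent gauge $\eta(\lambda)=\lambda^{-1}\eta_{-1}+\lambda\eta_1$ lies in the kernel of the linearization of every equation cutting out $\mathcal{C}^{fl}(P,\sigma)$, hence is tangent to it. The organizing principle is the elementary fact that a gauge transformation --- even a $\lambda$-dependent one --- maps flat connections to flat connections; the point of the word ``special'' is precisely that it keeps the transformed connection inside the three-term ansatz.

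Here is how I would run the argument. By \eqref{Eq:3-term-gauge-action} the two extreme terms of the Laurent expansion of the infinitesimal gauge action $\eta(\lambda)\cdot\mathcal{A}(\lambda)$ are $\lambda^{-2}[\Phi,\eta_{-1}]$ and $\lambda^{2}[\Psi,\eta_1]$, which vanish because $\eta_{-1}\in Z(\Phi)$ and $\eta_1\in Z(\Psi)$; what remains is a three-term expression, namely $\lambda^{-1}\dot\Phi+\dot A+\lambda\dot\Psi$ with the data of \eqref{variation-phi}. Since $F(\mathcal{A}(\lambda))=0$ for all $\lambda$, one has $d_{\mathcal{A}(\lambda)}\bigl(\eta(\lambda)\cdot\mathcal{A}(\lambda)\bigr)=d_{\mathcal{A}(\lambda)}d_{\mathcal{A}(\lambda)}\eta(\lambda)=[F(\mathcal{A}(\lambda)),\eta(\lambda)]=0$; thus the perturbed three-term connection $\lambda^{-1}(\Phi+\epsilon\dot\Phi)+(d_A+\epsilon\dot A)+\lambda(\Psi+\epsilon\dot\Psi)$ is flat to first order in $\epsilon$. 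Equating to zero the five coefficients of its curvature Laurent polynomial \eqref{curvature_3term} --- that is, $[\Phi'\wedge\Phi']$, $d_{A'}\Phi'$, $F(A')+[\Phi'\wedge\Psi']$, $d_{A'}\Psi'$ and $[\Psi'\wedge\Psi']$ for the perturbed data $(\Phi',A',\Psi')$ --- yields exactly the linearizations of the flatness equations defining $\mathcal{C}^{fl}(P,\sigma)$.

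It then remains to check the pointwise constraints: $\sigma$-anti-invariance of $\dot\Phi$ and $\dot\Psi$, $\sigma$-invariance of $\dot A$, and the nilpotency condition. For the symmetry statements I would use that $d_A$ is $\sigma$-invariant (which holds for every triple in $\mathcal{C}^{fl}(P,\sigma)$ by the uniqueness in Theorem \ref{Thm-filling-in}) and that $Z(\Phi)$ and $Z(\Psi)$ are negated by $\sigma$ (Corollary \ref{cor:sigma_negate_centralizer}): then $\dot\Phi=d_A\eta_{-1}$ and $\dot\Psi=d_A\eta_1$ are $\sigma$-anti-invariant, while $\dot A$, being a sum of brackets of two $\sigma$-anti-invariant fields, is $\sigma$-invariant. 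For nilpotency, from $d_A[\Phi,\eta_{-1}]=[\Phi\wedge d_A\eta_{-1}]$ together with $[\Phi,\eta_{-1}]=0$ and $d_A\Phi=0$ one obtains $[\Phi\wedge\dot\Phi]=0$, so $\dot\Phi(v)\in Z(\Phi(v))$ for every non-zero $v\in T_zS$; by Proposition \ref{Prop:center-in-image} this is contained in $\mathrm{Im}(\ad_{\Phi(v)})$, which is precisely the tangent space at the principal nilpotent $\Phi(v)$ to the principal nilpotent orbit, so $\dot\Phi$ preserves nilpotency to first order, and likewise $\dot\Psi$.

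Combining the two parts, $(\dot\Phi,\dot\Psi,\dot A)$ satisfies the linearization of every defining condition of $\mathcal{C}^{fl}(P,\sigma)$ and is therefore tangent to it. I expect the only mildly delicate step is the nilpotency check, where one must recognize that the admissible first-order motions of a principal-nilpotent-valued $1$-form are exactly those with values in $\mathrm{Im}(\ad)$ --- the content of Proposition \ref{Prop:center-in-image}; everything else is the invariance of flatness under gauge transformations together with the bookkeeping needed to stay within the three-term ansatz.
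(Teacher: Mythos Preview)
Your overall strategy---flatness is preserved by gauge transformations, the three-term shape is preserved by the ``special'' condition, and the $\sigma$-symmetries follow from Corollary~\ref{cor:sigma_negate_centralizer} together with the $\sigma$-invariance of $d_A$---is correct and matches the paper. The gap is in the nilpotency check.

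You argue that $[\Phi\wedge\dot\Phi]=0$ implies $\dot\Phi(v)\in Z(\Phi(v))$ for every nonzero $v$. This implication is false. Writing $\Phi=\Phi_1\,dz+\Phi_2\,d\bar z$ and $\dot\Phi=\dot\Phi_1\,dz+\dot\Phi_2\,d\bar z$, the condition $[\Phi\wedge\dot\Phi]=0$ says only that $[\Phi_1,\dot\Phi_2]=[\Phi_2,\dot\Phi_1]$; it places no constraint on $[\Phi(v),\dot\Phi(v)]$ for a single vector $v$. Concretely, any coboundary $\dot\Phi=[\Phi,X]$ satisfies $[\Phi\wedge\dot\Phi]=0$ while $\dot\Phi(v)=[\Phi(v),X]$ is typically not in $Z(\Phi(v))$. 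So Proposition~\ref{Prop:center-in-image} cannot be invoked in the way you propose, and the nilpotency argument collapses.

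What must actually be shown is $\dot\Phi\in\mathrm{Var}(\Phi)=\mathrm{Im}(\ad_\Phi)\oplus Z(\Phi)\bar K$ (Proposition~\ref{Prop:Phi-variation}), and this requires more than $[\Phi\wedge\dot\Phi]=0$. The paper handles it via Lemma~\ref{lemma:im-incl}: in a local frame with $\Phi_1=F$ constant, the $(1,0)$-part of $d_A\eta_{-1}$ is $\partial\eta_{-1}+[A_1,\eta_{-1}]$; the first summand lies in $Z(F)\subset\mathrm{Im}(\ad_F)$, and the second lies in $\mathrm{Im}(\ad_{\eta_{-1}})\subset\mathrm{Im}(\ad_F)$ precisely by that lemma (since $\eta_{-1}\in Z(F)$). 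A further computation using $d_A\Phi=0$ then shows that after subtracting the resulting coboundary $[R,\Phi]$, the $(0,1)$-part lands in $Z(F)\bar K$. Without Lemma~\ref{lemma:im-incl} or an equivalent statement, this step does not go through.
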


\begin{proof}
    Let $\mathcal{A}(\l)\in\mathcal{C}^{fl}(P,\sigma)$. Since gauge transformations preserve curvature, the flatness of $\mathcal{A}(\l)$ remains. We have also already seen that the space of 3-term connections is preserved. It remains to show that $\Phi$ and $\Psi$ stay Fock fields. It is enough to prove it for $\Phi$. Let us show that $\sigma$ negates the variation $\delta \Phi=d_A(\eta_{-1})$. Since $\eta_{-1}\in Z(\Phi)$, it is negated by $\sigma$. The canonical connection $d_A$ being $\sigma$-invariant, we get that $\delta\Phi$ is negated by $\sigma$.

    The only remaining point is that the nilpotency condition is preserved. This is a bit more delicate and needs Lemma \ref{lemma:im-incl}. Consider a local chart in which $\Phi=Fdz+\Phi_2d\bar{z}$, where $F$ is a fixed principal nilpotent element. We have to show that $\delta\Phi=d_A(\eta_{-1})\in \mathrm{Var}(\Phi)$. By Proposition \eqref{Prop:Phi-variation} we know that $\mathrm{Var}(\Phi)\cong \mathrm{Im}(\ad_\Phi)\oplus Z(\Phi)\bar{K}$. The (1,0)-part of $d_A(\eta_{-1})$ is $\partial \eta_{-1}+[A_1,\eta_{-1}]$. We have $\partial \eta_{-1}\in Z(F)$ and $Z(F)\subset \mathrm{Im}(\ad_F)$ since $Z(F)$ describes the space generated by lowest weight vectors. From $[\eta_{-1},\Phi]=0$, we get $\eta_{-1}\in Z(F)$, hence $[A_1,\eta_{-1}]\in \mathrm{Im}(\ad_{\eta_{-1}})\subset \mathrm{Im}(\ad_F)$ by Lemma \ref{lemma:im-incl}. Therefore, locally there is $R\in\Omega^0(S,\g_P)$ such that $[R,F]=\partial \eta_{-1}+[A_1,\eta_{-1}]$.

    We show that $d_A(\eta_{-1})-[R,\Phi]\in Z(\Phi)\bar{K}$. By definition of $R$ this difference has no (1,0)-part. It is sufficient to prove $\delbar \eta_{-1}+[A_2,\eta_{-1}]-[R,\Phi_2]\in Z(F)$. We compute
\begin{equation}\label{Eq:nilp-aux}
[F,\delbar \eta_{-1}+[A_2,\eta_{-1}]-[R,\Phi_2]] = [\eta_{-1},[A_2,F]]-[\Phi_2,[R,F]],
\end{equation}
where we used $[F,\eta_{-1}]=0$ (hence also $[F,\delbar \eta_{-1}]=0$), the Jacobi identity and $[F,\Phi_2]=0$. Now the identity $d_A\Phi=0$ gives
$$[A_2,F]=\delbar \Phi_2+[A_1,\Phi_2].$$
Together with $[R,F]=\partial \eta_{-1}+[A_1,\eta_{-1}]$ (from definition of $R$), we continue Equation \eqref{Eq:nilp-aux}:
$$[\eta_{-1},[A_2,F]]-[\Phi_2,[R,F]]=[\eta_{-1},\delbar \Phi_2]-[\Phi_2,\partial \eta_{-1}]=0,$$
where some cancellation happened and we use that $[\eta_{-1},\Phi_2]=0$.
\end{proof}
 
\begin{remark}
    At first glance it might be surprising that the action is only defined on the space of \emph{flat} 3-term connections. In fact, the condition $d_A\Phi=0$ is needed in order to get a variation $\delta\Phi$ which preserves $[\Phi\wedge\Phi]=0$, i.e. which satisfies $[\Phi\wedge\delta\Phi]=0$. Indeed,  since $\delta\Phi=d_A(\eta_{-1})$ and $[\Phi,\eta_{-1}]=0$, we get $$[\Phi\wedge\delta\Phi]=[\Phi\wedge d_A(\eta_{-1})]=[d_A(\Phi),\eta_{-1}]=0.$$
The condition $F(A)+[\Phi\wedge\Psi]=0$ assures the preservation of $d_A(\Phi)=0$:
$$\delta(d_A(\Phi))= d_A(d_A(\eta_{-1}))+[([\eta_{-1},\Psi]+[\eta_1,\Phi]) \wedge \Phi]=[F(A)+[\Phi\wedge\Psi], \eta_{-1}]=0.$$
\end{remark}

The variation $\delta \Phi=d_A(\eta_{-1})$ seems to depend on the connection $d_A$. We show that this is not really the case:
\begin{proposition}
    Up to usual gauge transformations, the variation $\delta\Phi=d_A(\eta_{-1})$ does not depend on the $\Phi$-compatible connection $d_A$.
\end{proposition}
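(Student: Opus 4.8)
The plan is to reduce the statement to the vanishing of a bracket in $\Phi$-cohomology, already established in Corollary \ref{coboundaries}. First I would compare two $\Phi$-compatible connections $d_A$ and $d_{A'}$: writing $C := A'-A\in\Omega^1(S,\g_P)$, the identity $d_{A'}\Phi - d_A\Phi = [C\wedge\Phi]$ together with $d_A\Phi = d_{A'}\Phi = 0$ shows $[C\wedge\Phi] = 0$, i.e.\ $\ad_\Phi C = 0$, so $C$ is a $1$-cocycle of the complex \eqref{phi-cohom}. I would also note that the choice of $\eta_{-1}\in Z(\Phi)$ does not depend on the connection at all, since $Z(\Phi) = \ker\ad_\Phi$ is determined by $\Phi$ alone, and that it is by definition a $0$-cocycle of \eqref{phi-cohom}.

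Next I would compute the difference of the two candidate variations: $d_{A'}(\eta_{-1}) - d_A(\eta_{-1}) = [C,\eta_{-1}]$. Since $C$ is a $1$-cocycle and $\eta_{-1}$ a $0$-cocycle of \eqref{phi-cohom}, Corollary \ref{coboundaries} (which in turn rests on the vanishing $\mathrm{H}^{\bullet,\sigma}(\Phi)=0$ from Proposition \ref{Prop-no-sigma-cohom}) applies and gives that $[C,\eta_{-1}]$ is a $\Phi$-coboundary, say $[C,\eta_{-1}] = [\Phi,\xi]$ for some $\xi\in\Omega^0(S,\g_P)$. Finally I would invoke the fact recorded in Subsection \ref{Sec:var-Fock-fields} that $[\Phi,\xi]$ is exactly the first-order change of $\Phi$ induced by the infinitesimal gauge transformation $\xi$; hence $d_{A'}(\eta_{-1})$ differs from $\delta\Phi = d_A(\eta_{-1})$ of \eqref{variation-phi} only by a gauge variation, which is the assertion.

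I do not expect a genuine obstacle here: the whole content is packaged in Corollary \ref{coboundaries}, and the only things to verify by hand are the elementary identity $[C\wedge\Phi] = \ad_\Phi C$ for $\g_P$-valued $1$-forms, the fact just mentioned that $Z(\Phi)$ (hence the admissible $\eta_{-1}$) is independent of $d_A$, and the standard identification of the degree-one $\Phi$-coboundaries $\mathrm{Im}(\ad_\Phi)\subset\Omega^1(S,\g_P)$ with the tangent directions to the gauge orbit through $\Phi$. If one wishes, one can additionally track the $\sigma$-invariance of $C$ (when both connections are chosen $\sigma$-invariant) and of $\xi$, but this is not needed for the statement as phrased.
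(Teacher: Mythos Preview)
Your proof is correct and follows essentially the same approach as the paper: both arguments write the difference of the two variations as $[A'-A,\eta_{-1}]$, observe that $A'-A$ is a $1$-cocycle and $\eta_{-1}$ a $0$-cocycle for the $\Phi$-complex, and then invoke Corollary \ref{coboundaries} to conclude this bracket is a coboundary $[\Phi,\xi]$, i.e.\ an infinitesimal gauge variation.
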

The main ingredient of the proof is Corollary \ref{coboundaries} about the vanishing of brackets in $\Phi$-cohomology.
\begin{proof}
   If $\lambda^{-1}\Phi+ d_B+\l\Psi$ is another flat 3-term connection, then we prove that there exists a usual gauge transformation $R$ such that
$d_A(\eta_{-1})-d_B(\eta_{-1}) = [R,\Phi].$
This means that the variation of $\Phi$ in \eqref{variation-phi} does not depend on $d_A$ modulo gauge transformations.

Note that $d_A\Phi=0$ and $d_B\Phi=0$ implies $[A-B,\Phi]=0$, and also $d_A(\eta_{-1})-d_B(\eta_{-1}) = [A-B,\eta_{-1}]$.
Hence in the language of $\Phi$-cohomology, $A-B$ is a 1-cocycle and $\eta_{-1}$ is a 0-cocycle. 
We have seen in Corollary \ref{coboundaries} that the bracket of cohomology classes of Fock fields is always a coboundary. This gives the existence of $R$. 
\end{proof}

\subsection{First variation formula}\label{Sec:higher-diff-action-via-gauge}

Now we restrict to the case of $\mathrm{SL}_n(\C)$-Fock bundles $(E,\Phi,g)$. We prove that the induced infinitesimal action of special $\l$-gauge transformations on the higher complex structure is given by the infinitesimal action by higher diffeomorphisms. 

Here we concentrate on the action on $\Phi$ and consider a $\l$-dependent gauge transformation $\l^{-1}\xi$ (we slightly change notation to the previous subsection where $\xi=\eta_{-1}$), where $\xi\in\Omega^0(S,\mathfrak{sl}(E))$ with $[\Phi,\xi]=0$. This implies that $\xi$ is a polynomial in $\Phi$, evaluated in various vector fields. Recall the variation of $\Phi$ from Equation \eqref{variation-phi}:
$$\delta \Phi = d_A\xi.$$

\begin{theorem}\label{hamiltonian-first-variations-coincide}
The variation on a Fock field $\Phi$ induced by a gauge transformation $\lambda^{-1} \xi$ with $\xi=\Phi(v_1)\cdots \Phi(v_k)$ is equivalent to the action of the Hamiltonian $H=v_1\cdots v_k$ on the higher complex structure induced by $\Phi$.
\end{theorem}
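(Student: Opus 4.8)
The plan is to compute both sides explicitly in a local complex coordinate $z$ induced by the Fock bundle and match them. First I would set up the local picture: trivialize $E$ so that $\Phi_1 = F = \sum_{i}E_{i+1,i}$ is the standard principal nilpotent and $\Phi_2 = \mu_2\Phi_1 + \mu_3\Phi_1^2 + \dots + \mu_n\Phi_1^{n-1}$ as in Lemma \ref{lemma:sln-fock-field}, with $\mu_2 = 0$ since we work in the induced complex structure. The gauge parameter is $\xi = \Phi(v_1)\cdots\Phi(v_k)$; writing each $v_j = a_j\partial_z + b_j\partial_{\bar z}$ we have $\Phi(v_j) = a_j\Phi_1 + b_j\Phi_2$, so $\xi$ is a polynomial in $\Phi_1$ with function coefficients, i.e. $\xi = \sum_{m\ge 1}c_m(z,\bar z)\Phi_1^m$ with $c_m$ determined by the symmetric product $H = v_1\cdots v_k$ expressed in the monomial basis $\partial_z^{\,p}\partial_{\bar z}^{\,q}$ and the substitution rule $p:\partial_z\mapsto \Phi_1,\ \partial_{\bar z}\mapsto\Phi_2$ from Proposition \ref{link_hcs}. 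The key organizing fact is that under $p$, the Hamiltonian $H$ as an element of $\mathrm{Sym}(T^\C S)$ maps precisely to $\xi$, so the two infinitesimal actions live on the same data.

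Next I would compute $\delta\Phi = d_A\xi = d\xi + [A\wedge\xi]$ and extract the change in the higher Beltrami differentials. Using Proposition \ref{Prop:Phi-variation}, $\mathrm{Var}(\Phi)\cong \mathrm{Im}(\ad_\Phi)\oplus Z(\Phi)\bar K$, so $\delta\Phi$ is equivalent, modulo the gauge-trivial summand $\mathrm{Im}(\ad_\Phi)$, to a variation of the form $(\delta\Phi_2)d\bar z$ with $\delta\Phi_2\in Z(\Phi_1)$; this $\delta\Phi_2 = \sum_k (\delta\mu_k)\Phi_1^{k-1}$ records the variation of the higher complex structure. The $(1,0)$-part $\partial\xi + [A_1,\xi]$ must be absorbed into $\mathrm{Im}(\ad_\Phi)$ — indeed $\partial\xi\in Z(F)\subset\mathrm{Im}(\ad_F)$ and $[A_1,\xi]\in\mathrm{Im}(\ad_\xi)\subset\mathrm{Im}(\ad_F)$ by Lemma \ref{lemma:im-incl}, exactly as in the preceding proposition — so the only surviving contribution comes from the $(0,1)$-part $\bar\partial\xi + [A_2,\xi]$, and one further subtracts $[R,\Phi]$ to land in $Z(F)\bar K$. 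Tracking coefficients of $\Phi_1^{m}$ through this reduction, using $d_A\Phi = 0$ (equivalently $[A_2,F] = \bar\partial\Phi_2 + [A_1,\Phi_2]$ and $[A_1,F]=\partial\Phi_2$-type relations from the compatibility equations) to eliminate the connection terms, should yield an explicit formula for $\delta\mu_k$ in terms of $\bar\partial\xi$, $\partial\mu_\ell$ and products — purely in terms of the $\mu$'s and the coefficients $c_m$ of $\xi$.

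On the other side, I would recall from the theory of higher complex structures (as in \cite{FT}, and the Hamiltonian action of $\mathrm{Ham}_0(T^*S)$ on ideals in $\widehat{\mathrm{Sym}}(T^\C S)$) the explicit first-variation formula for the action of a Hamiltonian $H = v_1\cdots v_k$ on the ideal $I = \langle p^n,\ -\bar p + \sum_\ell \mu_\ell p^{\ell-1}\rangle$: infinitesimally, the Hamiltonian vector field $X_H = \partial_p H\,\partial_z - \partial_z H\,\partial_{\bar z} + (\text{fiber terms})$ moves the defining function $-\bar p + \sum\mu_\ell p^{\ell-1}$, and reducing the result modulo $I$ (i.e. back into the normal form, using $p^n\equiv 0$ and $\bar p\equiv\sum\mu_\ell p^{\ell-1}$) produces the variation $\delta\mu_k$. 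The claim is then that this $\delta\mu_k$ agrees with the one computed gauge-theoretically. The matching is forced by the commuting square: the substitution $p$ intertwines (i) the Poisson bracket $\{H,-\}$ reduced modulo $I$ with (ii) the operator $\xi\mapsto\bar\partial\xi + [A_2,\xi]$ reduced modulo $\mathrm{Im}(\ad_\Phi)$ and $Z(F)\bar K$, precisely because $[\Phi\wedge\Phi]=0$ makes $p$ an algebra map and $d_A\Phi = 0$ makes the covariant derivative act on $\xi$ the same way the Hamiltonian flow acts on $H$.

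I expect the main obstacle to be the bookkeeping of the reduction-to-normal-form step: showing that the $\mathrm{Im}(\ad_\Phi)$-part discarded on the gauge side corresponds exactly to the ideal-membership reductions ($p^n\equiv 0$, $\bar p\equiv\sum\mu_\ell p^{\ell-1}$) discarded on the Hamiltonian side, and that no cross-terms are lost. This requires care because $\xi$ genuinely mixes $\Phi_1$ and $\Phi_2$ (so $H$ has both $\partial_z$ and $\partial_{\bar z}$ components), and the higher Beltrami differentials appear nonlinearly once one substitutes $\Phi_2$ for $\partial_{\bar z}$; the cleanest route is probably to verify the intertwining of operators at the level of $\mathrm{Sym}(T^\C S)$ before specializing, rather than chasing individual coefficients $\delta\mu_k$.
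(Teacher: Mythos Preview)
Your proposal is correct and follows essentially the same strategy as the paper: fix a local gauge with $\Phi_1=F$ the standard principal nilpotent, compute $\delta\Phi=d_A\xi$ for $\xi$ a monomial $w_k F^k$, use $d_A\Phi=0$ to eliminate the connection matrix, and compare with the Poisson-bracket first-variation formula for higher complex structures. The one organizational difference worth noting is how the $(1,0)$-part of $\delta\Phi$ is handled. You propose to subtract a coboundary $[R,\Phi]$ so that the residual variation lies in $Z(F)\bar K$ and then read off $\delta\mu_k$ as coefficients of $F^{k-1}$. The paper instead lets $\Phi_1$ drift, so $\delta F=\delta\Phi_1\neq 0$, and extracts $\delta\mu_k$ from the chain-rule identity
\[
\delta\Phi_2=\sum_k\delta\mu_k\,F^{k-1}+\sum_m\mu_m\sum_{\ell}F^\ell(\delta F)F^{m-2-\ell},
\]
substituting the explicit $\delta\Phi_1,\delta\Phi_2$ and then using $d_A\Phi=0$ in the form $[A_2,F]=\partial Q(F)+[A_1,Q(F)]$ to cancel all connection terms. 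These two bookkeepings are equivalent (your $R$ is exactly the infinitesimal gauge that restores $\Phi_1$ to $F$), but the paper's version avoids computing $R$ and makes the cancellation mechanical rather than requiring the reduction-to-normal-form matching you flag as the obstacle. Two small slips in your write-up: the compatibility relation is $[A_2,F]=\partial\Phi_2+[A_1,\Phi_2]$ (not $\bar\partial\Phi_2$), and there is no separate ``$[A_1,F]=\partial\Phi_2$''-type relation --- $d_A\Phi=0$ is a single $2$-form equation. Also, the paper works with an \emph{arbitrary} complex coordinate (keeping $\mu_2$), not the induced one; your choice $\mu_2=0$ is harmless for the statement but drops terms that the paper carries through.
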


The proof is a direct computation using local coordinates and the condition $d_A\Phi=0$.
\begin{proof} The statement is local. Hence consider a coordinate system $(z,\bar{z})$ on $S$ and a gauge fixing in which the Fock field $\Phi$ is $\Phi=F \, dz+Q(F)d\bar{z}$, where $F$ is a fixed principal nilpotent element and $Q$ is a polynomial without constant term. We can decompose $\xi=\Phi(v_1)\cdots \Phi(v_k)$ into monomials in $F$. So let us suppose that $\xi=w_k(z,\bar{z})F^k$. We then get
$$\delta \Phi = d_A\xi = d\xi+[A,\xi] = \left(\partial w_k F^k+w_k[A_1,F^k]\right)dz+\left(\delbar w_k F^k+w_k[A_2,F^k]\right)d\bar{z},$$
where we used $A=A_1dz+A_2d\bar{z}$. With $\Phi=\Phi_1dz+\Phi_2d\bar{z}$, we get
\begin{equation}\label{var-phi-1-2}
\delta \Phi_1 = \partial w_k F^k+w_k[A_1,F^k] \;\;\text{ and }\;\; \delta \Phi_2 = \delbar w_k F^k+w_k[A_2,F^k].
\end{equation}
From $\Phi_2=Q(F)=\mu_2 F+\mu_3F^2+...+\mu_nF^{n-1}$, we get
$$\delta \Phi_2 = \sum_{k=2}^n \delta \mu_k \, F^{k-1} + \mu_2\delta F+\mu_3(\delta F \,F+F\delta F)+...+\mu_n\sum_{\ell=0}^{n-2} F^\ell\delta F\, F^{n-2-\ell}.$$
Hence using Equation \eqref{var-phi-1-2} one has
\begin{equation}\label{aux-1}
\sum_{k=2}^n \delta \mu_k F^{k-1} = \delbar w_k F^k+w_k[A_2,F^k]-\sum_{m=2}^n\mu_m \sum_{\ell=0}^{m-2}F^\ell\left(\partial w_k F^k+w_k[A_1,F^k]\right)F^{m-2-\ell}.
\end{equation}
The flatness condition now gives
\begin{align*}
0=d_A\Phi &= -\delbar \Phi_1+\partial\Phi_2 +[A_1,\Phi_2]+[\Phi_1,A_2] \\
&=\sum_{m=2}^n \partial\mu_m F^{m-1}+[A_1,Q(F)]-[A_2,F].
\end{align*}
We may thus deduce 
\begin{equation}\label{aux-2}
[A_2,F^k]=\sum_{\ell=0}^{k-1}F^\ell[A_2,F]F^{k-1-\ell}=kF^{k-1}\sum_{m=2}^n \partial\mu_m F^{m-1}+\sum_{m=2}^n\sum_{j=0}^{k-1}\mu_m F^j[A_1,F^{m-1}]F^{k-1-j}.
\end{equation}
The last part of the expression needs some more manipulation:
\begin{align*}
\sum_{j=0}^{k-1} F^j[A_1,F^{m-1}]F^{k-1-j} &= \sum_{j=0}^{k-1}\sum_{\ell=0}^{m-2}F^{j+\ell}[A_1,F]F^{m-2-\ell}F^{k-1-j}\\
&= \sum_{\ell=0}^{m-2}F^\ell[A_1,F^k]F^{m-2-\ell}.
\end{align*}
Combining this last equation with Equations \eqref{aux-2} and \eqref{aux-1}, we get
$$\sum_{k=2}^n \delta \mu_k F^{k-1} = \delbar w_k \, F^k+kw_kF^{k-1}\sum_{m=2}^n \partial\mu_m F^{m-1}-\partial w_k\, F^k\sum_{m=2}^{n}(m-1)\mu_mF^{m-2}.$$
This gives exactly the first variation formula for higher complex structures. In order to see this, compare to the Poisson bracket expression (see \cite[Section 3.3]{FT}):
\begin{align*}\sum_{k=2}^n \delta \mu_k \, p^{k-1} &= \{w_kp^k,-\bar{p}+\mu_2 p+\mu_3 p^2+...+\mu_np^{n-1}\} \\
&= \delbar w_k \,p^k+kw_kp^{k-1}\sum_{m=2}^n\partial\mu_m\, p^{m-1}-\partial w_k\, p^{k}\sum_{m=2}^n (m-1)\mu_m p^{m-2}.
\end{align*}
This completes the proof of the theorem.
\end{proof}

\begin{remark}
For a higher complex structure with Beltrami differentials $\mu_k=0$, for all $k\in\{2,...,n\}$, the argument above can be simplified to 
$$\delta \mu_2\, F+...+\delta\mu_n\, F^{n-1} = \delbar w_k\, F^k+w_k[A_2,F^k]$$
and the flatness condition provides that $[A_2,F^k]=\delbar \Phi_1=0$.
\end{remark}

\subsection{Constant monodromy along paths of higher diffeomorphisms}

In the previous subsection, we lifted infinitesimal higher diffeomorphisms to 3-term gauge transformations. Now, we show that under the right circumstances, a path of flat 3-term connections inducing higher diffeomorphic higher complex structures must be obtained from a path of 3-term gauge transformations. This shows that within the subset of higher complex structures where our main conjecture holds, monodromy is locally constant along higher diffeomorphism orbits. We prove this statement in the setting with hermitian structure, though it surely has an analogue for more general 3-term connections. 

\begin{proposition}\label{prop:constancy} 
Suppose $\lambda^{-1}\Phi_t + d_{A_t} + \lambda\Phi_t^{*}$ is a differentiable family of flat 3-term connections on a vector bundle $E$ with compatible symmetric pairing and hermitian metric, and $\Phi_t$ is a family of positive $\mathrm{SL}_n(\C)$-Fock fields inducing a path of higher diffeomorphic higher complex structures. Then the $3$-term connections $\lambda^{-1}\Phi_t + d_{A_t} + \lambda\Phi_t^{*}$ are all gauge equivalent. 
\end{proposition}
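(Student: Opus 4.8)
The plan is to differentiate the family and show that its derivative is, at each time $t$ and each fixed $\lambda\in\C^*$, an infinitesimal gauge transformation of the flat connection $\mathcal A_t(\lambda):=\lambda^{-1}\Phi_t+d_{A_t}+\lambda\Phi_t^{*}$. This forces the monodromy of $\mathcal A_t(\lambda)$ to be independent of $t$, so the flat connections are pairwise gauge equivalent; integrating, one even obtains a path of $3$-term ($\lambda$-dependent) gauge transformations relating them. The two ingredients are the first variation formula of Theorem \ref{hamiltonian-first-variations-coincide}, which turns the higher-diffeomorphism hypothesis into special $\lambda$-dependent gauge transformations, and the ellipticity of Theorem \ref{Thm:ellipticity}, which shows that the remaining freedom is ordinary gauge.

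First I would unwind the hypothesis. Since the induced higher complex structures $\mu_t$ lie in a single $\mathrm{Ham}_0(T^*S)$-orbit, using the structure of $\mathrm{Ham}_0(T^*S)$ from \cite{Nolte} we may (after passing to a smooth path in the orbit) write $\dot\mu_t=\{H_t,\mu_t\}$ for a smooth family of Hamiltonians $H_t$. Expanding $H_t$ into monomials $v_1\cdots v_k$ and summing the formula of Theorem \ref{hamiltonian-first-variations-coincide}, the section $\xi_t:=\sum\Phi_t(v_1)\cdots\Phi_t(v_k)\in Z(\Phi_t)$ has the feature that the special $\lambda$-dependent gauge transformation $\lambda^{-1}\xi_t+\lambda\xi_t^{*}$ produces the Fock-field variation $\delta^{g}\Phi_t=d_{A_t}\xi_t$ whose induced variation of the higher complex structure (via Proposition \ref{link_hcs}) is exactly $\{H_t,\mu_t\}=\dot\mu_t$. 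As shown earlier in this section, this variation together with its accompanying connection variation (which agrees with the one produced by the filling-in procedure) is tangent to the space of positive flat Fock fields, and on the level of $3$-term connections it equals $d_{\mathcal A_t(\lambda)}(\lambda^{-1}\xi_t+\lambda\xi_t^{*})$ by \eqref{Eq:3-term-gauge-action}.

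Next I would compare $\dot\Phi_t$ with $\delta^g\Phi_t$: both are tangent to the space of positive flat Fock fields at $\Phi_t$, and both induce the variation $\dot\mu_t$ of the higher complex structure, so their difference $\delta_0\Phi_t:=\dot\Phi_t-\delta^g\Phi_t$ is tangent to that space and induces no change of higher complex structure. I claim $\delta_0\Phi_t=[\Phi_t,\zeta_t]$ for some $\zeta_t\in\Omega^0(S,\g_P^{\sigma,\rho})$, i.e.\ that it is the infinitesimal action of an honest unitary gauge transformation preserving $\sigma$ and $\rho$. Here I would use Proposition \ref{Prop:Phi-variation} to decompose $\mathrm{Var}(\Phi_t)^{-\sigma}=\mathrm{Im}(\ad_{\Phi_t})^{-\sigma}\oplus Z(\Phi_t)\bar K$ and split $\mathrm{Im}(\ad_{\Phi_t})^{-\sigma}=[\Phi_t,\Omega^0(\g_P^{\sigma,\rho})]\oplus[\Phi_t,\Omega^0(\g_P^{\sigma,-\rho})]$ into unitary and hermitian directions. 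The linearization $dF_{\Phi_t}$ of the curvature map vanishes on the unitary directions at a solution (a gauge transformation only conjugates the curvature), is an isomorphism on the hermitian directions by Theorem \ref{Thm:ellipticity}, and maps the $Z(\Phi_t)\bar K$-summand to $\Omega^2(S,\g_P^{\sigma,\rho})$ by some operator; hence the tangent space to the solution set, namely $\ker dF_{\Phi_t}$, is the direct sum of the unitary directions and of a graph over $Z(\Phi_t)\bar K$. Since conjugating $\Phi_t$ by a gauge transformation leaves $\ker p_{\Phi_t}$ unchanged, the induced variation of the higher complex structure is detected precisely by the $Z(\Phi_t)\bar K$-component (the higher Beltrami differentials, by Lemma \ref{lemma:sln-fock-field}); thus $\delta_0\Phi_t$, having vanishing $Z(\Phi_t)\bar K$-component, is a unitary gauge direction $[\Phi_t,\zeta_t]$, proving the claim.

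Finally I would assemble the conclusion. By linearity of the filling-in procedure (Section \ref{Sec:impl-fct-thm}), the equality $\dot\Phi_t=\delta^g\Phi_t+[\Phi_t,\zeta_t]$ lifts to
\[\dot{\mathcal A}_t(\lambda)=d_{\mathcal A_t(\lambda)}\big(\lambda^{-1}\xi_t+\zeta_t+\lambda\xi_t^{*}\big);\]
for each fixed $\lambda\in\C^*$ the section $\lambda^{-1}\xi_t+\zeta_t+\lambda\xi_t^{*}$ is an honest element of $\Omega^0(S,\g_P)$, so $\dot{\mathcal A}_t(\lambda)$ is $d_{\mathcal A_t(\lambda)}$-exact, i.e.\ tangent to the gauge orbit of the flat connection $\mathcal A_t(\lambda)$. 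Hence the monodromy of $\mathcal A_t(\lambda)$ is constant in $t$ for every $\lambda$, and in particular all the connections $\lambda^{-1}\Phi_t+d_{A_t}+\lambda\Phi_t^{*}$ are gauge equivalent (integrating the time-dependent $3$-term gauge transformations $\lambda^{-1}\xi_t+\zeta_t+\lambda\xi_t^{*}$ makes this explicit). The main obstacle is the claim in the third paragraph: that a deformation which stays within the solution set and induces no change of higher complex structure must be an ordinary gauge direction — this is exactly what the elliptic isomorphism of Theorem \ref{Thm:ellipticity} provides, but it requires careful bookkeeping of the $\sigma$- and $\rho$-eigenspaces and of the identification of the $Z(\Phi)\bar K$-summand with the higher Beltrami differentials. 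A secondary technical point is passing from ``pairwise higher diffeomorphic'' to a differentiable path of Hamiltonians, which relies on the analysis of $\mathrm{Ham}_0(T^*S)$ in \cite{Nolte}.
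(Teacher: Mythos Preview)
Your proposal is correct and follows essentially the same route as the paper: use Theorem~\ref{hamiltonian-first-variations-coincide} to realize the Hamiltonian as a special $\lambda$-dependent gauge producing the correct variation of the higher complex structure, observe that the difference $\dot\Phi_t-d_{A_t}\xi_t$ lies in $\mathrm{Im}(\ad_{\Phi_t})^{-\sigma}$, split it into unitary and hermitian parts, and kill the hermitian part with the injectivity of $L$ from Theorem~\ref{Thm:ellipticity}. The paper's proof is simply more direct at the third step: rather than describing $\ker dF_{\Phi_t}$ as a direct sum of unitary directions and a graph over $Z(\Phi_t)\bar K$, it just writes $\dot\Phi_t=d_{A_t}(h_t(\Phi_t))+[\Phi_t,\eta_t]$ with $\eta_t\in\Omega^0(S,\g_P^\sigma)$, decomposes $\eta_t=\eta_t^{\rho}+\eta_t^{-\rho}$, notes that the gauge part $d_{A_t}(h_t(\Phi_t))+[\Phi_t,\eta_t^{\rho}]$ already preserves flatness, and concludes $L(\eta_t^{-\rho})=0$ hence $\eta_t^{-\rho}=0$---your structural analysis of $\ker dF_{\Phi_t}$ is unnecessary once you know $\delta_0\Phi_t\in\mathrm{Im}(\ad_{\Phi_t})$.
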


\begin{proof}
The fact that the higher complex structures are higher diffeomorphic gives us a time dependant Hamiltonian $h_t:T^*S \to \C$ which satisfies 
\[\frac{d}{dt} \Phi_t = d_{A_t}(h_t(\Phi_t)) + [\Phi_t,\eta_t],\]
where $\eta_t$ is a path in $\Omega^0(S,\mf{g}_P^\sigma)$. This follows from Theorem \ref{hamiltonian-first-variations-coincide}. Decompose $\eta_t$ into its hermitian and anti-hermitian parts $\eta_t=\eta_t^{-\rho}+\eta_t^{\rho}$.
If $\eta_t^{-\rho} = 0$, then we are done because the path of three term infinitesimal gauge transformations 
\[\lambda^{-1}h_t(\Phi_t) + \eta_t + \lambda h_t(\Phi_t)^{*}\]
induces the correct time derivative of $\Phi_t$, thus must induce the correct derivative of the whole 3-term family. 

At each time $t$, the derivative of curvature $F_{A_t} + [\Phi_t\wedge\Phi_t^{*}]$ with respect to a change in $\Phi_t$ is a linear operator. We have just argued that the derivative of curvature if we change $\Phi$ by $d_{A_t}(h_t(\Phi_t)) + [\Phi_t,\eta_t^{\rho}]$ is zero, so the only thing that could be changing curvature is $[\Phi_t,\eta_t^{-\rho}]$. The change in curvature induced by a hermitian gauge transformation of $\Phi$ is the operator $L$ from theorem \ref{Thm:ellipticity}. Since we are assuming our connections to be flat, we have $L(\eta_t^{-\rho})=0$. This implies $\eta_t^{-\rho} = 0$ because $L$ has zero kernel.
\end{proof}

If we assume the main Conjecture \ref{main-conj}, then Proposition \ref{prop:constancy} tells us that the map from Fock bundles to the Hitchin component factors through the projection to $\mathcal{T}^n(S)$. Note the importance of the higher diffeomorphism group being connected.

\section{Covectors}\label{Sec:covectors}

In the previous sections we have seen the conjectural link between Fock bundles and the Hitchin component. We believe that we can extend this link to a tubular neighborhood of the Hitchin component inside the character variety $\chi(\pi_1S,G)$ for the complex Lie group $G$. For this, we do not require anymore the canonical connection $d_A$ to be $\sigma$-invariant. We will see that $d_A$ is then determined by cotangent vectors to the space of higher complex structures. This also leads to a gauge-theoretic meaning of the $\mu$-holomorphicity condition.

\subsection{Definition of covectors}

The setting of our investigations is now the following. Consider a $G$-Fock bundle $(P,\Phi,\sigma)$ on the surface $S$ equipped with a hermitian structure $\rho$. \emph{The main difference with the previous sections is that $\rho$ is not supposed to commute with $\sigma$.}
We assume $\Phi\in\mathcal{P}$ to be a positive Fock field (see Definition \ref{Def:pos-Higgs-field}). This implies that $\Phi$ and $\Phi^{*}=-\rho(\Phi)$ are transverse. 

We have seen in Proposition \ref{Prop:Phi-variation} that the tangent space to the space of Fock bundles modulo usual gauge transformations is given by $Z(\Phi)\bar{K}$. This motivates the following definition of a covector:
\begin{definition}
    A \emph{covector} for a $G$-Fock bundle $(P,\Phi)$ is an element of $(Z(\Phi)\bar{K})^*$.
\end{definition}
We will give several realizations of covectors. For $\mathrm{SL}_n(\C)$, we will see in Section \ref{Sec:mu-holo-meaning} below the link to the cotangent bundle to higher complex structures.

\begin{lemma}\label{lemma:contraction} 
For any $\Phi\in  \mathcal{P}$ we have:
\[\mf{g}_P\otimes T^*S = \mathrm{Im}(\ad_\Phi) \oplus \mathrm{Im}(\ad_{\Phi^{*}}) \oplus Z(\Phi) \bar{K} \oplus Z(\Phi^*) K.\]
\end{lemma}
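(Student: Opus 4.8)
The plan is to establish the direct sum decomposition pointwise at each $z \in S$, working in the fiber $\mf{g}$ of $\mf{g}_P$ tensored with the two-dimensional real cotangent space $T^*_zS$, which is $4\dim\mf{g}$-dimensional. Since everything is pointwise, I may use the complex structure induced by $\Phi$, in which $\Phi = \Phi_1\,dz$ with $\Phi_1 = F$ principal nilpotent (Lemma \ref{lemma:sln-fock-field} and its $G$-analogue: $\Phi_2$ is not principal nilpotent, and in the induced complex structure $\mu_2 = 0$ forces $\Phi_2 = 0$ after a gauge fixing). Then $\Phi^* = -\rho(\Phi)$ is of the form $\Phi_1^*\,d\bar z$ where $\Phi_1^* = -\rho(F)$ is again principal nilpotent. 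So I must show
\[
\mf{g}\otimes T^*_zS = \mathrm{Im}(\ad_{\Phi})\oplus\mathrm{Im}(\ad_{\Phi^*})\oplus Z(\Phi)\,d\bar z\oplus Z(\Phi^*)\,dz.
\]

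First I would split $\mf{g}\otimes T^*_zS = (\mf{g}\,dz)\oplus(\mf{g}\,d\bar z)$ and track where each summand of the claimed decomposition lands. Since $\Phi$ is a $(1,0)$-form, $\ad_\Phi$ maps $\mf{g}\,dz \to 0$ (because $dz\wedge dz = 0$) and maps $\mf{g}\,d\bar z \to \mathrm{Im}(\ad_F)\,(d\bar z\wedge dz)$... wait, more carefully: $\ad_\Phi$ acting on $\Omega^1$ lands in $\Omega^2$, but here I want the direct sum inside $\Omega^1 = \mf{g}\otimes T^*S$, so $\mathrm{Im}(\ad_\Phi)$ must mean the image of $\ad_{\Phi_1}$ on the $\mf{g}\,dz$ component sitting inside $\mf{g}\,d\bar z$ — i.e. $[\Phi_1, \mf{g}]\,d\bar z$ — and symmetrically $\mathrm{Im}(\ad_{\Phi^*}) = [\Phi_1^*,\mf{g}]\,dz$. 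Thus the $\mf{g}\,dz$ part of the right-hand side is $\mathrm{Im}(\ad_{\Phi^*})\oplus Z(\Phi^*)\,dz = \big([\Phi_1^*,\mf{g}]\oplus Z(\Phi_1^*)\big)\,dz$, and likewise the $\mf{g}\,d\bar z$ part is $\big([\Phi_1,\mf{g}]\oplus Z(\Phi_1)\big)\,d\bar z$. So the lemma reduces to the single Lie-algebraic statement: for any principal nilpotent $x\in\mf{g}$,
\[
\mf{g} = \mathrm{Im}(\ad_x)\oplus Z(x).
\]

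This last decomposition is where the real content lies, and it is the standard $\mathfrak{sl}_2$-representation theory fact: complete $x$ into a principal $\mathfrak{sl}_2$-triple $(x, h, e)$ by Jacobson--Morozov, decompose $\mf{g}$ into irreducible $\mathfrak{sl}_2$-modules $V_1\oplus\cdots\oplus V_r$ of dimensions $2d_i+1$ (the exponents), and observe that on each irreducible $V_i$, $\ker(\ad_x|_{V_i})$ is the lowest-weight line and $\mathrm{Im}(\ad_x|_{V_i})$ is the span of all non-highest-weight vectors — these intersect trivially and together span $V_i$ because no $V_i$ is trivial (principality). So $Z(x) = \bigoplus \ker(\ad_x|_{V_i})$ has dimension $r = \mathrm{rk}(\mf{g})$ and meets $\mathrm{Im}(\ad_x)$ only in $0$, while a dimension count ($\dim\mathrm{Im}(\ad_x) = \dim\mf{g} - \mathrm{rk}(\mf{g})$) closes the direct sum. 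I expect no serious obstacle here; the only subtlety is bookkeeping the identification of $\mathrm{Im}(\ad_\Phi)$ and $\mathrm{Im}(\ad_{\Phi^*})$ as subspaces of $\Omega^1$ rather than $\Omega^2$ (via contraction with a chosen volume form, as the paper notes elsewhere), and checking that positivity of $\Phi$ is not actually needed for the decomposition itself — only transversality of $\Phi$ and $\Phi^*$ plays a role, which is implied by $\Phi\in\mathcal{P}$ and guarantees $\mathrm{Im}(\ad_\Phi)\cap\mathrm{Im}(\ad_{\Phi^*}) = 0$ inside each summand is vacuous since they sit in complementary Hodge pieces. Assembling the four pieces then gives the global decomposition of $\mf{g}_P\otimes T^*S$ by letting $z$ vary.
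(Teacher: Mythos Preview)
Your proposal has two genuine gaps.

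\textbf{First, you cannot assume $\Phi_2=0$.} In the complex structure induced by $\Phi$ one has $\mu_2=0$, but the higher Beltrami differentials $\mu_3,\dots,\mu_n$ remain arbitrary, so $\Phi_2 = \mu_3\Phi_1^2+\cdots+\mu_n\Phi_1^{n-1}$ is in general nonzero (see Lemma~\ref{lemma:sln-fock-field}). No gauge transformation will remove it unless the Fock bundle is already in the Fuchsian locus. Consequently $\mathrm{Im}(\ad_\Phi)$ is \emph{not} contained in a single Hodge type: for $\eta\in\Omega^0(S,\g_P)$ one has $[\Phi,\eta]=[\Phi_1,\eta]\,dz+[\Phi_2,\eta]\,d\bar z$, and the four summands in the statement genuinely mix across the $dz$/$d\bar z$ decomposition. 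The paper's proof handles exactly this mixing via a contraction-mapping argument, and this is where positivity $\Phi\in\mathcal{P}$ is essential: the maps $c_\Phi:[\Phi_1,A]\mapsto[\Phi_2,A]$ and $c_{\Phi^*}$ strictly decrease norm, forcing the only fixed point of a certain composite to be zero. So your final remark that positivity ``is not actually needed'' is incorrect.

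\textbf{Second, the Lie-algebraic statement you reduce to is false.} For a principal nilpotent $x$ one always has $Z(x)\subset\mathrm{Im}(\ad_x)$ (Proposition~\ref{Prop:center-in-image}), so $\mf{g}=\mathrm{Im}(\ad_x)\oplus Z(x)$ cannot hold. Concretely, on each irreducible $\mathfrak{sl}_2$-module $V_i$ of dimension $\geq 2$, the lowest-weight line (your $\ker(\ad_x|_{V_i})$) \emph{is} a non-highest-weight vector, hence lies in $\mathrm{Im}(\ad_x|_{V_i})$; the two do not intersect trivially. The correct complementary pairing is $\mf{g}=\mathrm{Im}(\ad_F)\oplus Z(E)$ for an $\mathfrak{sl}_2$-triple $(F,H,E)$, or equivalently $\mf{g}=\mathrm{Im}(\ad_F)\oplus Z(F^*)$ using hermitian orthogonality. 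Even with this correction, however, the argument would only cover the Fuchsian case $\Phi_2=0$, leaving the first gap unresolved.
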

The same statement holds for $\Omega^1(S,\g_P)$, where we use $\mathrm{Im}(\ad_\Phi)$ both for the subbundle and its sections. The proof uses a fixed point argument coming from a contraction.
\begin{proof}
We do the proof at a point $p\in S$, so we may as well choose an identification of $(\mf{g}_P)_p$ with $\mf{g}$, and a local coordinate so we get a basis $dz,d\bar{z}$ of $T^*S\otimes \C$. Both vector spaces in the lemma have the same dimension, namely $2\mathrm{dim}(\g)$. All we have to show is that there is a direct sum on the right hand side.

Suppose we have a sum \[[\Phi,A] + [\Phi^{*},B] + C d\bar{z} + D dz = 0\] where $A,B,C,D\in \mathfrak{g}$ and $C,D$ are in the centralizers of $\Phi$ and $\Phi^{*}$ respectively. We want to show that the only solution to this equation is the trivial solution $[\Phi,A]=[\Phi^{*},B]=C d\bar{z}=D dz=0$. Split our linear relation into $dz$ and $d\bar{z}$ parts:
\[ [\Phi_1, A] + [\Phi_2^*, B] + D = 0\]
\[ [\Phi_1^*, B] + [\Phi_2, A] + C = 0\]
Note that $\mf{g}$ is an orthogonal direct sum of $\mathrm{Im}(\ad_{\Phi_1})$ and $Z(\Phi_1^*)$, and likewise for $\mathrm{Im}(\ad_{\Phi_1^*})$ and $Z(\Phi_1)$. This means, for example, that $[\Phi_1, A]$ is $-\pi_{\mathrm{Im}(\Phi_1)}([\Phi_2^*, B])$, where $\pi_{\mathrm{Im}(\Phi_1)}$ is orthogonal projection onto $\mathrm{Im}(\ad_{\Phi_1})$. Let $c_\Phi:\mathrm{Im}(\ad_{\Phi_1})\to \mathrm{Im}(\ad_{\Phi_2})$ denote the map which takes $[\Phi_1, A]$ to $[\Phi_2, A]$ for any $A$. Define $c_{\Phi^{*}}$ analogously. We see that $[\Phi_1,A]$ has to be a fixed point of the following map:
\[(-\pi_{\mathrm{Im}(\Phi_1)}) \circ c_{\Phi^{*}} \circ (-\pi_{\mathrm{Im}(\Phi^{*}_1)}) \circ  c_{\Phi}.\]
The positive definiteness condition $\Phi\in\mathcal{P}$ is precisely that $c_\Phi$, (and thus $c_{\Phi^{*}}$,) decreases norm. Orthogonal projections do not increase norm, so this composition must decrease norm. This implies that $[\Phi_1,A]$ must be zero. We immediately get that $[\Phi,A]=[\Phi^{*},B]=0$. The centralizers of $\Phi$ and $\Phi^{*}$ have trivial intersection because they are nilpotent preserving opposite flags, so $C$ and $D$ are zero as well.
\end{proof}

An equivalent way to phrase this lemma is as follows: for positive Fock field $\Phi$ we have
\begin{equation}
\Omega^1(S,\g_P)= \mathrm{Var}(\Phi)\oplus \mathrm{Var}(\Phi^{*}),
\end{equation}
where we used $\mathrm{Var}(\Phi)=\mathrm{Im}(\ad_\Phi)\oplus Z(\Phi)\bar{K}$ from Proposition \ref{Prop:Phi-variation}.

Recall from Section \ref{Sec:var-Fock-fields} the symplectic form $\omega=\int_S \tr\alpha\wedge\beta$ on $\Omega^1(S,\g_P)$. Both subspaces $\mathrm{Var}(\Phi)$ and $\mathrm{Var}(\Phi^*)$ are maximal isotropic with respect to $\omega$. Pointwise they give a decomposition of $T^*S\otimes \g$ into two Lagrangian subspaces. Therefore, pairing with $\omega$ gives the duality $\mathrm{Var}(\Phi)\cong \mathrm{Var}(\Phi^{*})^*$.

The space of covectors is related to the cohomology group $\mathrm{H}^1(\Phi)$. From $\Omega^1(S,\g_P)=\mathrm{Var}(\Phi)\oplus\mathrm{Var}(\Phi^{*})$ and $\mathrm{Var}(\Phi)\subset \ker\,\ad_\Phi$, we get $\ker\,\ad_\Phi=\mathrm{Var}(\Phi)\oplus (\ker\,\ad_\Phi\cap \mathrm{Var}(\Phi^{*}))$. Hence from $\mathrm{Var}(\Phi)=Z(\Phi)\bar{K}\oplus \mathrm{Im}(\ad_\Phi)$ we deduce
\begin{equation}\label{Eq:h1-aux}
\mathrm{H}^1(\Phi)\cong Z(\Phi)\bar{K}\oplus (\ker\,\ad_\Phi\cap \mathrm{Var}(\Phi^{*})).
\end{equation}
The latter can be identified with the space of covectors:
\begin{proposition}
    The symplectic pairing induces an isomorphism between $(Z(\Phi)\bar{K})^*$ and $\ker\,\ad_\Phi\cap\mathrm{Var}(\Phi^{*})$.
\end{proposition}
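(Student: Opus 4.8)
The statement to prove is that the symplectic pairing $\omega(\alpha,\beta)=\int_S\tr\,\alpha\wedge\beta$ restricts to a perfect pairing between $\ker\,\ad_\Phi\cap\mathrm{Var}(\Phi^{*})$ and $Z(\Phi)\bar K$, hence identifies $\ker\,\ad_\Phi\cap\mathrm{Var}(\Phi^{*})$ with $(Z(\Phi)\bar K)^*$. The plan is to use the splitting of Lemma \ref{lemma:contraction} together with the isotropy properties already established. First I would recall from the discussion after Lemma \ref{lemma:contraction} that $\Omega^1(S,\g_P)=\mathrm{Var}(\Phi)\oplus\mathrm{Var}(\Phi^{*})$ is a direct sum of two Lagrangian subspaces for $\omega$, so $\omega$ induces a perfect pairing $\mathrm{Var}(\Phi)\cong\mathrm{Var}(\Phi^{*})^*$ (and symmetrically). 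Next, inside $\mathrm{Var}(\Phi)=Z(\Phi)\bar K\oplus\mathrm{Im}(\ad_\Phi)$, I would identify the annihilator of $\mathrm{Im}(\ad_\Phi)$ under this pairing: since $\mathrm{Im}(\ad_\Phi)$ is isotropic and $\mathrm{Var}(\Phi^{*})$ is Lagrangian, the annihilator of $\mathrm{Im}(\ad_\Phi)$ inside $\mathrm{Var}(\Phi^{*})$ has the complementary dimension, namely $\dim Z(\Phi)\bar K$.

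The key step is to show that this annihilator is exactly $\ker\,\ad_\Phi\cap\mathrm{Var}(\Phi^{*})$. For one inclusion: if $\beta\in\ker\,\ad_\Phi$ and $\gamma=\ad_\Phi(\eta)$ is a coboundary, then $\tr(\gamma\wedge\beta)=\tr([\Phi,\eta]\wedge\beta)=\pm\tr(\eta\,[\Phi\wedge\beta])=0$ by the cyclicity of the Killing form and $\ad_\Phi\beta=0$; integrating over $S$ gives $\omega(\gamma,\beta)=0$, so $\ker\,\ad_\Phi\cap\mathrm{Var}(\Phi^{*})$ annihilates $\mathrm{Im}(\ad_\Phi)$. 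For the reverse inclusion, I would use \eqref{Eq:h1-aux}: we already know $\mathrm{H}^1(\Phi)\cong Z(\Phi)\bar K\oplus(\ker\,\ad_\Phi\cap\mathrm{Var}(\Phi^{*}))$, and by Proposition \ref{Prop:dim-phi-cohom} this has dimension $2\mathrm{rk}(\g)(2g-2)$ or, more to the point, the summand $\ker\,\ad_\Phi\cap\mathrm{Var}(\Phi^{*})$ has exactly the dimension of $Z(\Phi)\bar K$ (since the first summand $Z(\Phi)\bar K$ accounts for half of $\mathrm{H}^1(\Phi)$, and $\mathrm{H}^1(\Phi)$ carries a symplectic form making the two halves dual). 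Hence the annihilator of $\mathrm{Im}(\ad_\Phi)$ in $\mathrm{Var}(\Phi^{*})$, which has dimension $\dim Z(\Phi)\bar K$ and contains $\ker\,\ad_\Phi\cap\mathrm{Var}(\Phi^{*})$, must coincide with it.

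Finally, since $\omega$ pairs $\mathrm{Var}(\Phi)=Z(\Phi)\bar K\oplus\mathrm{Im}(\ad_\Phi)$ perfectly with $\mathrm{Var}(\Phi^{*})$, and $\ker\,\ad_\Phi\cap\mathrm{Var}(\Phi^{*})$ is exactly the annihilator of $\mathrm{Im}(\ad_\Phi)$, the restriction of $\omega$ descends to a nondegenerate pairing between $Z(\Phi)\bar K = \mathrm{Var}(\Phi)/\mathrm{Im}(\ad_\Phi)$ and $\ker\,\ad_\Phi\cap\mathrm{Var}(\Phi^{*})$; this is the claimed isomorphism $\ker\,\ad_\Phi\cap\mathrm{Var}(\Phi^{*})\cong(Z(\Phi)\bar K)^*$. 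The main obstacle I anticipate is the bookkeeping of dimensions and annihilators to rule out degeneracy; everything else is a formal consequence of the Lagrangian splitting and the cyclicity of the Killing form, both of which are already in hand.
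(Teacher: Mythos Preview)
Your proposal is correct and follows essentially the same route as the paper: both arguments use that $\ker\,\ad_\Phi\cap\mathrm{Var}(\Phi^{*})$ pairs trivially with $\mathrm{Var}(\Phi^{*})$ (Lagrangian) and with $\mathrm{Im}(\ad_\Phi)$ (cyclicity of the Killing form), then invoke the dimension count from \eqref{Eq:h1-aux} and Proposition \ref{Prop:dim-phi-cohom} to conclude. The only difference is packaging --- you phrase it via annihilators in a Lagrangian splitting, while the paper directly checks injectivity of $x\mapsto\omega(x,\cdot)$ --- and one cosmetic slip: the dimension $2\,\mathrm{rk}(\g)(2g-2)$ is spurious, since all the $\Phi$-cohomology statements here are pointwise (fiberwise) and the relevant rank is simply $\mathrm{rk}(\g)$.
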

\begin{proof}
    Both spaces have the same dimension $\mathrm{rk}(\g)$. This follows from $\dim \mathrm{H}^1(\Phi)=2\mathrm{rk}(\g)$ (see Proposition \ref{Prop:dim-phi-cohom}) and Equation \eqref{Eq:h1-aux}. Given a non-zero element $x\in \ker\,\ad_\Phi\cap\mathrm{Var}(\Phi^{*})$, there is $z\in\Omega^1(S,\g_P)$ such that $\omega(x,z)\neq 0$ since $\omega$ is non-degenerate. The symplectic pairing between $\ker\,\ad_\Phi\cap\mathrm{Var}(\Phi^{*})$ and $\mathrm{Var}(\Phi^{*})\oplus \mathrm{Im}(\ad_\Phi)$ is trivial, hence we can reduce to $z\in Z(\Phi)\bar{K}$ by Lemma \ref{lemma:contraction}. Therefore, the map $x\mapsto \omega(x,.)$ from $\ker\,\ad_\Phi\cap\mathrm{Var}(\Phi^{*})$ to $(Z(\Phi)\bar{K})^*$ is injective, thus an isomorphism.
\end{proof}
Combining this proposition with Equation \eqref{Eq:h1-aux}, we get
$$\mathrm{H}^1(\Phi)\cong Z(\Phi)\bar{K}\oplus (Z(\Phi)\bar{K})^*.$$
This shows that $\mathrm{H}^1(\Phi)$ describes variations of $\Phi$ and a covector modulo gauge transformations.

\subsection{Canonical connection with covectors}

We want to equip $P$ with a unitary $\Phi$-compatible connection $d_A$, generalizing the filling-in procedure from Theorem \ref{Thm-filling-in}, to get a canonical 3-term connection $\Phi+d_A+\Phi^{*}$.

Denote by $\Pi$ the space of all unitary connections which are $\Phi$-compatible (solutions to $d_A\Phi=0$). Note that such connections are automatically $\Phi^{*}$-compatible.
Since $\Phi$ and $\Phi^{*}$ are transverse, $\Pi$ is non-empty and is modeled on $\rho$-invariant elements of $\ker\,\ad_\Phi\cap\ker\,\ad_{\Phi^{*}}$.
We start by analysing the vector space $(\ker\,\ad_\Phi\cap\ker\,\ad_{\Phi^{*}})^\rho$ and show that it is a realization of the space of covectors. 

\begin{proposition}
    The symplectic pairing gives an isomorphism
$$\ker(\ad_\phi)\cap\ker(\ad_{\Phi^{*}})\cong (Z(\Phi)\bar{K}\oplus Z(\Phi^{*})K)^*.$$
\end{proposition}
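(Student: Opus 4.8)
The statement to prove is that the symplectic pairing $\omega$ gives an isomorphism $\ker(\ad_\Phi)\cap\ker(\ad_{\Phi^{*}}) \cong (Z(\Phi)\bar{K}\oplus Z(\Phi^{*})K)^*$. The plan is to mimic the proof of the previous proposition (the $\sigma$-invariant analogue) but now without invoking any real structure: combine a dimension count with non-degeneracy of $\omega$ restricted to the relevant pairing of subspaces. Everything is pointwise in the sense that the decomposition of Lemma~\ref{lemma:contraction} is a decomposition of the bundle $\mf{g}_P\otimes T^*S$, but for the dimension count I will work on $S$, where $\ker(\ad_\Phi)$ computes $\mathrm{H}^1(\Phi)$ up to coboundaries.

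**Key steps.** First I would compute dimensions on both sides. On the right, $Z(\Phi)\bar{K}\oplus Z(\Phi^{*})K$ has dimension $2\,\mathrm{rk}(\g)$, so its dual does too. On the left, recall $\Pi$ is an affine space modeled on $(\ker\ad_\Phi\cap\ker\ad_{\Phi^{*}})^{\rho}$; but without the reduction we need the full space $\ker\ad_\Phi\cap\ker\ad_{\Phi^{*}}$. By transversality ($\Phi$ positive, hence $\Phi$ and $\Phi^{*}$ transverse), Proposition~\ref{Prop:repr-for-cohom} identifies $\ker(\ad_\Phi)\cap\ker(\ad_{\Phi^{*}})$ with $\mathrm{H}^1(\Phi)$, which has dimension $2\,\mathrm{rk}(\g)$ by Proposition~\ref{Prop:dim-phi-cohom}. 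So both sides have the same dimension, and it suffices to show the pairing map $x\mapsto \omega(x,\cdot)$, restricted from $\ker(\ad_\Phi)\cap\ker(\ad_{\Phi^{*}})$ to $(Z(\Phi)\bar K\oplus Z(\Phi^{*})K)^*$, is injective.

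Second, I would establish injectivity. Take a nonzero $x\in\ker(\ad_\Phi)\cap\ker(\ad_{\Phi^{*}})$. Since $\omega$ is non-degenerate on $\Omega^1(S,\g_P)$, there is some $z$ with $\omega(x,z)\neq 0$. Using the decomposition $\Omega^1(S,\g_P)=\mathrm{Im}(\ad_\Phi)\oplus\mathrm{Im}(\ad_{\Phi^{*}})\oplus Z(\Phi)\bar K\oplus Z(\Phi^{*})K$ from Lemma~\ref{lemma:contraction}, decompose $z$ accordingly. The pairing of $x$ with the $\mathrm{Im}(\ad_\Phi)$ component vanishes: for $z'=[\Phi,\gamma]$, $\omega(x,[\Phi,\gamma])=\int_S\tr(x\wedge[\Phi,\gamma])=\pm\int_S\tr([\Phi,x]\wedge\gamma)=0$ since $x\in\ker\ad_\Phi$; symmetrically the pairing with the $\mathrm{Im}(\ad_{\Phi^{*}})$ component vanishes since $x\in\ker\ad_{\Phi^{*}}$. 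Hence $\omega(x,\cdot)$ is already determined by, and nonzero on, the $Z(\Phi)\bar K\oplus Z(\Phi^{*})K$ part of $z$. This shows the map $x\mapsto\omega(x,\cdot)|_{Z(\Phi)\bar K\oplus Z(\Phi^{*})K}$ is injective; by the dimension count it is an isomorphism.

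**Main obstacle.** The only genuinely non-routine point is making sure the dimension count is valid: one must check that $\ker(\ad_\Phi)\cap\ker(\ad_{\Phi^{*}})$ really has dimension $2\,\mathrm{rk}(\g)$ and not more, i.e. that no extra cohomology appears. This is exactly Proposition~\ref{Prop:repr-for-cohom} combined with Proposition~\ref{Prop:dim-phi-cohom}, so the obstacle dissolves once those are invoked. A secondary subtlety is that Lemma~\ref{lemma:contraction} is stated pointwise for $\mf{g}_P\otimes T^*S$; I would note (as the paper does) that the same direct-sum decomposition holds at the level of global sections $\Omega^1(S,\g_P)$, which is what is needed for the $\omega$-pairing argument.
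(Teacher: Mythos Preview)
Your proposal is correct and follows essentially the same approach as the paper: both arguments first match dimensions by identifying $\ker(\ad_\Phi)\cap\ker(\ad_{\Phi^{*}})$ with $\mathrm{H}^1(\Phi)$ (via Proposition~\ref{Prop:repr-for-cohom} and Proposition~\ref{Prop:dim-phi-cohom}), then prove injectivity of $x\mapsto\omega(x,\cdot)$ by using cyclicity of the Killing form to kill the pairing with $\mathrm{Im}(\ad_\Phi)\oplus\mathrm{Im}(\ad_{\Phi^{*}})$ and invoking the decomposition of Lemma~\ref{lemma:contraction}. The only difference is presentational: you spell out the injectivity step slightly more explicitly than the paper does.
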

\begin{proof}
Both spaces have the same dimension. Indeed the right hand side is of dimension $\dim Z(\Phi)+\dim Z(\Phi^{*})=2\mathrm{rk}(\g)$. The left hand side can be identified with $\mathrm{H}^1(\Phi)$ which is of the same dimension by Proposition \ref{Prop:dim-phi-cohom}.   

The symplectic pairing between $\ker\,\ad_\Phi\cap\ker\,\ad_{\Phi^{*}}$ and $\mathrm{Im}(\ad_\Phi)\oplus\mathrm{Im}(\ad_{\Phi^{*}})$ is trivial by the cyclicity of the Killing form. Since $\omega$ is non-degenerate and using Lemma \ref{lemma:contraction}, the map from $\ker\,\ad_\phi\cap\ker\,\ad_{\Phi^{*}}$ to $(Z(\Phi)\bar{K}\oplus Z(\Phi^{*})K)^*$ given by $x\mapsto \omega(x,\cdot)$ is injective. Therefore it is an isomorphism.
\end{proof}

Since $\rho$ exchanges $Z(\Phi)\bar{K}$ with $Z(\Phi^{*})K$, the $\rho$-invariant part of $(Z(\Phi)\bar{K}\oplus Z(\Phi^{*})K)^*$ maps isomorphically to any of the two factors. Thus:

\begin{corollary}\label{coro:covector-para}
    The symplectic pairing induces an isomorphism between $(Z(\Phi)\bar{K})^*$ and $(\ker\,\ad_\Phi\cap\ker\,\ad_{\Phi^{*}})^\rho$.
\end{corollary}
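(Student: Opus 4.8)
The plan is to obtain this as the $\rho$-fixed-point version of the preceding proposition. The key preliminary observation is how $\rho$ — extended to $\g_P$-valued forms as the hermitian-conjugate involution (conjugating the scalar form-part and applying the compact involution to the Lie-algebra part, exactly as in the definition of $\Phi^*$) — interacts with the symplectic form $\omega(\alpha,\beta)=\int_S\tr\,\alpha\wedge\beta$. Since the Killing form is real on the compact real form $\g_P^\rho$, one has $\tr(\rho\alpha\wedge\rho\beta)=\overline{\tr(\alpha\wedge\beta)}$ pointwise, hence
\[\omega(\rho\alpha,\rho\beta)=\overline{\omega(\alpha,\beta)}.\]
Moreover, from $\rho(\Phi)=-\Phi^*$ together with $\rho^2=\mathrm{id}$ one reads off that $\rho$ preserves $\ker\,\ad_\Phi\cap\ker\,\ad_{\Phi^*}$ and interchanges the summands $Z(\Phi)\bar K$ and $Z(\Phi^*)K$, so it acts on $W:=Z(\Phi)\bar K\oplus Z(\Phi^*)K$ and, via $\phi\mapsto\rho^*\phi:=\overline{\phi\circ\rho}$, conjugate-linearly on $W^*$.

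Next I would check that the isomorphism of the preceding proposition, $x\mapsto\omega(x,\cdot)|_W$ from $\ker\,\ad_\Phi\cap\ker\,\ad_{\Phi^*}$ to $W^*$, is equivariant for these two $\rho$-actions: applying the displayed identity to the pair $x$ and $\rho y$ gives $\omega(\rho x,y)=\overline{\omega(x,\rho y)}$ for all $y\in W$, which is precisely the statement $\omega(\rho x,\cdot)|_W=\rho^*\bigl(\omega(x,\cdot)|_W\bigr)$. An equivariant isomorphism restricts to an isomorphism of fixed-point sets, so $x\mapsto\omega(x,\cdot)|_W$ already identifies $(\ker\,\ad_\Phi\cap\ker\,\ad_{\Phi^*})^\rho$ with $(W^*)^{\rho^*}$.

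It remains to identify $(W^*)^{\rho^*}$ with $(Z(\Phi)\bar K)^*$. Decomposing $W^*=(Z(\Phi)\bar K)^*\oplus(Z(\Phi^*)K)^*$ (annihilators of the opposite summands) and using that $\rho^*$ interchanges the two pieces, every $\rho^*$-fixed functional is of the form $\phi_++\rho^*\phi_+$ with $\phi_+\in(Z(\Phi)\bar K)^*$ arbitrary; hence restriction to $Z(\Phi)\bar K$ is an isomorphism $(W^*)^{\rho^*}\to(Z(\Phi)\bar K)^*$. Composing with the previous step exhibits $x\mapsto\omega(x,\cdot)|_{Z(\Phi)\bar K}$ as the desired isomorphism.

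I do not expect a genuine obstacle: once $\omega(\rho\cdot,\rho\cdot)=\overline{\omega(\cdot,\cdot)}$ is in place the argument is formal. The one point to watch is bookkeeping between the real structure on $(W^*)^{\rho^*}$ and the complex structure on $(Z(\Phi)\bar K)^*$ — the identification is $\R$-linear but not $\C$-linear — which is harmless here, since these spaces enter only as real vector spaces modelling the affine space $\Pi$ of unitary $\Phi$-compatible connections.
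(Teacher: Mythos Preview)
Your proof is correct and follows essentially the same route as the paper: take the $\rho$-fixed part of the isomorphism from the preceding proposition, using that $\rho$ swaps the two summands $Z(\Phi)\bar K$ and $Z(\Phi^*)K$. The paper compresses this into a single sentence, whereas you spell out the equivariance via $\omega(\rho\alpha,\rho\beta)=\overline{\omega(\alpha,\beta)}$ and flag the $\R$-linear versus $\C$-linear bookkeeping explicitly---both worthwhile clarifications, but not a different argument.
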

Using this identification, we sometimes call an element of $(\ker\,\ad_\Phi\cap\ker\,\ad_{\Phi^{*}})^\rho$ a \emph{covector}.
In the case of $\mathrm{SL}_n(\C)$, the centralizer $Z(\Phi)=Z(\Phi_1)$ is generated by the powers of $\Phi_1$. Hence a covector $t\in (\ker\,\ad_\Phi\cap\ker\,\ad_{\Phi^{*}})^\rho$ is uniquely described by $(t_k=\tr \,\Phi_1^{k}t^{1,0})_{1\leq k\leq n-1}$.
This is similar to the formula for covectors to higher complex structures \cite[Proposition 4.2]{thomas-flat-conn}.

\medskip
Now we study the affine space $\Pi$ of unitary $\Phi$-compatible connections. 
For a connection $d_A\in\Pi$, we put $A^{-\sigma}=\tfrac{1}{2}(d_A-\sigma(d_A))\in \Omega^1(S,\g_P^{-\sigma})$. It is important to notice that $A^{-\sigma}\in \ker(\ad_\Phi)$. Indeed, this is the $\sigma$-invariant part of $d_A\Phi=0$. Note also that in general, since $\rho$ and $\sigma$ do not necessarily commute, $A^{-\sigma}\notin \Pi$.

Again we use the symplectic pairing with $Z(\Phi)\bar{K}$ and define the map
$$\psi: \left \{ \begin{array}{ccc}
\Pi &\rightarrow & (Z(\Phi)\bar{K})^* \\
d_A &\mapsto & \int_S\tr(A^{-\sigma}\,\cdot)
\end{array} \right.$$

\begin{proposition}\label{Prop:affine-iso}
    The map $\psi$ is an affine map covering the linear map from Corollary \ref{coro:covector-para}. Hence it is an isomorphism.
\end{proposition}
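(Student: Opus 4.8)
The plan is to show that $\psi$ is an affine map whose underlying linear map is exactly the isomorphism of Corollary \ref{coro:covector-para}; affineness together with an invertible linear part then forces $\psi$ to be an affine isomorphism. Well-definedness of $\psi$ is not an issue here: for $d_A\in\Pi$ the $\sigma$-anti-invariant part $A^{-\sigma}=\tfrac12(d_A-\sigma(d_A))$ is a genuine $\g_P$-valued $1$-form, so $x\mapsto\int_S\tr(A^{-\sigma}\wedge x)$ is an honest linear functional on $Z(\Phi)\bar{K}$ and no quotient intervenes.

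For affineness I would use that $\Pi$ is an affine space modeled on $V:=(\ker\ad_\Phi\cap\ker\ad_{\Phi^*})^\rho$, so any two elements of $\Pi$ differ by some $C\in V$. Since passing to the $\sigma$-anti-invariant part is linear in the connection, $d_{A'}=d_A+C$ yields $A'^{-\sigma}-A^{-\sigma}=C^{-\sigma}:=\tfrac12(C-\sigma(C))$, and hence $\psi(d_{A'})-\psi(d_A)=\omega(C^{-\sigma},\,\cdot\,)|_{Z(\Phi)\bar{K}}$, which depends complex-linearly on $C$. This already exhibits $\psi$ as an affine map with underlying linear map $\bar\psi\colon V\to(Z(\Phi)\bar{K})^*$, $\bar\psi(C)=\omega(C^{-\sigma},\,\cdot\,)|_{Z(\Phi)\bar{K}}$.

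The heart of the argument is then to identify $\bar\psi$ with the Corollary \ref{coro:covector-para} map $C\mapsto\omega(C,\,\cdot\,)|_{Z(\Phi)\bar{K}}$, i.e.\ to show $\omega(C^\sigma,x)=0$ for all $x\in Z(\Phi)\bar{K}$, where $C^\sigma:=\tfrac12(C+\sigma(C))$. This follows from two observations. First, by Corollary \ref{cor:sigma_negate_centralizer} the involution $\sigma$ negates $Z(\Phi)$, so $Z(\Phi)\bar{K}$ lies in the $\sigma$-anti-invariant part of $\Omega^1(S,\g_P)$. Second, $\sigma$ is a complex-linear Lie algebra automorphism and hence preserves the Killing form, so $\omega(\sigma\alpha,\sigma\beta)=\omega(\alpha,\beta)$ for all $\g_P$-valued $1$-forms; applying this with $\alpha=C^\sigma$ and $\beta=x$ gives $\omega(C^\sigma,x)=\omega(C^\sigma,-x)=-\omega(C^\sigma,x)$, hence $0$. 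Therefore $\bar\psi(C)=\omega(C,\,\cdot\,)|_{Z(\Phi)\bar{K}}$ agrees with the Corollary \ref{coro:covector-para} isomorphism, and since that map is an isomorphism, so is $\psi$.

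The only real source of care — and the likeliest obstacle — is bookkeeping about which subspaces are $\sigma$-stable in this section, where $\rho$ and $\sigma$ are no longer assumed to commute: $\ker\ad_\Phi$ and $Z(\Phi)\bar{K}$ are $\sigma$-stable because $\sigma(\Phi)=-\Phi$, but $\ker\ad_{\Phi^*}$, $Z(\Phi^*)K$ and hence the model space $V$ itself need not be $\sigma$-invariant, which is precisely why $C$ must be split into its $\sigma$-eigencomponents rather than assumed $\sigma$-anti-invariant. One should also pin down the direction and normalization of the Corollary \ref{coro:covector-para} map (it sends $x$ to the restriction to $Z(\Phi)\bar{K}$ of the $\rho$-invariant functional $\omega(x,\,\cdot\,)$ on $Z(\Phi)\bar{K}\oplus Z(\Phi^*)K$, using Lemma \ref{lemma:contraction} and the cyclicity of the Killing form) so that it genuinely coincides with $\bar\psi$; this last verification is routine.
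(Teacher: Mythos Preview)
Your proof is correct and reaches the same conclusion as the paper, but the key step is genuinely different. Both arguments reduce to showing that the $\sigma$-invariant part $C^\sigma$ of the difference $C=d_A-d_B\in V$ pairs to zero with $Z(\Phi)\bar K$ under $\omega$. The paper does this cohomologically: it notes that both $C$ and $C^{-\sigma}$ lie in $\ker\ad_\Phi$ (the latter because $A^{-\sigma}\in\ker\ad_\Phi$ was observed just before the definition of $\psi$), so $C^\sigma$ is a $\sigma$-invariant $\Phi$-cocycle, hence a coboundary by Proposition~\ref{Prop-no-sigma-cohom}; since the pairing with $Z(\Phi)\bar K\subset\ker\ad_\Phi$ annihilates $\mathrm{Im}(\ad_\Phi)$, one gets $[C]=[C^{-\sigma}]$ in $\mathrm{H}^1(\Phi)$ and the result follows. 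You instead bypass the cohomological machinery entirely, using only that $\sigma$ preserves the Killing form (hence $\omega$) and negates $Z(\Phi)\bar K$ (Corollary~\ref{cor:sigma_negate_centralizer}), which forces $\omega(C^\sigma,x)=-\omega(C^\sigma,x)$. Your route is more elementary and self-contained; the paper's route reinforces the role of $\Phi$-cohomology as the organizing framework of the section and makes explicit that only the class $[A^{-\sigma}]\in\mathrm{H}^1(\Phi)$ matters for the pairing. Your remarks about $V$ failing to be $\sigma$-stable when $\rho$ and $\sigma$ do not commute are on point and clarify why one cannot simply assume $C=C^{-\sigma}$.
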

\begin{proof}
    For $d_A,d_B\in\Pi$, we have $d_A-d_B=A-B\in (\ker\,\ad_\Phi\cap\ker\,\ad_{\Phi^{*}})^\rho$. In particular $A-B\in \ker\,\ad_\Phi$. Since $A^{-\sigma}\in\ker\,\ad_\Phi$ we also have $A^{-\sigma}-B^{\sigma}\in \ker\,\ad_\Phi$. By Proposition \ref{Prop-no-sigma-cohom} there is no $\sigma$-invariant $\Phi$-cohomology, so $[A-B]=[A^{-\sigma}-B^{-\sigma}]\in \mathrm{H}^1(\Phi)$. Therefore 
$$\psi(d_A)-\psi(d_B)=\int_S\tr((A^{-\sigma}-B^{-\sigma})\,\cdot)=\int_S\tr((A-B)\,\cdot),$$
where we used that the pairing with $Z(\Phi)\bar{K}$ does only depend on the cohomology class (since it pairs to zero with $\mathrm{Im}\,\ad_\Phi$).
\end{proof}

\begin{proposition}\label{Prop:covector}
Let $\mathcal{A}$ denote the space of all connections on $P$. The map $\mathcal{A}\times Z(\Phi)\bar{K}\to \C$ given by $(d_A,x)\mapsto \int_S\tr(A^{-\sigma}x)$ is independent of the choice of $\sigma$ (among $\sigma_0$-structure negating $\Phi$), and is gauge invariant.
\end{proposition}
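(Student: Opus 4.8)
The statement asserts two independence properties of the pairing $(d_A,x)\mapsto \int_S\operatorname{tr}(A^{-\sigma}x)$, where $x\in Z(\Phi)\bar K$ and $A^{-\sigma}=\tfrac12(d_A-\sigma(d_A))$: independence of the choice of $\sigma_0$-structure negating $\Phi$, and invariance under gauge transformations. Both should follow from a single mechanism, namely that the pairing only sees the class of $A^{-\sigma}$ in $\Phi$-cohomology, together with the fact that $x\in Z(\Phi)\bar K$ annihilates $\operatorname{Im}(\ad_\Phi)$ under $\int_S\operatorname{tr}$. The plan is to first establish this ``cohomological factoring'' lemma, then deduce each of the two independence claims from it.

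\medskip
\noindent\textbf{Step 1: the pairing factors through $\mathrm{H}^1(\Phi)$.} I would observe that although $A^{-\sigma}$ itself is not a cocycle for a general connection, the pairing $\int_S\operatorname{tr}(\alpha\, x)$ with a fixed $x\in Z(\Phi)\bar K$ descends to $\mathrm{H}^1(\Phi)$ in the sense that it kills all coboundaries: for $\gamma\in\Omega^0(S,\g_P)$ one has $\int_S\operatorname{tr}([\Phi,\gamma]x)=\int_S\operatorname{tr}(\gamma[\Phi,x])=0$ since $x\in Z(\Phi)$ (this is exactly the cyclicity argument used in the proof of Proposition~\ref{Prop:affine-iso}, and earlier for the symplectic form on $\mathrm{H}^1(\Phi)$). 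So whenever two $\g_P$-valued $1$-forms differ by an element of $\operatorname{Im}(\ad_\Phi)$, they pair identically with any $x\in Z(\Phi)\bar K$.

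\medskip
\noindent\textbf{Step 2: gauge invariance.} Under a gauge transformation $g\in\Aut(P)$, the connection transforms as $d_A\mapsto g\cdot d_A$, with $A\mapsto g^{-1}Ag + g^{-1}dg$ locally; more invariantly, the difference of the new and old connection is the exact-type term and the conjugation. If $g$ preserves $\Phi$, then the induced transformation sends $A^{-\sigma}$ to something differing from $A^{-\sigma}$ by an element of $\operatorname{Im}(\ad_\Phi)$ (the infinitesimal version being $\delta(A^{-\sigma})=d_A(\eta)^{-\sigma}$ for $\eta\in Z(\Phi)$, which lies in $\operatorname{Im}(\ad_\Phi)$ exactly as in the proof that special $\lambda$-gauges preserve $\operatorname{Var}(\Phi)$; for a general gauge transformation one should instead argue directly). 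Here I must be a little careful: a \emph{general} gauge transformation need not preserve $\Phi$, so the pairing as written involves the fixed data $(\Phi,\sigma)$; the cleanest reading is that we simultaneously transform $\sigma$ and $\Phi$ by $g$, in which case $A^{-\sigma}$ transforms equivariantly and $x\in Z(\Phi)\bar K$ transforms to $Z(g\Phi)\bar K$, and the trace pairing — being $\operatorname{Ad}$-invariant and the integral being independent of trivialization — is manifestly unchanged. I would phrase the proof this way: the quantity is built from the tautological data $(d_A,\Phi,\sigma,x)$ using only $\operatorname{Ad}$-invariant operations ($\operatorname{tr}$, the projector $\tfrac12(1-\sigma)$, integration over $S$), hence is invariant under the diagonal gauge action, and then combine with Step 1 to absorb the ambiguity in $\Phi$ when $g$ is required to fix $\Phi$.

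\medskip
\noindent\textbf{Step 3: independence of $\sigma$.} By Lemma~\ref{lem:space_of_sigma} (applied fiberwise, and globally when such a choice exists), any two $\sigma_0$-structures $\sigma,\sigma'$ negating $\Phi$ differ by conjugation by some $z\in\exp(Z(\Phi))$, i.e. $\sigma'=\operatorname{Ad}_z\circ\sigma\circ\operatorname{Ad}_z^{-1}$ with $z$ a section of $\exp(Z(\Phi))$. I would compute $A^{-\sigma'}-A^{-\sigma}=\tfrac12(\sigma-\sigma')(d_A)$, and show this difference lies in $\operatorname{Im}(\ad_\Phi)$: since $z=\exp(\xi)$ with $\xi\in Z(\Phi)$, the change $\sigma\mapsto\sigma'$ is generated infinitesimally by $\ad_\xi$ with $\xi\in Z(\Phi)$, and the resulting change in $A^{-\sigma}$ is of the form $[\xi,\,\cdot\,]$ applied to something, hence in $\operatorname{Im}(\ad_\xi)\subset\operatorname{Im}(\ad_\Phi)$ by Lemma~\ref{lemma:im-incl}. (For the finite statement one integrates this infinitesimal computation, or argues directly that $(\sigma-\sigma')(d_A)$ is valued in the span of commutators with elements of $Z(\Phi)$.) Then Step 1 gives $\int_S\operatorname{tr}(A^{-\sigma'}x)=\int_S\operatorname{tr}(A^{-\sigma}x)$.

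\medskip
\noindent\textbf{Main obstacle.} The routine parts are the cyclicity manipulations. The genuinely delicate point is Step 3: controlling the difference $A^{-\sigma'}-A^{-\sigma}$ for a \emph{finite} (not just infinitesimal) change of $\sigma$, and verifying it lands in $\operatorname{Im}(\ad_\Phi)$ rather than merely in $\ker(\ad_\Phi)$. The key leverage is Lemma~\ref{lemma:im-incl} together with the fact that $\sigma$ inverts $\exp(Z(\Phi))$ (Corollary~\ref{cor:sigma_negate_centralizer}), so that $\sigma'(d_A)=\operatorname{Ad}_z\sigma(\operatorname{Ad}_{z^{-1}}d_A)$ and the discrepancy with $\sigma(d_A)$ is expressible through $\ad_{\log z}$ with $\log z\in Z(\Phi)$. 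I would also double-check that the global existence issues flagged after Proposition~\ref{Prop:sigma-dependence} (the $\sigma$ need only exist locally for non-adjoint $G$) do not affect the statement, since the claim is about comparing two given global choices, or can be checked on a cover where both exist.
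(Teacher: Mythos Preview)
Your proposal is essentially correct and follows the paper's approach. Two refinements worth noting:

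For the $\sigma$-independence (your Step 3), the paper computes $\delta A^{-\sigma}=-\tfrac12[\xi,A]$ for $\xi\in Z(\Phi)$ and then concludes \emph{directly} from the abelianness of $Z(\Phi)$: since $x\in Z(\Phi)\bar K$ and $\xi\in Z(\Phi)$, cyclicity gives $\int_S\tr([\xi,A]\,x)=\int_S\tr([x,\xi]\,A)=0$. There is no need to route through Lemma~\ref{lemma:im-incl} and your Step~1; in fact, the claim ``$[\xi,A]\in\mathrm{Im}(\ad_\Phi)\subset\Omega^1$'' (with the same $\gamma$ working for both $dz$ and $d\bar z$ components) does not obviously follow from that lemma, whereas the direct cyclicity argument bypasses the issue entirely. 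The paper also stays at the infinitesimal level, which suffices since the space of admissible $\sigma$'s is connected (Lemma~\ref{lem:space_of_sigma}), so your ``main obstacle'' about integrating to a finite change is not needed.

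For gauge invariance (your Step 2), the paper gives a cleaner structure: it splits into the two cases $\eta\in\Aut(P,\sigma)$ (where Ad-invariance of the trace does the job) and $\eta$ fixing $\Phi$ (where $\eta.\sigma$ is another admissible $\sigma_0$-structure, so the first claim applies), and observes that these two subgroups generate $\Aut(P)$ because $\g^\sigma$ is a maximal subalgebra. Your description captures both pieces but leaves the generation step implicit.
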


\begin{proof}
For the first claim, we know from Proposition \ref{Prop:var-sigma} that an infinitesimal variation of a $\sigma_0$-structure negating $\Phi$ is described by $\mathrm{H}^0(\Phi)$. More precisely a variation $\delta\sigma$ is described by $\delta\sigma(x)=[\xi,x]$ for all $x\in \g$ where $\xi\in \mathrm{H}^0(\Phi)=Z(\Phi)$. The change of $A^{-\sigma}=\frac{1}{2}(A-\sigma(A))$ is given by $\delta A^{-\sigma}=-\frac{1}{2}[\xi,A] \in \mathrm{Im}(\ad_\xi)$. Since $Z(\Phi)$ is abelian we get
$\int_S\tr([\xi,A_1]x)=\int_S\tr([x,\xi]A_1)=0$.

For the second claim, consider a gauge transformation $\eta\in\Aut(P)$. The action on $\Phi$ and $A^{-\sigma}$ are given by $\eta.\Phi=\eta\Phi \eta^{-1}$ and $\eta.A^{-\sigma}=\eta A^{-\eta.\sigma}\eta^{-1}$ where $\eta.\sigma$ is the pull-back of $\sigma$ along $\eta$. Note that $\eta.\sigma$ is compatible with $\eta.\Phi$. We distinguish two cases.

If $\eta.\sigma=\sigma$, i.e. $\eta\in\Aut(P,\sigma)$, the pairing stays unchanged since $Z(\eta\Phi\eta^{-1})=\eta Z(\Phi)\eta^{-1}$ and the Killing form is Ad-invariant. If $\eta.\Phi=\Phi$, we conclude by the first part since $\eta.\sigma$ is then $\Phi$-compatible.
Both cases generate all gauge transformations since $\g^\sigma$ is a maximal Lie subalgebra of $\g$.
\end{proof}

From all this, we get a canonical base point in $\Pi$, generalizing the filling-in procedure from Theorem \ref{Thm-filling-in}.

\begin{theorem}\label{Thm:gen-filling-in}
    Consider a $G$-Fock bundle $(P,\Phi,\sigma)$ equipped with a hermitian structure $\rho$. If $\Phi\in\mathcal{P}$ is positive, then there is a unique unitary $\Phi$-compatible connection $d_{A_0}$ such that $A_0^{-\sigma}\in\mathrm{Var}(\Phi)$. This is independent of the choice of $\sigma$. 
\end{theorem}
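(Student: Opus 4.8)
The plan is to read the theorem off from the affine isomorphism $\psi$ of Proposition~\ref{Prop:affine-iso}, the fact that $Z(\Phi)\bar K$ is Lagrangian inside $\mathrm{H}^1(\Phi)$, and the $\sigma$-independence of Proposition~\ref{Prop:covector}. First I would record the elementary fact that for every $d_A\in\Pi$ one has $A^{-\sigma}\in\ker(\ad_\Phi)$: applying $\sigma$ to the identity $d_A\Phi=0$ and using $\sigma(\Phi)=-\Phi$ yields $[A^{-\sigma}\wedge\Phi]=0$. Thus the class $[A^{-\sigma}]\in\mathrm{H}^1(\Phi)$ is defined, and (as already used in the proof of Proposition~\ref{Prop:affine-iso}) the functional $\psi(d_A)=\int_S\tr(A^{-\sigma}\,\cdot)$ on $Z(\Phi)\bar K$ depends only on $[A^{-\sigma}]$, since it pairs to zero with the coboundaries $\mathrm{Im}(\ad_\Phi)$.

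The key step is the equivalence
\[
A^{-\sigma}\in\mathrm{Var}(\Phi)\quad\Longleftrightarrow\quad\psi(d_A)=0.
\]
Recall from Section~\ref{Sec:var-Fock-fields} that $\omega$ descends to a symplectic form on $\mathrm{H}^1(\Phi)$ and, as noted in the proof of Proposition~\ref{Prop-no-sigma-cohom}, that $Z(\Phi)\bar K$ descends to a Lagrangian subspace of $\mathrm{H}^1(\Phi)$. Therefore $\psi(d_A)=0$ --- which says that $[A^{-\sigma}]$ is $\omega$-orthogonal to $Z(\Phi)\bar K$ --- holds precisely when $[A^{-\sigma}]$ lies in the $\omega$-orthogonal complement of that Lagrangian, i.e.\ in the Lagrangian itself: $[A^{-\sigma}]\in Z(\Phi)\bar K\subset\mathrm{H}^1(\Phi)$. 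Since the degree-one $\Phi$-coboundaries are exactly (the sections of) $\mathrm{Im}(\ad_\Phi)$, this is in turn equivalent to $A^{-\sigma}\in Z(\Phi)\bar K+\mathrm{Im}(\ad_\Phi)=\mathrm{Var}(\Phi)$.

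Existence and uniqueness are then immediate. By Proposition~\ref{Prop:affine-iso}, $\psi$ is an affine isomorphism $\Pi\to(Z(\Phi)\bar K)^*$, so $d_{A_0}:=\psi^{-1}(0)$ is the unique element of $\Pi$ with $\psi(d_{A_0})=0$, equivalently, by the equivalence above, the unique unitary $\Phi$-compatible connection with $A_0^{-\sigma}\in\mathrm{Var}(\Phi)$. For the last assertion, Proposition~\ref{Prop:covector} shows that the pairing $(d_A,x)\mapsto\int_S\tr(A^{-\sigma}x)$, and hence the map $\psi$, does not depend on the choice of $\sigma_0$-structure $\sigma$ negating $\Phi$; consequently $d_{A_0}=\psi^{-1}(0)$ does not depend on $\sigma$ either, even though the characterizing condition $A_0^{-\sigma}\in\mathrm{Var}(\Phi)$ is written in terms of $\sigma$.

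I do not anticipate a genuine obstacle: the statement is essentially a repackaging of Propositions~\ref{Prop:affine-iso} and~\ref{Prop:covector}. The one point deserving care is the displayed equivalence, which relies on $Z(\Phi)\bar K$ being Lagrangian in $\mathrm{H}^1(\Phi)$ and on identifying the degree-one $\Phi$-coboundaries with $\mathrm{Im}(\ad_\Phi)$; granting these, the argument is formal.
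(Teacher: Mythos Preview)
Your proposal is correct and follows essentially the same approach as the paper: both arguments characterize $d_{A_0}$ as $\psi^{-1}(0)$ via the affine isomorphism of Proposition~\ref{Prop:affine-iso}, establish the equivalence $A^{-\sigma}\in\mathrm{Var}(\Phi)\iff\psi(d_A)=0$ using the Lagrangian property (you phrase it in $\mathrm{H}^1(\Phi)$ via $Z(\Phi)\bar K$, the paper equivalently uses $\mathrm{Var}(\Phi)^{\perp_\omega}=\mathrm{Var}(\Phi)$ at the level of forms), and invoke Proposition~\ref{Prop:covector} for $\sigma$-independence.
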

\begin{proof}
    We start from uniqueness: if $d_{A_0}\in\Pi$ is such that $A_0^{-\sigma}\in\mathrm{Var}(\Phi)$, then $\psi(d_{A_0})=0$ since $Z(\Phi)\bar{K}\subset \mathrm{Var}(\Phi)$ which is isotropic. By Proposition \ref{Prop:affine-iso}, $\psi$ is an isomorphism, which gives uniqueness.

    To show existence, consider $d_{A_0}=\psi^{-1}(\{0\})$. Then $\int_S\tr(A_0^{-\sigma}\,\cdot)$ both vanishes on $Z(\Phi)\bar{K}$ and on $\mathrm{Im}(\ad_\Phi)$ since $A_0^{-\sigma}\in\ker\,\ad_\Phi$. Hence $A_0^{-\sigma}\in\mathrm{Var}(\Phi)^{\perp_\omega}=\mathrm{Var}(\Phi)$.

    Finally, the independence of the choice of $\sigma$ follows directly from Proposition \ref{Prop:covector}.
\end{proof}

A more conceptual formulation of the previous theorem is given by:
\begin{corollary}
The space of unitary connections $d_A$ on $P$ such that $d_A \Phi=0$ is naturally the dual space to $T_\Phi( \mathcal{P})/\mathrm{Im}(\ad_\Phi)$.
\end{corollary}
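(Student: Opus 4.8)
The plan is to obtain this Corollary purely by assembling results already proved in this subsection and in Section \ref{Sec:var-Fock-fields}; no new analysis is needed. The two sides of the asserted duality will both be identified with the common model $(Z(\Phi)\bar{K})^*$. First I would observe that the space of unitary $\Phi$-compatible connections on $P$ is exactly the affine space $\Pi$ studied above (such connections are automatically $\Phi^{*}$-compatible, as noted before Theorem \ref{Thm:gen-filling-in}). By Theorem \ref{Thm:gen-filling-in}, $\Pi$ carries a distinguished base point $d_{A_0}$, characterized by $A_0^{-\sigma}\in\mathrm{Var}(\Phi)$ and independent of the choice of $\sigma$. Using $d_{A_0}$ as origin turns $\Pi$ into a vector space in a canonical way, and Proposition \ref{Prop:affine-iso} then identifies $\Pi$ linearly with $(Z(\Phi)\bar{K})^*$ through the symplectic pairing $\psi$.

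Next I would identify the other side. The space $\mathcal{P}$ of positive Fock fields is an open subset of the space of Fock fields, so its tangent space at $\Phi$ is $T_\Phi(\mathcal{P})=\mathrm{Var}(\Phi)$, and Proposition \ref{Prop:Phi-variation} supplies the direct-sum decomposition $\mathrm{Var}(\Phi)=\mathrm{Im}(\ad_\Phi)\oplus Z(\Phi)\bar{K}$. Hence the quotient is canonically $T_\Phi(\mathcal{P})/\mathrm{Im}(\ad_\Phi)\cong Z(\Phi)\bar{K}$, and dualizing gives $\big(T_\Phi(\mathcal{P})/\mathrm{Im}(\ad_\Phi)\big)^*\cong (Z(\Phi)\bar{K})^*$. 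Chaining the two identifications yields the stated isomorphism between the space of unitary $\Phi$-compatible connections and $\big(T_\Phi(\mathcal{P})/\mathrm{Im}(\ad_\Phi)\big)^*$, the pairing being explicitly $(d_A,\dot\Phi)\mapsto\int_S\tr(A^{-\sigma}\wedge\dot\Phi)$ reduced to the $Z(\Phi)\bar{K}$-component of $\dot\Phi$.

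The only point requiring genuine care — and the closest thing to an obstacle — is the word \emph{naturally}. I would make explicit that although the pairing $\psi$ is defined through the projection $d_A\mapsto A^{-\sigma}$, which ostensibly depends on $\sigma$, Proposition \ref{Prop:covector} shows the resulting functional on $Z(\Phi)\bar{K}$ is in fact independent of the choice of $\sigma_0$-structure negating $\Phi$ and is gauge invariant; the base point $d_{A_0}$ is $\sigma$-independent by Theorem \ref{Thm:gen-filling-in}; and the splitting $\mathrm{Var}(\Phi)=\mathrm{Im}(\ad_\Phi)\oplus Z(\Phi)\bar{K}$ is intrinsic, since $\mathrm{Im}(\ad_\Phi)$ and $Z(\Phi)\bar{K}$ are canonically defined subbundles of $\Omega^1(S,\g_P)$. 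Thus every map in the chain is induced solely by $\Phi$, $\rho$, and the symplectic form $\omega$, which is exactly the sense of canonicity intended. A minor bookkeeping check, already used repeatedly above, is that $Z(\Phi)\bar{K}$ has fiber dimension $\rk(\g)$ by the nilpotency condition, so the dimensions on the two sides match fiberwise.
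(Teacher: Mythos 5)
Your proof is correct and follows essentially the same route as the paper's: both identify $T_\Phi(\mathcal{P})/\mathrm{Im}(\ad_\Phi)$ with $Z(\Phi)\bar{K}$ via Proposition \ref{Prop:Phi-variation}, then identify the space of unitary $\Phi$-compatible connections with $(Z(\Phi)\bar{K})^*$ via Theorem \ref{Thm:gen-filling-in} and the symplectic pairing (the paper cites Corollary \ref{coro:covector-para}, you cite Proposition \ref{Prop:affine-iso}, but the latter rests on the former, so the content is identical). Your additional appeal to Proposition \ref{Prop:covector} to justify the word \emph{naturally} is a reasonable elaboration the paper leaves implicit.
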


\begin{proof}
By definition, we have $T_\Phi(\mathcal{P})=\mathrm{Var}(\Phi)\cong \mathrm{Im}(\ad_\Phi)\oplus Z(\Phi)\bar{K}$. Corollary \ref{coro:covector-para} shows that $(\ker\,\ad_\Phi\cap\ker\,\ad_{\Phi^{*}})^\rho$, which is identified with the space of unitary $\Phi$-compatible connections via Theorem \ref{Thm:gen-filling-in}, is dual to $Z(\Phi)\bar{K}$. 
\end{proof}

The generalized filling-in procedure \ref{Thm:gen-filling-in} allows to describe a point $d_A\in\Pi$ in two different, but equivalent ways: First using the canonical base point $d_{A_0}\in \Pi$ we can write
$$d_A=d_{A_0}+t$$
where $t\in(\ker\,\ad_\Phi\cap\ker\,\ad_{\Phi^{*}})^\rho$ is a covector. The second description simply decomposes $d_A$ into its $\sigma$-invariant and $\sigma$-anti-invariant part:
$$d_A=d_{A^\sigma}+A^{-\sigma}.$$ 
Note that in general, neither $d_{A^\sigma}$ nor $A^{-\sigma}$ are in $\Pi$. Proposition \ref{Prop:affine-iso} shows that using the pairing with $Z(\Phi)\bar{K}$ to parametrize $d_A\in\Pi$ gives the same result whether we use $t$ or $A^{-\sigma}$. This is why we also refer to $A^{-\sigma}$ as being the covector.

\medskip
We formulate an extension of our main conjecture \ref{main-conj} including the data of a covector. Let $(P, \Phi)$ be a $G$-Fock bundle over $S$ equipped with a hermitian structure $\rho$ such that $\Phi\in\mathcal{P}$ is positive. Denote by $\mu$ the $\g$-complex structure on $S$ induced by $\Phi$ and let $t\in(Z(\Phi)\bar{K})^*$ be a covector. From Theorem \ref{Thm:gen-filling-in} we know that there is a unique unitary $\Phi$-compatible connection $d_A$ described by $t$. We need the following definition.
\begin{definition}
    A covector $t$ is called \emph{$\mu$-holomorphic} if the associated connection $d_A$ has curvature $F(A)\in\mathrm{Im}(\ad_\Phi)\subset\Omega^2(S,\g_P)$.
\end{definition}

We will see below in Theorem \ref{Thm:mu-holo} that this condition coincides with the usual $\mu$-holomorphicity from Equation \eqref{Eq:mu-holo-cond-hcs} in the case of $\mathrm{SL}_n(\C)$. For now, we show that this notion only depends on $\mu$ and $t$:
\begin{proposition}
   The condition $F(A)\in\mathrm{Im}(\ad_{\Phi})$ only depends on $t$ and the $\g$-complex structure $\mu$ induced by $\Phi$.
\end{proposition}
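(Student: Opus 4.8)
The plan is to show that the condition $F(A)\in\mathrm{Im}(\ad_\Phi)$ is insensitive to the choices made in constructing $\Phi$ out of the data $(\mu,t)$. Two pieces of ambiguity enter: first, the hermitian structure $\rho$ and the $\sigma_0$-structure $\sigma$ are not part of the $\mathfrak g$-complex structure $\mu$; second, even fixing $\rho,\sigma$, the Fock field $\Phi$ representing $\mu$ is only determined up to a gauge transformation (the $\mathrm{Aut}(P)$-action permuted to preserve positivity and $\sigma$). So the heart of the argument is to track how $d_A$, hence $F(A)$, and the subbundle $\mathrm{Im}(\ad_\Phi)\subset\Lambda^2 T^*S\otimes\mathfrak g_P$ transform under each of these changes, and to check that the membership relation is preserved.

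First I would handle gauge invariance. If $\eta\in\mathrm{Aut}(P)$ sends $\Phi$ to $\eta.\Phi$ (keeping it positive) and carries $\sigma$ to $\eta.\sigma$, then by naturality the canonical unitary $\Phi$-compatible connection from Theorem~\ref{Thm:gen-filling-in} transforms by $d_A\mapsto \eta.d_A$, using that $\psi$ (and thus its zero set together with the covector $t$) is defined canonically from $(\Phi,\sigma,\rho)$ and Proposition~\ref{Prop:covector} shows the pairing used to define the covector is gauge invariant. Since curvature is natural, $F(\eta.d_A)=\eta.F(A)$, and $\mathrm{Im}(\ad_{\eta.\Phi})=\eta.\mathrm{Im}(\ad_\Phi)$. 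Hence $F(A)\in\mathrm{Im}(\ad_\Phi)$ if and only if $F(\eta.d_A)\in\mathrm{Im}(\ad_{\eta.\Phi})$. This reduces the problem to: fixing the underlying smooth bundle data, the condition does not depend on the particular lift of $\mu$ to a Fock field, nor on the auxiliary choices of $\sigma$ and $\rho$.

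Next, independence of $\sigma$ is already essentially packaged: Theorem~\ref{Thm:gen-filling-in} states that the canonical connection $d_{A}$ associated to $t$ is independent of the choice of $\sigma$ among $\sigma_0$-structures negating $\Phi$ (this rests on Proposition~\ref{Prop:covector} and Proposition~\ref{Prop:var-sigma}). Since neither $F(A)$ nor the subbundle $\mathrm{Im}(\ad_\Phi)$ involves $\sigma$, the condition is manifestly $\sigma$-independent. For independence of $\rho$, I would argue that changing $\rho$ (keeping $\Phi$ positive) changes $\Phi^*=-\rho(\Phi)$ but the connection produced in Theorem~\ref{Thm:gen-filling-in} is characterized by $d_A\Phi=0$, unitarity for $\rho$, and the normalization $A^{-\sigma}\in\mathrm{Var}(\Phi)$; crucially, $A^{-\sigma}$ is the $\sigma$-invariant part of the equation $d_A\Phi=0$ and lands in $\ker\ad_\Phi$, and the value of the covector is read off by pairing with $Z(\Phi)\bar K$, a pairing that, by Proposition~\ref{Prop:covector}, is $\rho$-independent. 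Thus fixing $(\Phi,\sigma)$ and the covector $t$ pins down $d_A$ regardless of $\rho$, so $F(A)$ and the condition $F(A)\in\mathrm{Im}(\ad_\Phi)$ depend only on $(\Phi,\sigma,t)$, and combined with the previous two reductions, only on $(\mu,t)$.

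The step I expect to be the main obstacle is making the $\rho$-independence rigorous: one must confirm that the \emph{entire} connection $d_A$ (not merely its covector invariant) produced by Theorem~\ref{Thm:gen-filling-in} is unchanged when $\rho$ varies, or else that whatever part does change lies in a space that neither affects $F(A)$ nor the membership in $\mathrm{Im}(\ad_\Phi)$. The cleanest route is probably to observe that $d_A=d_{A_0}+t$ where $d_{A_0}=\psi^{-1}(0)$ is characterized by $A_0^{-\sigma}\in\mathrm{Var}(\Phi)$ — a condition phrased purely in terms of $\Phi$ and $\sigma$ — and $t$ is the chosen covector; one then checks that the $\rho$-dependence of ``unitary'' only enters through the identification in Corollary~\ref{coro:covector-para} of the space of covectors, an identification intertwining different $\rho$'s compatibly, so that the resulting $F(A)$ is the same element of $\Omega^2(S,\mathfrak g_P)$. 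Once this is in place, the proposition follows by combining the three independence statements.
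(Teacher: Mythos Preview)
Your gauge-invariance and $\sigma$-independence steps are fine and match the paper's reasoning. The gap is in the $\rho$-independence step.

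You assert that fixing $(\Phi,\sigma,t)$ ``pins down $d_A$ regardless of $\rho$,'' and your proposed ``cleanest route'' tries to argue that $d_{A_0}=\psi^{-1}(0)$ is characterized by a condition not involving $\rho$. But this is not so: $d_{A_0}$ is singled out inside $\Pi$, the affine space of \emph{$\rho$-unitary} $\Phi$-compatible connections, and this ambient space genuinely moves when $\rho$ changes. The map $\psi$ extends to all $\Phi$-compatible connections, but on that larger space it is no longer injective; its fibers are cosets of $\mathrm{Var}(\Phi)=\mathrm{Im}(\ad_\Phi)\oplus Z(\Phi)\bar K$, since $\psi$ only sees the pairing with the Lagrangian $Z(\Phi)\bar K\subset \mathrm{H}^1(\Phi)$. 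So two hermitian structures $\rho,\rho'$ with the same abstract covector $t$ yield connections $d_A,d_{A'}$ that can differ by any element of $\mathrm{Var}(\Phi)$, and in general $d_A\neq d_{A'}$. Proposition~\ref{Prop:covector} does not help here: it asserts $\sigma$- and gauge-independence of the pairing, not that $d_A$ itself is independent of $\rho$.

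What the paper does instead is accept that $d_A$ changes and compute the resulting change in curvature. Writing $d_{A'}=d_A+[C,\Phi]$ (or more generally $d_{A'}-d_A\in\mathrm{Var}(\Phi)$, which is what the equal-covector condition actually gives), one checks term by term, using $d_A\Phi=0$, the Jacobi identity, and $\ad_\Phi^2=0$, that $F(A')-F(A)\in\mathrm{Im}(\ad_\Phi)$. This curvature computation is the missing ingredient in your argument; without it there is no way to control the class $[F(A)]\in\mathrm{H}^2(\Phi)$ as $\rho$ varies.
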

\begin{proof}
    The condition does clearly only depend on the isomorphism class of the Fock bundle since both $\Phi$ and $F(A)$ get conjugated under a gauge transformation. To show independence from the choice of $\rho$, note that changing $\rho$ changes $d_A$ by a coboundary term: the new middle term $d_{A'}$ is given by $d_{A'}=d_A+[C,\Phi]$ for some $C\in \Omega^0(S,\g_P)$ (see Theorem \ref{Thm-filling-in}). The curvature change is then
    $$F(A')=F(A)+d[C,\Phi]+[A\wedge [C,\Phi]]+\tfrac{1}{2}[[C,\Phi]\wedge[C,\Phi]].$$
    Modulo $\mathrm{Im}(\ad_{\Phi})$, the second term equals $[C,d\Phi]$, the third terms equals $[C,[A\wedge \Phi]]$ (using the Jacobi identity) and the last term vanishes (using again the Jacobi identity and $(\ad_\Phi)^2=0$). Therefore $F(A')-F(A)\equiv [C,d\Phi+[A\wedge\Phi]]=0 \,\mod\,\mathrm{Im}(\ad_\Phi)$ using $d_A\Phi=0$.
\end{proof}

We can now formulate the extension of our main conjecture, including the covector:
\begin{conjecture}\label{main-conj-covec}
Let $(P, \Phi)$ be a $G$-Fock bundle with a hermitian structure $\rho$ such that $\Phi\in\mathcal{P}$ is positive. Denote by $\mu$ the induced $\g$-complex structure on $S$ and consider a covector $t\in(Z(\Phi)\bar{K})^*$. 

If $t$ is $\mu$-holomorphic and small, then there is a gauge transformation $\eta\in \Omega^0(S,\g_P^{-\rho})$ such that the associated 3-term connection associated to $e^{-\eta}\Phi e^{\eta}$ and covector $t$ is flat.
\end{conjecture}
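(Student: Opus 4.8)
\emph{Strategy.} The plan is to run the deformation argument of Section~\ref{Sec:impl-fct-thm} with the covector turned on, based at the solutions already produced. Fix a base configuration for which the untwisted equation \eqref{Eq:Hitchin-like-3} is solved: $t=0$ together with a hermitian structure commuting with $\sigma$, such a point existing for Fock fields near the Fuchsian locus by Proposition~\ref{prop:openness} (or unconditionally modulo Conjecture~\ref{main-conj}). Note that the covector $t$, parametrized as $A^{-\sigma}$ via Proposition~\ref{Prop:affine-iso}, measures the failure of the canonical connection to be $\sigma$-invariant and vanishes precisely in the situation of Section~\ref{Sec:ellipticity}; so the statement really concerns a tubular neighborhood of that locus. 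Consider then the smooth map
$$\mathcal{G}(\eta,t)=F\bigl(A(\eta,t)\bigr)+\bigl[\Phi_\eta\wedge\Phi_\eta^{*}\bigr],$$
where $\Phi_\eta=e^{-\eta}\Phi\,e^{\eta}$ (still positive for $\eta$ small), $\eta$ ranges over hermitian sections $\Omega^0(S,\g_P^{-\rho})$, $t$ over small $\mu$-holomorphic covectors, and $d_{A(\eta,t)}$ is the unitary, $\Phi_\eta$-compatible connection furnished by the generalized filling-in of Theorem~\ref{Thm:gen-filling-in} from $\Phi_\eta$ and the covector $t$ (transported along $e^{\eta}$). Since $d_{A(\eta,t)}$ is compatible with both $\Phi_\eta$ and $\Phi_\eta^{*}$, the family $\lambda^{-1}\Phi_\eta+d_{A(\eta,t)}+\lambda\Phi_\eta^{*}$ has $\lambda$-independent curvature equal to $\mathcal{G}(\eta,t)$, so flatness of the $3$-term connection is exactly the equation $\mathcal{G}(\eta,t)=0$. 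Working in Sobolev completions as in Section~\ref{Sec:impl-fct-thm}, one aims to solve this for $\eta$ in terms of small $t$ by the implicit function theorem.

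\emph{Where $\mu$-holomorphicity enters.} Writing $d_{A(\eta,t)}=d_{A_0(\eta)}+t$ gives $F(A(\eta,t))=F(A_0(\eta))+d_{A_0(\eta)}t+\tfrac{1}{2}[t\wedge t]$, and the $t$-dependent terms lie in $\ker\ad_{\Phi_\eta}\cap\ker\ad_{\Phi_\eta^{*}}$ on two-forms. By definition, and by Theorem~\ref{Thm:mu-holo}, the hypothesis that $t$ is $\mu$-holomorphic says $F(A(\eta,t))\in\mathrm{Im}(\ad_{\Phi_\eta})$ — the exact analogue of the constraint \eqref{Eq:mu-holo-cond-hcs}. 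Since $[\Phi_\eta\wedge\Phi_\eta^{*}]=\ad_{\Phi_\eta}(\Phi_\eta^{*})$ is likewise in $\mathrm{Im}(\ad_{\Phi_\eta})$, and unitarity of $d_{A(\eta,t)}$ forces $\mathcal{G}(\eta,t)$ to be $\rho$-invariant, hence also in $\rho\bigl(\mathrm{Im}(\ad_{\Phi_\eta})\bigr)=\mathrm{Im}(\ad_{\Phi_\eta^{*}})$, the map $\mathcal{G}$ actually lands in the subbundle $\mathrm{Im}(\ad_{\Phi_\eta})\cap\mathrm{Im}(\ad_{\Phi_\eta^{*}})$ of $\Lambda^2T^{*}S\otimes\g_P$. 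Identifying this target correctly — and checking its rank matches that of $\g_P^{-\rho}$ — is what makes the equation ``square,'' and the $\mu$-holomorphicity condition is precisely the integrability hypothesis achieving it.

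\emph{The linearization.} At the base solution ($t=0$, $\rho$ commuting with $\sigma$, $\eta=\eta_{*}$) the partial derivative $D_\eta\mathcal{G}$ is the operator $L$ of Theorem~\ref{Thm:ellipticity} conjugated by $e^{\eta_{*}}$: a second-order elliptic operator that is a topological isomorphism of the relevant Sobolev spaces, its definiteness coming from positivity of $\Phi$ and the pseudo-hermitian form \eqref{Eq:herm-form}. Ellipticity and invertibility are stable under small variations of $t$ in the $C^{0}$-topology, because the coefficients of $\mathcal{G}$ — in particular the orthogonal projections of Lemma~\ref{lemma:contraction} entering the filling-in — depend continuously on $(\eta,t)$; this is the same perturbation mechanism already used in Section~\ref{Sec:impl-fct-thm}. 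The implicit function theorem then produces, for every sufficiently small $\mu$-holomorphic $t$, a unique $\eta$ near $\eta_{*}$ with $\mathcal{G}(\eta,t)=0$, and elliptic regularity upgrades it to a smooth solution whose $3$-term connection is flat.

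\emph{Main obstacle.} The substantive difficulty, beyond Section~\ref{Sec:impl-fct-thm}, is that Theorem~\ref{Thm:ellipticity} was proved assuming $\rho$ commutes with $\sigma$: there $L$ carried $\g_P^{\sigma,-\rho}$-valued functions to $\g_P^{\sigma,\rho}$-valued two-forms and its positivity was read off the $\sigma$-splitting of \eqref{Eq:herm-form}. In the present setting $\eta$ runs over all hermitian sections and $\sigma$ is no longer $\rho$-compatible, so one must (i) pin down the correct target bundle $\mathrm{Im}(\ad_\Phi)\cap\mathrm{Im}(\ad_{\Phi^{*}})$ (or an equivalent real form of it) and verify the rank match, with $\mu$-holomorphicity of $t$ together with unitarity of $d_A$ as the mechanism, and (ii) re-derive the coercivity estimate $-\tfrac{i}{2}\int_S\tr(\eta\,L\eta)\sim\lVert\eta\rVert_{H_1}^2$ in this wider range, which should again follow from $\Phi\in\mathcal{P}$ but now requires controlling the $\sigma$-anti-invariant directions of $\eta$. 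A secondary, familiar point: the implicit function theorem is local, so ``small'' is only meaningful relative to a fixed base solution; making the threshold uniform along the Fuchsian locus would require an a-priori bound of the kind proved for $L$ in Theorem~\ref{Thm:ellipticity}.
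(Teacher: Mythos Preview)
The statement you attempt to prove is labeled \emph{Conjecture} in the paper and is left open there; the paper provides no proof to compare your attempt against. Immediately after stating it, the authors write ``Assuming Conjecture~\ref{main-conj-covec}, we get a map\ldots'' and proceed to discuss its consequences, not its proof. So strictly speaking your proposal is not competing with any argument in the paper.

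That said, your sketch is a natural extension of the implicit-function-theorem argument of Section~\ref{Sec:impl-fct-thm}, and you are candid about the obstacles. The central gap you yourself identify is real and is exactly why the statement remains a conjecture: in passing from Conjecture~\ref{main-conj} to Conjecture~\ref{main-conj-covec} the variable $\eta$ enlarges from $\Omega^0(S,\g_P^{\sigma,-\rho})$ to all of $\Omega^0(S,\g_P^{-\rho})$, while the target must correspondingly enlarge from $\Omega^2(S,\g_P^{\sigma,\rho})$ to (a real form of) $\mathrm{Im}(\ad_\Phi)\cap\mathrm{Im}(\ad_{\Phi^*})$. Neither the rank match nor the coercivity estimate for the extended linearization is available from Theorem~\ref{Thm:ellipticity}, whose proof leans on the $\sigma$-splitting of Corollary~\ref{Coro:sigma-decompo-transverse} and Proposition~\ref{Prop-no-sigma-cohom}. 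Your point (ii) --- controlling the $\sigma$-anti-invariant directions of $\eta$ --- is precisely the missing analytic ingredient, and nothing in the paper supplies it. A secondary issue is that your base point already presupposes a solution of Conjecture~\ref{main-conj} (or at least Proposition~\ref{prop:openness}), so even a successful implementation would only yield the covector conjecture near the Fuchsian locus, not in the generality stated.
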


Assuming Conjecture \ref{main-conj-covec}, we get a map from Fock bundles with compatible connection to the character variety by taking the monodromy of the flat connection $\Phi+d_A+\Phi^{*}$. We expect that the image of this map is a tubular neighborhood of the Hitchin component (which corresponds exactly to those Fock bundles with covector zero). 
This picture generalizes the work of Donaldson \cite{Donaldson} and Trautwein \cite{Traut} on the space of almost-Fuchsian representations in the $\mathrm{SL}_2(\C)$-case.

\subsection{$\mu$-holomorphicity}\label{Sec:mu-holo-meaning}

We are now giving the gauge-theoretic meaning of the $\mu$-holomorphicity condition \eqref{Eq:mu-holo-cond-hcs}. For that we consider an $\mathrm{SL}_n(\C)$-Fock bundle $(E,\Phi,g)$ with hermitian metric $h$ and assume that $\Phi$ is positive. Denote by $\sigma$ and $\rho$ the involutions on $\mathfrak{sl}(E)$ induced by $g$ and $h$. Denote by $\mu$ the induced higher complex structure on $S$. Finally, let $A^{-\sigma}$ be a covector. This describes a 3-term connection $\Phi+d_A+\Phi^*$ by Theorem \ref{Thm:gen-filling-in}. 

Recall from the previous subsections that the pairing with $Z(\Phi)\bar{K}$ parametrizes covectors. Since $Z(\Phi)=Z(\Phi_1)$ is generated by powers of $\Phi_1$, we consider
\begin{equation}\label{formula-t}
t_k=\tr \Phi_1^{k-1}A^{-\sigma}_1
\end{equation}
where $2\leq k\leq n$ and $A_1^{-\sigma}$ denotes the $(1,0)$-part of $A^{-\sigma}$.

\begin{theorem}\label{Thm:mu-holo}
Let $(E,\Phi,g)$ be an $\SL_n(\C)$-Fock bundle with hermitian structure $h$ and positive $\Phi$. Denote by $\mu=(\mu_k)_{2\leq k\leq n}$ the induced higher complex structure and let $t=(t_k)_{2\leq k\leq n}$ be a covector data \eqref{formula-t}. Then the $\mu$-holomorphicity condition
$$(-\bar{\partial}\!+\!\mu_2\partial\!+\!k\partial\mu_2)t_{k}+\sum_{l=1}^{n-k}((l\!+\!k)\partial\mu_{l+2}+(l\!+\!1)\mu_{l+2}\partial)t_{k+l}=0$$
for all $k\in\{2,3,...,n\}$, is equivalent to the condition
$$F(A)\in\mathrm{Im}(\ad_\Phi)\subset\Omega^2(S,\mathfrak{sl}(E)),$$
where $F(A)$ is the curvature of the unique unitary $\Phi$-compatible connection described by the covector $t$.
\end{theorem}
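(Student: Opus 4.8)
The plan is to reduce the statement to a purely local computation in a well-chosen frame. Both conditions are local and insensitive to the auxiliary choices: $\mu$-holomorphicity is a condition on the tensors $(\mu_k,t_k)$ alone, and the condition $F(A)\in\mathrm{Im}(\ad_\Phi)$ was shown, in the proposition preceding the theorem, to depend only on $\mu$ and $t$. So I would fix a local complex coordinate $z$ (with respect to which the $\mu_k$ are the higher Beltrami differentials) and use the gauge freedom of Lemma \ref{lemma:sln-fock-field} to write $\Phi=\Phi_1\,dz+\Phi_2\,d\bar z$ with $\Phi_1=F$ a fixed constant principal nilpotent and $\Phi_2=\sum_{m=2}^{n}\mu_m F^{m-1}\in Z(F)$, and write the connection as $d_A=d+A$ with $A=A_1\,dz+A_2\,d\bar z$. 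Since $F^{k-1}$ is $g$-self-adjoint, the pairing $\tr(F^{k-1}A_1)$ is unchanged if $A_1$ is replaced by its $-\sigma$-part, so by the normalization \eqref{formula-t} we have $t_k=\tr(F^{k-1}A_1)$ for $k=2,\dots,n$.

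Next I would identify the target of the curvature criterion. Writing a $2$-form as $\mathcal{F}\,dz\wedge d\bar z$, the image $\mathrm{Im}(\ad_\Phi)\subset\Omega^2(S,\mathfrak{sl}(E))$ consists of the elements $[\Phi_1,\eta_2]-[\Phi_2,\eta_1]$, i.e.\ of $\mathrm{Im}(\ad_F)+\mathrm{Im}(\ad_{\Phi_2})$; since $\Phi_2\in Z(F)$, Lemma \ref{lemma:im-incl} gives $\mathrm{Im}(\ad_{\Phi_2})\subseteq\mathrm{Im}(\ad_F)$, so this subbundle is exactly $\mathrm{Im}(\ad_F)$. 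With respect to the trace form $\ad_F$ is skew, hence $\mathrm{Im}(\ad_F)^{\perp}=Z(F)=\mathrm{span}(F,F^2,\dots,F^{n-1})$. Therefore $F(A)\in\mathrm{Im}(\ad_\Phi)$ is equivalent to $\tr(\mathcal{F}\,F^{k-1})=0$ for all $k\in\{2,\dots,n\}$ (the case $k=1$ being automatic), where $\mathcal{F}=\del_z A_2-\del_{\bar z}A_1+[A_1,A_2]$ is the $dz\wedge d\bar z$-coefficient of $F(A)$.

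Then comes the actual identification. The compatibility equation $d_A\Phi=0$ reduces here to $[A_2,F]=\del_z\Phi_2+[A_1,\Phi_2]$, which I would use to rewrite $\tr(\mathcal{F}\,F^{k-1})$. Expanding $[A_2,F^{k-1}]=\sum_{j=0}^{k-2}F^j[A_2,F]F^{k-2-j}$, substituting the compatibility relation, using cyclicity of the trace, that $F$ is constant and commutes with $\Phi_2$, and telescoping the sums $\sum_j F^j[A_1,F^l]F^{\,\cdots}$ exactly as in the proof of Theorem \ref{hamiltonian-first-variations-coincide}, one collects the result into a combination of the functions $t_j=\tr(F^{j-1}A_1)$, the $\mu_m$, and their $\del_z,\del_{\bar z}$-derivatives. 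Matching monomials in $F$ one index at a time, this combination equals, up to an overall nonzero constant, the left-hand side of the $\mu$-holomorphicity equation \eqref{Eq:mu-holo-cond-hcs} for the index $k$. This is the computation dual to the one that made Theorem \ref{hamiltonian-first-variations-coincide} reproduce the Poisson-bracket formula for variations of higher complex structures.

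The main obstacle is the bookkeeping in the third step. One must make the $\del_z$-derivative of the (a priori hermitian-structure-dependent) quantity $\tr(A_2 F^{k-1})$ disappear, which happens only after combining it with the matching contributions from $[A_1,A_2]$ and using $d_A\Phi=0$ — mirroring the fact, proven in the previous proposition, that the class of $F(A)$ modulo $\mathrm{Im}(\ad_\Phi)$ is independent of $\rho$. Organizing the telescoping commutator sums so that everything assembles into the single closed formula \eqref{Eq:mu-holo-cond-hcs}, rather than an opaque pile of terms, is where the care is needed; conceptually the computation is forced, since $\tr(\mathcal{F}F^{k-1})$ is the order-$\lambda^0$ coefficient of the flatness of the formal connection $\lambda^{-1}\Phi+d_A+\cdots$, which is the viewpoint of \cite{thomas-flat-conn}.
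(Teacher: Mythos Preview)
Your overall strategy coincides with the paper's: work locally in the gauge $\Phi_1=F$ constant, identify $\mathrm{Im}(\ad_\Phi)$ in $2$-forms with $\mathrm{Im}(\ad_F)$, and test $F(A)$ against the basis $F^{k-1}$ of $Z(F)$ using $d_A\Phi=0$. The reduction you describe (and the observation that $t_k=\tr(F^{k-1}A_1)$ by $\sigma$-parity) are exactly the paper's first moves.

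Where your sketch becomes thin is precisely at the step you flag as ``the main obstacle'', and here the paper brings in two tools you do not mention. First, the paper does \emph{not} work with the full curvature: it splits $d_A=d_{A^\sigma}+A^{-\sigma}$ and uses $\mathrm{H}^{2,\sigma}(\Phi)=0$ to reduce everything to $\tr\big(F^{k-1}\,d_{A^\sigma}(A^{-\sigma})\big)$. This kills all the terms quadratic in the same $\sigma$-type (e.g.\ the $\tr(A_1 F^j[A_1,\Phi_2]F^{\cdots})$ contributions that your direct substitution produces) at one stroke, and isolates $B=A_1^{-\sigma}$, $C=A_2^{-\sigma}$ together with $M_i=A_i^\sigma=[F,N_i]$. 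The $M_i$-dependence is then packaged into a single element $X=[N_1,Q(F)]+[F,N_2]$ satisfying $[X,F]=\partial Q(F)$, which can be solved explicitly; pairing with $[F^{k-1},B]$ yields the $\partial\mu_\ell\,t_{k+\ell-1}$ terms. Second, the term $\partial\tr(F^{k-1}A_2)=\partial\tr(F^{k-1}C)$ does \emph{not} ``disappear'': the paper rewrites it using the principal $\mathfrak{sl}_2$-triple via $F^{k-1}=\tfrac{1}{2(k-1)}[F^{k-1},H]$ and $H=[E,F]$, then uses $[C,F]=[B,Q(F)]$ to turn it into $\sum_\ell(\ell-1)\partial(\mu_\ell\,t_{k+\ell-1})$, which supplies the $\mu_\ell\partial t_{k+\ell-1}$ contributions.

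So your plan is correct but incomplete: the ``telescoping as in Theorem~\ref{hamiltonian-first-variations-coincide}'' is not enough on its own, and the $\sigma$-splitting together with the $\mathfrak{sl}_2$-identity for $F^{k-1}$ are the two ingredients that make the computation close. Once you add those, your argument becomes the paper's.
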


We can make the condition more symmetric in $\Phi$ and $\Phi^*$. Since $d_A$ is unitary, its curvature is $\rho$-invariant. Hence $F(A)\in\mathrm{Im}(\ad_{\Phi})$ is equivalent to $F(A)\in\mathrm{Im}(\ad_{\Phi})\cap\mathrm{Im}(\ad_{\Phi^*})$. 
Another useful reformulation using $\Phi$-cohomology reads $[F(A)]= 0 \in \mathrm{H}^2(\Phi)$. From Proposition \ref{Prop-no-sigma-cohom}, we know that there is no $\sigma$-invariant $\Phi$-cohomology. Hence it is sufficient to show that $[F(A)^{-\sigma}]=0 \in\mathrm{H}^2(\Phi)$. Decomposing $d_A=d_{A^\sigma}+A^{-\sigma}$, we get
$$F(A)=F(A^\sigma)+A^{-\sigma}\wedge A^{-\sigma}+d_{A^\sigma}(A^{-\sigma}).$$
Therefore $[F(A)]=[d_{A^\sigma}(A^{-\sigma})] \mod \mathrm{H}^2(\Phi)$.

The symplectic pairing on $\Phi$-cohomology induces an isomorphism $\mathrm{H}^2(\Phi)\cong \mathrm{H}^0(\Phi)^*$. Thus, an element in $\mathrm{H}^2(\Phi)$ is trivial if and only if its pairing with $\mathrm{H}^0(\Phi)=Z(\Phi)$ is trivial.
The centralizer $Z(\Phi)$ is generated (as vector space) by the elements $\Phi_1^k$ for $1\leq k\leq n$. We will see that the condition 
\begin{equation}\label{imp-eq}
\tr \Phi_1^k d_{A^\sigma}(A^{-\sigma})=0
\end{equation}
gives the $\mu$-holomorphicity condition for $t_{k+1}$. 
Before going to the general case, consider the example for $n=2$.

\begin{example}
Fix an arbitrary complex structure on $S$ and fix a standard basis $(F,H,E)$ in $\mathfrak{sl}_2$. We work in a gauge where $\Phi=F \, dz+\mu_2F\, d\bar{z}$. By Equation \eqref{formula-t}, we know that $A^{-\sigma}=t_2E\, dz+\mu_2t_2 E \,d\bar{z}$. Since the only $\sigma$-invariant part of $\mathfrak{sl}_2$ is spanned by $H$, we can put $A^\sigma=aH\, dz+bH\, d\bar{z}$ for some local functions $a$ and $b$.
From $d_{A^\sigma}(\Phi)=0$, we get $2b-2a\mu_2+\partial\mu_2=0$. This yields
\begin{align*}
\tr Fd_{A^\sigma}(A^{-\sigma}) &= -\delbar t_2+\partial(\mu_2t_2)+2a\mu_2t_2-2bt_2 \\
&= -\delbar t_2+\mu_2 \partial t_2+2\partial\mu_2\, t_2,
\end{align*}
which is the $\mu$-holomorphicity condition for $n=2$.
\end{example}

\subsubsection{Interlude: Natural basis from principal $\mathfrak{sl}_2$-triple}\label{Sec:interlude}

Consider a complex simple Lie algebra $\g$. By a theorem of Kostant (see Theorem \ref{Thm:one-reg-orbit}), we know that there is a unique principal $\mathfrak{sl}_2$-triple in $\g$ up to conjugation. Fix $(E,H,F)$ such a triple.
It induces two decompositions of $\g$. First by weights of $\ad_H$:
$$\g \cong \bigoplus_{k\in\mathbb{Z}} \g_k \;\; \text{ where } \;\; \g_k=\{g\in \g\mid [H,g]=k g\}.$$
Second by the action with the bracket, $\g$ becomes a $\mathfrak{sl}_2$-module which can be decomposed into irreducible representations:
$$\g \cong \bigoplus_{i\in\mathbb{N}} n_iV_{i},$$
where $V_i$ is the irreducible representation of $\mathfrak{sl}_2$ of dimension $2i+1$ and $n_i\in \mathbb{N}$ the multiplicities.

In the sequel we only consider $\g=\mathfrak{sl}_n(\C)$. Then, we know that $n_i=1$ for $1\leq i\leq n-1$ and $n_i=0$ otherwise. Using both decompositions, we get
\begin{equation}\label{line-decompo}
\g\cong \bigoplus_{k,i} \g_k\cap V_{i},
\end{equation}
which is a line decomposition; see also Figure \ref{Fig:g-decompo}.

\begin{figure}[h!]
\centering
\includegraphics[height=4cm]{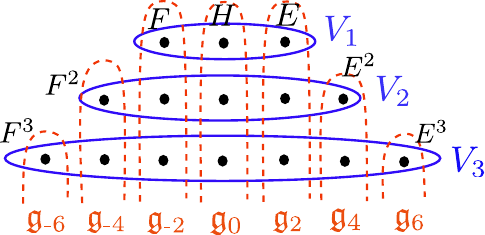}

\caption{Line decomposition of $\mathfrak{sl}_n$ by a principal triple}\label{Fig:g-decompo}
\end{figure}

All irreducible representations of $\mathfrak{sl}_2$ are highest weight representations. This means that for a given irreducible representation $V$, there is a highest weight vector $v\in V\backslash\{0\}$ with $E.v=0$. Then acting successively with $F$ generates all of $V$. 

In our setting, the highest weight vector of $V_i$ is given by $E^i$. Hence a basis adapted to the line decomposition \eqref{line-decompo} is given by
\begin{equation}\label{def-G}
G_{i,j}=\ad_F^{i-j}(E^i) \in V_i\cap \g_{2j}
\end{equation}
where $i\in \{1,...,n-1\}$ and $j\in \{-i,-i+1,...,i-1,i\}$.

A nice property of this line decomposition is its behavior under the trace:
$$\tr(G_{i,j}G_{k,\ell}) = 0 \;\text{ if }\; (k,\ell)\neq (i,-j).$$
In terms of Figure \ref{Fig:g-decompo}, the proposition says that the trace of a product of two elements of the basis is only non-zero if the two corresponding dots lie symmetric with respect to the middle axis. More details can be found in \cite{Malek} (in particular Section 4), where the Lie bracket is computed in the basis $(G_{i,j})$ using a graphical calculus.

\subsubsection{Proof of Theorem \ref{Thm:mu-holo}}

The proof of Theorem \ref{Thm:mu-holo} is a nice but lengthy computation, using principally the Lie theory of the decomposition \eqref{line-decompo}. Since Theorem \ref{Thm:mu-holo} is a local statement, we can work in local coordinates. To do that, we fix an arbitrary complex structure on $S$.

The general setting is the following: using a gauge transformation, we can fix 
$$\Phi(z,\bar{z})=F\, dz+ Q(F)\,d\bar{z}\;\;\text{ where } Q(F)=\textstyle\sum_{k=2}^n \mu_k(z,\bar{z}) F^{k-1}.$$
Further put $A^{-\sigma}=B\,dz+C\, d\bar{z}$. From $[A^{-\sigma}\wedge \Phi]=0$ we have 
\begin{equation}
    [F,C]=[Q(F),B].
\end{equation}
Finally, we put $A^\sigma=M_1\, dz+M_2\, d\bar{z}$ where $M_i\in \g^\sigma$. Since $\g^\sigma\subset \mathrm{Im}(\ad_F)$, we can write $M_i=[F,N_i]$ with suitable $N_i\in \g$, where $i=1,2$.

Recall from Equation \eqref{imp-eq} that we want to compute $$\tr F^k d_{A^\sigma}(A^{-\sigma})= \tr(F^k dA^{-\sigma}) + \tr (F^k[A^\sigma\wedge A^{-\sigma}]).$$

Using the above notations and the definition $t_{k+1}=\tr F^kB$ we get
\begin{align}\label{Eq-proof-mu-holo-0}
\tr(F^kd_{A^\sigma}(A^{-\sigma})) &= \tr(F^k(\partial C-\delbar B)) + \tr(F^k([M_1,C]+[B,M_2])) \nonumber\\
&= -\delbar t_{k+1}+\tr(F^k\partial C)+\tr M_1[C,F^k]+\tr M_2[F^k,B].
\end{align}
We analyze the different terms separately.

\medskip
\noindent \underline{Step 1:}
We compute
\begin{align*}
\tr M_1[C,F^k] &=\tr [F,N_1][C,F^k] = \tr N_1[[C,F^k],F] \\
&= \tr N_1[[C,F],F^k]= \tr N_1[[B,Q(F)],F^k] \\
&= \tr N_1[[B,F^k],Q(F)] = \tr [B,F^k][Q(F),N_1]
\end{align*}
where we used $[C,F]=[B,Q(F)]$, cyclicity of the trace and the Jacobi identity.

Combined with $M_2=[F,N_2]$ we get
\begin{equation}\label{Eq-proof-mu-holo-1}
\tr M_1[C,F^k]+\tr M_2[F^k,B] = \tr [F^k,B]X
\end{equation}
where $X=[N_1,Q(F)]+[F,N_2]$.

\medskip
\noindent \underline{Interlude:}
From the flatness equation $d_{A^\sigma}(\Phi)=0$, we get an important relation for $X$:
\begin{align*}
0=d_{A^\sigma}(\Phi)&= \partial Q(F)+[M_1,Q(F)]+[F,M_2]\\
&= \partial Q(F)+[[F,N_1],Q(F)]+[F,[F,N_2]] \\
&= \partial Q(F)-[[N_1,Q(F)]+[F,N_2],F]
\end{align*}
where we used the Jacobi identity. Therefore:
\begin{equation}\label{eq-for-X}
\partial Q(F)=[X,F].
\end{equation}

We can explicitly solve Equation \eqref{eq-for-X} in $X$. From the line decomposition of $\mathfrak{sl}_n{\C}$ (see Figure \ref{Fig:g-decompo}), we see that we should look for a solution of the form $X=\sum_{\ell=2}^n \alpha_\ell \partial\mu_\ell\,[F^{\ell-1},E]$. Then we get
\begin{align*}
\sum_{\ell=2}^n \partial\mu_\ell\, F^{\ell-1} = \partial Q(F) &= [X,F]=\sum_{\ell=2}^n \alpha_\ell\partial\mu_\ell[[F^{\ell-1},E],F]\\
&= \sum_{\ell=2}^n \alpha_\ell\partial\mu_\ell (2\ell-2)F^{\ell-1}
\end{align*}
using the Jacobi identity and $F^{\ell-1}\in \g_{-2\ell+2}$. Therefore we find $\alpha_\ell=\frac{1}{2\ell-2}$, so
\begin{equation}\label{sol-X}
X=\sum_{\ell=2}^n \frac{\partial\mu_\ell}{2\ell-2}\,[F^{\ell-1},E].
\end{equation}
We see that we found $X$ up to an element of $\ker\,\ad_F$. This choice does not matter since we compute the symplectic pairing with $Z(\Phi)$.

\medskip
\noindent \underline{Step 2:}
Now we come back to Equation \eqref{Eq-proof-mu-holo-1} using the explicit expression \eqref{sol-X} for $X$:
\begin{equation}\label{Eq-proof-mu-holo-2}
\tr [F^k,B]X = \sum_{\ell=2}^n \frac{\partial\mu_\ell}{2\ell-2} \tr[F^k,B][F^{\ell-1},E].
\end{equation}
Using cyclicity of the trace and Jacobi identity, we get
\begin{align}\label{Eq:proof-mu-holo-16}
\tr [F^k,B][F^{\ell-1},E] &= \tr[F,\sum_{j=0}^{k-1}F^jBF^{k-1-j}][F^{\ell-1},E] \nonumber\\
&= \sum_{j=0}^{k-1}\tr F^j BF^{k-1-j}[[F^{\ell-1},E],F] \nonumber\\
&= \sum_{j=0}^{k-1}\tr F^j BF^{k-1-j}[-H,F^{\ell-1}] \nonumber\\
&= (2\ell-2)\sum_{j=0}^{k-1}\tr F^j BF^{k-1-j}F^{\ell-1} \nonumber\\
&= (2\ell-2)k\tr BF^{k+\ell-2} \nonumber\\
&= (2\ell-2)kt_{k+\ell-1}
\end{align}
where we used $F^{\ell-1}\in \g_{2-2\ell}$ and the definition of $t_{k+\ell-1}$.

Therefore Equation \eqref{Eq-proof-mu-holo-2}, using Equation \eqref{Eq-proof-mu-holo-1}, becomes
\begin{equation}\label{Eq-proof-mu-holo-3}
\tr M_1[C,F^k]+\tr M_2[F^k,B] = \tr [F^k,B]X = \sum_{\ell=2}^n k (\partial\mu_\ell) t_{k+\ell-1}.
\end{equation}

\medskip
\noindent \underline{Step 3:}
The remaining term in Equation \eqref{Eq-proof-mu-holo-0} can now be computed:
\begin{align*}
\tr F^k\partial C = \partial \tr(F^kC) &= \frac{1}{2k}\partial \tr([F^k,H]C)\\
&= \frac{1}{2k}\partial \tr(F^k[[E,F],C])\\
&= \frac{1}{2k}\partial \tr(F^k([[C,F],E]+[[E,C],F]))\\
&= \frac{1}{2k}\partial \tr(F^k[[B,Q(F)],E])\\
&= \frac{1}{2k}\partial\left(\sum_{\ell=2}^n \tr(\mu_\ell F^k[[B,F^{\ell-1}],E])\right)\\
&= \frac{1}{2k}\partial\left(\sum_{\ell=2}^n \mu_\ell\tr([F^k,B][F^{\ell-1},E])\right).
\end{align*}
Using $\tr [F^k,B][F^{\ell-1},E] =(2\ell-2)kt_{k+\ell-1}$ (Equation \eqref{Eq:proof-mu-holo-16}), we conclude
\begin{equation}\label{Eq-proof-mu-holo-4}
\tr F^k\partial C = \sum_{\ell=2}^n(\ell-1)\partial(\mu_\ell t_{k+\ell-1}).
\end{equation}

Putting Equations \eqref{Eq-proof-mu-holo-0}, \eqref{Eq-proof-mu-holo-3} and \eqref{Eq-proof-mu-holo-4} together, we get that the condition $\tr(F^kd_{A^\sigma}A^{-\sigma})=0$ is equivalent to
$$-\delbar t_{k+1}+\sum_{\ell=2}^n\left((k+\ell-1)\partial\mu_\ell+(\ell-1)\mu_\ell\partial\right)t_{k+\ell-1}=0$$
which is exactly the $\mu$-holomorphicity condition.

\begin{remark}
    It is surprising that in the proof above, we never used integration by parts. The statement is somehow pointwise true (without being a pointwise statement).
\end{remark}

\subsection{Higher diffeomorphism action on covectors}\label{Sec:higher-diffeos-on-covectors}

We extend the gauge-theoretic implementation of the action of higher diffeomorphisms on flat Fock bundles in Section \ref{Sec:higher-diff-action-via-gauge} to the setting with covectors. In the case of $\mathrm{SL}_n(\C)$, we recover the variation formula of the covector data $t_k$ from the theory of higher complex structures.

Consider a $G$-Fock bundle $(P,\Phi,\sigma)$ with a hermitian structure $\rho$ (not necessarily commuting with $\sigma$) such that $\Phi$ is positive. Use $\sigma$ to decompose a unitary $\Phi$-compatible connection $d_A=d_{A^{\sigma}}+A^{-\sigma}$, where $A^{-\sigma}$ is the covector.

Recall from Section \ref{Sec:higher-diffeos} that a higher diffeomorphism is a special $\l$-dependent gauge transformation $\l^{-1}\xi+\l\xi^{*}$ where $\xi\in Z(\Phi)$. The variations of $\Phi$ and $d_A$ are given by $\delta\Phi=d_A\xi$ and $\delta A=[\xi^{*},\Phi]+[\xi,\Phi^{*}]$.

In the framework with covectors, we define the action of higher diffeomorphisms the same way. The only problem is that the variation $\delta \Phi=d_A\xi$ does not preserve $\sigma(\Phi)=-\Phi$ since $\xi\in Z(\Phi)$ is $\sigma$-anti-invariant but $d_A$ is not $\sigma$-invariant. Thus, higher diffeomorphisms also change $\sigma$.

There is way out to keep the same $\sigma$: we can use a usual gauge transformation $\eta$ to make $\delta\Phi$ anti-invariant under $\sigma$. The infinitesimal gauge transformation $\l^{-1}\xi+\eta+\l\xi^{*}$ induces the variation
$$\delta\Phi=d_{A^\sigma}(\xi)+[A^{-\sigma},\xi]+[\eta,\Phi].$$
The first term is $\sigma$-anti-invariant, we wish to choose $\eta$ which let vanish the rest. Then the action on $\Phi$ is simply given by $\delta\Phi=d_{A^\sigma}(\xi)$.

\begin{proposition}
    There exists $\eta\in\Omega^0(S,\g_P^{-\sigma})$ such that $[A^{-\sigma},\xi]+[\eta,\Phi]=0$.
\end{proposition}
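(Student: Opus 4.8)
The statement asks us to solve $[\eta,\Phi] = -[A^{-\sigma},\xi]$ for $\eta \in \Omega^0(S,\g_P^{-\sigma})$, given $\xi \in Z(\Phi)$ and $A^{-\sigma}$ a covector, i.e. $A^{-\sigma} \in (\ker\,\ad_\Phi \cap \ker\,\ad_{\Phi^*})^\rho$. The plan is to show that the right-hand side $-[A^{-\sigma},\xi]$ lies in $\mathrm{Im}(\ad_\Phi)$ restricted to the $\sigma$-anti-invariant part, and then invoke that $\ad_\Phi$ restricts to an isomorphism from $\Omega^{0,-\sigma}(S,\g_P)$ onto its image $\mathrm{Im}(\ad_\Phi)^{-\sigma} \subset \Omega^{1,-\sigma}(S,\g_P)$ — which follows from $\mathrm{H}^{0,\sigma}(\Phi) = \mathrm{H}^{0,-\sigma}(\Phi) = 0$ (Proposition \ref{Prop-no-sigma-cohom}, together with Proposition \ref{Prop:no-automorphisms}): $\ad_\Phi$ has no kernel on sections of $\g_P$ at all, so in particular it is injective on $\Omega^{0,-\sigma}$, and its image there is exactly $\mathrm{Im}(\ad_\Phi)^{-\sigma}$. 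So once we know $[A^{-\sigma},\xi]$ is $\sigma$-anti-invariant and valued in $\mathrm{Im}(\ad_\Phi)$, uniqueness and existence of $\eta$ follow immediately.

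First I would check the $\sigma$-grading. We have $\sigma(A^{-\sigma}) = -A^{-\sigma}$ (it is the $\sigma$-anti-invariant part of $d_A$) and $\sigma(\xi) = -\xi$ (since $\xi \in Z(\Phi)$, which $\sigma$ negates by Corollary \ref{cor:sigma_negate_centralizer}). Since $\sigma$ commutes with the Lie bracket, $\sigma([A^{-\sigma},\xi]) = [\sigma(A^{-\sigma}),\sigma(\xi)] = [A^{-\sigma},\xi]$, so $[A^{-\sigma},\xi]$ is in fact $\sigma$-\emph{invariant}, not anti-invariant. This means we should solve $[\eta,\Phi] = -[A^{-\sigma},\xi]$ with $\eta$ $\sigma$-anti-invariant, and note $[\eta,\Phi]$ is then $\sigma$-invariant since $\Phi$ is anti-invariant — consistent. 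So the relevant surjectivity statement is that $\ad_\Phi : \Omega^{0,-\sigma}(S,\g_P) \to \mathrm{Im}(\ad_\Phi)^{\sigma} \subset \Omega^{1,\sigma}(S,\g_P)$ is an isomorphism, which again follows from $\mathrm{H}^{0,-\sigma}(\Phi) = 0$.

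The substantive step is therefore: show $[A^{-\sigma},\xi] \in \mathrm{Im}(\ad_\Phi)$. Since $A^{-\sigma} \in \ker\,\ad_\Phi$ and $\xi \in \ker\,\ad_\Phi = Z(\Phi)$, the element $[A^{-\sigma},\xi]$ is a bracket of two $\Phi$-cocycles — $A^{-\sigma}$ a $1$-cocycle and $\xi$ a $0$-cocycle — and by Corollary \ref{coboundaries} the bracket of cohomology classes of cocycles in the $\Phi$-complex is always a coboundary. Concretely, $[\mathrm{H}^0(\Phi),\mathrm{H}^1(\Phi)] \subset \mathrm{H}^{1,\sigma}(\Phi) = 0$ by Proposition \ref{Prop-no-sigma-cohom} (using that the bracket respects the $\sigma$-grading and both factors live in the anti-invariant cohomology), so $[A^{-\sigma},\xi]$ represents the zero class in $\mathrm{H}^1(\Phi)$, i.e. it lies in $\mathrm{Im}(\ad_\Phi)$. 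Combining: $-[A^{-\sigma},\xi] \in \mathrm{Im}(\ad_\Phi)^\sigma$, so there is a unique $\eta \in \Omega^{0,-\sigma}(S,\g_P)$ with $[\eta,\Phi] = -[A^{-\sigma},\xi]$, equivalently $[A^{-\sigma},\xi] + [\eta,\Phi] = 0$.

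The main obstacle — really the only delicate point — is making sure the cohomological vanishing argument (Corollary \ref{coboundaries}) applies pointwise/globally here to a product of forms of mixed degree and that the $\sigma$-bookkeeping is correct: $A^{-\sigma}$ is a genuine $1$-form while $\xi$ is a $0$-form, so $[A^{-\sigma},\xi]$ is a $1$-form, and we must track that it lands in the right graded piece so that $\ad_\Phi$ is invertible onto it. Once that bookkeeping is nailed down, the proof is a two-line application of Proposition \ref{Prop-no-sigma-cohom} and Corollary \ref{coboundaries}.
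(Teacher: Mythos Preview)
Your core argument is correct and is exactly the paper's approach: since $\xi\in Z(\Phi)=\mathrm{H}^0(\Phi)$ and $A^{-\sigma}\in\ker\ad_\Phi$, Corollary~\ref{coboundaries} makes $[A^{-\sigma},\xi]$ a $\Phi$-coboundary, and the $\sigma$-bookkeeping (both factors $\sigma$-anti-invariant, so the bracket is $\sigma$-invariant, hence any preimage under $\ad_\Phi$ may be taken in $\Omega^{0,-\sigma}$) gives $\eta\in\Omega^0(S,\g_P^{-\sigma})$.

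One factual error to flag, though it does not affect the proof of the stated existence claim: your assertion that $\ad_\Phi$ has no kernel on $\Omega^0(S,\g_P)$, and hence that $\eta$ is unique, is false. The kernel is precisely $Z(\Phi)$, which has rank $\mathrm{rk}(\g)$ (Proposition~\ref{Prop:dim-phi-cohom}) and lies entirely in the $\sigma$-anti-invariant part (Corollary~\ref{cor:sigma_negate_centralizer}); thus $\mathrm{H}^{0,-\sigma}(\Phi)=Z(\Phi)\neq 0$. Proposition~\ref{Prop:no-automorphisms} is about infinitesimal automorphisms of the pair $(\Phi,\sigma)$, i.e.\ elements of $Z(\Phi)\cap\g_P^\sigma$, and does not say $\ad_\Phi$ is injective. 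So $\eta$ is only determined up to an element of $Z(\Phi)$---but the proposition only asks for existence, so your argument for the stated claim stands.
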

\begin{proof}
    This is an application of Corollary \ref{coboundaries} stating that the bracket between cohomology classes is a coboundary. We have $\xi\in Z(\Phi)=\mathrm{H}^0(\Phi)$ and $A^{-\sigma}\in\text{ker}\,\text{ad}_{\Phi}$. Since $\xi\in Z(\Phi)$ the bracket $[A^{-\sigma},\xi]$ only depends on the class $[A^{-\sigma}]\in \mathrm{H}^1(\Phi)$. In addition, the bracket is a coboundary which gives the existence of $\eta$. Since $\Phi, \xi$ and $A^{-\sigma}$ are $\sigma$-anti-invariant, so is $\eta$.
\end{proof}

\medskip
Now we restrict attention to the case of an $\mathrm{SL}_n(\C)$-Fock bundle $(E,\Phi,g)$ and compute explicitly the action on the covector data. Put $\xi=\Phi(v_1)\cdots \Phi(v_k)$ which is associated to the Hamiltonian $H=v_1\cdots v_k$. Consider the infinitesimal gauge transformation $\l^{-1}\xi+\eta+\l\xi^{*}$, where $\eta$ is explicitly given by
\begin{equation}\label{def-eta-0}
\eta = A^{-\sigma}(v_1)\Phi(v_2)\cdots \Phi(v_k)+...+\Phi(v_1)\cdots \Phi(v_{k-1})A^{-\sigma}(v_k).
\end{equation}
Note that $\sigma(\eta)=-\eta$ and that $\eta=0$ if $t_k=0$ for all $k$ (recall that $t_k=\tr \Phi_1^{k-1}A_1^{-\sigma}$).

\begin{proposition}
The $\eta$ is well-defined, i.e. does not depend on the order of the $v_i$.
\end{proposition}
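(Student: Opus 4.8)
The plan is to show that the expression
\[
\eta = A^{-\sigma}(v_1)\Phi(v_2)\cdots \Phi(v_k)+\Phi(v_1)A^{-\sigma}(v_2)\Phi(v_3)\cdots\Phi(v_k)+\cdots+\Phi(v_1)\cdots \Phi(v_{k-1})A^{-\sigma}(v_k)
\]
is symmetric under any transposition of two adjacent indices $v_i,v_{i+1}$, since adjacent transpositions generate the full symmetric group. So fix $i$ and compare the terms of $\eta$ before and after swapping $v_i$ with $v_{i+1}$. All the terms in which the $A^{-\sigma}$-slot sits at a position other than $i$ or $i+1$ are visibly unchanged, because each such term is a product of factors $\Phi(v_j)$ (and one $A^{-\sigma}(v_j)$ at a fixed $j\notin\{i,i+1\}$), and such a product is insensitive to the order of the two $\Phi$-factors $\Phi(v_i),\Phi(v_{i+1})$: indeed $[\Phi(v_i),\Phi(v_{i+1})]=0$ is exactly the content of the condition $[\Phi\wedge\Phi]=0$ (equivalently $\Phi\wedge\Phi=0$ in the vector bundle picture), which holds for a Fock field. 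So the only thing to check is that the sum of the two ``special'' terms,
\[
\cdots\Phi(v_{i-1})\,A^{-\sigma}(v_i)\,\Phi(v_{i+1})\,\Phi(v_{i+2})\cdots \;+\; \cdots\Phi(v_{i-1})\,\Phi(v_i)\,A^{-\sigma}(v_{i+1})\,\Phi(v_{i+2})\cdots,
\]
is unchanged when $v_i\leftrightarrow v_{i+1}$.

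The key algebraic input here is the transversality relation $[A^{-\sigma}\wedge\Phi]=0$, equivalently (writing $\Phi=\Phi_1 dz+\Phi_2 d\bar z$ and $A^{-\sigma}$ similarly) $[A^{-\sigma}(v),\Phi(w)]=[A^{-\sigma}(w),\Phi(v)]$ for all tangent vectors $v,w$; this is precisely the statement that $A^{-\sigma}\in\ker(\ad_\Phi)$, which was established for a covector in Section~\ref{Sec:covectors} (it is the $\sigma$-invariant part of $d_A\Phi=0$, combined with $\sigma(\Phi)=-\Phi$, or directly the relation $[F,C]=[Q(F),B]$ used in the proof of Theorem~\ref{Thm:mu-holo}). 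Using this symmetry of the ``bracket pairing'' of $A^{-\sigma}$ and $\Phi$, together with $[\Phi(v_i),\Phi(v_{i+1})]=0$, one commutes the middle block $A^{-\sigma}(v_i)\Phi(v_{i+1})+\Phi(v_i)A^{-\sigma}(v_{i+1})$ past a transposition: more precisely
\[
A^{-\sigma}(v_i)\,\Phi(v_{i+1})+\Phi(v_i)\,A^{-\sigma}(v_{i+1})
= A^{-\sigma}(v_{i+1})\,\Phi(v_{i})+\Phi(v_{i+1})\,A^{-\sigma}(v_{i}),
\]
which follows by adding and subtracting $\Phi(v_{i+1})A^{-\sigma}(v_i)$ and $A^{-\sigma}(v_{i+1})\Phi(v_i)$ and invoking the two commutation identities. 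Since the blocks $\Phi(v_1)\cdots\Phi(v_{i-1})$ and $\Phi(v_{i+2})\cdots\Phi(v_k)$ surrounding this middle block are themselves symmetric in their own arguments (again by $[\Phi\wedge\Phi]=0$) and do not involve $v_i,v_{i+1}$, the full sum $\eta$ is invariant under $v_i\leftrightarrow v_{i+1}$.

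The routine part is bookkeeping of which terms move where; the only genuine obstacle is making sure the correct commutation relation is being used in the correct slot—specifically that $A^{-\sigma}$ commutes with $\Phi$ in the $\ad$-sense (so that $A^{-\sigma}(v)$ and $\Phi(w)$ can be ``traded'' symmetrically), which is \emph{not} $[A^{-\sigma}(v),A^{-\sigma}(w)]=0$ (which need not hold) but rather $\ad_\Phi A^{-\sigma}=0$. I would therefore state this commutation lemma explicitly at the start of the proof, citing that $A^{-\sigma}\in\ker(\ad_\Phi)$ for a covector (Section~\ref{Sec:covectors}), and then the symmetry of $\eta$ reduces to the one-line identity for the middle block displayed above, extended by multilinearity. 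Finally, one notes $\sigma(\eta)=-\eta$ since every summand is a product of $\sigma$-anti-invariant factors ($\Phi$ is $\sigma$-anti-invariant, $A^{-\sigma}$ by definition) with an odd total number of such factors, which was already recorded in the statement.
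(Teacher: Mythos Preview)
Your proof of the proposition is correct and follows essentially the same route as the paper: reduce to adjacent transpositions, observe that terms with the $A^{-\sigma}$-slot outside positions $i,i+1$ are unchanged because the $\Phi(v_j)$ commute, and then use $[A^{-\sigma}\wedge\Phi]=0$ (i.e.\ $[A^{-\sigma}(v),\Phi(w)]=[A^{-\sigma}(w),\Phi(v)]$) to get the two-term identity for the remaining block. This is exactly the paper's argument.

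One caveat on your closing remark: the justification you give for $\sigma(\eta)=-\eta$ is not valid. The involution $\sigma$ is a Lie algebra involution (on $\mf{sl}(E)$ it is $X\mapsto -X^{*_g}$), not an associative algebra homomorphism, so it does not act on a product $X_1\cdots X_k$ by the sign $(-1)^k$; moreover $k$ need not be odd. The correct argument is that each factor is $g$-self-adjoint, so $(X_1\cdots X_k)^{*_g}=X_k\cdots X_1$, and then one uses the commutativity of the $\Phi(v_j)$ together with the symmetry of $\eta$ you have just proved to see that the reversed sum equals $\eta$; hence $\sigma(\eta)=-\eta^{*_g}=-\eta$. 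This is outside the proposition as stated, but since you included it you should replace the ``odd number of anti-invariant factors'' reasoning.
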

\begin{proof}
This will follow from $\Phi\wedge\Phi=0$ and $[A^{-\sigma}\wedge \Phi]=0$. Let us compute what happens if we exchange $v_1$ and $v_2$. From $[A^{-\sigma}\wedge \Phi](v_1,v_2)=0$ we get $$[A^{-\sigma}(v_1),\Phi(v_2)]=[A^{-\sigma}(v_2),\Phi(v_1)].$$
Hence $$A^{-\sigma}(v_1)\Phi(v_2)+\Phi(v_1)A^{-\sigma}(v_2) = A^{-\sigma}(v_2)\Phi(v_1)+\Phi(v_2)A^{-\sigma}(v_1).$$
Since the $\Phi(v_i)$ commute among themselves, $\eta$ will not change under the exchange between $v_1$ and $v_2$. In a similar way, you prove this for any exchange between $v_i$ and $v_{i+1}$. Hence $\eta$ is invariant under any permutation.
\end{proof}

\begin{proposition}
We have $[A^{-\sigma},\xi]+[\Phi,\eta]=0$.
\end{proposition}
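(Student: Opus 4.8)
The plan is to prove the identity pointwise, evaluating both $\g_P$-valued $1$-forms on an arbitrary tangent vector $w\in T_zS$. Abbreviate $\Phi_i:=\Phi(v_i)$, $B_i:=A^{-\sigma}(v_i)$, $\Phi_w:=\Phi(w)$ and $B_w:=A^{-\sigma}(w)$, all regarded as sections of $\g_P$ near $z$, so that $\xi=\Phi_1\cdots\Phi_k$ and, by \eqref{def-eta-0}, $\eta=\sum_{i=1}^k \Phi_1\cdots\Phi_{i-1}B_i\Phi_{i+1}\cdots\Phi_k$. Well-definedness of $\eta$ (and of $\xi$) is the content of the preceding propositions, so the ordering of the $v_i$ is immaterial. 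The two structural facts I will use are: $\Phi\wedge\Phi=0$, which gives $[\Phi(u),\Phi(v)]=0$ for all $u,v$; and $A^{-\sigma}\in\ker(\ad_\Phi)$, i.e. $[\Phi\wedge A^{-\sigma}]=0$, which (with the convention $[\alpha\wedge\beta]=[\beta\wedge\alpha]$ for $1$-forms) reads as the symmetry $[A^{-\sigma}(u),\Phi(v)]=[A^{-\sigma}(v),\Phi(u)]$ for all $u,v$.

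First I would evaluate $[\Phi,\eta]$ on $w$: $[\Phi,\eta](w)=[\Phi_w,\eta]=\sum_{i=1}^k[\Phi_w,\,\Phi_1\cdots\Phi_{i-1}B_i\Phi_{i+1}\cdots\Phi_k]$. Expanding each bracket by the derivation rule $[X,\,Y_1\cdots Y_m]=\sum_j Y_1\cdots[X,Y_j]\cdots Y_m$, every term in which $\Phi_w$ is bracketed against some $\Phi_j$ vanishes by $\Phi\wedge\Phi=0$, so what survives is $[\Phi,\eta](w)=\sum_{i=1}^k \Phi_1\cdots\Phi_{i-1}[\Phi_w,B_i]\Phi_{i+1}\cdots\Phi_k$. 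Next I would rewrite the inner brackets using the second structural identity, $[\Phi_w,B_i]=-[B_i,\Phi_w]=-[B_w,\Phi_i]$, and substitute back to obtain $[\Phi,\eta](w)=-\sum_{i=1}^k \Phi_1\cdots\Phi_{i-1}[B_w,\Phi_i]\Phi_{i+1}\cdots\Phi_k=-[B_w,\,\Phi_1\cdots\Phi_k]=-[A^{-\sigma},\xi](w)$, the middle equality being again the derivation rule, now for $\ad_{B_w}$. Since $w$ was arbitrary, $[A^{-\sigma},\xi]+[\Phi,\eta]=0$.

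There is essentially no serious obstacle here: once the two commutator identities are in place the argument is a short bookkeeping computation, parallel to the proof that $\eta$ is well-defined. The only points requiring care are the sign conventions for brackets of $\g_P$-valued forms (so that $[\Phi\wedge A^{-\sigma}]=0$ is correctly interpreted as the above symmetry relation) and the discipline of invoking $\Phi\wedge\Phi=0$ exactly for the $\Phi_w$–$\Phi_j$ cross-terms while using $A^{-\sigma}\in\ker(\ad_\Phi)$ only for the surviving $\Phi_w$–$B_i$ terms. No integration by parts or cohomological input is needed; unlike the related statement that $[A^{-\sigma},\xi]$ is a coboundary, here the primitive $\eta$ is produced explicitly.
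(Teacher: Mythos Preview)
Your proof is correct and follows essentially the same approach as the paper: both arguments expand $[\Phi,\eta]$ via the derivation rule, kill the $\Phi$--$\Phi$ cross-terms using $\Phi\wedge\Phi=0$, and then cancel the surviving terms against $[A^{-\sigma},\xi]$ using the identity coming from $[A^{-\sigma}\wedge\Phi]=0$. The only cosmetic difference is that the paper works at the level of $1$-forms (writing the key identity as $[A^{-\sigma},\Phi(v_i)]+[\Phi,A^{-\sigma}(v_i)]=0$), whereas you evaluate on a tangent vector $w$ first; the computation is the same.
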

\begin{proof}
Since the equation to prove is additive, we can assume the Hamiltonian to be $H=v_1\cdots v_k$. Then, the first term is 
$$[A^{-\sigma},\Phi(v_1)\cdots \Phi(v_k)] = \sum_{i=0}^{k}\Phi(v_1)\cdots \Phi(v_{i-1})[A^{-\sigma},\Phi(v_i)]\Phi(v_{i+1})\cdots \Phi(v_k).$$
Using $\Phi\wedge\Phi=0$ the second term is
$$\sum_{i=0}^{k}\Phi(v_1)\cdots \Phi(v_{i-1})[\Phi,A^{-\sigma}(v_i)]\Phi(v_{i+1})\cdots \Phi(v_k).$$
The flatness identity $[A^{-\sigma},\Phi]=0$ implies $[A^{-\sigma},\Phi(v_i)]+[\Phi,A^{-\sigma}(v_i)]=0 \,\forall\, i$ which concludes the proof.
\end{proof}

As a consequence, we get
\begin{equation}\label{higher-diff-action-extended-2}
\delta \Phi = d_{A^\sigma}(\xi).
\end{equation}
In particular, the property $\sigma(\Phi)=-\Phi$ is preserved.

The variation of the middle term $d_A$ is given by (see also Equation \eqref{Eq:3-term-gauge-action}): $$\delta A=d_A\eta+[\xi,\Phi^{*}]+[\xi^{*},\Phi].$$
Thus, the variation of $A^{-\sigma}$, which is important for the covector data, is given by
\begin{equation}\label{higher-diff-action-extended-1}
\delta A^{-\sigma} = d_{A^\sigma}(\eta)+[\xi,(\Phi^{*})^\sigma]+[(\xi^{*})^{\sigma},\Phi].
\end{equation}

We can now state the variation formula of the differentials $t_k$.
\begin{proposition}\label{Prop:var-covectors}
The variation of $t_k$ under a Hamiltonian $H=w_\ell p^{\ell-1}$ is given by 
$$\delta t_k = (k+\ell-2)t_{k+\ell-2}\partial w_\ell+(\ell-1)w_\ell\partial t_{k+\ell-2}.$$
\end{proposition}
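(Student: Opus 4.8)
The strategy is to compute $\delta t_k$ directly from its definition $t_k = \tr \Phi_1^{k-1} A_1^{-\sigma}$ using the variation formulas \eqref{higher-diff-action-extended-2} and \eqref{higher-diff-action-extended-1}, and then identify the result with the known Poisson-bracket formula for the covector variation in the theory of higher complex structures (cf.\ \cite{thomas-flat-conn}). By linearity in the Hamiltonian, we may assume $H = w_\ell p^{\ell-1}$, so that $\xi = w_\ell \Phi_1^{\ell-1}$ in a gauge where $\Phi = F\,dz + Q(F)\,d\bar z$ with $F$ a fixed principal nilpotent and $Q$ a polynomial without constant term. First I would differentiate $t_k$:
\[
\delta t_k = (k-1)\tr\big(\Phi_1^{k-2}(\delta\Phi_1)\,A_1^{-\sigma}\big) + \tr\big(\Phi_1^{k-1}\,\delta A_1^{-\sigma}\big),
\]
using that $\Phi_1$ and $A_1^{-\sigma}$ both lie in the commuting families generated by $F$ (so the first term genuinely only needs $\delta\Phi_1$, not a full trace over all orderings). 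Here $\delta\Phi_1 = (d_{A^\sigma}\xi)_1 = \partial\xi + [M_1,\xi]$ where $A^\sigma = M_1\,dz + M_2\,d\bar z$, and $\delta A_1^{-\sigma}$ is extracted from \eqref{higher-diff-action-extended-1}.

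The key computational steps, in order: (1) expand $\tr(\Phi_1^{k-1} d_{A^\sigma}(\eta)_1)$ by integration-by-parts-free manipulation exactly as in the proof of Theorem \ref{Thm:mu-holo} — cyclicity of the trace plus the Jacobi identity plus the relation $[F,C] = [Q(F),B]$ coming from $[A^{-\sigma}\wedge\Phi]=0$ — to rewrite it in terms of the $t_j$'s and derivatives of the $\mu$'s; (2) handle the bracket terms $[\xi,(\Phi^*)^\sigma]$ and $[(\xi^*)^\sigma,\Phi]$: project onto $Z(\Phi)$ via the symplectic pairing, where many contributions vanish because the trace of a product of two basis elements $G_{i,j}$ from the line decomposition \eqref{line-decompo} is nonzero only for symmetric positions (Section \ref{Sec:interlude}); (3) use the flatness relation $d_{A^\sigma}\Phi = 0$, which gives $\partial Q(F) = [X,F]$ as in \eqref{eq-for-X}, together with the explicit solution \eqref{sol-X} for $X$, to eliminate the $M_i$-dependent terms; (4) collect everything and compare against the Poisson bracket
\[
\{w_\ell p^{\ell-1}, t_k p^{k-1}\} = (\ell-1)w_\ell\,p^{\ell-2}\cdot(k-1)t_k p^{k-1}\cdot(\text{...})
\]
— more precisely the standard formula for the action of a Hamiltonian on a covector tensor, which yields $\delta t_k = (k+\ell-2)t_{k+\ell-2}\partial w_\ell + (\ell-1)w_\ell \partial t_{k+\ell-2}$.

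The main obstacle I anticipate is step (2): bookkeeping the bracket terms $[\xi,(\Phi^*)^\sigma]+[(\xi^*)^\sigma,\Phi]$ in the decomposition $d_A = d_{A^\sigma} + A^{-\sigma}$, since $(\Phi^*)^\sigma$ and $(\xi^*)^\sigma$ mix the $\sigma$-grading in a way that is not immediately transparent, and one must carefully track which pieces survive the pairing with $Z(\Phi) = Z(\Phi_1)$. I expect, however — as with Theorem \ref{Thm:mu-holo} — that when the symplectic pairing with $\Phi_1^{k-1}$ is taken, the genuinely $\rho$-dependent contributions cancel and the answer depends only on $\mu$, $t$, and the Hamiltonian, confirming the stated formula. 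A useful sanity check along the way is the Fuchsian case $\mu_\ell = 0$ for all $\ell$, where the formula should collapse to $\delta t_k = \bar\partial(\text{something})$-free terms matching the known holomorphic behaviour, and the case $\ell = 2$, which should reproduce the classical transformation of a quadratic-type differential under a vector field.
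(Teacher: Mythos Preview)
Your overall strategy---compute $\delta t_k$ directly from $t_k=\tr(F^{k-1}B)$ using the variation formulas \eqref{higher-diff-action-extended-2} and \eqref{higher-diff-action-extended-1}---is exactly what the paper does, and it works. However, you are planning to bring in considerably more machinery than is needed. The two places where you anticipate difficulty are in fact trivial in the paper's argument. First, the bracket terms in $\delta B$ that you flag in step~(2), namely $[\xi,(\Phi^{*})^\sigma]_1$ and $[(\xi^{*})^\sigma,\Phi]_1$, contribute nothing to $\tr(F^{k-1}\delta B)$ for elementary reasons: cyclicity of the trace gives $\tr F^{k-1}[(\xi^{*})^\sigma,F]=0$ since $[F^{k-1},F]=0$, and $\tr F^{k-1}[\xi,(\Phi_2^{*})^\sigma]=0$ since $\xi$ is a power of $F$ and thus commutes with $F^{k-1}$. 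No line decomposition or $\rho$-bookkeeping is required. Second, the $A^\sigma_1$-dependent pieces you plan to eliminate in step~(3) via the flatness relation $d_{A^\sigma}\Phi=0$ and the explicit solution \eqref{sol-X} for $X$ are not handled that way in the paper at all: the $[A^\sigma_1,F^{\ell-1}]$ contribution from $\delta(F^{k-1})$ and the $[A^\sigma_1,\eta]$ contribution from $\delta B$ cancel against each other by a direct cyclicity-and-Jacobi computation, with no appeal to flatness. So the proof in the paper never uses $[F,C]=[Q(F),B]$, never uses $d_{A^\sigma}\Phi=0$, and never invokes $X$; after the two simple vanishings above, what remains is just $(k-1)\partial w_\ell\,t_{k+\ell-2}$ from $\delta(F^{k-1})$ plus $(\ell-1)\partial w_\ell\,t_{k+\ell-2}+(\ell-1)w_\ell\,\partial t_{k+\ell-2}$ from $\partial\eta$, which is already the answer. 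Your route would presumably arrive at the same place, but via detours you can simply skip.
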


This is in accordance to the computations from the classical perspective on higher complex structures. Before giving the proof which is a direct computation, let us see an example:
\begin{example}
For $n=2$, the $\mu$-holomorphicity condition reads $(-\delbar+\mu_2\partial+2\partial\mu_2)t_2=0$. A Hamiltonian $H=w_2p$ induces 
\begin{align*}
\delta \mu_2 &= (\delbar-\mu_2\partial+\partial\mu_2)w_2,\\
\delta t_2 &= w_2\partial t_2+2t_2 \partial w_2.
\end{align*}
One can check that the $\mu$-holomorphicity is preserved. 
\end{example}
Note that the $\mu$-holomorphicity condition is preserved under the higher diffeomorphism action. This is because it is a gauge action, hence it preserves the flatness of the 3-term connection.

\begin{proof}[Proof of Proposition \ref{Prop:var-covectors}]
We start in a gauge in which $\Phi_1=Fdz$ is a fixed principal nilpotent element ($F$ will only change under the higher diffeomorphism action). Recall the definition $t_k=\tr(F^{k-1}A^{-\sigma}_1)$ and the notation $B=A^{-\sigma}_1$, from which we get 
$$\delta t_k=\tr\left(\delta\left(F^{k-1}\right)B + F^{k-1}\delta B\right).$$
From Equation \eqref{higher-diff-action-extended-1} and \eqref{higher-diff-action-extended-2}, we get
$$\delta\left(F^{k-1}\right) = \sum_{i=0}^{k-2}F^i (\delta F)F^{k-i-2} = \sum_{i=0}^{k-2}F^i (\partial\xi+[A^\sigma_1,\xi])F^{k-2-i}$$
and
$$\delta B = \partial\eta+[A^\sigma_1,\eta]+[\xi,(\Phi_2)^{*,\sigma}]+[\xi^{*,\sigma},F].$$
The last two terms of $\delta B$ will not contribute to the variation $\delta t_k$. Indeed $\tr F^{k-1}[\xi^{*,\sigma},F]=0$ by cyclicity of the trace and $\tr F^{k-1}[\xi,(\Phi_2)^{*,\sigma}]=0$ since $[\xi,F^{k-1}]=0$.

\smallskip
Using $\xi=w_\ell F^{\ell-1}$ and $\eta=w_\ell\sum_{j=0}^{\ell-2}F^j B F^{\ell-2-j}$, we get
$$\delta\left(F^{k-1}\right) = (k-1)\partial w_\ell \,F^{k+\ell-3} + w_\ell\sum_{i=0}^{k-2}F^i[A^\sigma_1,F^{\ell-1}]F^{k-2-i}$$
and 
$$\partial\eta+[A^\sigma_1,\eta] = \partial w_\ell \sum_{j=0}^{\ell-2}F^jBF^{\ell-2-j}+w_\ell\sum_{j=0}^{\ell-2}F^j (\partial B) F^{\ell-2-j}+w_\ell[A^\sigma_1,\sum_{j=0}^{\ell-2}F^jBF^{\ell-2-j}].$$
Therefore using $t_{k+\ell-2}=\tr F^{k+\ell-3}B$ we get
\begin{align*}
\delta t_k &= \tr \delta(F^{k-1})B+\tr F^{k-1}\delta B\\
&= (k-1)\partial w_\ell t_{k+\ell-2}+\partial w_\ell \sum_{j=0}^{\ell-2}t_{k+\ell-2}+w_\ell\sum_{j=0}^{\ell-2}\partial t_{k+\ell-2} \\
& \; +w_\ell \sum_{i=0}^{k-2}\tr\left(F^i[A^\sigma_1,F^{\ell-1}]F^{k-2-i}B\right)+w_\ell\sum_{j=0}^{\ell-2}\tr\left(F^{k-1}[A^\sigma_1,F^jBF^{\ell-2-j}]\right)\\
&= \left((k+\ell-2)\partial w_\ell+(\ell-1)w_\ell\partial\right)t_{k+\ell-2}.
\end{align*}
The last line comes from the cancellation of two terms. Using cyclicity we get:
\begin{align*}
\tr\sum_{i=0}^{k-2}F^i[A^\sigma_1,F^{\ell-1}]F^{k-2-i}B &= \tr A^\sigma_1 \sum_{i=0}^{k-2}[F^{\ell-1},F^{k-2-i}BF^i] \\
&= \tr A^\sigma_1 \sum_{i=0}^{k-2}\sum_{j=0}^{\ell-2}F^{k-2-i}F^j[F,B]F^{\ell-2-j}F^i.
\end{align*}
Similarly
$$\tr\sum_{j=0}^{\ell-2}F^{k-1}[A^\sigma_1,F^jBF^{\ell-2-j}] = \tr A^\sigma_1 \sum_{j=0}^{\ell-2}\sum_{i=0}^{k-2}F^jF^{k-2-i}[B,F]F^iF^{\ell-2-j}.$$
Hence the two terms cancel out.
\end{proof}

\addcontentsline{toc}{section}{References}

\bigskip
\noindent\small{\textsc{Department of Mathematics, University of Patras}\\ Panepistimioupolis Patron, Patras 26504, Greece
	}\\
\emph{E-mail address}:  \texttt{gkydonakis@math.upatras.gr}

\bigskip
\noindent\small{\textsc{Department of Mathematics, University of Texas at Austin}\\
2515 Speedway, Austin, TX 78712, USA}\\
\emph{E-mail address}:  \texttt{charliereid@utexas.edu}

\bigskip
\noindent\small{\textsc{Institut f\"ur Mathematik, Universit\"at Heidelberg}\\
	Berliner Str. 41-49,
	69120 Heidelberg, Germany}\\
\emph{E-mail address}: \texttt{athomas@mathi.uni-heidelberg.de}

\end{document}